


\documentclass[12pt,reqno]{amsart}
\usepackage[top=1.25in,bottom=1.25in,left=1in,right=1in]{geometry}
\usepackage{amsmath,amssymb,amsthm,pinlabel}
\usepackage{ifthen}
\usepackage{enumitem}
\usepackage{mathtools}
\usepackage[all,cmtip]{xy}
\usepackage[final, 
colorlinks=true,
pdftitle={Thermodynamic metrics on outer space},
pdfauthor={Tarik Aougab, Matt Clay and Yo'av Rieck}]{hyperref}

\usepackage{tikz,tikz-cd,amscd}
\usetikzlibrary{calc}
\usepackage{tikz,mathdots,color}
\usetikzlibrary{decorations.pathreplacing,decorations.markings}
\usetikzlibrary{shapes,shapes.arrows,positioning}

\tikzset{->-/.style={decoration={
  markings,
  mark=at position #1 with {\arrow{>}}},postaction={decorate}}}
\tikzset{->-/.default=.5}



\theoremstyle{plain}
\newtheorem{theorem}{Theorem}
\newtheorem{lemma}[theorem]{Lemma}
\newtheorem{proposition}[theorem]{Proposition}
\newtheorem{corollary}[theorem]{Corollary}
\newtheorem{claim}[theorem]{Claim}
\newtheorem*{claim*}{Claim}

\theoremstyle{definition}
\newtheorem{definition}[theorem]{Definition}

\newtheorem{example}[theorem]{Example}
\newtheorem{remark}[theorem]{Remark}
\newtheorem{question}[theorem]{Question}

\newtheorem{conjecture}[theorem]{Conjecture}
\newtheorem{problem}[theorem]{Problem}

\numberwithin{theorem}{section}
\numberwithin{equation}{section}


\newcommand{\FF}{\mathbb{F}}
\newcommand{\NN}{\mathbb{N}} 
\newcommand{\RR}{\mathbb{R}}
\newcommand{\PP}{\mathbb{P}}

\newcommand{\One}{\mathbf{1}} 
\newcommand{\Zero}{\mathbf{0}}

\newcommand{\bc}{\mathbf{c}}  
  
\newcommand{\bv}{\mathbf{v}}  
  
\newcommand{\bH}{\mathbf{H}}

\newcommand{\calB}{\mathcal{B}}
\newcommand{\calC}{\mathcal{C}}

\newcommand{\calG}{\mathcal{G}}
\newcommand{\calH}{\mathcal{H}}

\newcommand{\calL}{\mathcal{L}}
\newcommand{\calM}{\mathcal{M}}

\newcommand{\calR}{\mathcal{R}}
\newcommand{\calS}{\mathcal{S}}

\newcommand{\calV}{\mathcal{V}}
\newcommand{\calX}{\mathcal{X}}

\newcommand{\entropy}{\mathfrak{h}}

\newcommand{\pressure}{\mathfrak{P}}

\newcommand{\sfm}{\mathsf{m}}

\newcommand{\tG}{{\widetilde G}}

\newcommand{\oA}{\overline{A}}
\newcommand{\oC}{\overline{C}}
\newcommand{\oD}{\overline{D}}
\newcommand{\oF}{\overline{F}}

\newcommand{\hcalM}{\widehat{\mathcal{M}}}
\newcommand{\ocalM}{\overline{\mathcal{M}}}
\newcommand{\ccalX}{\widehat{\mathcal{X}}} 
\newcommand{\ocalX}{\overline{\mathcal{X}}} 
    

\newcommand{\abs}[1]{\left\lvert {#1} \right\rvert} 
 
\newcommand{\I}[1]{\langle #1 \rangle}

\newcommand{\from}{\colon\thinspace}

\newcommand{\norm}[1]{\left\| {#1} \right\|}

\newcommand{\splx}[1]{\abs{#1}} 
\newcommand{\PD}[2]{\partial_{#2} #1}
\newcommand{\PPD}[3]{\partial_{#2 #3} #1}

\newcommand{\vardottilde}[1]{\dot{\raisebox{0pt}[1\height]{$\tilde{#1}$}}}

    
\newcommand{\param}%
	{{\mathchoice{\mkern1mu\mbox{\raise2.2pt\hbox{$\centerdot$}}\mkern1mu}%
	{\mkern1mu\mbox{\raise2.2pt\hbox{$\centerdot$}}\mkern1mu}%
	{\mkern1.5mu\centerdot\mkern1.5mu}{\mkern1.5mu\centerdot\mkern1.5mu}}}



\DeclareMathOperator{\Aut}{Aut}
\DeclareMathOperator{\Homeo}{Homeo}
\DeclareMathOperator{\Inn}{Inn}
\DeclareMathOperator{\Lip}{Lip}
\DeclareMathOperator{\Mat}{Mat}
\DeclareMathOperator{\MCG}{MCG}
\DeclareMathOperator{\Out}{Out}
\DeclareMathOperator{\diam}{diam}
\DeclareMathOperator{\spec}{spec}
\DeclareMathOperator{\trace}{tr}
\DeclareMathOperator{\vol}{vol}

\subjclass[2010]{20E05,20F65,57-XX}

\keywords{Outer space, automorphisms of free groups, thermodynamic formalism, Weill--Petersson metric}

\begin{document}


\title[Thermodynamic metrics on outer space]{Thermodynamic metrics on outer space}

\author[T. Aougab]{Tarik Aougab}
\address{Department of Mathematics\\
Haverford College\\
370 Lancaster Avenue\\
Haverford, PA 19041, USA\\}
\email{\href{mailto:taougab@haverford.edu}{taougab@haverford.edu}}

\author[M. Clay]{Matt Clay}
\address{Department of Mathematics\\
University of Arkansas\\
Fayetteville, AR 72701, USA\\}
\email{\href{mailto:mattclay@uark.edu }{mattclay@uark.edu}}

\author[Y. Rieck]{Yo'av Rieck}
\address{Department of Mathematics\\ 
University of Arkansas\\ 
Fayetteville, AR 72701, USA\\}
\email{\href{mailto:yoav@uark.edu}{yoav@uark.edu}}
\date{\today}

\begin{abstract}
In this paper we consider two piecewise Riemannian metrics defined on the Culler--Vogtmann outer space which we call the \emph{entropy metric} and the \emph{pressure metric}.  As a result of work of McMullen, these metrics can be seen as analogs of the Weil--Petersson metric on the Teichm\"uller space of a closed surface.  We show that while the geometric analysis of these metrics is similar to that of the Weil--Petersson metric, from the point of view of geometric group theory, these metrics behave very differently to the Weil--Petersson metric.  Specifically, we show that when the rank $r$ is at least 4, the action of $\Out(\FF_r)$ on the completion of the Culler--Vogtmann outer space using the entropy metric has a fixed point.  A similar statement also holds for the pressure metric.
\end{abstract}

\maketitle

\tableofcontents


\section{Introduction}

The purpose of this paper is to introduce and examine two piecewise Riemannian metrics, called the \emph{entropy metric} and the \emph{pressure metric}, on the Culler--Vogtmann outer space $CV(\FF_r)$. The Culler--Vogtmann outer space is the moduli space of unit volume marked metric graphs and as such it is often viewed as the analog of the Teichm\"uller space of an orientable surface $S_g$.  Both the Culler--Vogtamnn outer space and the Teichm\"uller space admit a natural properly discontinuous action by a group.  For the Culler--Vogtmann outer space, the group is the outer automorphism group of a free group $\Out(\FF_r) = \Aut(\FF_r)/\Inn(\FF_r)$.  For the Teichm\"uller space, the group is the mapping class group of the surface $\MCG(S_g) = \pi_0 (\Homeo^+(S_g))$.  Strengthening the connection between these spaces and groups are the facts that (i) $\Out(\FF_r)$ is isomorphic to the group of homotopy equivalences of a graph whose fundamental group is isomorphic to $\FF_r$, i.e., $\Out(\FF_r)$ can be thought of as the mapping class group of a graph, and (ii) the Dehn--Nielsen--Baer theorem which states that the extended mapping class group $\MCG^\pm(S_g)$ (which also includes isotopy classes of orientation reserving homeomorphisms) is isomorphic to $\Out(\pi_1(S_g))$~\cite{bk:Dehn87}.  This analogy has led to much fruitful research on the outer automorphism group of a free group $\Out(\FF_r)$.  

The metrics on the Culler--Vogtmann outer space we consider in this paper are analogs to the classical Weil--Petersson metric on the Teichm\"uller space of an orientable surface. The Weil--Petersson metric has been studied extensively from the point of view of both geometric analysis and geometric group theory. On the one hand, it enjoys many important analytic properties which can be expressed naturally in terms of hyperbolic geometry on $S_g$. Its utility in geometric group theory then stems from the fact that every isometry of the Weil--Petersson metric is induced by a mapping class~\cite{MasurWolf}. Thus, the action of $\MCG(S_g)$ on the Teichm\"uller space equipped with the Weil--Petersson metric encodes information about useful invariants for mapping classes. 
 
As the piecewise Riemannian metrics on the Culler--Vogtmann outer space that we study in this paper are motivated by the classical Weil--Petersson metric on the Teichm{\"u}ller space of a closed surface, it is natural to ask to what extent they are true analogs of the Weil--Petersson metric. A major takeaway from the work in this paper is that they should be seen as natural analogs from the geometric analysis point of view, but not from the geometric group theory perspective. Specifically, while we highlight some similarities between these metrics and the Weil--Petersson metric as seen from the analytic point of view (Theorems~\ref{th:entropy metric not complete} and \ref{th:completion rose})  the main result (Theorem~\ref{th:entropy bounded}) of this paper shows that from the geometric group theoretic perspective, these metrics are not useful (except possibly when $r=3$).  The content of this theorem is summarized below.
\begin{quote}
\emph{The action of $\Out(\FF_r)$ on the metric completion of the Culler--Vogtmann outer space has a fixed point for $r \geq 4$}.    
\end{quote}
The remainder of the introduction discusses these metrics more thoroughly and provides context for the main results.

\subsection{Metrics on Outer Space}\label{subsec:intro metrics}
The topology of $CV(\FF_r)$ has been well-studied; see for instance the survey papers of Bestvina~\cite{pro:Bestvina02} and Vogtmann~\cite{ar:Vogtmann02}.  The metric theory of $CV(\FF_r)$ has been steadily developing over the past decade.  What is desired is a theory that reflects the dynamical properties of the natural action by $\Out(\FF_r)$, that further elucidates the connection between $\Out(\FF_r)$ and $\MCG(S)$, and that leads to useful new discoveries.

The metric that has received the most attention to date is the \emph{Lipschitz metric}.  Points in the Culler--Vogtmann outer space are represented by triples $(G,\rho,\ell)$ where $G$ is a finite connected graph, $\rho \from \calR_r \to G$ is a homotopy equivalence where $\calR_r$ is the $r$--rose, and $\ell$ is a function from the edges of $G$ to $(0,\infty)$ for which the sum of $\ell(e)$ over all edges of $G$ is equal to $1$.  (See Section~\ref{subsec:outer space} for complete details.)  We think of the function $\ell$ as specifying the length of each edge and as such $\ell$ determines a metric on $G$ where the interior of each edge $e$ is locally isometric to the interval $(0,\ell(e))$.  The Lipschitz distance between two unit volume marked metric graphs $(G_1,\rho_1,\ell_1)$ and $(G_2,\rho_2,\ell_2)$ in $CV(\FF_r)$ is defined by:
\begin{equation}\label{eq:lipschitz}
d_{\Lip}\bigl((G_1,\rho_1,\ell_1),(G_2,\rho_2,\ell_2)\bigr) = \log \inf \{ \Lip(f) \mid f \from G_1 \to G_2, \, \rho_2 \simeq f \circ \rho_1 \}.    
\end{equation}
In the above, $\Lip(f)$ is the Lipschitz constant of the function $f \from G_1 \to G_2$ using the metrics induced by $\ell_1$ and $\ell_2$ respectively.  In general the function $d_{\Lip}$ is not symmetric.  As such, $d_{\Lip}(\param,\param)$ is not a true metric, but an \emph{asymmetric metric}.  See~\cite{ar:Algom-Kfir11,ar:A-KB12,ar:FM11} for more on the asymmetric aspects of the Lipschitz metric.  

Regardless, the Lipschitz metric has been essential in several recent developments for $\Out(\FF_r)$.  This is in part due to the fact that the Lipschitz metric connects the dynamical properties of an outer automorphism of $\FF_r$ acting on $CV(\FF_r)$ to its action on conjugacy classes---of elements and of free factors---in $\FF_r$.  Notably is the ``Bers-like proof'' of the existence of train-tracks by Bestvina~\cite{ar:Bestvina11}, the proof of hyperbolicity of the free factor complex by Bestvina--Feighn~\cite{ar:BF14-1}, and the proof of hyperbolicity of certain free group extensions by Dowdall--Taylor~\cite{ar:DT18}.  

In this way, the Lipschitz metric is akin to the Teichm\"uller metric on Teichm\"uller space which was used to prove the corresponding statements for the mapping class group~\cite{ar:Bers78,ar:MM99,ar:FM02}.  One can also define the Lipschitz metric on Teichm\"uller space using the same idea as in~\eqref{eq:lipschitz}, and in this setting it is oftentimes called \emph{Thurston's asymmetric metric}~\cite{un:Thurston}.  This metric has seen renewed attention lately, in part due to the usefulness of the Lipschitz metric on $CV(\FF_r)$.

As a result of McMullen's interpretation of the Weil--Petersson metric on Teichm\"uller space via tools from the thermodynamic formalism applied to the geodesic flow on the hyperbolic surface~\cite[Theorem~1.12]{ar:McMullen08}, there exists a natural candidate for the Weil--Petersson metric on the Culler--Vogtmann outer space.  This idea was originally pursued by Pollicott--Sharp~\cite{ar:PS14}.

\subsection{Thermodynamic Metrics}\label{subsec:intro thermo}

The metrics we consider in this paper arise from the tools of the thermodynamic formalism as developed by Bowen~\cite{bk:Bowen08}, Parry--Pollicott~\cite{ar:PP90}, Ruelle~\cite{bk:Ruelle78} and others.  The central objects involved are the notions of \emph{entropy} and \emph{pressure}.  For a graph $G$, these notions define functions:
\begin{equation}
\entropy_G \from \calM(G) \to \RR \quad \text{and} \quad \pressure_G \from \RR^n \to \RR
\end{equation}
where  $n$ is the number of (geometric) edges in $G$ and $\calM(G) = \RR_{>0}^n$---this space parametrizes the length functions on $G$.  The entropy and pressure functions are real analytic, strictly convex and are related by $\entropy_G(\ell) = 1$ if and only if $\pressure_G(-\ell) = 0$ (see Theorem~\ref{thm:pressure = 0 at entropy}).  As these functions are smooth and strictly convex, their Hessians induce an inner product on the tangent space of the unit entropy subspace $\calM^1(G) = \{ \ell \in \calM(G) \mid \entropy_G(\ell) = 1 \}$ at a length function (see Definition~\ref{def:metrics}).  Hence, the notions of entropy and pressure induce Riemannian metrics on $\calM^1(G)$ which we call the \emph{entropy metric} and \emph{pressure metric} respectively.  By $d_{\entropy,G}$ and $d_{\pressure,G}$ we denote the induced distance functions on $\calM^1(G)$.  We caution the reader that these metrics have been considered by others with conflicting terminology.  Throughout this introduction, we will use the above terminology even when referencing the work of others.  See Remark~\ref{rem:metrics} for a further discussion.

Pollicott--Sharp initiated the study of the thermodynamic metrics when they first defined the pressure metric on $\calM^1(G)$~\cite{ar:PS14}.  They proved that the pressure metric is not complete for the 2--rose $\calR_2$ and they derived formulas for the sectional curvature for the theta graph $\Theta_2$ and barbell graph $\calB_2$ (see Figure~\ref{fig:rank two graphs} for these graphs).  Additionally, Pollicott--Sharp produce a dynamical characterization of the entropy metric in terms of generic geodesics similar to Wolpert's result for the Weil--Petersson metric~\cite{ar:Wolpert86} (see Remark~\ref{rem:metrics}).  Kao furthered these results by showing that the pressure metric is incomplete for $\Theta_2$, $\calB_2$ and the 3--rose $\calR_3$, and by showing that the entropy metric is complete for $\calR_2$~\cite{ar:Kao17}.  Additionally, he derives formulas for the sectional curvature with respect to both the entropy and the pressure metric for $\Theta_2$, $\calB_2$ and $\calR_3$.  Xu shows that for certain graphs, the moduli space $\calM^1(G)$ equipped with the entropy metric arises in the completion of the Teichm\"uller space of a surface with boundary using the pressure metric~\cite{ar:Xu19}.

In this paper, we will investigate the entropy metric not only on a the moduli space of a single graph, but on the full moduli space of all marked graphs.  Let $\calX(\FF_r)$ be the space of marked metric graphs so that contained in $\calX(\FF_r)$ is the Culler--Vogtmann outer space $CV(\FF_r)$.  The notion of entropy extends to $\calX(\FF_r)$ by $\entropy([(G,\rho,\ell)]) = \entropy_G(\ell)$  and we set 
\begin{equation}
\calX^1(\FF_r) = \{ [(G,\rho,\ell)] \in \calX(\FF_r) \mid \entropy([(G,\rho,\ell)]) = 1 \}.
\end{equation}
There is a homeomorphism between $CV(\FF_r)$ and $\calX^1(\FF_r)$ defined by scaling the length function (see Section~\ref{subsec:entropy}).  Fixing a graph $G$ and a marking $\rho \from \calR_r \to G$, the map $\calM^1(G) \to \calX^1(\FF_r)$, that sends a length function $\ell$ in $\calM^1(G)$ to the point determined by $(G,\rho,\ell)$ in $\calX^1(\FF_r)$ is an embedding whose image we denote by $\calX^1(G,\rho)$.  Considering all marked graphs individually, this induces a piecewise Riemannian metric on $\calX^1(\FF_r)$.  See Section~\ref{subsec:metrics} for complete details.  We denote the induced distance function on $\calX^1(\FF_r)$ by $d_\entropy$.

For a closed orientable surface $S_g$, one can repeat the above discussion using the moduli space of marked Riemannian metrics with constant curvature $\calX(S_g)$.  In this case, the entropy and the area of the Riemann surface are directly related.  In particular, the unit entropy constant curvature metrics correspond to the hyperbolic metrics, i.e., those with constant curvature equal to $-1$, and hence to those with area equal to $2\pi(2g - 2)$.  In other words, the entropy and area normalizations on $\calX(S_g)$ result in the same subspace, the Teichm\"uller space of the surface.  McMullen proved that the ensuing entropy metric on the Teichm\"uller space is proportional to the Weil--Petersson metric~\cite[Theorem~1.12]{ar:McMullen08}.  

It is this connection between the entropy metric and the Weil--Petersson metric that drives the research in this paper.  After introducing the framework for both the entropy and the pressure metrics in Section~\ref{sec:thermo}, we specialize the discussion to the entropy metric because of this connection to the Weil--Petersson metric.  All of the main results of this paper have analogous statements for the pressure metric and the proofs are similar, and in most cases substantially easier.  The statements for the pressure metric are given in Section~\ref{subsec:intro pressure}. 

It is not necessary for this paper, but we mention that building on work of McMullen,  Bridgeman~\cite{ar:Bridgeman10} and Bridgeman--Canary--Labourie--Samborino~\cite{ar:BCLS15} used these same ideas to define a metric on the space of conjugacy classes of regular irreducible representations of a hyperbolic group into a special linear group.       

In the next three sections, we explain our main results on the entropy metric on $\calX^1(\FF_r)$ and their relation to the Weil--Petersson metric on Teichm\"uller space.

\subsection{Incompletion of the Metric}\label{subsec:intro incompletion}

Our first main result concerns the completion of the entropy metric on $\calX^1(\FF_r)$.  Wolpert showed that the Weil--Petersson metric on Teichm\"uller space is not complete~\cite{ar:Wolpert75}.  Our first theorem shows that when $r \geq 3$, the same holds for the entropy metric on $\calX^1(\FF_r)$.

\begin{theorem}\label{th:entropy metric not complete}
The metric space $(\calX^{1}(\FF_r),d_{\entropy})$ is complete if $r = 2$ and incomplete if $r \geq 3$.
\end{theorem}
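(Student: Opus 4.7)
The plan is to prove the two assertions separately.

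For incompleteness when $r \geq 3$, my strategy is to construct a finite-length Cauchy sequence whose limit escapes $\calX^1(\FF_r)$. The cleanest setting is a single marked rose $(\calR_r,\rho)$: parametrize an arc $t \mapsto \ell_t$, $t \in [0,T)$, in $\calM^1(\calR_r)$ along which one loop-length $\ell_t(e_r)$ decreases monotonically to $0$ while the remaining loop lengths are determined by the constraint $\entropy_{\calR_r}(\ell_t) = 1$. The limit as $t \to T$ is a unit-entropy length function on the smaller rose $\calR_{r-1}$, which has rank $r-1$ and therefore does not lie in $\calX^1(\FF_r)$. The key estimate is a bound on the entropy length
\begin{equation*}
L(\gamma) = \int_0^T \sqrt{ \langle \dot\ell_t, \dot\ell_t \rangle_{\entropy,\calR_r} } \, dt < \infty.
\end{equation*}
I would obtain this bound via the interpretation of the Hessian of $\entropy$ as (a multiple of) the variance of the edge-length cocycle with respect to the measure of maximal entropy on the geodesic flow of $(\calR_r,\ell_t)$. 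As $\ell_t(e_r) \to 0$, this equilibrium measure should place negligible weight on the vanishing loop, degenerating the metric tensor in the $e_r$-direction enough that the integrand is integrable, provided the other loop lengths remain bounded away from $0$ and $\infty$. A one-parameter family with $\ell_t(e_r)$ scaling as a suitable power of $T-t$ should suffice.

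For completeness when $r = 2$, I will show every Cauchy sequence in $\calX^1(\FF_2)$ converges. The space $\calX^1(\FF_2)$ is assembled from the three strata $\calX^1(\calR_2,\rho)$, $\calX^1(\Theta_2,\rho)$, and $\calX^1(\calB_2,\rho)$. After passing to a subsequence I may assume the marked graph is constant, so the problem reduces to analyzing Cauchy sequences in $\calM^1(G)$ for each $G \in \{\calR_2, \Theta_2, \calB_2\}$. There are three asymptotic behaviors:
\begin{enumerate}[label=(\roman*)]
\item the length vector remains in a compact region of $\calM^1(G)$, in which case local completeness of the Riemannian metric gives convergence;
\item a non-loop edge length tends to $0$, in which case the limit lies on a face belonging to $\calX^1(\calR_2,\rho')$ for an induced marking $\rho'$, and the Cauchy sequence converges in $\calX^1(\FF_2)$ by continuity of $\entropy_G$ and its Hessian across the collapsing face;
\item a loop-edge length tends to $0$, in which case the limit has rank $1$ and leaves $\calX^1(\FF_2)$; this scenario must be ruled out.
\end{enumerate}
Kao's theorem that $(\calM^1(\calR_2), d_\entropy)$ is complete handles (iii) on $\calR_2$ directly. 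On $\calB_2$, I would argue that as a loop collapses the entropy metric is asymptotic to the entropy metric on $\calR_2$ in that direction, so Kao's bound transfers and gives infinite length.

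The main obstacle is locating the precise asymmetry between $r = 2$ and $r \geq 3$, which must come from asymptotic control of $\nabla^2 \entropy_{\calR_r}$ as one loop-length goes to $0$: the same Hessian computation has to yield integrability of the length element in the shrinking direction for $r \geq 3$ and non-integrability for $r = 2$. I expect this dichotomy to reflect the one-dimensionality of the unit-entropy slice in rank $2$: along the collapsing path the remaining loop length is forced to diverge to maintain $\entropy = 1$, and it is this divergence that produces the infinite entropy length, whereas for $r \geq 3$ the additional directions in $\calM^1(\calR_r)$ absorb the constraint without forcing any single loop to blow up.
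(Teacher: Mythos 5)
Your incompleteness construction for $r \geq 3$ goes in the wrong direction and in fact fails. If you shrink one loop length $\ell_t(e_r)$ to $0$ while holding $\entropy_{\calR_r}(\ell_t)=1$, the limit is not a unit-entropy metric on $\calR_{r-1}$: already the two-petal subrose constraint (Lemma~\ref{lem:definite length}) forces every other loop length to blow up as $\ell_t(e_r)\to 0$, so the limiting coordinates are one zero and the rest infinite. Worse, any such path has \emph{infinite} entropy length --- this is precisely the content of Lemma~\ref{lem:square root bound} and Proposition~\ref{prop:infinite length}, where the entropy norm in the growing directions is bounded below by a constant times $t^{-1/2}$, so the length diverges. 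Your heuristic is also reversed: a very short loop is entropy-cheap, so the measure of maximal entropy concentrates on it rather than ignoring it, and the metric does not degenerate in the $e_r$-direction the way you need. The finite-length escaping path goes the other way: send \emph{one edge length to infinity} while the remaining $r-1$ lengths stay bounded and converge to a unit-entropy metric on the subrose (Proposition~\ref{prop:finite length path}). This is also the true source of the $r=2$ versus $r\geq 3$ dichotomy: the complementary subrose must have rank at least $2$ to carry a unit-entropy metric, whereas for $r=2$ the forced behavior is the infinite-length square-root regime; it is not that "extra directions absorb the constraint" for $r\geq 3$ (a shrinking loop still forces all other loops to blow up for every $r$). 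Finally, even with the correct path one must show the resulting Cauchy sequence has no limit in $\calX^1(\FF_r)$; the paper does this by first proving the metric topology coincides with the weak topology (Theorem~\ref{th:topologies agree}).

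The completeness argument for $r=2$ also has a gap. A Cauchy sequence in $\calX^1(\FF_2)$ need not eventually lie in a single marked-graph stratum, so "pass to a subsequence with constant marked graph" is not available without further argument; and even for points of one stratum, $d_\entropy$ is an infimum over paths in all of $\calX^1(\FF_2)$, which may leave the stratum, so it can be strictly smaller than the intrinsic distance $d_{\entropy,G}$. Consequently, citing Kao's completeness of $(\calM^1(\calR_2),d_{\entropy,\calR_2})$ does not rule out your scenario (iii) for the global metric: you need lower bounds on the length of \emph{arbitrary} paths in $\calX^1(\FF_2)$, uniform over the three graph types and over all markings. This is what the paper's Propositions~\ref{prop:2-rose}, \ref{prop:2-barbell} and \ref{prop:2-theta} supply, via the marking-invariant function $\sfm$; Lemma~\ref{lem:cauchy is thick} then bounds $\sfm$ along any Cauchy sequence, and local finiteness plus compactness of the sets $\calX^1_L(G,\rho)$ finishes the proof.
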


For $r \geq 3$, this theorem is proved by exhibiting a finite length path in $\calM^1(\calR_r)$ that exits every compact subset.  This path is defined by sending the length of one edge in $\calR_r$ to infinity, while shrinking the others to maintain unit entropy (Proposition~\ref{prop:finite length path}).  This strategy---also used by Pollicott--Sharp~\cite{ar:PS14} and Kao~\cite{ar:Kao17}---shows that $(\calM^1(\calR_r),d_{\entropy,\calR_r})$ is incomplete.  We further show in Section~\ref{sec:not complete} that this path also exits every compact set in $\calX^1(\FF_r)$.  As $d_{\entropy,\calR_r}$ is an upper bound to $d_\entropy$ (when defined), this path still has finite length when considered in $\calX^1(\FF_r)$ and thus Theorem~\ref{th:entropy metric not complete} follows.   

The path described above illustrates a general method for producing paths that exit every compact subset and that have finite length in the entropy metric: deform the metric by sending the length of some collection of edges to infinity while shrinking the others to maintain unit entropy. So long as the complement of the collection supports a unit entropy metric, this path will have finite length.  This explains why $(\calX^1(\FF_2),d_\entropy)$ is complete: any metric on a graph where every component has rank at most 1 has entropy equal to zero.  In Section~\ref{sec:rank 2}, we demonstrate the calculations required to prove that $(\calX^1(\FF_2),d_\entropy)$ is complete.  

This is completely analogous to the setting of the Weil--Petersson metric on Teichm\"uller space.  In that setting, deforming a hyperbolic metric on $S_g$ by pinching a simple closed curve results in a path with finite length that exits every compact set.  Moreover, the geometric analysis agrees.  For the path in $\calM^1(\calR_r)$ described above, if we parametrize the long edge by $-\log(t)$ as $t \to 0$, then the entropy norm along this path is $O(t^{-1/2})$ as shown in Proposition~\ref{prop:finite length path}.  For the path in the Teichm\"uller space of $S_g$, if we parametrize the curve which is being pinched by $t$ as $t \to 0$, then the Weil--Petersson norm along this path is $O(t^{-1/2})$~\cite[Section~7]{col:Wolpert09}.  Note that in this case, the length of the shortest curve that intersects the pinched one has length approximately $-\log(t)$.

\subsection{The Moduli Space of the \texorpdfstring{$r$--rose}{r-rose}}\label{subsec:intro rose}

Our second main result is concerned with the entropy metric on the moduli space of the $r$--rose $\calR_r$.    

\begin{theorem}\label{th:completion rose}
The completion of $(\calM^{1}(\calR_{r}),d_{\entropy,\calR_r})$ is homeomorphic to the complement of the vertices of an $(r-1)$--simplex.
\end{theorem}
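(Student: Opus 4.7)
The plan is to construct an explicit real-analytic diffeomorphism from $\calM^1(\calR_r)$ onto the open $(r-1)$-simplex using the Perron eigenvalue equation for the rose, and then classify which boundary strata of the simplex are reached by finite-length paths in the entropy metric. Explicitly, the weighted transition matrix on the $2r$ directed edges of $\calR_r$---with transition $i \to j$ allowed precisely when $j$ is not the reverse of $i$, and weight $e^{-h\ell_j}$---admits a Perron eigenvector $v$ that can be solved algebraically: setting $x_i \defeq e^{-h\ell_i}$ and $S \defeq \sum_j x_j v_j$, the eigenvector equation reduces to $v_i = S/(x_i + \lambda)$, which forces the scalar identity $2\sum_{i=1}^{r} x_i/(x_i+\lambda) = 1$. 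Specializing to $h = \lambda = 1$ and introducing $z_i \defeq 2e^{-\ell_i}/(1+e^{-\ell_i}) \in (0,1)$, the unit-entropy condition becomes the affine relation $\sum_i z_i = 1$. Since $\ell_i \mapsto z_i$ is a smooth decreasing diffeomorphism of $(0,\infty)$ onto $(0,1)$, the map $\Phi \from \ell \mapsto z$ is a real-analytic diffeomorphism of $\calM^1(\calR_r)$ onto the open simplex $\Delta^\circ_{r-1} = \{z \in \RR^r_{>0} : \sum_i z_i = 1\}$.

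A non-vertex boundary point of the closed simplex $\Delta_{r-1}$ has zero coordinates on some subset $S$ with $|S| \leq r-2$, so the remaining at least two coordinates $z_j$ still sum to $1$---precisely the unit-entropy relation for the sub-rose $\calR_{|S^c|}$. Adapting the $-\log t$ construction from Proposition~\ref{prop:finite length path} to send the edges in $S$ simultaneously to infinity, while the remaining lengths track a prescribed unit-entropy configuration on $\calR_{|S^c|}$, should produce a path in $\calM^1(\calR_r)$ with entropy norm of order $O(t^{-1/2})$ as in Theorem~\ref{th:entropy metric not complete}, hence of finite $d_{\entropy,\calR_r}$-length, whose $\Phi$-image terminates at the chosen boundary point. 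Varying the target configuration realizes every non-vertex boundary point, and continuity of the extended $\Phi$ upgrades this to a homeomorphism from the non-vertex portion of $\partial \Delta_{r-1}$ into the completion.

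The main obstacle is the remaining step: showing that the $r$ vertices of $\Delta_{r-1}$ are \emph{not} in the completion. A vertex forces some $\ell_k \to 0$ with $\ell_j \to \infty$ for all $j \neq k$ (as $z_k \to 1$ and $\sum_i z_i = 1$ must persist). I would analyze the Hessian of $\entropy_{\calR_r}$ restricted to $T\calM^1(\calR_r)$ in a neighborhood of $\ell_k = 0$, using implicit differentiation of $2\sum_i x_i/(x_i+1)=1$ (and of its first and second derivatives in the $\ell$-variables) to show that the restricted Hessian has a diagonal entry in the $\partial/\partial \ell_k$-direction growing at least like $\ell_k^{-2}$. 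It would then follow that the entropy length of any curve with $\ell_k(t) = t \to 0$ is bounded below by $c\int_0^{t_0} dt/t = \infty$. This analytic input is the most delicate step of the argument and generalizes Kao's completeness theorem for $(\calM^1(\calR_2), d_{\entropy,\calR_2})$, since for $r = 2$ the only way to leave every compact set is precisely to collapse an edge. Combining the three steps identifies the completion with the closed $(r-1)$-simplex with its $r$ vertices removed.
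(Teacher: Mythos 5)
Your change of variables is correct and is a nice repackaging of the paper's determinant formula: indeed $\oF_{r}(\ell)=\prod_{i}(1+e^{-\ell^i})\bigl(1-\sum_{i}\tfrac{2e^{-\ell^i}}{1+e^{-\ell^i}}\bigr)$, so by Propositions~\ref{prop:Frose} and~\ref{prop:oF = 0 iff h = 1} the unit-entropy locus is exactly $\sum_i z_i=1$, and your first two steps (identifying $\calM^1(\calR_r)$ with the open simplex and reaching the non-vertex faces by finite-length paths) run parallel to Proposition~\ref{prop:finite length path} and Section~\ref{subsec:model}. The trouble is in the step you yourself flag as delicate. The bound you propose to prove---that the restricted Hessian in the $\partial/\partial\ell_k$-direction grows at least like $\ell_k^{-2}$---is false. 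Already for $r=2$, writing $x=e^{-a}$, $y=e^{-b}$ and $\oF_{2}=1-x-y-3xy$, along the unit-entropy locus with $a\to 0$ one has $y\sim a/4$, $b\sim\log(1/a)$ and $\dot b\sim -1/a$ for a curve with $\dot a=1$, so the numerator $-\I{\dot\ell,\bH[\oF_{2}(\ell)]\dot\ell}\sim 1/a$ while the denominator $\I{\ell,\nabla\oF_{2}(\ell)}\sim a\log(1/a)$; the squared entropy norm is therefore asymptotic to $1/(a^2\log(1/a))$, not bounded below by $c/a^2$. The same logarithmic loss persists for all $r$ (it is the reason Lemma~\ref{lem:square root bound} produces square-root-of-log growth). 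The qualitative conclusion survives, since $\int_0 dt/(t\sqrt{\log(1/t)})$ still diverges, but the estimate as you state it cannot be established, and the resulting divergence is of order $\sqrt{\log(1/\ell_k)}$ rather than $\log(1/\ell_k)$.

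The larger gap is that ``every curve parametrized monotonically by $\ell_k\to 0$ has infinite length'' does not by itself identify the completion. You need a uniform statement valid for arbitrary piecewise smooth paths---for every $D$ there is $\epsilon>0$ so that any path from a fixed basepoint into $\{\min_i\ell^i\le\epsilon\}$ has length at least $D$---and proving it requires controlling paths along which the roles of the short and long edges change and $\ell_k$ is not monotone; this is Proposition~\ref{prop:infinite length}, which rests on Lemma~\ref{lem:square root bound} together with the auxiliary Lemmas~\ref{lem:4r+5} and~\ref{lem:definite length} to locate a subinterval where the hypotheses hold. That uniform bound is needed twice: to exclude the vertices, and to prove that your candidate model (the simplex minus its vertices) is \emph{complete}, i.e.\ that every Cauchy sequence stays uniformly away from the collapsed locus and hence converges in the model. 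Separately, your phrase ``continuity of the extended $\Phi$ upgrades this to a homeomorphism'' is precisely what must be proved: one has to show the extended distance is non-degenerate on the boundary (distinct non-vertex boundary configurations, and boundary versus interior points, are at positive distance) and induces the expected topology there; the paper does this in Proposition~\ref{prop:metric extends}, using continuity of the extended semi-norm on the faces plus an Arzel\`a--Ascoli argument that again relies on Proposition~\ref{prop:infinite length}. As written, your three steps yield a continuous map of the simplex minus vertices into the completion, but neither surjectivity (completeness of the model) nor injectivity on the boundary is established.
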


The space $\calM^1(\calR_r)$ is homeomorphic to the interior of an $(r-1)$--simplex.  The faces added in the completion for $d_{\entropy,\calR_r}$ correspond to unit entropy metrics on subroses.  Such a metric is obtained as the limit of a sequence of length functions on $\calR_r$ by sending the length of a collection of edges to infinity and scaling the others to maintain unit entropy.  Specifically, a $(k-1)$--dimensional face of the completion corresponds to the moduli space of unit entropy metrics on a sub--$k$--rose.  As before, intuitively, the vertices of the $(r-1)$--simplex are missing in the completion as there does not exist a unit entropy metric on $\calR_1$. 

That unit entropy metrics on subroses arise as points in the completion follows from the calculations provided for the proof of incompleteness in Theorem~\ref{th:entropy metric not complete} and some continuity arguments.  This is shown in Section~\ref{subsec:model}.  The difficult part of the proof of Theorem~\ref{th:completion rose} is showing that any path in $(\calM^1(\calR_r),d_{\entropy,\calR_r})$ that sends the length of one edge to 0 (and hence the lengths of the other edges to infinity) necessarily has infinite length.  This argument appears in Lemma~\ref{lem:square root bound} and Proposition~\ref{prop:infinite length}.  In Section~\ref{subsec:proof rose}, we combine these two facts to complete the proof of Theorem~\ref{th:completion rose}.  

In Example~\ref{ex:completion 2,3}, we compare the completion of $(\calM^1(\calR_3),d_{\entropy,\calR_3})$ to the closure of the unit volume metrics on $\calR_3$ in the axes topology on $CV(\FF_3)$ (see Section~\ref{subsec:outer space} for definitions).  By Theorem~\ref{th:completion rose}, the completion in the entropy metric is a $2$--simplex without vertices whereas the closure in the axes topology is a $2$--simplex.  More interestingly, the newly added edges and vertices are dual: edges in the entropy completion correspond to vertices in the axes closure and the missing vertices in the entropy completion correspond to the edges in the axes closure.  This is explained in detail in Example~\ref{ex:completion 2,3} and illustrated in Figure~\ref{fig:completion}.     

While it is not necessary for Theorem~\ref{th:completion rose}, we mention that in Section~\ref{subsec:rose-thin} we prove that the diameter of a cross-section of the $(r-1)$--simplex goes to 0 as the length of one of the edges goes to 0, i.e., as the cross-section moves out toward one of the missing vertices.  In other words, the completion of $(\calM^1(\calR_r),d_{\entropy,\calR_r})$ is geometrically akin to an ideal hyperbolic $(r-1)$--simplex; see Lemma~\ref{lem:thin part}. 

\subsection{A Fixed Point in the Completion}\label{subsec:intro bounded}

Whereas the first two main results demonstrate the similarity between the geometric analysis for the Weil--Petersson metric on the Teichm\"uller space and the entropy metric on the Culler--Vogtmann outer space, our final main result provides a stark contrast between these two metrics with respect to geometric group theory.

\begin{theorem}\label{th:entropy bounded}
The subspace $(\calX^1(\calR_{r},{\rm id}) \cdot \Out(\FF_r),d_\entropy) \subset (\calX^1(\FF_r),d_\entropy)$ is bounded if $r \geq 4$.  Moreover, the action of $\Out(\FF_r)$ on the completion of $(\calX^1(\FF_r),d_\entropy)$ has a fixed point.
\end{theorem}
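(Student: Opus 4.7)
The plan is to construct an $\Out(\FF_r)$-invariant bounded subset $Z$ of the metric completion $\overline{\calX^{1}(\FF_r)}$ that lies at uniformly finite distance from the rose moduli space. Granting such a $Z$, the boundedness of $\calX^{1}(\calR_r,{\rm id})\cdot\Out(\FF_r)$ follows from the triangle inequality, and an $\Out(\FF_r)$-fixed point is extracted as a circumcenter of $Z$ inside the complete metric space $\overline{\calX^{1}(\FF_r)}$.

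The set $Z$ is built from the general collapsing construction indicated in Section~\ref{subsec:intro incompletion}: for any subset $F$ of edges of $\calR_r$ whose complement supports a unit entropy metric, sending the $F$-lengths to infinity while rescaling the complement to maintain unit entropy yields a finite-length path in $(\calX^{1}(\FF_r),d_\entropy)$. Choosing $|F| = r-2$ so that the complement is a sub-$\calR_2$, which supports unit entropy precisely because $r \geq 4$, the limit point lies in a sub-$\calR_{2}$ stratum $S$ of $\overline{\calX^{1}(\FF_r)}$. I set $Z = \bigcup_S \overline{S}$ over all such sub-$\calR_2$ strata of the completion, one for each pair of edges of $\calR_r$. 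By choosing the pair of edges to collapse adapted to the starting length function (using that not all edges can simultaneously be short under the unit entropy constraint), the distance from any point in the rose moduli to $Z$ is bounded by a universal constant $L$; and since $\Out(\FF_r)$ permutes the rank-two sub-roses of $\calR_r$, the set $Z$ is $\Out(\FF_r)$-invariant and the same bound $L$ holds for every point in the full orbit $\calX^{1}(\calR_r,{\rm id})\cdot\Out(\FF_r)$ because $\Out(\FF_r)$ acts by isometries.

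The main obstacle is to show $\diam_{d_\entropy}(Z)<\infty$. The pairwise distance between any two sub-$\calR_2$ strata $S$ and $S'$ is at most $2L$, via the detour obtained by concatenating the collapsing paths back through the rose. Bounding the diameter of each individual $S$ in the ambient $d_\entropy$-metric is more delicate, since intrinsically $\calM^{1}(\calR_2)$ is unbounded; here I would exploit shortcuts in $d_\entropy$ through non-rose strata of the completion (theta graphs, barbells, and in general graphs obtained from the rose by a sequence of folds), combined with the thinness estimates of Lemma~\ref{lem:thin part} that control the geometry near the missing vertices of each rose completion. This is the place where the hypothesis $r \geq 4$ is essential a second time, as the variety of connecting graphs providing the relevant shortcuts becomes richer only in that range.

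With bounded invariant $Z$ in hand, the fixed point $q^*$ of $\Out(\FF_r)$ on $\overline{\calX^{1}(\FF_r)}$ is obtained as the circumcenter of $Z$. The piecewise Riemannian structure of $d_\entropy$, combined with the real-analytic strict convexity of the entropy functions $\entropy_G$ (and thus of the Hessians defining the local metric), suggests a nonpositive-curvature-type property of the relevant portion of the completion that makes the circumcenter well-defined and unique, hence necessarily $\Out(\FF_r)$-fixed; this is the second step where I expect technical work, but the framework is standard once the required convexity is verified. The boundedness of the orbit then also follows, independently, via $d_\entropy(p, p \cdot \phi) \leq 2\, d_\entropy(p, q^*)$ for any $p$ in the rose stratum and any $\phi \in \Out(\FF_r)$.
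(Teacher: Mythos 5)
Your outline leaves the two essential mechanisms of the argument unproved, and both are precisely where the content of the theorem lies. First, you flag as the ``main obstacle'' the boundedness of each sub-$\calR_2$ stratum $S$ in the ambient $d_\entropy$-metric and gesture at shortcuts through theta graphs, barbells and folds; but the shortcut that actually works is very specific: one passes to a graph with a \emph{separating} edge (e.g.\ the double barbell $G_{2,2}$, or more generally $G_{n_1,n_2}$) whose complementary components both have rank at least $2$. Because the entropy norm is computed from \emph{second} derivatives of the edge lengths along a path (Proposition~\ref{prop:metrics}), one may hold a unit-entropy metric fixed on one component and vary the lengths on the other component \emph{linearly}, producing paths of length zero in the completion (Proposition~\ref{prop:sep-metric extends}). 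This is what collapses every stratum $\calM^1_S$ with $1<\abs{S}<r-1$ --- in particular every sub-$\calR_2$ stratum when $r\geq 4$ --- to a single point (Lemma~\ref{lem:single point}), which combined with the thin-part estimate (Proposition~\ref{prop:thin part}) extends the map $\Phi$ over the missing vertices and gives compactness of $\Phi(\Delta^{r-1})$. Without this zero-length mechanism your set $Z$ is not known to be bounded at all, since intrinsically $\calM^1(\calR_2)$ has infinite diameter, and no curvature or folding heuristic substitutes for it.

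Second, your $Z$ is not $\Out(\FF_r)$-invariant as described: the group does not permute the sub-roses of the single marked rose $(\calR_r,{\rm id})$, it changes the marking, so the invariant object is the union over \emph{all} markings, and isometry of the action only bounds distances to the orbit of $Z$, not the diameter of that orbit. The point that must be proved is that the collapsed point $x_\rho$ is \emph{independent of the marking} $\rho$; the paper does this by checking invariance under a single Nielsen move, via the rose-theta graph $\Gamma_r$ and its blow-up $\widehat\Gamma_r$ with a separating edge, again invoking Proposition~\ref{prop:sep-metric extends}\eqref{item:sep-short cut}. Once this is done the invariant bounded set is literally one point, so no circumcenter is needed. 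Your circumcenter step has its own gap: it requires a nonpositive-curvature-type (or at least uniform convexity) property of the completion that is neither proved in the paper nor plausible without work --- the known curvature computations for these metrics in low rank show no such global sign control, and the boundary strata carry only a degenerate semi-norm --- so the fixed point cannot be extracted that way without substantial additional argument.
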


This subspace consists of the unit entropy metrics on every marked $r$--rose.  To illustrate the difference with the setting of the Weil--Petersson metric on Teichm\"uller space, we mention the fact due to Daskalopoulos--Wentworth that pseudo-Anosov mapping classes have positive translation length in their action on the Teichm\"uller space ~\cite{daskwent}. In particular, the action of the mapping class group does not have a fixed point in the completion of Teichm\"uller space with the Weil--Petersson metric.

The first step in the proof of Theorem~\ref{th:entropy bounded} is to show that the image of the inclusion map $\calM^1(\calR_r) \to \calX^1(\calR_r,{\rm id}) \subset \calX^1(\FF_r)$ has bounded diameter for $r \geq 4$.  This result is particularly striking in contrast to Theorem~\ref{th:completion rose}, since implicit in that theorem is that the space $(\calM^1(\calR_r),d_{\entropy,\calR_r})$ has infinite diameter.  Boundedness of the image of $\calM^1(\calR_r)$ is achieved by showing that the map induced via Theorem~\ref{th:completion rose}, $\Phi \from \Delta^{r-1} - V \to \ccalX^1(\FF_r)$, extends to $\Delta^{r-1}$, where $\Delta^{r-1}$ is a $(r-1)$--simplex, $V \subset \Delta^{r-1}$ is the set of vertices, and $\ccalX^1(\FF_r)$ is the completion of $\calX^1(\FF_r)$ for $d_\entropy$.  The existence of this extension shows that $\calX^1(\calR_r,{\rm id})$ lies in a compact set and hence is bounded.   

In order to show that $\Phi \from \Delta^{r-1} - V \to \ccalX^1(\FF_r)$ extends to the set $V$, we show that $\Phi$ maps every $(k-1)$--dimensional face of $\Delta^{r-1} - V$ to a single point when $1 < k < r-1$.  This is shown in Section~\ref{sec:rose bounded}.  This fact, plus the previously mentioned fact about the diameter of the cross-sections going to 0, gives that $\Phi$ extends to the set $V$ and that the entire $(r-3)$--skeleton of $\Delta^{r-1}$ is mapped to a single point in $\ccalX^1(\FF_r)$.  The collapse of a $(k-1)$--dimensional face of $\Delta^{r-1} - V$ for $1 < k < r-1$ arises from paths in $\calX^1(\FF_r)$ connecting points in $\calX^1(\calR_r,{\rm id})$ whose length is much shorter than paths in $\calM^1(\calR_r)$ connecting the same points.  In other words, there are shortcuts present in $\calX^1(\FF_r)$ that are not present in $\calM^1(\calR_r)$.

These shortcuts are most easily understood in terms of unit entropy metrics on marked subgraphs, i.e., points in the completion of $(\calX^1(\FF_r),d_\entropy)$.  Pathologies arise when the subgraph is not connected.  In this case, the entropy of the metric on the subgraph is the maximum of the entropy---in the previous sense---on a component of the subgraph.  Hence, by holding the length function constant on a component of the subgraph with unit entropy, we are free to modify the length function on the other components at will, so long as the entropy is never greater than 1 on 
any of these components.  In Proposition~\ref{prop:metrics} we show that the entropy and pressure metrics can be computed using the second derivatives of the lengths of edges along a path.  Hence, the length of a path that changes the length of the edges in a component with entropy less than 1 linearly has zero length in either of these metrics.

Figure~\ref{fig:shortcut} illustrates the central idea that is exploited in Section~\ref{sec:separating} to show that many paths have zero length.  This figure is taking place in the completion of $\calM^1(G_{2,2})$ using the metric $d_{\entropy,G_{2,2}}$.  This space has $\calM^1(\calR_4)$ as a face in $\calX^1(\FF_4)$ corresponding to the collapse of the separating edge.  The completion of $\calM^1(\calR_4)$ has an edge corresponding to unit entropy metrics on two of the edges (denoted $a$ and $b$ in Figure~\ref{fig:shortcut}).  This edge also corresponds to a subset in the completion of $\calM^1(G_{2,2})$.  Illustrated in Figure~\ref{fig:shortcut} is a path through unit entropy length functions on subgraphs of $G_{2,2}$, i.e., points in the completion.  As all edge lengths are changing linearly, this path has length 0 and hence all of these length functions correspond to the same point in the completion.  

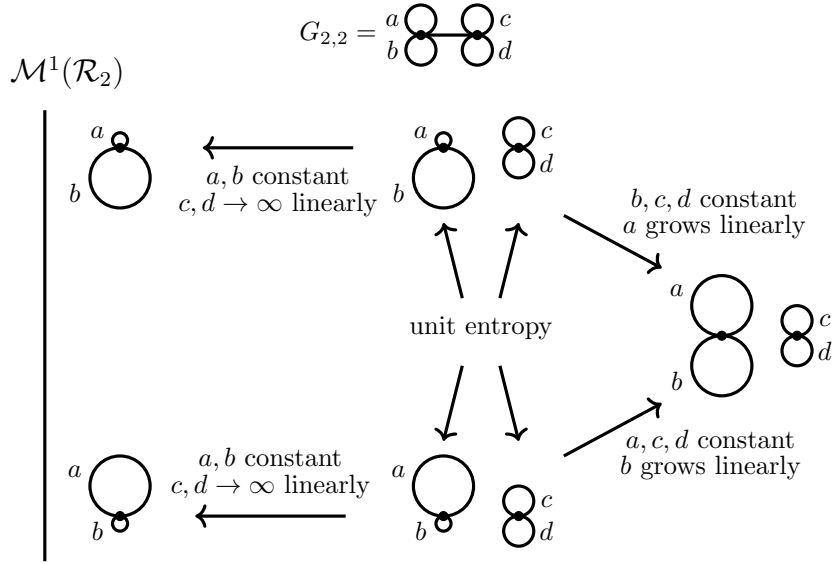
\begin{figure}[ht]
\centering
\begin{tikzpicture}
\draw[very thick] (0,-3) -- (0,3);
\node at (0.3,3.5) {$\calM^{1}(\calR_{2})$};
\begin{scope}[xshift=5cm,yshift=4cm]
\filldraw[black] (0.75,0) circle [radius=0.6mm];
\filldraw[black] (0,0) circle [radius=0.6mm];
\draw[very thick] (0,0.2) circle [radius=2mm];
\draw[very thick] (0,-0.2) circle [radius=2mm];
\draw[very thick] (0.75,0.2) circle [radius=2mm];
\draw[very thick] (0.75,-0.2) circle [radius=2mm];
\draw[very thick] (0,0) -- (0.75,0);
\node at (-0.38,0.2) {\footnotesize $a$};
\node at (-0.38,-0.2) {\footnotesize $b$};
\node at (1.12,0.2) {\footnotesize $c$};
\node at (1.12,-0.2) {\footnotesize $d$};
\node at (-1.1,0) {\footnotesize $G_{2,2} = $};
\end{scope}
\begin{scope}[xshift=1cm,yshift=2.5cm]
\filldraw[black] (0,0) circle [radius=0.6mm];
\draw[very thick] (0,0.1) circle [radius=1mm];
\draw[very thick] (0,-0.4) circle [radius=4mm];
\node at (-0.3,0.2) {\footnotesize $a$};
\node at (-0.6,-0.6) {\footnotesize $b$};
\end{scope}
\begin{scope}[xshift=1cm,yshift=-2.4cm]
\draw[very thick,->] (3,0) -- (1,0); 
\node at (2,0.75) {\footnotesize $a,b$ constant};
\node at (2,0.4) {\footnotesize $c,d \to \infty$ linearly};
\filldraw[black] (0,0) circle [radius=0.6mm];
\draw[very thick] (0,0.4) circle [radius=4mm];
\draw[very thick] (0,-0.1) circle [radius=1mm];
\node at (-0.6,0.6) {\footnotesize $a$};
\node at (-0.3,-0.2) {\footnotesize $b$};
\end{scope}
\begin{scope}[xshift=5.3cm,yshift=2.5cm]
\draw[very thick,<-] (-3.2,0) -- (-1.2,0); 
\node at (-2.2,-0.4) {\footnotesize $a,b$ constant};
\node at (-2.2,-0.75) {\footnotesize $c,d \to \infty$ linearly};
\filldraw[black] (0,0) circle [radius=0.6mm];
\draw[very thick] (0,0.1) circle [radius=1mm];
\draw[very thick] (0,-0.4) circle [radius=4mm];
\node at (-0.3,0.2) {\footnotesize $a$};
\node at (-0.6,-0.6) {\footnotesize $b$};
\filldraw[black] (1,0) circle [radius=0.6mm];
\draw[very thick] (1,0.2) circle [radius=2mm];
\draw[very thick] (1,-0.2) circle [radius=2mm];
\node at (1.37,0.2) {\footnotesize $c$};
\node at (1.37,-0.2) {\footnotesize $d$};
\draw[very thick,<-] (0,-1) -- (0.25,-2);
\draw[very thick,<-] (1,-1) -- (0.75,-2);
\node at (0.5,-2.4) {\footnotesize{unit entropy}};
\end{scope}
\begin{scope}[xshift=9cm]
\filldraw[black] (0,0) circle [radius=0.6mm];
\draw[very thick] (0,0.4) circle [radius=4mm];
\draw[very thick] (0,-0.4) circle [radius=4mm];
\node at (-0.6,0.6) {\footnotesize $a$};
\node at (-0.6,-0.6) {\footnotesize $b$};
\filldraw[black] (1,0) circle [radius=0.6mm];
\draw[very thick] (1,0.2) circle [radius=2mm];
\draw[very thick] (1,-0.2) circle [radius=2mm];
\node at (1.37,0.2) {\footnotesize $c$};
\node at (1.37,-0.2) {\footnotesize $d$};
\draw[very thick,->] (-2.1,1.6) -- (-0.8,0.9);
\node at (-0.1,1.8) {\footnotesize $b,c,d$ constant};
\node at (-0.1,1.45) {\footnotesize $a$ grows linearly};
\end{scope}
\begin{scope}[xshift=5.3cm,yshift=-2.4cm]
\filldraw[black] (0,0) circle [radius=0.6mm];
\draw[very thick] (0,0.4) circle [radius=4mm];
\draw[very thick] (0,-0.1) circle [radius=1mm];
\node at (-0.6,0.6) {\footnotesize $a$};
\node at (-0.3,-0.2) {\footnotesize $b$};
\filldraw[black] (1,0) circle [radius=0.6mm];
\draw[very thick] (1,0.2) circle [radius=2mm];
\draw[very thick] (1,-0.2) circle [radius=2mm];
\node at (1.37,0.2) {\footnotesize $c$};
\node at (1.37,-0.2) {\footnotesize $d$};
\draw[very thick,<-] (2.9,1.5) -- (1.6,0.8);
\node at (3.55,1.0) {\footnotesize $a,c,d$ constant};
\node at (3.55,0.65) {\footnotesize $b$ grows linearly};
\draw[very thick,<-] (0,1) -- (0.25,2);
\draw[very thick,<-] (1,1) -- (0.75,2);
\end{scope}
\end{tikzpicture}
\caption{Illustration of a path with length 0 in the completion of $(\calM^1(G_{2,2}),d_{\entropy,G_{2,2}})$.}\label{fig:shortcut}
\end{figure}

This shows that the edge corresponding to $\calM^1(\calR_2)$ is mapped by $\Phi$ to a point.  The same idea works for any sub--$k$--rose of $\calR_r$ so long as $1 < k < r-1$: it is necessary to separate two subroses, each of which supports a unit entropy metric.  This is the reason we require $r \geq 4$ in Theorem~\ref{th:entropy bounded}. 

Once we know that the entire $(r-3)$--skeleton of $\Delta^{r-1}$ is mapped  by $\Phi$ to a point in $\ccalX^1(\FF_r)$, we utilize the structure of the Culler--Vogtmann outer space to conclude in Section~\ref{sec:entropy bounded} that this point is independent of the marking $\rho \from \calR_r \to \calR_r$ used to define the inclusion $\calM^1(\calR_r) \to \calX^1(\FF_r)$.  This completes the proof of Theorem~\ref{th:entropy bounded}.

\subsection{Analogous Statements for Pressure Metric}\label{subsec:intro pressure}

For the pressure metric on $\calX^1(\FF_r)$ we have the following analogs of Theorems~\ref{th:entropy metric not complete},~\ref{th:completion rose} and~\ref{th:entropy bounded}.  By $d_\pressure$ we denote the induced distance function.
\begin{enumerate}
\item The space $(\calX^1(\FF_r),d_\pressure)$ is incomplete for $r \geq 2$.
\item The completion of $(\calM^{1}(\calR_{r}),d_{\pressure,\calR_r})$ is homeomorphic to an $(r-1)$--simplex.
\item The space $(\calX^1(\FF_r),d_\pressure)$ is bounded if $r \geq 2$ and moreover, the action of $\Out(\FF_r)$ on the completion of $(\calX^1(\FF_r),d_\pressure)$ has a fixed point.
\end{enumerate}
These can be shown using techniques similar---and simpler---to those in this article.  The key source of the distinction between the entropy and pressure metrics is that the length function that assigns 0 to the unique edge on $\calR_1$ has pressure equal to 0 even through the entropy is not defined.  Hence the path in $\calM^1(\calR_2)$ that sends the length of one edge to infinity while shrinking the length of the other (necessarily to 0) to maintain unit entropy has finite length in the pressure metric, where as the length in the entropy metric is infinite.

\subsection{Further Discussion and Questions}\label{subsec:intro discussion}

This work raises a number of questions.

\medskip 

Our proof that the action of $\Out(\FF_r)$ on the completion of $(\calX^1(\FF_r),d_\entropy)$ has a fixed point relies heavily on the assumption that $r \geq 4$: the key construction uses an edge that separates a given graph into two subgraphs, each with rank at least $2$. This leaves the door open to a negative answer for the following question, which would allow for interesting applications specifically for $\FF_3$:

\begin{question}\label{que:rank 3}
Does $(\calX^1(\FF_3),d_\entropy)$ admit an $\Out(\FF_3)$--invariant bounded subcomplex?
\end{question}

Theorem~\ref{th:entropy bounded} demonstrates the existence of an $\Out(\FF_r)$ orbit in $(\calX^1(\FF_r),d_{\entropy})$ with bounded diameter but we do not yet know that the entire space has bounded diameter. We therefore ask:  

\begin{question}\label{que:bounded}
Is $(\calX^1(\FF_r),d_\entropy)$ bounded for $r \geq 4$?
\end{question}

We believe the answer to this question is yes.  Indeed, the only way $(\calX^1(\FF_r),d_\entropy)$ could fail to be bounded is if the subspace $(\calX^1(G,\rho),d_\entropy)$ has infinite diameter for some marked graph $\rho \from \calR_r \to G$.  As the diameter of $(\calX^1(\calR_r,{\rm id}) \cdot \Out(\FF_r),d_\entropy)$ is bounded, to answer the question in the affirmative, it would suffice to find a bound (in terms of $r$) on the distance from any point $\calX^1(\FF_r)$ to a point in $\calX^1(\calR_r,{\rm id}) \cdot \phi$ for some $\phi \in \Out(\FF_r)$.  Another approach to answer Question~\ref{que:bounded} in the affirmative would be to show the existence of a bound (in terms of $r$) on distance from any point in $\calX^1(G, \rho)$ to a completion point represented by a unit entropy metric on a proper subgraph (with the goal of getting to a point in the completion of a marked rose via induction). This led us to the following question, which is of independent interest and we pose here as a conjecture:

\begin{conjecture}\label{con:entropy bound}
For any $r \geq 3$ there exists $C > 0$ so that any metric graph of rank $r$ with unit entropy contains a proper subgraph with entropy at least $C$.
\end{conjecture}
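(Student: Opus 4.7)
As a preliminary reduction, collapsing valence-one vertices and forgetting valence-two vertices preserves both rank and the entropy function, so one may assume $G$ is a core graph, in particular with $\abs{E(G)} \leq 3r - 3$. The entropy condition $\entropy_G(\ell) = 1$ is equivalent, via Perron--Frobenius, to the spectral radius of the weighted directed-edge transition matrix $A$ being equal to $1$, where $A_{\vec{e},\vec{f}} = e^{-\ell(e)}$ whenever the transition $\vec{e} \to \vec{f}$ is admissible (terminal vertex of $\vec{e}$ equals initial vertex of $\vec{f}$ and $\vec{f} \neq \overline{\vec{e}}$). Let $v > 0$ be the associated right Perron eigenvector, normalized so that $\sum_e m_e = 1$ where $m_e = v(\vec{e}) + v(\overline{\vec{e}})$ is the total weight of the geometric edge $e$.

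The strategy is to produce a proper subgraph $H = G - e^*$ of rank at least $2$ whose entropy is bounded below by a constant $C(r)$ depending only on $r$. Since contracting the bridges of $G$ produces a rank-$r$ graph with at most $3r-3$ and at least $r$ edges, the set of non-bridge edges of $G$ has cardinality at least $r$; as their weights sum to at most $1$, some non-bridge edge $e^*$ satisfies $m_{e^*} \leq 1/r$. Because $e^*$ lies on a cycle and $r \geq 3$, the subgraph $H = G - e^*$ is connected of rank $r - 1 \geq 2$.

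To lower-bound $\entropy_H(\ell|_H)$, compare the spectral radius of $A$ with that of the principal submatrix $A'$ obtained by removing the rows and columns indexed by $\vec{e}^*$ and $\overline{\vec{e}}^*$. Applying the restriction of $v$ to the surviving coordinates as a test vector in the Rayleigh--Collatz--Wielandt characterization of $\rho(A')$ yields $\rho(A') \geq 1 - K(r)\, m_{e^*}$ for a constant $K(r)$. Since $\rho(A'(h))$ is smooth and strictly decreasing in $h$, with derivative bounded from below in absolute value by a positive function of $r$ (using uniform length estimates forced by the unit entropy condition together with the finiteness of combinatorial types), the value $h = \entropy_H(\ell|_H)$ satisfying $\rho(A'(h)) = 1$ obeys $h \geq 1 - K'(r)\, m_{e^*} \geq C(r) > 0$.

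The main obstacle is the quantitative perturbation estimate. Bounding $\rho(A) - \rho(A')$ linearly by $m_{e^*}$ amounts to controlling the ratios $v(\vec{e}^*)/v(\vec{e})$ for directed edges $\vec{e}$ transitioning into $\vec{e}^*$, and a priori these ratios can be arbitrarily uneven along the Perron eigenvector. The natural route is a Harnack-type inequality for Perron eigenvectors of nonnegative matrices arising from graphs of bounded combinatorial complexity, coupled with a uniform upper bound on $\max_e \ell(e)$ in terms of $r$ (and, if the latter should fail, a refinement in which one deletes an entire family of long edges simultaneously). An alternative route, more in the spirit of the present paper, would attack the conjecture directly via the metric completion of Theorem~\ref{th:completion rose}: it would suffice to show that every point of $(\calX^1(\FF_r), d_\entropy)$ lies within distance bounded in terms of $r$ of a completion point represented by a unit entropy metric on a proper subgraph, and the ``shortcut'' constructions underlying the proof of Theorem~\ref{th:entropy bounded} provide a template for producing such paths.
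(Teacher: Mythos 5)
The first thing to note is that the statement you are addressing is posed in the paper as Conjecture~\ref{con:entropy bound}: the authors do not prove it, and offer only the observation that it suffices to treat each of the finitely many topological types of rank-$r$ graphs separately. So there is no proof in the paper to compare against, and the only question is whether your argument closes the conjecture. It does not: it is a program whose decisive step is exactly the point you yourself flag as ``the main obstacle.''

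Concretely, the chain breaks at the perturbation estimate $\rho(A') \geq 1 - K(r)\,m_{e^*}$. The Collatz--Wielandt bound with the restricted right Perron eigenvector only gives $\rho(A') \geq \min\bigl(1 - c\,v(\vec{e}^*)/v(\vec{e})\bigr)$ over directed edges $\vec{e}$ feeding into $e^*$, and nothing in the unit-entropy hypothesis controls these ratios; moreover a correct first-order formula for $\rho(A)-\rho(A')$ involves the left as well as the right eigenvector, and in either formulation one needs the Harnack-type control that you do not supply. The proposed rescue via a uniform bound on $\max_e \ell(e)$ in terms of $r$ is false: the paths of Proposition~\ref{prop:finite length path}, and the degenerations studied in Section~\ref{sec:separating}, produce unit-entropy metrics in which some edge lengths tend to infinity while others tend to $0$, and it is precisely this regime that makes the conjecture delicate (your parenthetical ``refinement'' for that case is not carried out). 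Even granting the estimate, the pigeonhole step only yields $m_{e^*} \leq 1/r$, so the bound $1 - K(r)/r$ need not be positive unless you also show $K(r) < r$; and converting a lower bound on $\rho(A')$ into a lower bound on $\entropy_H(\ell|_H)$ requires the uniform derivative control that you likewise defer. The alternative route you sketch---bounding the distance from any point of $\calX^1(\FF_r)$ to a completion point supported on a proper subgraph---is the authors' own suggested approach in Section~\ref{subsec:intro discussion}, and is again only a heuristic. In short, the proposal identifies plausible strategies but contains genuine gaps and does not establish the conjecture.
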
 

It suffices to show the conjecture for a fixed topological type of graph since for a given rank $r$, there are only finitely many topological types of graph of rank $r$.

One can also define the notion of the entropy metric on the Teichm{\"u}ller space of a surface with boundary.  In~\cite{ar:Xu19}, Xu shows that this metric is incomplete.  As mentioned previously, McMullen proved that for closed surfaces, the entropy metric is a constant multiple of the Weil--Petersson metric.  However, by partially characterizing the completion of the entropy metric in the bordered setting, Xu is able to show that this is not true in the presence of boundary.  Concretely, Xu identifies certain graphs $G$ so that, in the notation of this paper, $(\calM^{1}(G),d_{\entropy})$ isometrically embeds in the completion of the Teichm{\"u}ller space of the surface equipped with the entropy metric. We therefore ask if the work in this paper can be used to fully understand the completion of the Teichm{\"u}ller space of a bordered surface equipped with the entropy metric. 

\begin{problem}\label{prob:completion}
Fully characterize the completion of $(\calM^{1}(G), d_{\entropy,G})$ for an arbitrary graph $G$ and use this to study the completion of the Teichm{\"u}ller space of a bordered surface, equipped with the entropy metric.
\end{problem}

The pathology exhibited by Theorem~\ref{th:entropy bounded} relies on the existence of a sequence of unit entropy length functions whose limiting metric is supported on a subgraph with multiple components where the metric on some component need not have entropy equal to 1. This behavior does not occur in the Teichm\"uller space of a closed surface since unit entropy is equivalent to the hyperbolic metric for constant curvature surfaces, and thus for the subsurface supporting the limit of a sequence of unit entropy metrics, the metric on each component also has entropy equal to 1. One can also consider an entropy function defined over the moduli space of singular flat metrics on a closed surface. This setting appears more similar to the situation of Theorem ~\ref{th:entropy bounded} in that the unit entropy condition is not encoded by the local geometry. It appears likely that some version of Theorem~\ref{th:entropy bounded} holds for singular flat metrics, and so we therefore ask:  

\begin{question}\label{que:flat}
Can the techniques used in this paper in the setting of metric graphs apply to the study of an entropy metric on the moduli space of singular flat metrics on a closed surface? 
\end{question}



\subsection{Acknowledgements} The authors thank the Institute for Computational and Experimental Research in Mathematics (ICERM) for hosting the workshop \textit{Effective and Algorithmic Methods in Hyperbolic Geometry and Free Groups}, at which work on this project began. We also thank Jing Tao for suggesting Question~\ref{que:flat}. The first author is supported by NSF grant DMS-1807319. The second author is supported by Simons Foundation Grant No.~316383.  The third author is supported by Simons Foundation Grant No.~637880.


\section{Graphs and Outer Space}\label{sec:preliminaries}

In this section we introduce some concepts that are necessary for the sequel.  First, we set some notation for dealing with graphs.  Next, we define the Culler--Vogtmann outer space---including its topology---and the $\Out(\FF_r)$ action on this space.  


\subsection{Graphs}\label{subsec:graphs} We use Serre's convention for graphs~\cite{bk:Serre03}.  That is, an \emph{\textup{(}undirected\textup{)} graph} is a tuple $G = (V,E,o,\tau,\bar{\phantom{e}})$ where:
\begin{enumerate}
    \item $V$ and $E$ are sets, called the \emph{vertices} and the \emph{directed edges} (we think of $E$ as  containing two copies, with opposite orientations, of each undirected edge), 
\item  $o,\tau \from E \to V$ are functions that specify the \emph{originating} and \emph{terminating} vertices of an edge,
\item $\bar{\phantom{e}} \from E \to E$ is a fixed point free involution such that $o(e) = \tau(\bar{e})$ ($\bar{\phantom{e}}$ flips edges).
\end{enumerate}
We fix an \emph{orientation} on $G$, that is, a subset $E_{+} \subset E$ that contains exactly one edge from each pair $\{e,\bar{e}\}$.  
Since we consider the pair $\{e,\bar{e}\}$ to be a single edge, the number of edges of $G$ is $|E_{+}| = |E|/2$.  
The \em valance \em of a vertex $v$ 
is the number of edges from $e \in E_+$ with $o(e)=v$ 
plus the number of edges from $e \in E_+$ with $\tau(e)=v$ (an edge $e$ for which $o(e) = \tau(e) =v$ contributes $2$ to the valance). 
Oftentimes when defining a graph we only specify the edges in $E_+$ (together with the restrictions of $o$ and $\tau$ to $E_+$).  The complete set of edges is then defined as 
$E = E_+ \cup \overline{E}_+$, where 
$\overline{E}_+$ 
is a copy of $E_+$, and $o,\tau$, and $\bar{\phantom{e}}$ are defined in the obvious way.  We blur the distinction between the tuple $(V,E,o,\tau,\bar{\phantom{e}})$ and the corresponding one dimensional CW--complex with $0$--cells $V$ and 1--cells $E_+$.

The space of \emph{length functions on $G$} is the open convex cone
\begin{equation}
\calM(G) = \{ \ell \from E_{+} \to \RR_{>0} \}. 
\end{equation}
We consider this set as a subset of $\RR^{\abs{E_+}}$.  A length function $\ell \from E_{+} \to \RR_{>0}$ extends to a function $\ell \from E \to \RR_{>0}$ by $\ell(e) = \ell(\bar{e})$ if $e \notin E_{+}$.  By $\One \in \calM(G)$ we denote the constant function with value 1.

An edge path is a sequence of edges $(e_{1},\ldots,e_{n})$ in $E$ such that $\tau(e_{i}) = o(e_{i+1})$ for $i = 1,\ldots, n-1$.  A function $f \from E \to \RR$ (in particular a length function) extends to a function on edge paths $\gamma = (e_{1},\ldots,e_{n})$ by
\begin{equation}
f(\gamma) = \sum_{i=1}^{n} f(e_{i}).
\end{equation}



\subsection{Outer space}\label{subsec:outer space}  
We will introduce some definitions and notation for the Culler--Vogtmann outer space.  This space was originally defined by Culler and Vogtmann~\cite{ar:CV86}.  For more information, see for example the survey papers by Vogtmann~\cite{ar:Vogtmann02} or Bestvina~\cite{pro:Bestvina02}.

Let $\calR_r$ be the $r$--rose.  That is, $\calR_r$ the graph with a unique vertex $v$ and $r$ edges.  
Fix an isomorphism $\FF_r \cong \pi_1(\calR_r,v)$.  A \emph{marked metric graph \textup{(}of rank $r$\textup{)}} is a triple $(G,\rho,\ell)$ where 
\begin{enumerate}
\item $G$ is a finite connected graph without valence one or two vertices,
\item $\rho \from \calR_r \to G$ is a homotopy equivalence, and 
\item $\ell$ is a length function on $G$.
\end{enumerate}   There is an equivalence relation on the set of marked metric graphs defined by $(G_1,\rho_1,\ell_1) \sim (G_2,\rho_2,\ell_2)$ if there exists a graph automorphism $\alpha \from G_1 \to G_2$ such that $\ell_1 = \ell_2 \circ \alpha$ and such that the following diagram commutes up to homotopy:
\begin{equation*}
\xymatrix@R=3mm{ & G_1\ar[dd]^\alpha \\ \calR_r\ar[ur]^{\rho_1}\ar[dr]_{\rho_2} & \\ & G_2}
\end{equation*}

We let $\calX(\FF_r)$ denote the set of equivalence classes of marked metric graphs of rank $r$.  The group $\Out(\FF_r)$ acts on $\calX(\FF_r)$ on the right by precomposing the marking.  Specifically, for any outer automorphism $\phi \in \Out(\FF_r)$, there is a homotopy equivalence $g_\phi \from \calR_r \to \calR_r$ that induces $\phi$ on $\pi_1(\calR_r)$ via the aforementioned fixed isomorphism $\FF_r \cong \pi_1(\calR_r,*)$.  Moreover, this homotopy equivalence is unique up to homotopy.  With this, we define 
\begin{equation}
(G,\rho,\ell) \cdot \phi = (G,\rho \circ g_\phi,\ell).  
\end{equation}
This action respects the equivalence relation on marked metric graphs and so defines an action on $\calX(\FF_r)$ as claimed.  

Let $\calG_r$ denote the set of finite connected graphs without valence one or two vertices whose fundamental group has rank $r$.  We observe that this is a finite set.  Given a graph $G \in \calG_r$ and homotopy equivalence $\rho \from \calR_r \to  G$, we set
\[ \calX(G,\rho) = \{ [(G_0,\rho_0,\ell_0)] \in \calX(\FF_r) \mid G_0 = G \mbox{ and } \rho_0 \simeq \rho \}. \]    
There is a bijection $\calX(G,\rho) \to \calM(G)$ defined by $[(G_0,\rho_0,\ell_0)] \mapsto \ell_0$. These sets partition the set $\calX(\FF_r)$ and are permuted under the action by $\Out(\FF_r)$.  Specifically, for each $G \in \calG_r$ we fix a marking $\rho_G \from \calR_r \to G$.  Then   
\[ \calX(\FF_r) = \bigcup_{G \in \calG_r} \bigcup_{\phi \in \Out(\FF_r)} \calX(G,\rho_G) \cdot \phi. \]

There is a topology on $\calX(\FF_r)$ that is often defined in three different ways.  We will need to use the first two and for completeness we explain all three here.

\bigskip

\noindent
{\bf The weak topology.} 
The notion of a collapse induces a partial order on the set of marked graphs.  Specifically, for two graphs $G$ and $G_0$, we say that $G$ \emph{collapses} to $G_0$ if there is a surjection $c \from G \to G_0$ such that the image of any edge in $G$ is either a vertex or an edge of $G_0$ and such that $c^{-1}(x)$ is a contractible subgraph of $G$ for each point $x$ of $G_0$. The map $c$ is a called a \emph{collapse}.  Observe that if the map $c \from G \to G_0$ is a collapse, then a length function $\ell \in \calM(G_0)$ can be considered as a degenerate length function $\ell_{G_0}$ on $G$ by
\begin{equation}
\ell_{G_0}(e) = \begin{cases}
\ell(c(e)) & \mbox{ if $c(e)$ is an edge in $G_0$}, \\
0 & \mbox{ else}.
\end{cases}  
\end{equation}
This defines a map $c^* \from \calM(G_0) \to \RR_{\geq 0}^{\abs{E_+}}$ by $c^*(\ell) = \ell_{G_0}$.  We now define the following subset of $\RR_{\geq 0}^{\abs{E_+}}$:
\begin{equation}
\ocalM(G) = \bigcup_{c \from G \to G_0} c^*(\calM(G_0)).
\end{equation}  
We note the $\calM(G)$ is a subset of $\ocalM(G)$ as the identity map ${\rm id} \from G \to G$ is a collapse.

Next, given two marked graphs $\rho \from \calR_r \to G$ and $\rho_0 \from \calR_r \to G_0$, we say that $(G,\rho)$ \emph{collapses} to $(G_0,\rho_0)$ if there is a collapse $c \from G \to G_0$ such that $\rho_0 \simeq c \circ \rho$.  In this case we write $(G_0,\rho_0) \leq (G_,\rho)$.  We now define the following subset of $\calX(\FF_r)$:
\begin{equation}\label{eq:simplex closure}
\ocalX(G,\rho) = \bigcup_{(G_0,\rho_0) \leq (G,\rho)} \calX(G_0,\rho_0)
\end{equation}
The bijection $\calX(G,\rho) \to \calM(G)$ extends in a natural way to a bijection $\ocalX(G,\rho) \to \ocalM(G)$ and allows us to consider $\ocalX(G,\rho)$ as a subset of $\RR_{\geq 0}^{\abs{E_+}}$.  

The \emph{weak topology} is defined using this collection of subsets.  Specifically, a set $U \subseteq \calX(\FF_r)$ is open if $U \cap \ocalX(G,\rho)$ is open as a subset of $\RR_{\geq 0}^{\abs{E_+}}$ for all marked graphs $(G,\rho)$.

\bigskip

\noindent
{\bf The axes topology.}  Given a marked metric graph $(G,\rho,\ell)$ and an element $g \in \FF_r$, we denoted by $\ell([g])$ the $\ell$--length of the shortest loop in $G$ representing the conjugacy class $[\rho(g)]$.  This induces a function ${\rm Len} \from \calX(\FF_r) \to \RR_{\geq 0}^{\FF_r}$ where ${\rm Len}([(G,\rho,\ell)]) \from \FF_r \to \RR_{\geq 0}$ is the function defined by
\begin{equation*}
{\rm Len}([(G,\rho,\ell)])(g) = \ell([g]).
\end{equation*}
Culler--Morgan proved that the map ${\rm Len}$ is injective~\cite[3.7~Theorem]{ar:CM87}.  The resulting subspace topology on ${\rm Len}(\calX(\FF_r)) \subset \RR_{\geq 0}^{\FF_r}$ is called the \emph{axes topology}.  It is known that this topology agrees with the weak topology.  (See \cite[Section~1.1]{ar:CV86} or \cite[Proposition~5.4]{ar:GL07-2}.)  

\bigskip

\noindent
{\bf The equivariant Gromov--Hausdorff topology.}
We will not need this definition, and we only remark that Paulin showed that it is equivalent to the axes topology~\cite[Main~Theorem]{ar:Paulin89}.

\bigskip

There is an action of $\RR_{>0}$ on $\calX(\FF_r)$ given by scaling the length  function.  Specifically, $a \cdot (G,\rho,\ell) = (G,\rho,a\cdot \ell)$.  The quotient of $\calX(\FF_r)$ is denoted $\PP\calX(\FF_r)$.  

There are many continuous sections of the quotient map $\calX(\FF_r) \to \PP\calX(\FF_r)$.  An often used choice uses the notion of \emph{volume of a length function}, $\vol(\ell)$ that we define now.  For a length function $\ell \in \calM(G)$, we define the volume of $\ell$ by $\vol(\ell) = \sum_{e \in E_+}\ell(e)$.  There is a section $\calV \from \PP\calX(\FF_r) \to \calX(\FF_r)$ defined by
\[ \calV\left(\bigl[ [(G,\rho,\ell)]\bigr]\right) = [(G,\rho,\frac{1}{\vol(\ell)}\ell)]. \]
We denote the image of this section by $CV(\FF_r)$, it is known as the \emph{Culler--Vogtmann outer space}.  Further, given a marked graph $\rho \from \calR_r \to G$, we set $CV(G,\rho) = \calX(G,\rho) \cap CV(\FF_r)$.  This set is homeomorphic to an open simplex of dimension $\abs{E_+} - 1$.

\begin{example}\label{ex:cv2}
There are three graphs in $\calG_2$: the $2$--rose $\calR_2$, the theta graph $\Theta_2$ and the barbell graph $\calB_2$; see Figure~\ref{fig:rank two graphs}.  Figure~\ref{fig:CV2} shows a portion of $CV(\FF_2)$ and how these simplices piece together.  The homotopy equivalences used for Figure~\ref{fig:CV2} are as follows:
\begin{align*}
\rho_\Theta \from \calR_2 \to \Theta_2 & : e_1 \mapsto e_1\bar{e}_3, \, e_2 \mapsto e_2\bar{e}_3 \\
\rho_\calB \from \calR_2 \to \calB_2 & : e_1 \mapsto e_1, \, e_2 \mapsto e_3e_2\bar{e}_3 \\
a \from \calR_2 \to \calR_2 & : e_1 \mapsto e_1, \, e_2 \mapsto \bar{e}_2 \\
b \from \calR_2 \to \calR_2 & : e_1 \mapsto \bar{e}_2, \, e_2 \mapsto e_1\bar{e}_2
\end{align*}
Notice that $\rho_\Theta$ is the homotopy inverse to the map $\Theta_2 \to \calR_2$ that collapses the edge $e_3$.  Likewise $\rho_\Theta \circ b$ is the homotopy inverse to the collapse of $e_2$ and $\rho_\Theta \circ b^2$ is the homotopy inverse to the collapse of $e_1$.  Similary, $\rho_\calB$ is the homotopy inverse to the map $\calB_2 \to \calR_2$ that collapses $e_3$.
\end{example}

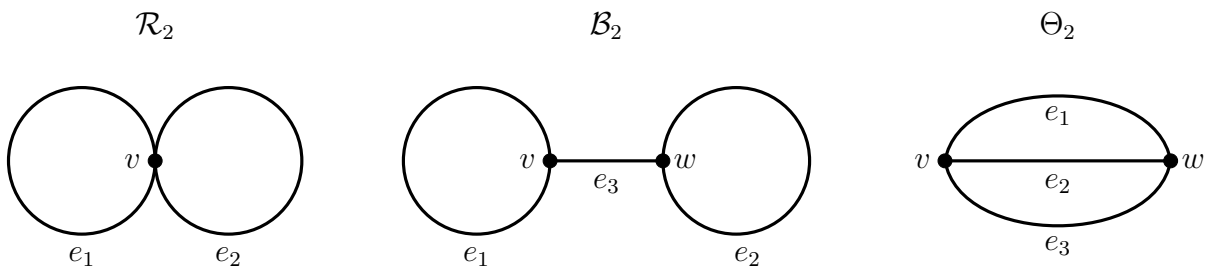
\begin{figure}[ht]
\centering
\begin{tikzpicture}[scale=1.5]
\node at (-1.5,1.2) {$\calR_2$};
\filldraw (-1.5,0) circle [radius=1.75pt];
\draw[very thick] (-2.15,0) circle[radius=0.65cm];
\draw[very thick] (-0.85,0) circle[radius=0.65cm];
\node at (-2.15,-0.85) {$e_1$};
\node at (-0.85,-0.85) {$e_2$};
\node at (-1.7,0) {$v$};
\begin{scope}[xshift=3.5cm]
\node at (-1,1.2) {$\calB_2$};
\draw[very thick] (-1.5,0) -- (-0.5,0);
\filldraw (-1.5,0) circle [radius=1.75pt];
\filldraw (-0.5,0) circle [radius=1.75pt];
\draw[very thick] (-2.15,0) circle[radius=0.65cm];
\draw[very thick] (0.15,0) circle[radius=0.65cm];
\node at (-2.15,-0.85) {$e_1$};
\node at (0.25,-0.85) {$e_2$};
\node at (-1,-0.2) {$e_3$};
\node at (-1.7,0) {$v$};
\node at (-0.3,0) {$w$};
\end{scope}
\begin{scope}[xshift=7cm]
\node at (-0.5,1.2) {$\Theta_2$};
\filldraw (-1.5,0) circle [radius=1.75pt];
\filldraw (0.5,0) circle [radius=1.75pt];
\draw[very thick] (-1.5,0) -- (0.5,0);
\draw[very thick] (-1.5,0) to[out=80,in=100] (0.5,0);
\draw[very thick] (-1.5,0) to[out=-80,in=-100] (0.5,0);
\node at (-0.5,0.4) {$e_1$};
\node at (-0.5,-0.2) {$e_2$};
\node at (-0.5,-0.75) {$e_3$};
\node at (-1.7,0) {$v$};
\node at (0.7,0) {$w$};
\end{scope}
\end{tikzpicture}
\caption{The three homeomorphism types of graphs in $\calG_2$.}\label{fig:rank two graphs}
\end{figure}

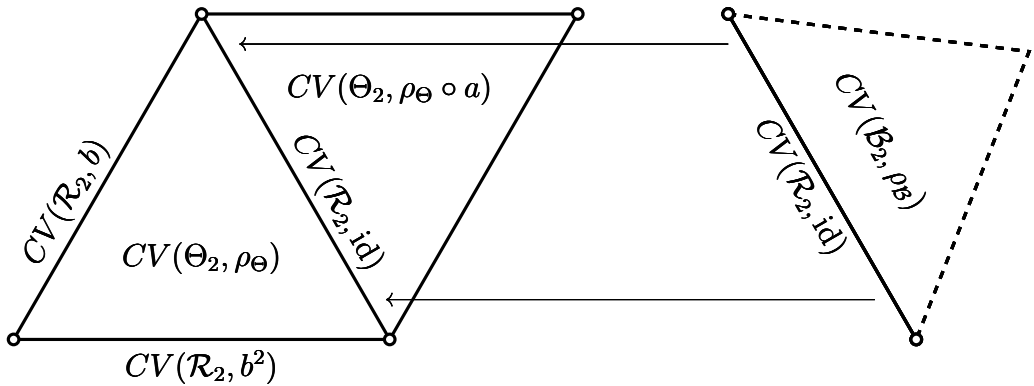
\begin{figure}[ht]
\begin{tikzpicture}
\def\s{5}
\draw[very thick] (0,0) -- (\s,0) node[pos=0](a){} node[pos=1,inner sep=0pt](b1){};
\draw[very thick,rotate=-60] (0,0) -- (\s,0) node[pos=1,inner sep=0pt](b2){};
\draw[very thick,rotate=-120] (0,0) -- (\s,0) node[pos=1,inner sep=0pt](b3){};
\draw[very thick] (b1) -- (b2) -- (b3);
\foreach \x in {a,b1,b2,b3} {
\draw[very thick,fill=white] (\x) circle [radius=2pt];
\node at (0,-3.25) {$CV(\Theta_2,\rho_\Theta)$};
\node at (2.5,-1) {$CV(\Theta_2,\rho_\Theta \circ a)$};
\node[rotate=-60] at (1.8,-2.5) {$CV(\calR_2,{\rm id})$};
\node[rotate=60] at (-1.8,-2.5) {$CV(\calR_2,b)$};
\node at (0,-4.7) {$CV(\calR_2,b^2)$};
\begin{scope}[xshift=7cm]
\draw[very thick,rotate=-60] (0,0) -- (\s,0) node[pos=1,inner sep=0pt](c2){};
\draw[very thick,dashed] (0,0) -- (4,-0.5) -- (c2);
\node[rotate=-60] at (0.95,-2.3) {$CV(\calR_2,{\rm id})$};
\node[rotate=-60] at (2,-1.7) {$CV(\calB_2,\rho_\calB)$};
\draw[very thick,fill=white] (0,0) circle [radius=2pt];
\draw[very thick,fill=white] (c2) circle [radius=2pt];
\end{scope}
\draw[->] (7,-0.4) -- (0.5,-0.4);
\draw[->] (8.95,-3.8) -- (2.45,-3.8);
}
\end{tikzpicture}
\caption{A portion of the Culler--Vogtmann outer space $CV(\FF_2)$.}\label{fig:CV2}
\end{figure}

One of the goals of this paper is to investigate a different continuous section of the quotient map.  This uses the notion of \emph{entropy}, $\entropy_G(\ell)$, defined in Section~\ref{subsec:entropy}.  Using this notion, there is a section $\calH \from \PP\calX(\FF_r) \to \calX(\FF_r)$ defined by
\[ \calH\left(\bigl[ [(G,\rho,\ell)]\bigr]\right) = [(G,\rho,\entropy_G(\ell)\ell)]. \]
We will denote the image of this section by $\calX^1(\FF_r)$.


\section{Thermodynamic Metrics}\label{sec:thermo}

In this section we introduce the \em entropy \em and \em pressure \em of a length function in $\calM(G)$, for a graph $G$ as in Section~\ref{subsec:graphs}.  By normalizing the entropy to be equal to 1, we realize $\calX^1(\FF_r)$ (as defined in Section~\ref{subsec:outer space}) as a section of $\calX(\FF_r) \to \PP\calX(\FF_r)$; it will follow that  $\calX^1(\FF_r)$ is homeomorphic to $CV(\FF_r)$ (see Theorem~\ref{thm:entropy continuous}). We use entropy and pressure to construct piecewise Riemannian metrics on $\calX^1(\FF_r)$, which we call the \emph{thermodynamic metrics}.  Pollicott--Sharp were the first to consider one of these metrics~\cite{ar:PS14}.  Kao~\cite{ar:Kao17} and Xu~\cite{ar:Xu19} have also investigated these metrics.  In these papers, the metric is only considered for a single marked graph and never on the entire outer space, as we will do here.


\subsection{Entropy}\label{subsec:entropy}
Fix a finite connected graph $G = (V,E,o,\tau,\bar{\phantom{e}})$.  An edge path $(e_1,\ldots,e_n)$ in a graph $G$ is \emph{reduced} if $e_{i} \neq \bar{e}_{i+1}$ for $i = 1,\ldots,n-1$.  A reduced edge path $(e_1,\ldots,e_n)$ is a \emph{based circuit} if $\tau(e_{n}) = o(e_{1})$ and $e_{n} \neq \bar{e}_{1}$.  The set of all based circuits in $G$ is denoted by $\calC(G)$.  For a length function $\ell \in \calM(G)$ and a real number $t \geq 0$, define:
\begin{gather*}
\calC_{G,\ell}(t) = \{\gamma \in \calC(G) \mid \ell(\gamma) \leq t\}.
\end{gather*}

\begin{definition}\label{def:entropy}
The \emph{entropy} of a length function $\ell \in \calM(G)$ is:
\begin{equation*}
\entropy_G(\ell) = \lim_{t \to \infty} \frac{1}{t} \log \abs{\calC_{G,\ell}(t)}.
\end{equation*}
\end{definition}

\begin{remark}
We defined entropy as the growth rate of the number of reduced based circuits. In the literature, there exist many equivalent definitions of entropy.  In particular, one can count the growth of reduced edge paths in $G$ starting at a particular vertex.  Also, the adjective ``based'' can be removed from the count of circuits, or one may only consider paths with length bounded between $t - L$ and $t$ for a sufficiently long $L$ (cf.~\cite[Proposition~2.3]{ar:KR08}).  
This shows that $\entropy_G(\ell)$ equals the \emph{volume entropy} of $(G,\ell)$.  The volume entropy is defined as the exponential growth rate of the volume of balls in $(\tG, g_\ell)$, where $\tG$ is the universal cover of $G$ and $g_\ell$ is the piecewise Riemannian metric obtained by pulling back the length function $\ell$.  That is,
\[ \entropy_G(\ell) = \lim_{t \to \infty} \frac{1}{t}\log \vol_{g_\ell} B(x,t) \]
where $B(x,t)$ is the ball of radius $t$ centered at $x \in \tG$, which is an arbitrary basepoint.
\end{remark}

\begin{example}\label{ex:rose}
The number of reduced edge paths in $\calR_{r}$ with $\One$--length equal to $n$ is exactly $2r(2r-1)^{n-1}$.  Thus for any vertex $v \in \widetilde{\calR}_r$ we have
\begin{equation*}
\vol_{g_\One} B(v,n) = \frac{r}{r-1}\bigl((2r-1)^{n} - 1\bigr).
\end{equation*}
Hence $\entropy_{\calR_r}(\One) = \log (2r-1)$.
\end{example}

The next lemma shows that entropy is homogeneous of degree $-1$ and thus any length function $\ell \in \calM(G)$ can be scaled to have unit entropy.  Specifically, $\entropy_G(a \cdot \ell) = 1$ if and only if $a = \entropy_G(\ell)$.   

\begin{lemma}\label{lem:homogeneous -1}
Let $G$ be a finite connected graph and fix $\ell \in \calM(G)$.  If $a \in \RR_{>0}$, then 
\[\entropy_G(a \cdot \ell) = \frac{1}{a}\entropy_G(\ell).\]    
\end{lemma}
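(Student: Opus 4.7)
The plan is to observe that scaling the length function by $a$ simply rescales the time parameter in the counting function. Specifically, for any based circuit $\gamma \in \calC(G)$, we have $(a\cdot\ell)(\gamma) = a\cdot\ell(\gamma)$ by linearity of the extension of $a\cdot\ell$ to edge paths. Hence the condition $(a\cdot\ell)(\gamma) \leq t$ is equivalent to $\ell(\gamma) \leq t/a$, giving the set-level identity
\begin{equation*}
\calC_{G, a\cdot\ell}(t) = \calC_{G, \ell}(t/a)
\end{equation*}
for every $t \geq 0$.

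From here the computation is immediate. Substituting into Definition~\ref{def:entropy} and then changing variables $s = t/a$ (so $s \to \infty$ as $t \to \infty$, since $a > 0$), I would write
\begin{equation*}
\entropy_G(a\cdot\ell) = \lim_{t \to \infty} \frac{1}{t}\log \abs{\calC_{G, a\cdot\ell}(t)} = \lim_{s \to \infty} \frac{1}{as}\log \abs{\calC_{G,\ell}(s)} = \frac{1}{a}\entropy_G(\ell).
\end{equation*}
This concludes the argument.

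There is really no obstacle here; the lemma is a direct consequence of the homogeneity of $\ell \mapsto \ell(\gamma)$ and the $1/t$ normalization in the definition of entropy. The only minor point worth noting is that one should confirm the limit defining $\entropy_G(\ell)$ exists in the first place (which is standard: $\log \abs{\calC_{G,\ell}(t)}$ is essentially subadditive because concatenation of two based circuits yields a based circuit, up to a bounded combinatorial factor). Once existence of the limit is granted, the scaling identity follows formally from the substitution above, with no further analytic input required.
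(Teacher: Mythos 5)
Your proof is correct and matches the paper's argument: the paper also proves the lemma by the change of variables $s = at$ in the limit defining entropy, which is exactly your substitution written in the reverse direction. Your additional remark about existence of the limit is fine but not needed beyond what the paper assumes.
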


\begin{proof}
We reparametrize the limit defining entropy by setting $s = a t$.  Then
\begin{align*}
\entropy_G(\ell) & = \lim_{t \to \infty} \frac{1}{t} \log \abs{\{\gamma \in \calC(G) \mid \ell(\gamma) \leq t\}} \\
& = \lim_{s \to \infty} \frac{a}{s} \log \abs{\{\gamma \in \calC(G) \mid a \cdot \ell(\gamma) \leq s\}} = a\entropy_G(a \cdot \ell).\qedhere
\end{align*}
\end{proof}

Entropy defines an $\Out(\FF_r)$--invariant function on $\calX(\FF_r)$ by $\entropy\left([(G,\rho,\ell)]\right) = \entropy_G(\ell)$.  This function was investigated by Kapovich--Nagnibeda, who showed: 

\begin{theorem}[{\cite[Theorem~A]{ar:KN07}}]\label{thm:entropy continuous}
The entropy function $\entropy \from \calX(\FF_r) \to \RR$ is continuous. 
\end{theorem}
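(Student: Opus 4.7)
The plan is to reduce to continuity on each closed simplex $\ocalX(G,\rho) \subset \calX(\FF_r)$, which suffices by the definition of the weak topology in Section~\ref{subsec:outer space}. Concretely, for each $G \in \calG_r$ I would show that the function on $\ocalM(G)$ that sends $\ell \in \calM(G)$ to $\entropy_G(\ell)$, and sends a degenerate $c^*(\ell_0)$ (for a collapse $c \from G \to G_0$ and $\ell_0 \in \calM(G_0)$) to $\entropy_{G_0}(\ell_0)$, is continuous.

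On the interior $\calM(G)$ I would establish real analyticity via Perron--Frobenius. Define the weighted transition matrix $M_h(\ell)$ indexed by directed edges of $G$ by
\[ \bigl(M_h(\ell)\bigr)_{e,e'} = \begin{cases} e^{-h\ell(e')} & \text{if } \tau(e) = o(e') \text{ and } e' \neq \bar{e}, \\ 0 & \text{otherwise.} \end{cases} \]
Standard thermodynamic formalism identifies $\entropy_G(\ell)$ as the unique $h>0$ for which the Perron eigenvalue $\lambda(h,\ell)$ of $M_h(\ell)$ equals $1$; equivalently, $\pressure_G(-h\ell)=0$. Since $G$ has no valence-one vertices, $M_h(\ell)$ is irreducible, and under the assumption that $G$ has rank at least $2$ one can verify aperiodicity by exhibiting reduced closed walks whose combinatorial lengths have gcd equal to $1$. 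Thus $\lambda$ is a simple eigenvalue depending real analytically on $(h,\ell)$ with $\partial_h\lambda<0$, and the implicit function theorem gives $\entropy_G$ as a real analytic function on $\calM(G)$.

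For continuity at a boundary point $c^*(\ell_0)$, consider a sequence $\ell_n \in \calM(G)$ with $\ell_n \to c^*(\ell_0)$. The collapse $c \from G \to G_0$ restricts to a bijection between the non-collapsed edges of $G$ and the edges of $G_0$, and each fiber $c^{-1}(v)$ is a finite tree. Because trees are uniquely path-connected, any reduced based circuit in $G_0$ lifts to at most boundedly many reduced based circuits in $G$, and conversely any reduced circuit in $G$ projects to a reduced circuit in $G_0$ since a nontrivial reduced loop cannot be confined to a tree. Combined with the fact that the $\ell_n$-length contributed by the collapsing edges tends to $0$ uniformly, this near-bijection allows one to bound $\tfrac{1}{t}\log\abs{\calC_{G,\ell_n}(t)}$ in terms of $\tfrac{1}{t}\log\abs{\calC_{G_0,\ell_0}(t)}$ up to error terms that vanish as $n \to \infty$ and $t \to \infty$, yielding $\entropy_G(\ell_n) \to \entropy_{G_0}(\ell_0)$.

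The main obstacle is making this final comparison rigorous: one must quantify how the short-but-nonzero lengths of the collapsing edges distort the circuit bijection, and show the distortions are asymptotically negligible in the entropy limit. The tree structure of the collapsed subgraphs is essential here, as it both bounds the number of lifts of a given circuit and prevents excessive wandering inside the collapsed regions, keeping the accumulated error terms under control. A related subtlety is uniformity of the Perron--Frobenius analysis at the boundary, since the matrix $M_h(c^*(\ell_0))$ has entries equal to $1$ along the collapsed edges and its spectral behavior must be tied back to the analogous matrix on $G_0$.
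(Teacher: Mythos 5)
The paper does not prove this statement at all: Theorem~\ref{thm:entropy continuous} is quoted verbatim from Kapovich--Nagnibeda~\cite[Theorem~A]{ar:KN07}, so there is no in-paper argument to compare yours against. Judged on its own, your outline is essentially sound and close in spirit to how one would expect such a proof to go: reduce to the closed simplices $\ocalX(G,\rho)$ (which does suffice, since the weak topology is the final topology determined by these closed pieces), handle the open simplex via the characterization $\entropy_G(\ell)=h \iff \spec(A_{G,h\ell})=1$ together with Perron--Frobenius and the implicit function theorem, and handle collapses by a circuit-counting comparison. In fact the step you flag as the ``main obstacle'' is in better shape than you suggest: because the collapsed fibers are trees, a reduced based circuit of $G$ not only projects to a reduced circuit of $G_0$ (your parenthetical reason---no nontrivial reduced loop in a tree---is exactly why no cancellation $e\bar{e}$ can appear in the projection), but the correspondence is essentially a bijection, since a reduced circuit of $G_0$ lifts uniquely by inserting the unique reduced tree paths. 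The length distortion is then at most (number of non-collapsed edges traversed) times the maximal $\ell_n$-diameter of a collapsed fiber, which is linear in the circuit length with a coefficient tending to $0$, and this gives the two-sided comparison of growth rates you want.

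A few small corrections and caveats. Aperiodicity is not needed for the interior argument: for an irreducible nonnegative matrix the Perron eigenvalue is already simple (periodicity only produces other eigenvalues of the same modulus), so irreducibility plus $\partial_h\lambda<0$ suffices for the implicit function theorem. Irreducibility itself requires more than the absence of valence-one vertices---for a rank-one graph (a circle) the non-backtracking matrix splits into two components---so you should invoke connectedness and rank at least $2$, as the paper does in Section~\ref{subsec:pressure}; alternatively, for bare continuity on $\calM(G)$ you can avoid simplicity altogether by using continuity of the spectral radius in the matrix entries and strict monotonicity in $h$. Finally, continuity on $\ocalM(G)$ requires handling sequences lying in boundary faces and converging into deeper faces, not only interior sequences approaching the boundary; the same collapse argument applies with $G$ replaced by an intermediate collapse, but it should be said. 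It is also worth noting that the paper's Lemma~\ref{lem:F_G for collapse}, $F_G\circ c^*=F_{G_0}$, is an algebraic shadow of the spectral compatibility under collapse that your last paragraph gestures at, and could be used to organize that part of the argument.
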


In particular, the map $\calH \from \PP\calX(\FF_r) \to \calX(\FF_r)$ defined by normalizing to have unit entropy
\[ \calH\left(\bigl[ [(G,\rho,\ell)]\bigr]\right) = [(G,\rho,\entropy_G(\ell)\ell)] \] is a section.  Hence the image $\calX^1(\FF_r) = \{[(G,\rho,\ell)] \in \calX(\FF_r) \mid \entropy_G(\ell) = 1 \}$ is homeomorphic to $CV(\FF_r)$.


\subsection{Pressure}\label{subsec:pressure}  
Fix a finite connected graph $G = (V,E,o,\tau,\bar{\phantom{e}})$.  We assume throughout this section and the next that $\chi(G) < 0$ and that $G$ has no vertices with valence equal to $1$ or $2$.  

Define $A_{G} \in \Mat_{\abs{E}}(\RR)$ by
\begin{equation}
A_{G}(e,e') = \begin{cases} 1 & \mbox{ if } \tau(e) = o(e') \mbox{ and } \bar{e} \neq e', \\
0 & \mbox{ else. }
\end{cases}
\end{equation}
It follows that the entry $A_{G}^{n}(e,e')$ is the number of reduced edge paths of the form $(e_{1},\ldots, e_{n})$ where $e_1 = e$, $\tau(e_{n}) = o(e')$, and $\bar{e}_n \neq e'$.  In particular, $\trace(A_{G}^{n})$ is the number of based edge circuits with $\One$--length equal to $n$.  
Denoting the spectral radius of a matrix by $\spec(\param)$ we get, from the definition of entropy, that  
\begin{equation}
\label{equation:EntropyAsSpectralRadius}
\entropy_G(\One) = \log(\spec(A_{G}))
\end{equation}

We remark that the above assumptions on $G$ ensure that $A_G$ is irreducible.

In order to get a matrix that incorporates the metric and is related to entropy, we scale the rows of $A_G$ as follows: given a function $f \from E \to \RR$, we define $A_{G,f} \in \Mat_{\abs{E}}(\RR)$ by 
\begin{equation}
A_{G,f}(e,e') = A_{G}(e,e')\exp(-f(e)).
\end{equation}  
As for $A_{G}$, it follows that $A_{G,f}^{n}(e,e')$ is the sum of $\exp(-f(\gamma))$ over all edge paths of the form $\gamma = (e_{1},\ldots, e_{n})$ where $e_1 = e$, $\tau(e_{n}) = o(e')$ and $\bar{e}_n \neq e'$. 

\begin{definition}\label{def:pressure}
The \emph{pressure} of a function $f \from E \to \RR$ is defined as $\pressure_G(f) = \log \spec (A_{G,-f})$.
\end{definition}

By Equation~\eqref{equation:EntropyAsSpectralRadius} we have that $\pressure_G(\Zero) = \entropy_G(\One)$ as $A_{G,-\Zero} = A_G$ where $\Zero$ is the zero function.

The connection between entropy and pressure is given by the following theorem.  In a wider setting, this was shown originally by Parry--Pollicott~\cite{ar:PP90}. 
McMullen gave an alternative proof in the current setting~\cite{ar:McMullen15}.  Kapovich--Nagnibeda gave an alternative proof of the third item in the current setting~\cite{ar:KN07}.  As it is phrased below, this theorem appears in the work of Pollicott--Sharp~\cite[Lemma~3.1]{ar:PS14}.

\begin{theorem}\label{thm:pressure = 0 at entropy}
Suppose that $G = (V,E,o,\tau,\bar{\phantom{e}})$ is a finite connected graph.  Then the following hold.
\begin{enumerate}
\item\label{item:common level set} For any length function $\ell \in \calM(G)$, $\pressure_G(-\ell) = 0$ if and only if $\entropy_G(\ell) = 1$.
\item\label{item:pressure} The pressure function $\pressure_G \from \RR^{\abs{E_{+}}} \to \RR$ is real analytic and strictly convex.
\item\label{item:entropy} The entropy function $\entropy_G \from \calM(G) \to \RR$ is real analytic and strictly convex.
\end{enumerate}
\end{theorem}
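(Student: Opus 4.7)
I would establish the three parts in the order (2), (1), (3). For part (2), real analyticity of $\pressure_G$ follows from standard eigenvalue perturbation theory: the entries of $A_{G,-f}$ are real analytic in $f \in \RR^{\abs{E_+}}$, and the hypotheses $\chi(G) < 0$ and no valence-one or two vertices force $A_{G,-f}$ to be irreducible and aperiodic, so by Perron--Frobenius the spectral radius is a simple eigenvalue depending analytically on the matrix entries; since this radius is positive, taking $\log$ preserves analyticity. For strict convexity the cleanest route is the variational principle
\[ \pressure_G(f) = \sup_\mu \left( h_\mu + \int f \, d\mu \right), \]
where the supremum is taken over shift-invariant probability measures $\mu$ on the space of bi-infinite reduced edge paths in $G$. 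This expresses pressure as a supremum of functionals affine in $f$, hence convex; strict convexity follows from uniqueness of the equilibrium state $\mu_f$ for each $f$ (Ruelle's theorem for mixing subshifts of finite type) together with the fact that no non-trivial linear combination of the coordinate potentials is cohomologous to a constant on the reduced-path shift.

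For part (1), I would combine Perron--Frobenius with a zeta-function identification. By Perron--Frobenius, $\trace(A_{G,s\ell}^n) \asymp \exp(n \cdot \pressure_G(-s\ell))$ as $n \to \infty$, so the Dirichlet series $\zeta(s) = \sum_{\gamma \in \calC(G)} \exp(-s\ell(\gamma))$ converges if and only if $\pressure_G(-s\ell) < 0$. By the definition of entropy as the exponential growth rate of $\abs{\calC_{G,\ell}(t)}$, the same series converges if and only if $s > \entropy_G(\ell)$. Matching these two thresholds yields $\pressure_G(-s\ell) = 0 \iff s = \entropy_G(\ell)$, and specializing to $s = 1$ gives the claim.

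For part (3), real analyticity of $\entropy_G$ follows from (1) and the implicit function theorem applied to $F(\ell, h) = \pressure_G(-h\ell)$: since $A_{G,-f}$ is strictly increasing entrywise in $f$, Perron--Frobenius gives $\partial_{f(e)} \pressure_G > 0$ for every $e$, hence $\partial_h F(\ell, h) = -\sum_e \ell(e) \, \partial_{f(e)} \pressure_G(-h\ell) < 0$, so we can solve $F = 0$ analytically for $h = \entropy_G(\ell)$. For strict convexity, fix distinct $\ell_1, \ell_2 \in \calM(G)$ and $t \in (0,1)$, set $h_i = \entropy_G(\ell_i)$, and define
\[ s = \frac{t h_2}{t h_2 + (1-t) h_1}, \qquad c = \frac{h_1 h_2}{t h_2 + (1-t) h_1}. \]
A direct check shows $s h_1 \ell_1 + (1-s) h_2 \ell_2 = c\ell$ with $\ell = t \ell_1 + (1-t) \ell_2$. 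Strict convexity of $\pressure_G$ along the segment from $-h_1 \ell_1$ to $-h_2 \ell_2$, together with $\pressure_G(-h_i \ell_i) = 0$, yields $\pressure_G(-c \ell) \leq 0$, with strict inequality unless $h_1 \ell_1 = h_2 \ell_2$; monotonicity of $\pressure_G$ along the ray through $-\ell$ then converts this to $\entropy_G(\ell) \leq c$. An AM--GM step applied to $(t h_1 + (1-t) h_2)(t h_2 + (1-t) h_1) \geq h_1 h_2$ gives $c \leq t h_1 + (1-t) h_2$, with strict inequality unless $h_1 = h_2$. A short case analysis shows that whenever $\ell_1 \neq \ell_2$ at least one of these two inequalities is strict, which yields the desired strict convexity of $\entropy_G$.

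\textbf{Main obstacle.} The principal difficulty lies in strict convexity of pressure in part (2): one must rule out any direction in $\RR^{\abs{E_+}}$ along which pressure is merely affine. Via the Parry--Pollicott formalism the Hessian of $\pressure_G$ equals the asymptotic variance matrix of the coordinate potentials, and the required positive-definiteness reduces to the assertion that no non-zero function on the edges of $G$ is cohomologous to a constant for the reduced-path shift. The graph hypotheses ($\chi(G) < 0$ and no valence-one or two vertices) are precisely what guarantee the shift is mixing, which is the essential ergodic-theoretic input.
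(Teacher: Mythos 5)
The first thing to note is that the paper does not prove this theorem at all: it is imported from the literature (Parry--Pollicott, with alternative proofs by McMullen and, for item~(3), Kapovich--Nagnibeda, in the phrasing of Pollicott--Sharp), so there is no in-paper argument to match your proposal against. Taken on its own merits, your item~(1) is sound: identifying $\sum_{\gamma \in \calC(G)} \exp(-s\ell(\gamma))$ both as $\sum_n \trace(A_{G,s\ell}^n)$ and as a Dirichlet-type series whose abscissa of convergence is $\entropy_G(\ell)$, and then using strict monotonicity of $s \mapsto \pressure_G(-s\ell)$, is a correct route to $\pressure_G(-\ell)=0 \iff \entropy_G(\ell)=1$. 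Likewise the analyticity claims in (2) and (3) (simple Perron--Frobenius eigenvalue, then the implicit function theorem using $\PD{\pressure_G}{e} > 0$, cf.\ Lemma~\ref{lem:pressure properties}) follow the standard lines of the cited proofs.

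The genuine gap is in your strict convexity argument for item~(2). Since adding $t$ to every coordinate of $f$ multiplies $A_{G,-f}$ by $\exp(t)$, one has $\pressure_G(f + t\One) = \pressure_G(f) + t$: pressure is affine along every line in the direction $\One$, so it is \emph{not} strictly convex on $\RR^{\abs{E_+}}$, and your supporting assertion that ``no non-trivial linear combination of the coordinate potentials is cohomologous to a constant'' is false --- the all-ones combination \emph{is} a constant. (The literal ``strictly convex'' in the quoted statement is an overstatement inherited from the literature; what the paper actually uses, positive definiteness of the Hessians on $T_\ell\calM^1(G)$, survives because $\I{\One,\nabla \entropy_G(\ell)} \neq 0$, so the degenerate direction is transverse to the unit-entropy hypersurface.) The correct input is that the only \emph{symmetric} edge potentials cohomologous to a constant are the constant ones, and this is not a consequence of mixing or uniqueness of equilibrium states: it is a separate combinatorial rigidity statement about reduced circuits in graphs without valence-one or -two vertices (injectivity of the linearized marked length spectrum), which your sketch never establishes. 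The same omission touches the strictness step of your item~(3): ``strict inequality unless $h_1\ell_1 = h_2\ell_2$'' requires exactly this rigidity, since a difference $h_2\ell_2 - h_1\ell_1$ cohomologous to a constant $c_0$ makes pressure affine on the segment (and then the two zero endpoint values force $c_0 = 0$, which only closes the argument once you know such a difference must equal $c_0\One$). With that fact supplied, your clever manipulation with $s$, $c$ and the AM--GM step does yield strict convexity of $\entropy_G$; without it, both strict-convexity claims are unproved.
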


Let $\calM^1(G) = \{ \ell \in \calM(G) \mid \entropy_G(\ell) = 1 \}$.  By the first item above, we have that alternatively $\calM^{1}(G) = \{ \ell \in \calM(G) \mid \pressure_G(-\ell) = 0 \}$.  To see that $\calM^1(G)$ is a codimension 1 submanifold of $\RR^{\abs{E_{+}}}$ we need to argue that $1$ is a regular value of $\entropy_G$.  This follows from the following lemma.   We denote the standard Euclidean inner product on $\RR^n$ by $\I{\param,\param}$.   

\begin{lemma}\label{lem:regular value}
Let $G$ be a finite connected graph and fix $\ell \in \calM(G)$.  Then $\I{\ell,\nabla \entropy_G(\ell)} = -\entropy_G(\ell)$.
\end{lemma}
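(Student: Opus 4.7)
The plan is to derive this identity as an immediate application of Euler's theorem on homogeneous functions, using the homogeneity statement already established in Lemma~\ref{lem:homogeneous -1}. Since entropy is real analytic by Theorem~\ref{thm:pressure = 0 at entropy}\eqref{item:entropy}, the gradient $\nabla \entropy_G(\ell)$ is well-defined and the chain rule applies.

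Concretely, I would consider the one-parameter family of length functions $\ell_a = a\ell$ for $a \in \RR_{>0}$. By Lemma~\ref{lem:homogeneous -1}, we have the identity
\begin{equation*}
\entropy_G(a \ell) = \frac{1}{a}\entropy_G(\ell)
\end{equation*}
for all $a > 0$. Both sides are smooth functions of $a$, so I would differentiate with respect to $a$ and evaluate at $a = 1$. On the left, the chain rule yields
\begin{equation*}
\left.\frac{d}{da}\right|_{a=1} \entropy_G(a\ell) = \I{\ell, \nabla \entropy_G(\ell)},
\end{equation*}
while on the right
\begin{equation*}
\left.\frac{d}{da}\right|_{a=1} \frac{1}{a}\entropy_G(\ell) = -\entropy_G(\ell).
\end{equation*}
Setting the two equal gives the claimed identity $\I{\ell, \nabla \entropy_G(\ell)} = -\entropy_G(\ell)$.

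There is no real obstacle here: the statement is essentially Euler's relation for a function homogeneous of degree $-1$, and the only ingredient beyond Lemma~\ref{lem:homogeneous -1} is the smoothness of $\entropy_G$, which has already been recorded. The lemma is then used immediately afterward to conclude that $\nabla \entropy_G(\ell) \neq 0$ on $\calM^1(G)$ (since otherwise the inner product would vanish rather than equal $-1$), so that $1$ is a regular value of $\entropy_G$ and $\calM^1(G)$ is a codimension-one submanifold of $\RR^{\abs{E_+}}$.
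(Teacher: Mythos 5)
Your proposal is correct and is essentially the paper's argument: the paper also computes the directional derivative of $\entropy_G$ at $\ell$ in the direction $\ell$ (as a limit of difference quotients of $\entropy_G((1+s)\ell)$) and uses the homogeneity from Lemma~\ref{lem:homogeneous -1} to evaluate it, which is exactly your Euler-relation computation. The only cosmetic difference is that the paper writes the limit explicitly rather than invoking the chain rule on $a \mapsto \entropy_G(a\ell)$.
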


\begin{proof}
This follows from the homogeneity of the entropy function (Lemma~\ref{lem:homogeneous -1}).  Indeed:
\begin{align*}
\I{\ell,\nabla \entropy_G(\ell)} &= \lim_{s \to 0} \frac{\entropy_G(\ell + s\ell) - \entropy_G(\ell)}{s}
= \lim_{s \to 0} \frac{\entropy_G((1+s)\ell) - \entropy_G(\ell)}{s} \\
&= \lim_{s \to 0} \frac{\frac{1}{1+s}\entropy_G(\ell) - \entropy_G(\ell)}{s} = \entropy_G(\ell) \lim_{s \to 0} \frac{-s}{s(s+1)} = -\entropy_G(\ell).\qedhere
\end{align*}
\end{proof}

We record the following properties of the partial derivatives and the gradient of the pressure function.  Given a function $f \from \RR^{\abs{E_+}} \to \RR$ and an edge $e \in E_+$, we denote the partial derivative of $f$ with respect to the $e$th coordinate by $\PD{f}{e}$.  Let $\norm{\param}_{1}$ denote the usual $L^{1}$-norm on vectors in $\RR^{n}$. 

\begin{lemma}[{\cite[Proposition~4.10]{ar:PP90}}]\label{lem:pressure properties}
Let $G  = (V,E,o,\tau,\bar{\phantom{e}})$ be a finite connected graph and fix $\ell \in \calM(G)$.  Then the following hold.
\begin{enumerate}
\item $\PD{\pressure_G}{e}(\ell) > 0$ for any $e \in E_+$, and\label{item:positive}
\item $\norm{\nabla \pressure_G(\ell)}_1 = 1$.\label{item:norm = 1}
\end{enumerate}
\end{lemma}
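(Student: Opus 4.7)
The plan is to compute $\nabla \pressure_G$ explicitly in terms of Perron--Frobenius data of the matrix $A_{G,-f}$. First we verify that $A_{G,-f}$ is irreducible: under the standing assumptions on $G$ (no vertices of valence one or two and $\chi(G)<0$), the transition matrix $A_G$ is irreducible, and hence so is $A_{G,-f}$ since it shares the sparsity pattern of $A_G$ and exponentials are positive. By Perron--Frobenius, $\lambda(f) \defeq \spec(A_{G,-f}) = \exp(\pressure_G(f))$ is then a simple leading eigenvalue with strictly positive left and right eigenvectors $u(f), v(f) \in \RR^{\abs{E}}$, which we normalize by $\I{u(f),v(f)} = 1$.

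The main computational ingredient is the standard perturbation formula for simple eigenvalues: for a matrix $M$ depending smoothly on a parameter and having a simple eigenvalue $\lambda$ with dual-normalized Perron eigenvectors $u$ and $v$, one has $\partial \lambda / \partial M(e,e') = u_e v_{e'}$. Applied to $A_{G,-f}(e,e') = A_G(e,e')\exp(f(e))$, and remembering that $f$ is extended from $E_+$ to $E$ via $f(\bar{e}) = f(e)$, we see that only the rows indexed by $e \in \{e_0, \bar{e}_0\}$ depend on the coordinate $f(e_0)$. Carrying out the chain rule and simplifying using the eigenvector identity $\sum_{e'} A_{G,-f}(e,e') v_{e'}(f) = \lambda(f) v_e(f)$ will then give
\begin{equation*}
\PD{\pressure_G}{e_0}(f) \;=\; \frac{1}{\lambda(f)}\PD{\lambda}{e_0}(f) \;=\; u_{e_0}(f) v_{e_0}(f) + u_{\bar{e}_0}(f) v_{\bar{e}_0}(f).
\end{equation*}

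Item~(1) is now immediate from strict positivity of the Perron eigenvectors. For item~(2), summing the displayed formula over $e_0 \in E_+$ yields $\sum_{e \in E} u_e(f) v_e(f) = \I{u(f),v(f)} = 1$, because every $e \in E$ appears exactly once in the pairs $\{e_0, \bar{e}_0\}$ as $e_0$ ranges over $E_+$. Neither step presents a genuine obstacle; the one mild subtlety worth flagging is the bookkeeping between the coordinates on $E_+$ (on which $\pressure_G$ is defined) and the rows of the matrix indexed by $E$, mediated by the edge-reversal involution $\bar{\phantom{e}}$.
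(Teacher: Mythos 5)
Your argument is correct, and it fills in a proof that the paper itself does not give: Lemma~\ref{lem:pressure properties} is simply quoted from Parry--Pollicott \cite[Proposition~4.10]{ar:PP90}, whose statement is the general transfer-operator fact that the derivative of pressure in the direction of a potential $g$ is the integral of $g$ against the equilibrium state. What you have done is specialize that to the finite subshift determined by $A_G$ and make it elementary: since $A_{G,-f}$ has the sparsity pattern of $A_G$ (irreducible under the standing assumptions of Section~3.2, as the paper itself records) and positive entries where $A_G$ is nonzero, the Perron value $\lambda(f)=\exp(\pressure_G(f))$ is simple, and the first-order perturbation identity $d\lambda = u^{T}(dM)v$ (with $\I{u,v}=1$) together with the eigenvector relation gives $\PD{\pressure_G}{e_0}(f) = u_{e_0}v_{e_0} + u_{\bar e_0}v_{\bar e_0}$; positivity of the Perron eigenvectors yields item~(1), and summing over $e_0\in E_+$ recovers $\I{u,v}=1$, which is item~(2). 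In the Parry--Pollicott language your formula says exactly that $\nabla\pressure_G(f)$ records the masses that the associated Gibbs (Parry-type) measure assigns to the edge cylinders $[e_0]\cup[\bar e_0]$, so the two routes are the same computation at different levels of generality: the citation buys the statement for free in a framework that also covers the real-analyticity and convexity used elsewhere (Theorem~\ref{thm:pressure = 0 at entropy}), while your version is self-contained, purely finite-dimensional, and makes the probabilistic meaning of the gradient explicit. The bookkeeping point you flag (coordinates on $E_+$ versus rows indexed by $E$, with the row for $e$ depending only on $f(e)$ and the involution pairing $e$ with $\bar e$) is handled correctly, so there is no gap.
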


\subsection{Thermodynamic Metrics}\label{subsec:metrics}  Fix a finite connected graph $G = (V,E,o,\tau,\bar{\phantom{e}})$.  As in the previous section, we assume that $\chi(G) < 0$ and that $G$ has no vertices with valence equal to $1$ or $2$.   The \emph{tangent space} $T_{\ell}\calM^{1}(G)$ at the length function $\ell \in \calM^{1}(G)$ is the space of vectors $\bv \in \RR^{\abs{E_{+}}}$ such that $\I{\bv,\nabla \entropy_G(\ell)} = 0$.  The \emph{tangent bundle} $T\calM^{1}(G)$ is the subspace of $\calM^1(G) \times \RR^{\abs{E_{+}}}$ consisting of pairs $(\ell,\bv)$ where $\bv \in T_{\ell}\calM^{1}(G)$.

We now define two Riemannian metrics on $\calM^{1}(G)$.  
We denote the Hessian (i.e., the matrix of second derivatives) of a smooth function $f \from \RR^{n} \to \RR$ by $\bH[f(x)]$.

\begin{definition}\label{def:metrics}
Given a length function $\ell \in \calM^{1}(G)$ and tangent vectors $\bv_{1},\bv_{2} \in T_{\ell} \calM^{1}(G)$ we define the \emph{entropy metric} by:
\begin{align*}
\I{\bv_{1},\bv_{2}}_{\entropy,G} &=  \I{\bv_{1},\bH[\entropy_G(\ell)]\bv_{2}}, \\
\intertext{and the \emph{pressure metric} by:}
\I{\bv_{1},\bv_{2}}_{\pressure,G} &= \I{\bv_{1},\bH[\pressure_G(-\ell)]\bv_{2}}.
\end{align*}
The associated norms on the tangent bundle $T\calM^{1}(G)$ are denoted by
\begin{align*}
\norm{(\ell,\bv)}_{\entropy,G}^{2} &= \I{\bv,\bH[\entropy_G(\ell)]\bv} \quad \mbox{and} \quad 
\norm{(\ell,\bv)}_{\pressure,G}^{2} = \I{\bv,\bH[\pressure_G(-\ell)]\bv}.
\end{align*}
\end{definition}

\begin{remark}\label{rem:metrics}
Other authors have considered these metrics with different and conflicting terminology.  We discuss this now using the notation introduced above.  Pollicott--Sharp defined $\norm{\param}_{\pressure,G}$, calling it the \emph{Weil--Petersson metric}~\cite{ar:PS14}.  Kao defined $\norm{\param}_{\entropy,G}$, calling it the \emph{Weil--Petersson metric} and also studied $\norm{\param}_{\pressure,G}$, calling it the \emph{pressure metric}~\cite{ar:Kao17}.  Xu considered $\norm{\param}_{\entropy,G}$, calling it the \emph{pressure metric}~\cite{ar:Xu19}.  We use the terminology as stated in Definition~\ref{def:metrics} as it accurately reflects the functions on which the metrics are based.  The definitions of these metrics in the literature are not those as given in Definition~\ref{def:metrics}, but are equivalent as can be seen by Proposition~\ref{prop:metrics}.  

We note that Theorem 3 in the paper by Pollicott--Sharp~\cite{ar:PS14} holds for the metric $\norm{\param}_{\entropy,G}$ and not with $\norm{\param}_{\pressure,G}$ as claimed.
\end{remark}

The following proposition shows that these metrics lie in the same conformal class and shows that they can be calculated using the second derivative along a path.  This feature is essential particularly for the material in Section~\ref{sec:separating}.

\begin{proposition}\label{prop:metrics}
Let $G$ be a finite connected graph.  If $\ell_{t} \from (-1,1) \to \calM^{1}(G)$ is a smooth path, then
\begin{equation*}
\norm{(\ell_{t},\dot\ell_{t})}_{\entropy,G}^{2}  = -\I{\ddot\ell_{t},\nabla \entropy_G(\ell_{t})}
\quad \mbox{and} \quad
\norm{(\ell_{t},\dot\ell_{t})}_{\pressure,G}^{2}  = \I{\ddot\ell_{t},\nabla \pressure_G(-\ell_{t})}.
\end{equation*}
Additionally, given a length function $\ell \in \calM^{1}(G)$ and tangent vectors $\bv_{1},\bv_{2} \in T_{\ell} \calM^{1}(G)$ we have
\begin{equation*} 
\I{\bv_{1},\bv_{2}}_{\entropy,G} = \frac{\I{\bv_{1},\bv_{2}}_{\pressure,G}}{\I{\ell,\nabla \pressure_G(-\ell)}}. 
\end{equation*}
\end{proposition}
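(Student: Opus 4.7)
The plan is to exploit the fact that along any smooth path $\ell_t$ contained in $\calM^1(G)$, both $\entropy_G(\ell_t) \equiv 1$ and (by Theorem~\ref{thm:pressure = 0 at entropy}\eqref{item:common level set}) $\pressure_G(-\ell_t) \equiv 0$ are constant in $t$, so all their derivatives in $t$ vanish identically. Differentiating $\entropy_G(\ell_t) = 1$ twice via the chain rule yields
\[
0 = \frac{d^2}{dt^2}\entropy_G(\ell_t) = \I{\ddot\ell_t,\nabla\entropy_G(\ell_t)} + \I{\dot\ell_t,\bH[\entropy_G(\ell_t)]\dot\ell_t},
\]
which rearranges directly to the first identity $\norm{(\ell_t,\dot\ell_t)}_{\entropy,G}^2 = -\I{\ddot\ell_t,\nabla\entropy_G(\ell_t)}$. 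For the pressure identity, the analogous computation on $\pressure_G(-\ell_t)\equiv 0$ picks up two minus signs from the inner composition $-\ell_t$; these cancel in the Hessian term and flip the sign of the gradient term, producing $\norm{(\ell_t,\dot\ell_t)}_{\pressure,G}^2 = \I{\ddot\ell_t,\nabla\pressure_G(-\ell_t)}$.

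For the conformal relation I would first observe that by Theorem~\ref{thm:pressure = 0 at entropy}\eqref{item:common level set} the submanifold $\calM^1(G)$ is simultaneously a regular level set of $\entropy_G$ and of $\ell \mapsto \pressure_G(-\ell)$, with regularity provided by Lemma~\ref{lem:regular value} and Lemma~\ref{lem:pressure properties}\eqref{item:positive} respectively. Consequently at each $\ell \in \calM^1(G)$ the two gradients are parallel, so there exists a scalar $\lambda(\ell)$ with
\[
\nabla\entropy_G(\ell) = -\lambda(\ell)\,\nabla\pressure_G(-\ell).
\]
To pin down $\lambda(\ell)$, I pair both sides with $\ell$: the left side evaluates to $-1$ by Lemma~\ref{lem:regular value}, forcing $\lambda(\ell) = 1/\I{\ell,\nabla\pressure_G(-\ell)}$. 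To promote this identity of gradients into the desired proportionality of Hessians restricted to $T_\ell\calM^1(G)$, fix $\bv \in T_\ell\calM^1(G)$, choose a smooth path $\ell_t$ in $\calM^1(G)$ with $\ell_0 = \ell$ and $\dot\ell_0 = \bv$, and apply the two second-derivative formulas just proved: both norms become an inner product of $\ddot\ell_0$ against a gradient, and these two gradients differ by the factor $\lambda(\ell)$. Polarization then upgrades the quadratic form identity to the claimed bilinear identity for distinct tangent vectors $\bv_1, \bv_2$.

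The anticipated obstacle is purely bookkeeping: one must consistently distinguish between $\nabla\pressure_G$ evaluated at the point $-\ell$ (the convention in Lemma~\ref{lem:pressure properties}) and the gradient of the composite $\ell \mapsto \pressure_G(-\ell)$, which differ by a sign; the two Hessian conventions do agree since the two chain-rule sign flips cancel. Once this convention is fixed at the outset, every step reduces to a direct application of the chain rule together with the homogeneity identity supplied by Lemma~\ref{lem:regular value}.
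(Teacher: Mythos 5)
Your proposal is correct and takes essentially the same route as the paper: differentiate the constraints $\entropy_G(\ell_t)=1$ and $\pressure_G(-\ell_t)=0$ twice along the path to obtain the two norm formulas, then use that $\nabla\entropy_G(\ell)$ and $\nabla\pressure_G(-\ell)$ are parallel together with the normalization $\I{\ell,\nabla\entropy_G(\ell)}=-\entropy_G(\ell)=-1$ from Lemma~\ref{lem:regular value} (and positivity from Lemma~\ref{lem:pressure properties}), and finish by polarization. Your explicit proportionality scalar $\lambda(\ell)$ and the sign bookkeeping for the composite $\ell\mapsto\pressure_G(-\ell)$ are simply a more explicit rendering of the paper's chain of equalities, not a different argument.
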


\begin{proof}
Differentiating the equation $\pressure_G(-\ell_{t}) = 0$ with respect to $t$ we have $\I{\dot\ell_{t},\nabla\pressure_G(-\ell_{t})} = 0$.  Differentiating again we find
\[ \I{\ddot \ell_{t}, \nabla\pressure_G(-\ell_{t})} - \I{\dot \ell_{t},\bH [\pressure_G(-\ell_{t})]\dot\ell_{t}} = 0. \]
Hence $\norm{(\ell_{t},\dot\ell_{t})}_{\pressure}^{2}  = \I{\dot\ell_{t},\bH [\pressure(-\ell_{t})]\dot\ell_{t}} = \I{\ddot\ell_{t},\nabla \pressure(-\ell_{t})}$ as claimed.

The proof of the analogous statement for the entropy norm is similar observing that $\entropy_G(\ell_t) = 1$.

By Lemma~\ref{lem:regular value}, we have that $\I{\ell,\nabla \entropy_G(\ell)} = -1$ for any $\ell \in \calM^{1}(G)$.  Further, we have that $\nabla \pressure_G(-\ell)$ is nonzero by Lemma~\ref{lem:pressure properties} and hence is parallel to $\nabla \entropy_G(\ell)$ by Theorem~\ref{thm:pressure = 0 at entropy}\eqref{item:common level set}.  Hence we find:
\begin{align*}
\norm{(\ell_{t},\dot\ell_{t})}_{\entropy,G}^{2} &= -\I{\ddot\ell_{t},\nabla\entropy_G(\ell_{t})} = \frac{\I{\ddot\ell_{t},\nabla\entropy_G(\ell_{t})}}{\I{\ell_{t},\nabla \entropy_G(\ell_{t})}} = \frac{\I{\ddot\ell_{t},\nabla\pressure_G(-\ell_{t})}}{\I{\ell_{t},\nabla \pressure_G(-\ell_{t})}} = \frac{\norm{(\ell_{t},\dot\ell_{t})}_{\pressure,G}^{2}}{\I{\ell_{t},\nabla \pressure_G(-\ell_{t})}}.
\end{align*}
By polarization, the norm determines the inner product and so the claim follows.
\end{proof}

Positive definiteness of the Hessians follow from strict convexity of $\entropy_G$ and $\pressure_G$ on $\calM(G)$ and $\RR^{\abs{E_{+}}}$ respectively (Theorem~\ref{thm:pressure = 0 at entropy}).

Using these norms, we can define the entropy or pressure length of a piecewise smooth path $\ell_{t} \from [t_0,t_1] \to \calM^{1}(G)$ by
\begin{align*}
\calL_{\entropy,G}(\ell_{t}|[t_0,t_1]) & = \int_{t_0}^{t_1} \norm{(\ell_{t},\dot\ell_{t})}_{\entropy,G} \, dt \quad \mbox{and} \quad 
\calL_{\pressure,G}(\ell_{t}|[t_0,t_1])  = \int_{t_0}^{t_1} \norm{(\ell_{t},\dot\ell_{t})}_{\pressure,G} \, dt.
\end{align*}
These induce the entropy and pressure distance functions on $\calM^1(G)$ by
\begin{align*}
d_{\entropy,G}(x,y) & = \inf\{ \calL_{\entropy,G}(\ell_{t}|[0,1]) \mid \ell_{t} \from [0,1] \to \calM^{1}(G), \, \ell_{0} = x, \, \ell_{1} = y  \} \mbox{ and}\\
d_{\pressure,G}(x,y) & = \inf\{ \calL_{\pressure,G}(\ell_{t}|[0,1]) \mid \ell_{t} \from [0,1] \to \calM^{1}(G), \, \ell_{0} = x, \, \ell_{1} = y  \}.
\end{align*}

Given a marked graph $(G,\rho)$, we set $\calX^1(G,\rho) = \calX(G,\rho) \cap \calX^1(\FF_r)$.  Using the natural bijection $\calX^1(G,\rho) \leftrightarrow \calM^1(G)$, we get metrics and distance functions on $\calX^1(G,\rho)$ that we denote using the same notation as above.

Next we explain how these fit together to get distance functions on $\calX^1(\FF_r)$.  Suppose $\alpha_t \from [0,1] \to \calX^1(\FF_r)$ is a piecewise smooth path and that there is a partition $t_1 = 0 < t_2 < \cdots < t_{n+1} = 1$ and marked graphs $(G_k,\rho_k)$ for $k = 1,\ldots,n$ such that $\alpha_t \in \calX^1(G_k,\rho_k)$ for $t \in (t_k,t_{k+1})$.  
We set
\begin{align*}
\calL_\entropy(\alpha_t) & = \sum_{k=1}^n \calL_{\entropy,G_k}(\alpha_t | (t_k,t_{k+1})) \quad \mbox{and} \quad
\calL_\pressure(\alpha_t)  = \sum_{k=1}^n \calL_{\pressure,G_k}((\alpha_t | (t_k,t_{k+1})).
\end{align*}
These define distance functions on $\calX^1(\FF_r)$ as usual---by taking the infimum of the lengths of paths---that we denote by $d_\entropy$ and $d_\pressure$.  
It is obvious that the proposed distance functions are symmetric and satisfy the triangle inequality; positive definiteness of the Hessians implies non-degeneracy.  However, 
it is not obvious that the distances they define are finite: {\it a priori} it may be possible that the length of a path that collapses an edge is infinite.  This will be addressed in Section~\ref{sec:entropy topology}.

\begin{remark}\label{rem:pressure}
For the remainder of the article, we will mainly be concerned with the entropy metric.  As stated in Section~\ref{subsec:intro pressure} in the Introduction, the main results of this article also hold for the pressure metric with slightly altered hypotheses. 
\end{remark}


\section{A Determinant Defining Equation for \texorpdfstring{$\calM^{1}(G)$}{M\textasciicircum1(G)}}
\label{sec:determinant}

The purpose of this section is to derive formulas to assist in computing the metrics introduced in the previous section.  The first of these appears in Section~\ref{subsec:coefficient}, specifically Proposition~\ref{prop:metric formulas}, where it is shown that these metrics can be computed using finite sums of exponential functions on $\calM^1(G)$.  Next, in Section~\ref{subsec:simplification}, we present a simplification for certain graphs that is useful in the sequel.


\subsection{Determinant equation}\label{subsec:coefficient}
Fix a finite connected graph $G = (V,E,o,\tau,\bar{\phantom{e}})$.  We always assume 
that $\chi(G) < 0$ and $G$ has no vertices of valence $1$ or $2$.

Let $D_{G}$ be the directed graph with adjacency matrix $A_{G}$.  Thus the vertex set for $D_G$ is the set $E$ (recall our notation $E = E_+ \cup \overline{E}_+$) and there is an edge from $e$ to $e'$ if $A_G(e,e') = 1$, that is, if
$\tau(e) = o(e')$ and $\bar{e} \neq e'$.  The \emph{cycle complex of $D_{G}$}, denoted by $C_{G}$, is the abstract simplicial complex with an $n$--simplex for each collection $\Delta = \{ z_{1},\ldots,z_{n+1}\}$ of pairwise disjoint simple cycles, i.e., embedded loops, in $D_{G}$. 
  
\begin{example}\label{ex:barbell}
Suppose that $G$ is the barbell graph as shown below:

\begin{figure}[ht]
\centering
\begin{tikzpicture}
\tikzstyle{vertex} =[circle,draw,fill=black,thick, inner sep=0pt,minimum size= 1 mm]
\node[vertex] (v) at (-1,0) {};
\node[vertex] (w) at (1,0) {};
\draw[thick,black,->-] (v) -- (w) node[pos=0.5,label=above:{$c$}] {};
\draw[thick,->-] (v) arc (0:360:8mm) node[pos=0.5,label=left:{$a$}] {};
\draw[thick,->-] (w) arc (180:-180:8mm) node[pos=0.5,label=right:{$b$}] {};
\end{tikzpicture}%
\end{figure}

\noindent Order the edges of $G$ by $a$, $\bar{a}$, $b$, $\bar{b}$, $c$, $\bar{c}$.  The matrix $A_{G}$ and directed graph $D_{G}$ are as presented below.

\medskip
\begin{minipage}{0.5\textwidth}
\begin{equation*}
\begin{bmatrix}
1 & 0 & 0 & 0 & 1 & 0 \\
0 & 1 & 0 & 0 & 1 & 0 \\
0 & 0 & 1 & 0 & 0 & 1 \\
0 & 0 & 0 & 1 & 0 & 1 \\
0 & 0 & 1 & 1 & 0 & 0 \\
1 & 1 & 0 & 0 & 0 & 0
\end{bmatrix}
\end{equation*}
\end{minipage}
\begin{minipage}{0.5\textwidth}
\begin{tikzpicture}
\tikzstyle{vertex} =[circle,draw,fill=black,thick, inner sep=0pt,minimum size= 1 mm]
\node[vertex,label=left:{$a$}] (a) at (-1.5,1) {};
\node[vertex,label=left:{$\bar{a}$}] (A) at (-1.5,-1) {};
\node[vertex,label=above:{$c$}] (c) at (0,1) {};
\node[vertex,label=below:{$\bar{c}$}] (C) at (0,-1) {};
\node[vertex,label=right:{$b$}] (b) at (1.5,1) {};
\node[vertex,label=right:{$\bar{b}$}] (B) at (1.5,-1) {};
\draw[thick,black,->-] (a) -- (c);
\draw[thick,black,->-=0.7] (A) -- (c);
\draw[thick,black,->-=0.7] (b) -- (C);
\draw[thick,black,->-] (B) -- (C);
\draw[thick,black,->-] (c) -- (b);
\draw[thick,black,->-=0.7] (c) -- (B);
\draw[thick,black,->-=0.7] (C) -- (a);
\draw[thick,black,->-] (C) -- (A);
\draw[thick,->-] (a) arc (0:360:6mm);
\draw[thick,->-] (b) arc (180:-180:6mm);
\draw[thick,->-] (A) arc (0:360:6mm);
\draw[thick,->-] (B) arc (180:-180:6mm);
\end{tikzpicture}
\end{minipage}

\medskip \noindent  There are eight simple cycles in $D_G$: $\gamma_{a} = (a)$, $\gamma_{\bar{a}} = (\bar{a})$, $\gamma_{b} = (b)$, $\gamma_{\bar{b}} = (\bar{b})$, $\gamma_{ab} = (a,c,b,\bar{c})$, $\gamma_{a\bar{b}} = (a,c,\bar{b},\bar{c})$, $\gamma_{\bar{a}b} = (\bar{a},c,b,\bar{c})$, and $\gamma_{\bar{a}\bar{b}} = (\bar{a},c,\bar{b},\bar{c})$.  The cycle complex $C_{G}$ is the flag complex whose 1--skeleton is shown below:
\begin{figure}[ht]
\centering
\begin{tikzpicture}
\tikzstyle{vertex} =[circle,draw,fill=black,thick, inner sep=0pt,minimum size= 1 mm]
\node[vertex,label=left:{$\gamma_{b}$}] (b) at (-1.5,0) {};
\node[vertex,label=right:{$\gamma_{\bar{b}}$}] (B) at (1.5,0) {};
\node[vertex,label=above:{$\gamma_{a}$}] (a) at (0,1.5) {};
\node[vertex,label=below:{$\gamma_{\bar{a}}$}] (A) at (0,-1.5) {};
\node[vertex,label=left:{$\gamma_{\bar{a}\bar{b}}$}] (AB) at (-2.25,2.25) {};
\node[vertex,label=left:{$\gamma_{a\bar{b}}$}] (aB) at (-2.25,-2.25) {};
\node[vertex,label=right:{$\gamma_{ab}$}] (ab) at (2.25,-2.25) {};
\node[vertex,label=right:{$\gamma_{\bar{a}b}$}] (Ab) at (2.25,2.25) {};
\draw[thick,black] (b) -- (B);
\draw[line width=6pt,white]  (0,0.2) -- (0,-0.2);
\draw[thick,black] (a) -- (A);
\draw[thick,black] (a) -- (b);
\draw[thick,black] (b) -- (A);
\draw[thick,black] (A) -- (B);
\draw[thick,black] (B) -- (a);
\draw[thick,black] (a) -- (AB);
\draw[thick,black] (b) -- (AB);
\draw[thick,black] (b) -- (aB);
\draw[thick,black] (A) -- (aB);
\draw[thick,black] (B) -- (Ab);
\draw[thick,black] (a) -- (Ab);
\draw[thick,black] (B) -- (ab);
\draw[thick,black] (A) -- (ab);
\end{tikzpicture}
\end{figure}
\end{example}

Given a function $f \from E \to \RR$ (in particular a length function) and a simple cycle $z$ in $D_G$, we set $f(z) = \sum_{i=1}^n f(e_i)$ where $e_1,\ldots,e_n$ are the vertices in $D_G$ traversed by $z$ (each corresponding to an oriented edge in $G$).  Likewise, for a simplex $\Delta = \{ z_{1},\ldots,z_{n}\}$ in $C_G$ we set $f(\Delta) = \sum_{k=1}^{n} f(z_{k})$.  We consider the empty set as a simplex and define $f(\emptyset) = 0$ for any function $f \from E \to \RR$.  Lastly, for a simplex $\Delta = \{z_1,\ldots,z_n\}$ we set $\splx{\Delta} = n$. 
 
 Recall that given a length function $\ell \in \calM(G)$, the matrix $A_{G,\ell}$ is defined by $A_{G,\ell}(e,e') = \exp(-\ell(e))A_G(e,e')$.  We consider the function $F_{G} \from \calM(G) \to \RR$ given by
\begin{equation}\label{eq:F}
F_{G}(\ell) = \det\left(I - A_{G,\ell}\right).
\end{equation}
This function can be expressed using the cycle complex $C_G$ as follows.

\begin{theorem}\label{thm:F}
Let $G$ be a finite connected graph and fix $\ell \in \calM(G)$.  Then
\begin{equation*}\label{eq:F thm}
F_{G}(\ell) = \sum_{\Delta \in C_{G}} (-1)^{\splx{\Delta}} \exp(-\ell(\Delta)).
\end{equation*}
\end{theorem}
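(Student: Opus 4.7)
The plan is to apply the Leibniz expansion of the determinant twice and identify the surviving terms with the simplices of $C_{G}$. Writing $A = A_{G,\ell}$, start from
\[ F_{G}(\ell) = \det(I - A) = \sum_{\sigma \in S_{E}} \mathrm{sgn}(\sigma) \prod_{e \in E} \bigl(\delta_{e,\sigma(e)} - A(e,\sigma(e))\bigr). \]
Distribute the product: for each vertex $e$ of $D_{G}$ one chooses either the factor $\delta_{e,\sigma(e)}$ or the factor $-A(e,\sigma(e))$. Letting $S \subseteq E$ be the set of indices where $-A$ is chosen, the $\delta$ factors force $\sigma$ to fix $E \setminus S$ pointwise, so $\sigma$ is determined by its restriction $\tau \in S_{S}$. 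Since $\sigma$ and $\tau$ have the same sign, this regrouping yields the standard identity
\[ F_{G}(\ell) = \sum_{S \subseteq E} (-1)^{\splx{S}} \det\bigl(A[S]\bigr), \]
where $A[S]$ is the principal submatrix on $S$ and the empty set contributes $1$.

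Next, expand each $\det(A[S])$ by Leibniz a second time. Any permutation $\tau \in S_{S}$ decomposes into disjoint cycles of lengths $k_{1},\dots,k_{m}$, so $\mathrm{sgn}(\tau) = (-1)^{\splx{S}-m}$. A term survives only when every factor $A(i,\tau(i))$ is nonzero, which (by the definition of $A_{G}$) is the condition that each cycle of $\tau$ is a directed simple cycle in $D_{G}$; vertex-disjointness of the cycles of a permutation then means that the data $(S,\tau)$ is precisely a collection of pairwise disjoint simple cycles in $D_{G}$, i.e.\ a simplex $\Delta \in C_{G}$ with $\splx{\Delta} = m$. Combining the two sign contributions produces
\[ (-1)^{\splx{S}} \cdot (-1)^{\splx{S} - m} = (-1)^{m} = (-1)^{\splx{\Delta}}, \]
which is the sign appearing in the target formula.

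Finally, since $A(e,e') = \exp(-\ell(e)) A_{G}(e,e')$ and the $A_{G}$-factors along a genuine cycle are all $1$, the weight of a simple cycle $z = (e_{1},\dots,e_{n})$ in $D_{G}$ is $\prod_{i} A(e_{i},e_{i+1}) = \exp(-\ell(z))$, and multiplying across the disjoint cycles of $\Delta$ gives $\exp(-\ell(\Delta))$. Summing over all $\Delta$, including the empty simplex which contributes $(-1)^{0}\exp(0) = 1$, yields the claimed identity.

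The only genuinely delicate point is the sign bookkeeping—tracking the $(-1)^{\splx{S}}$ coming from the binomial expansion together with the cycle-type sign $(-1)^{\splx{S}-m}$ of a permutation—but this is an elementary cancellation and the remainder of the argument is a direct identification of the nonzero Leibniz terms with simplices of the cycle complex.
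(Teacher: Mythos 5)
Your proof is correct, and it is worth noting how it relates to the paper's treatment: the paper does not prove this identity at all, but simply cites the Coefficient Theorem for Digraphs (\cite{ar:CR90}, \cite{ar:A-KHR15}). Your argument --- expanding $\det(I-A)$ as $\sum_{S\subseteq E}(-1)^{\abs{S}}\det\bigl(A[S]\bigr)$ over principal submatrices, then expanding each $\det(A[S])$ by Leibniz and observing that a permutation $\tau\in S_S$ contributes a nonzero term exactly when its disjoint cycles are vertex-disjoint simple directed cycles of $D_G$ covering $S$, with $\mathrm{sgn}(\tau)=(-1)^{\abs{S}-m}$ so that the signs combine to $(-1)^{m}=(-1)^{\splx{\Delta}}$ --- is precisely the standard proof of that coefficient theorem, so in substance you have supplied the argument the paper outsources to the literature. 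What your route buys is self-containedness and transparency about where each ingredient is used: the bijection between nonvanishing terms and simplices of $C_G$ (including the empty simplex giving the constant term $1$) relies on $A_G$ being a $0$--$1$ matrix, so $D_G$ has no parallel edges and fixed points of $\tau$ correspond exactly to loop vertices of $D_G$, i.e.\ loop edges of $G$; and the weight $\exp(-\ell(\Delta))$ comes from the row scaling $A_{G,\ell}(e,e')=A_G(e,e')\exp(-\ell(e))$, so that the product along a simple cycle depends only on the total $\ell$--length of the $G$--edges it traverses. Both points appear in your write-up, and the sign bookkeeping you flag as the delicate step is handled correctly.
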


\begin{proof}
This follows from the Coefficient Theorem for Digraphs.  See for instance~\cite{ar:CR90} and~\cite[Theorems~2.5~and~2.14]{ar:A-KHR15}. 
\end{proof}

\begin{example}\label{example:barbell poly}
We apply Theorem~\ref{thm:F} to the case when $G$ is the barbell graph as in Example~\ref{ex:barbell}.  Using the change of variables $x = \exp(-\ell(a))$, $y = \exp(-\ell(b))$ and $z = \exp(-\ell(c))$ we find:
\begin{align*}
F_{G}(\ell) =& \, 1 - \bigl(2x + 2y + 4xyz^{2}\bigr) + \bigl(x^{2} + y^{2} + 4xy + 4x^{2}yz^{2} + 4xy^{2}z^{2} \bigr) \\
& -\bigl(2x^{2}y + 2xy^{2} + 4x^{2}y^{2}z^{2}\bigr) + x^{2}y^{2}.
\end{align*}
\end{example}

The following statements show how the function $F_{G}$ is related to $\calM^{1}(G)$.

\begin{lemma}\label{lem:F}
For $\ell \in \calM^{1}(G)$ we have
\begin{enumerate}
\item\label{item:F = 0} $F_{G}(\ell) = 0$;  
\item\label{item:grad F} $\nabla F_{G}(\ell) \neq 0$; and moreover
\item\label{item:positive partial} $\PD{F_G}{e}(\ell) > 0$ for any $e \in E_+$.
\end{enumerate}
\end{lemma}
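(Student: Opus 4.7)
The first claim is immediate from Theorem~\ref{thm:pressure = 0 at entropy}\eqref{item:common level set}: since $\ell \in \calM^{1}(G)$, we have $\pressure_G(-\ell) = \log \spec(A_{G,\ell}) = 0$, so $1$ is an eigenvalue of $A_{G,\ell}$ and therefore $F_G(\ell) = \det(I - A_{G,\ell}) = 0$. I would dispense with this in a single sentence.

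For parts \eqref{item:grad F} and \eqref{item:positive partial}, the plan is to invoke the Perron--Frobenius theorem. Since $A_{G,\ell}$ has the same pattern of zeros as $A_G$, it inherits irreducibility, so $\lambda_1(\ell) := \spec(A_{G,\ell}) = 1$ is a simple eigenvalue of $A_{G,\ell}$ admitting strictly positive left and right Perron eigenvectors $u_\ell, v_\ell$. In particular, $\lambda_1$ is real analytic in a neighborhood of $\ell$, and one may factor
\begin{equation*}
F_G(\ell') = (1 - \lambda_1(\ell'))\, P(\ell'), \qquad P(\ell') := \prod_{i \geq 2}(1 - \lambda_i(\ell')),
\end{equation*}
with $P$ smooth. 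Differentiating at $\ell$ yields $\PD{F_G}{e}(\ell) = -\partial_e \lambda_1(\ell) \cdot P(\ell)$, reducing \eqref{item:positive partial} to showing each factor has a definite sign.

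For the first factor I would use the standard perturbation formula for a simple eigenvalue. Noting that $\partial_e A_{G,\ell} = -D_e A_{G,\ell}$, where $D_e$ is the diagonal matrix with entries $1$ in rows $e$ and $\bar e$ and zero elsewhere, a short calculation gives
\begin{equation*}
\partial_e \lambda_1(\ell) = -\frac{u_\ell(e) v_\ell(e) + u_\ell(\bar e) v_\ell(\bar e)}{u_\ell^T v_\ell} < 0,
\end{equation*}
by positivity of the Perron eigenvectors; this is essentially the computation underlying Lemma~\ref{lem:pressure properties}\eqref{item:positive}.

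The main obstacle is showing $P(\ell) > 0$, and my plan is to split according to whether $A_{G,\ell}$ is primitive or irreducible of cyclic index $h > 1$. In the primitive case every non-Perron eigenvalue satisfies $|\lambda_i| < 1$, so pairing complex conjugates yields positive contributions $|1 - \lambda_i|^2$ while real eigenvalues contribute $1 - \lambda_i > 0$. In the imprimitive case, the $h - 1$ extra eigenvalues on the unit circle are the non-trivial $h$-th roots of unity, and their contribution is the positive integer $\prod_{k=1}^{h-1}(1 - e^{2\pi i k/h}) = h$; the remaining eigenvalues lie strictly inside the unit disk and contribute positively as in the primitive case. Thus $P(\ell) > 0$ unconditionally, so $\PD{F_G}{e}(\ell) > 0$, proving \eqref{item:positive partial}. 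Claim \eqref{item:grad F} follows as an immediate corollary.
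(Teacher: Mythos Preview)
Your argument is correct, and it runs parallel to the paper's but with a different division of labour.

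Both proofs hinge on the same fact: the product $P(\ell) = \prod_{i \ge 2}(1-\lambda_i)$ is strictly positive, since each real $\lambda_i \in [-1,1)$ contributes a positive factor and complex eigenvalues pair into factors $|1-\lambda_i|^2 > 0$. Your primitive/imprimitive split is correct but unnecessary; the single observation ``$|\lambda_i|\le 1$, $\lambda_i\neq 1$, conjugate pairing'' covers both cases at once, and that is what the paper does.

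Where the two arguments differ is in how they pass from $P(\ell)>0$ to the individual partials. The paper computes only one directional derivative: setting $p(t)=\det(I-tA_{G,\ell})$ and noting $F_G(\ell+s\cdot\One)=p(\exp(-s))$, it gets $\langle \One,\nabla F_G(\ell)\rangle=-p'(1)=P(\ell)>0$. This already gives \eqref{item:grad F}. For \eqref{item:positive partial} the paper then observes that $\nabla F_G(\ell)$ is parallel to the gradient of $\ell'\mapsto\pressure_G(-\ell')$ (both are normal to $\calM^1(G)$) and invokes Lemma~\ref{lem:pressure properties}\eqref{item:positive} as a black box to conclude all components have the same sign, hence all are positive.

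You instead differentiate the factorisation $F_G=(1-\lambda_1)P$ directly in each coordinate and compute $\partial_e\lambda_1<0$ via the standard simple-eigenvalue perturbation formula with the Perron eigenvectors. This is more self-contained---you are effectively reproving the relevant piece of Lemma~\ref{lem:pressure properties} rather than citing it---at the cost of a slightly longer write-up. Either route is fine; the paper's is shorter given the cited lemma, yours avoids the dependency.
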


\begin{proof}
Since $\ell$ lies in $\calM^{1}(G)$, Theorem~\ref{thm:pressure = 0 at entropy}\eqref{item:common level set} and the definition of pressure imply that $\spec(A_{G,\ell}) = 1$.  Above we remarked that the assumptions on $G$ imply that $A_G$ is irreducible; hence so is $A_{G,\ell}$. By the Perron--Frobenius Theorem the spectral radius of $A_{G,\ell}$ 
is realized by a positive, real, simple eigenvalue;~\eqref{item:F = 0} follows.

Now consider the function $p \from \RR \to \RR$ defined by $p(t) = \det(I - tA_{G,\ell})$.  Let $1 = \lambda_{1}, \ldots, \lambda_{\abs{E}}$ 
be the roots of the characteristic polynomial of $A_{G,\ell}$.  Then we can write:
\[ p(t) = (1-t)\prod_{i=2}^{\abs{E}} (1-t\lambda_{i}). \]  
Therefore taking the derivative we find:
\[ p'(1) = -\prod_{i=2}^{\abs{E}} (1-\lambda_{i}).\] 
For $i = 2,\ldots, \abs{E}$ we have that $|\lambda_i| \leq 1$ and $\lambda_i \neq 1$.  Combining these observations with the fact that complex eigenvalues come in conjugate pairs, it follows that $p'(1) < 0$.  Observe that $F_{G}(\ell + s \cdot \One) = \det(I - \exp(-s)A_{G,\ell}) = p(\exp(-s))$.  Therefore we have that $\I{\One,\nabla F_{G}(\ell)} = -p'(1)\exp(0) > 0$,
giving~\eqref{item:grad F}.

Now by \eqref{item:F = 0}, \eqref{item:grad F} and Theorem~\ref{thm:pressure = 0 at entropy}, we have that $\nabla F_G(\ell)$ is parallel to $\nabla \pressure_G(\ell)$ for $\ell \in \calM^1(G)$.  Hence by Lemma~\ref{lem:pressure properties}\eqref{item:positive}, we have that either $\PD{F_G}{e}(\ell) > 0$ or $\PD{F_G}{e}(\ell) < 0$ for all $e \in E_+$ and $\ell \in \calM^1(G)$.  Since $\I{\One,\nabla F_G(\ell)} >0$, we must have the former, whence~\eqref{item:positive partial}.
\end{proof}

As a consequence of Lemma~\ref{lem:F}, we have the following corollary.

\begin{corollary}\label{co:F}
The unit entropy moduli space $\calM^{1}(G)$ is a connected component of the level set $\{ \ell \in \calM(G) \mid F_{G}(\ell) = 0 \}$.
\end{corollary}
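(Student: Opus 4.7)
The strategy is to use Lemma~\ref{lem:F} together with the implicit function theorem to show that $\calM^{1}(G)$ is both open and closed in $\{ \ell \in \calM(G) \mid F_G(\ell) = 0 \}$, and then invoke connectedness of $\calM^{1}(G)$.

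First, by Lemma~\ref{lem:F}\eqref{item:F = 0}, the inclusion $\calM^{1}(G) \subseteq \{F_G = 0\}$ holds, so there is something to compare. Next, I would verify that each side is a smooth codimension--$1$ submanifold of $\calM(G)$ near $\calM^{1}(G)$. For the inclusion side: Lemma~\ref{lem:regular value} says $\I{\ell, \nabla \entropy_G(\ell)} = -1$ on $\calM^{1}(G)$, so $\nabla \entropy_G$ is nonzero there and $1$ is a regular value of $\entropy_G$, making $\calM^{1}(G)$ a codimension--$1$ submanifold. For the level set $\{F_G = 0\}$: Lemma~\ref{lem:F}\eqref{item:grad F} gives $\nabla F_G \neq 0$ on $\calM^{1}(G)$, so the implicit function theorem makes $\{F_G = 0\}$ a codimension--$1$ submanifold in a neighborhood of each point of $\calM^{1}(G)$.

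Since $\calM^{1}(G)$ is a codimension--$1$ submanifold contained in another codimension--$1$ submanifold (of the same ambient manifold $\calM(G)$), the two must agree locally near each point of $\calM^{1}(G)$. Therefore $\calM^{1}(G)$ is \emph{open} in $\{F_G = 0\}$. To see it is also \emph{closed} in $\{F_G = 0\}$, suppose $\ell_n \in \calM^{1}(G)$ with $\ell_n \to \ell_\infty \in \{F_G = 0\} \subseteq \calM(G)$. Continuity of the entropy function (Theorem~\ref{thm:entropy continuous}) gives $\entropy_G(\ell_\infty) = \lim_n \entropy_G(\ell_n) = 1$, so $\ell_\infty \in \calM^{1}(G)$.

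Finally, $\calM^{1}(G)$ is connected: the map $\calM(G) \to \calM^{1}(G)$, $\ell \mapsto \entropy_G(\ell)\cdot \ell$, is a continuous surjection (using homogeneity from Lemma~\ref{lem:homogeneous -1}), and $\calM(G) = \RR_{>0}^{\abs{E_+}}$ is connected. Combining connectedness with the fact that $\calM^{1}(G)$ is nonempty, open, and closed in $\{F_G = 0\}$, we conclude that $\calM^{1}(G)$ is a connected component of $\{F_G = 0\}$. I do not anticipate a serious obstacle: the only subtlety is the standard point that a codimension--$0$ inclusion between smooth submanifolds of the same ambient dimension is open, which is immediate from the inverse function theorem.
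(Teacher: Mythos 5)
Your proof is correct and fills in exactly the argument the paper leaves implicit when it states the corollary as a consequence of Lemma~\ref{lem:F}: the inclusion from Lemma~\ref{lem:F}\eqref{item:F = 0}, local agreement of the two hypersurfaces from the nonvanishing gradients (Lemma~\ref{lem:F}\eqref{item:grad F} and Lemma~\ref{lem:regular value}), closedness from continuity of $\entropy_G$, and connectedness of $\calM^1(G)$ via the unit-entropy rescaling from Lemma~\ref{lem:homogeneous -1}. No gaps; this is essentially the paper's intended route.
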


Using these observations, we can compute the entropy and pressure norms using the function $F_{G}$.   

\begin{proposition}\label{prop:metric formulas}
If $\ell_{t} \from (-1,1) \to \calM^{1}(G)$ is a smooth path, then:\begin{align*} 
\norm{(\ell_{t},\dot\ell_{t})}_{\entropy,G}^{2} &= \frac{-\I{\dot\ell_{t},\bH [F_{G}(\ell_{t})]\dot\ell_{t}}}{\I{\ell_{t},\nabla F_{G}(\ell_{t})}}= \frac{\I{\ddot\ell_{t},\nabla F_{G}(\ell_{t})}}{\I{\ell_{t},\nabla F_{G}(\ell_{t})}}, \mbox{ and} \\
\norm{(\ell_{t},\dot\ell_{t})}_{\pressure,G}^{2} &= \frac{-\I{\dot\ell_{t},\bH [F_{G}(\ell_{t})]\dot\ell_{t}}}{\norm{\nabla F_{G}(\ell_{t})}_{1}}= \frac{\I{\ddot\ell_{t},\nabla F_{G}(\ell_{t})}}{\norm{\nabla F_{G}(\ell_{t})}_{1}}. 
\end{align*}
\end{proposition}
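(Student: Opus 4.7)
The plan is to first verify the equality of the middle and rightmost expressions in each formula by differentiating $F_G(\ell_t) = 0$ twice, and then to relate $\nabla F_G(\ell_t)$ to $\nabla \entropy_G(\ell_t)$ and to $(\nabla \pressure_G)(-\ell_t)$, exploiting the fact that all three vectors are normal to $T_{\ell_t}\calM^1(G)$.

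Since $\ell_t \in \calM^1(G)$ for all $t$, Lemma~\ref{lem:F}\eqref{item:F = 0} gives $F_G(\ell_t) \equiv 0$. One differentiation yields $\I{\dot\ell_t, \nabla F_G(\ell_t)} = 0$, and a second differentiation yields
\begin{equation*}
\I{\ddot\ell_t, \nabla F_G(\ell_t)} + \I{\dot\ell_t, \bH[F_G(\ell_t)]\dot\ell_t} = 0,
\end{equation*}
which establishes the equality of the middle and rightmost expressions in both displayed formulas. The remaining task is to identify $\I{\ddot\ell_t, \nabla F_G(\ell_t)}$, up to the asserted denominators, with the two norms furnished by Proposition~\ref{prop:metrics}.

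Because $\calM^1(G)$ is simultaneously cut out by the equations $F_G = 0$, $\entropy_G = 1$, and $\pressure_G(-\param) = 0$, and because its tangent space has codimension one in $\RR^{\abs{E_+}}$, the three vectors $\nabla F_G(\ell_t)$, $\nabla \entropy_G(\ell_t)$, and $(\nabla \pressure_G)(-\ell_t)$ are all nonzero and pairwise parallel. For the entropy identity, I would write $\nabla F_G(\ell_t) = \lambda\, \nabla \entropy_G(\ell_t)$ and pair with $\ell_t$; Lemma~\ref{lem:regular value} gives $\I{\ell_t, \nabla \entropy_G(\ell_t)} = -1$, so $\lambda = -\I{\ell_t, \nabla F_G(\ell_t)}$. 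Combining this with $\I{\ddot\ell_t, \nabla \entropy_G(\ell_t)} = -\norm{(\ell_t, \dot\ell_t)}_{\entropy,G}^2$ from Proposition~\ref{prop:metrics} yields the entropy formula after a short arithmetic.

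For the pressure identity, write $(\nabla \pressure_G)(-\ell_t) = \mu\, \nabla F_G(\ell_t)$. Both vectors have strictly positive coordinates by Lemma~\ref{lem:pressure properties}\eqref{item:positive} and Lemma~\ref{lem:F}\eqref{item:positive partial}, so $\mu > 0$, and taking $L^1$ norms together with Lemma~\ref{lem:pressure properties}\eqref{item:norm = 1} gives $\mu = 1/\norm{\nabla F_G(\ell_t)}_1$. Substituting into $\I{\ddot\ell_t, (\nabla \pressure_G)(-\ell_t)} = \norm{(\ell_t, \dot\ell_t)}_{\pressure,G}^2$ from Proposition~\ref{prop:metrics} produces the pressure formula. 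I do not expect a real obstacle beyond bookkeeping; the only subtlety to watch is that $(\nabla \pressure_G)(-\ell_t)$ means the gradient of $\pressure_G$ evaluated at $-\ell_t$, not the gradient of the composition $\ell \mapsto \pressure_G(-\ell)$, which is where any sign confusion would enter.
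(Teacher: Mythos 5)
Your proposal is correct and follows exactly the route the paper intends: its proof is stated only as ``similar to Proposition~\ref{prop:metrics}, noting $\norm{\nabla\pressure_G(-\ell)}_1=1$,'' i.e.\ differentiate $F_G(\ell_t)=0$ twice and compare $\nabla F_G(\ell_t)$ with $\nabla\entropy_G(\ell_t)$ and $(\nabla\pressure_G)(-\ell_t)$ via Lemma~\ref{lem:regular value}, Lemma~\ref{lem:F} and Lemma~\ref{lem:pressure properties}, which is precisely what you do. Your remark on where the sign convention for $\nabla\pressure_G(-\ell_t)$ could cause trouble is also consistent with the paper's usage in Proposition~\ref{prop:metrics}.
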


\begin{proof}
The proof of the formula for the entropy norm is similar to that of Proposition~\ref{prop:metrics} and left to the reader.

The proof of the formula for the pressure norm is again similar noting that $\norm{\nabla \pressure(\ell)}_{1} = 1$ as stated in Lemma~\ref{lem:pressure properties}\eqref{item:norm = 1}.
\end{proof}

Using Theorem~\ref{thm:F}, we can compute the partial derivatives of $F_G$.  We find for any edges $e,e' \in E_+$ that 
\begin{align}
\PD{F_{G}}{e}(\ell) &= -\sum_{\Delta \in C_G} (-1)^{\splx{\Delta}}\Delta(e)\exp(-\ell(\Delta)), \mbox{ and}\label{eq:partial-e} \\[5pt]
\PPD{F_{G}}{e}{e'}(\ell) &= \sum_{\Delta \in C_G} (-1)^{\splx{\Delta}}\Delta(e)\Delta(e')\exp(-\ell(\Delta)).\label{eq:partial-ee'}
\end{align}  
where $\Delta(e) \in \{0,1,2\}$ denotes the cardinality of the intersection $\{e,\bar{e}\} \cap \Delta$.  Using this notation, we remark that $\ell(\Delta) = \sum_{e \in E_+} \Delta(e)\ell(e)$ for a length function $\ell \in \calM(G)$ and simplex $\Delta \in C_G$.  Given a vector $\bv \in \RR^{\abs{E_+}}$ and a simplex $\Delta \in C_G$, we set $\bv(\Delta) = \sum_{e \in E_+} \Delta(e)\bv(e)$.  Using these expressions, we can rewrite the dot products appearing in the formulas for the metrics in Proposition~\ref{prop:metric formulas} as sums over simplicies in $C_{G}$ rather than over the edges of $G$ as follows:

\begin{lemma}\label{lem:dot product}
For  $\ell \in \calM(G)$ we have
\begin{equation*}
\I{\ell, \nabla F_{G}(\ell)} = -\sum_{\Delta \in C_{G}} (-1)^{\splx{\Delta}}\ell(\Delta)\exp(-\ell(\Delta)).
\end{equation*}
\end{lemma}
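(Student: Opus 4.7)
The plan is to unpack the definition of the Euclidean inner product and substitute the formula for the partial derivatives of $F_G$ already derived in equation~\eqref{eq:partial-e}. Concretely, I would start from
\[
\I{\ell,\nabla F_G(\ell)} \;=\; \sum_{e \in E_+} \ell(e)\, \PD{F_G}{e}(\ell),
\]
and then substitute
\[
\PD{F_G}{e}(\ell) \;=\; -\sum_{\Delta \in C_G}(-1)^{\splx{\Delta}}\Delta(e)\exp(-\ell(\Delta))
\]
from~\eqref{eq:partial-e}.

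Next I would swap the order of summation (this is a finite sum over edges and over simplices in $C_G$, so there is no convergence issue) to obtain
\[
\I{\ell,\nabla F_G(\ell)} \;=\; -\sum_{\Delta \in C_G}(-1)^{\splx{\Delta}}\exp(-\ell(\Delta))\sum_{e \in E_+}\ell(e)\Delta(e).
\]
Finally, I would invoke the identity $\ell(\Delta)=\sum_{e\in E_+}\Delta(e)\ell(e)$ recorded in the excerpt just before the lemma to collapse the inner sum, yielding the claimed formula.

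There is essentially no obstacle here: the lemma is a bookkeeping consequence of Theorem~\ref{thm:F} combined with the definitions of $\ell(\Delta)$ and $\Delta(e)$. The only thing to check carefully is the sign, which comes from the minus sign in~\eqref{eq:partial-e} and matches the minus sign on the right-hand side of the statement.
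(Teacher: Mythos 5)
Your proposal is correct and coincides with the paper's own proof: substitute the expression for $\PD{F_G}{e}(\ell)$ from~\eqref{eq:partial-e} into $\sum_{e \in E_+}\ell(e)\,\PD{F_G}{e}(\ell)$, exchange the finite sums, and collapse the inner sum using $\ell(\Delta)=\sum_{e\in E_+}\Delta(e)\ell(e)$. Nothing further is needed.
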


\begin{proof}
We compute:
\begin{align*}
\I{\ell, \nabla F_{G}(\ell)} & = \sum_{e \in E_{+}} \ell(e) \left(-\sum_{\Delta \in {C_{G}}}(-1)^{\splx{\Delta}}\Delta(e)\exp(-\ell(\Delta))\right) \\
& = -\sum_{\Delta \in {C_{G}}} (-1)^{\splx{\Delta}}\exp(-\ell(\Delta)) \left(\sum_{e \in E_{+}}  \ell(e)\Delta(e)\right) \\
& = -\sum_{\Delta \in C_{G}} (-1)^{\splx{\Delta}}\ell(\Delta)\exp(-\ell(\Delta)).\qedhere
\end{align*}
\end{proof}

\begin{lemma}\label{lem:hessian product}
Let $G$ be a finite connected graph.  If $(\ell, \bv) \in T\calM^{1}(G)$, then
\begin{equation*}
\I{\bv, \bH[F_{G}(\ell)]\bv} = \sum_{\Delta \in C_{G}} (-1)^{\splx{\Delta}}\bv(\Delta)^{2}\exp(-\ell(\Delta)).
\end{equation*}
\end{lemma}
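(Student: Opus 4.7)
The plan is to prove this by a direct calculation that exactly parallels the proof of Lemma~\ref{lem:dot product}, but uses the formula for the second partial derivatives of $F_G$ rather than the first. The hypothesis $(\ell,\bv)\in T\calM^1(G)$ is not actually needed for the identity to hold as a purely algebraic statement; it simply reflects the setting in which the lemma will later be applied.

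First I would unfold the definition of the Hessian quadratic form as
\[
\I{\bv,\bH[F_G(\ell)]\bv} \;=\; \sum_{e,e'\in E_+} \bv(e)\,\bv(e')\,\PPD{F_G}{e}{e'}(\ell).
\]
Next, I would substitute the closed-form expression~\eqref{eq:partial-ee'} for $\PPD{F_G}{e}{e'}(\ell)$, which was derived earlier by termwise differentiation of the identity in Theorem~\ref{thm:F}. This yields a triple sum indexed by pairs of edges $(e,e')\in E_+\times E_+$ and by simplices $\Delta\in C_G$.

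Then I would interchange the order of summation, pulling the sum over $\Delta$ to the outside, so that the summand factors as
\[
(-1)^{\splx{\Delta}}\exp(-\ell(\Delta))\sum_{e,e'\in E_+}\bv(e)\bv(e')\Delta(e)\Delta(e').
\]
The inner double sum factors as the square
\[
\Bigl(\sum_{e\in E_+}\bv(e)\Delta(e)\Bigr)^{\!2} \;=\; \bv(\Delta)^2,
\]
using the definition $\bv(\Delta)=\sum_{e\in E_+}\Delta(e)\bv(e)$ introduced just before Lemma~\ref{lem:dot product}. Collecting the pieces gives the desired formula. There is no substantive obstacle here; the only bookkeeping step to be careful about is that $\Delta(e)\in\{0,1,2\}$ accounts for the involution $\bar{\phantom{e}}$ (so that both orientations of an edge traversed by a cycle are incorporated once it is restricted to representatives in $E_+$), which is already how the notation was set up when \eqref{eq:partial-ee'} was derived, so nothing further is needed.
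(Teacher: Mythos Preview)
Your proof is correct and follows essentially the same approach as the paper: both substitute the formula~\eqref{eq:partial-ee'} for the second partials, interchange the order of summation, and recognize the resulting inner sum as $\bv(\Delta)^2$. The only cosmetic difference is that the paper first computes the $e$th component of $\bH[F_G(\ell)]\bv$ and then dots with $\bv$, whereas you expand the full double sum over $(e,e')$ at once; your observation that the tangency hypothesis is not used in the identity itself is also accurate.
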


\begin{proof}
This is similar to Lemma~\ref{lem:dot product}.  The $e$th component of $\bH[F_{G}(\ell)]\bv$ is
\begin{multline*}
\sum_{e' \in E_{+}} \left( \sum_{\Delta \in C_{G}} (-1)^{\splx{\Delta}} \Delta(e)\Delta(e')\exp(-\ell(\Delta))\right) \bv(e') = 
\sum_{\Delta \in C_{G}} (-1)^{\splx{\Delta}} \Delta(e)\bv(\Delta)\exp(-\ell(\Delta)).
\end{multline*}
Hence:
\begin{align*}
\I{\bv, \bH[\nabla F_{G}(\ell)]\bv} & = \sum_{e \in E_{+}} \bv(e) \left(\sum_{\Delta \in {C_{G}}}(-1)^{\splx{\Delta}}\Delta(e)\bv(\Delta)\exp(-\ell(\Delta))\right) \\
 &= \sum_{\Delta \in C_{G}} (-1)^{\splx{\Delta}}\bv(\Delta)^{2}\exp(-\ell(\Delta)).\qedhere
\end{align*}
\end{proof}

By Lemma~\ref{lem:F}, if $\entropy_G(\ell) = 1$ then $F_{G}(\ell) = 0$.  A partial converse is:

\begin{lemma}\label{lem: h < 1}
If $\entropy_G(\ell) < 1$, then $F_G(\ell) > 0$.
\end{lemma}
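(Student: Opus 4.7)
The plan is to translate the claim $F_G(\ell) > 0$ into a statement about the spectrum of $A_{G,\ell}$, using the bridge $F_G(\ell) = \det(I - A_{G,\ell})$ together with the monotonicity of the Perron eigenvalue under a strict entrywise decrease of a nonnegative irreducible matrix.

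First, I would use the homogeneity of entropy (Lemma~\ref{lem:homogeneous -1}) to reduce to a comparison with the unit entropy stratum. Setting $a = \entropy_G(\ell) < 1$, Lemma~\ref{lem:homogeneous -1} gives $\entropy_G(a\ell) = 1$, so $a\ell \in \calM^1(G)$, and therefore $\pressure_G(-a\ell) = 0$ by Theorem~\ref{thm:pressure = 0 at entropy}\eqref{item:common level set}, i.e., $\spec(A_{G,a\ell}) = 1$. Since $0 < a < 1$ and $\ell(e) > 0$ for every $e \in E_+$, we have $a\ell(e) < \ell(e)$, hence $\exp(-\ell(e)) < \exp(-a\ell(e))$, and so the matrices $A_{G,\ell}$ and $A_{G,a\ell}$ have the same support (that of $A_G$) with $A_{G,\ell}(e,e') < A_{G,a\ell}(e,e')$ at every nonzero entry. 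Because $A_G$ is irreducible (as noted in the paper following the definition of $A_G$), both $A_{G,\ell}$ and $A_{G,a\ell}$ are irreducible nonnegative matrices, and the strict entrywise domination gives $\spec(A_{G,\ell}) < \spec(A_{G,a\ell}) = 1$ by the Perron--Frobenius theorem.

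Now I would convert this spectral inequality into a sign statement about $F_G(\ell)$. Let $\lambda_1,\dots,\lambda_{|E|}$ be the eigenvalues of $A_{G,\ell}$, counted with algebraic multiplicity. Then
\begin{equation*}
F_G(\ell) = \det(I - A_{G,\ell}) = \prod_{i=1}^{|E|}(1 - \lambda_i).
\end{equation*}
Since $A_{G,\ell}$ has real entries, its complex eigenvalues occur in conjugate pairs, and each such pair contributes
\begin{equation*}
(1-\lambda)(1-\bar\lambda) = |1-\lambda|^2 > 0,
\end{equation*}
which is strictly positive because $\spec(A_{G,\ell}) < 1$ forces $\lambda \neq 1$. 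Every real eigenvalue $\lambda_i$ satisfies $|\lambda_i| \leq \spec(A_{G,\ell}) < 1$, hence $1 - \lambda_i > 0$. Multiplying these positive factors yields $F_G(\ell) > 0$.

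I do not anticipate a serious obstacle here: the only subtlety is the application of the strict monotonicity form of Perron--Frobenius, which requires irreducibility and a strict entrywise decrease of a nonnegative matrix with fixed support --- both of which are immediate from the standing hypotheses on $G$ and the positivity of $\ell$. The argument is essentially a cleaner companion to the computation in the proof of Lemma~\ref{lem:F}\eqref{item:grad F}, where the same factorization $p(t) = (1-t)\prod_{i\geq 2}(1 - t\lambda_i)$ was used to pin down the sign of $p'(1)$.
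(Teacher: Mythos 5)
Your proof is correct, but it follows a genuinely different route from the paper's in both halves. For the key intermediate fact $\spec(A_{G,\ell}) < 1$, the paper does not use Perron--Frobenius monotonicity: it integrates $\I{\nabla \pressure_G(-\ell_t), -\dot\ell_t} > 0$ along the straight-line path from $\ell$ to $\entropy_G(\ell)\ell$ (using the componentwise positivity of $\nabla\pressure_G$ from Lemma~\ref{lem:pressure properties}) to conclude $\pressure_G(-\ell) < 0$, hence $1$ is not an eigenvalue and $F_G(\ell) \neq 0$. You instead compare $A_{G,\ell}$ entrywise with $A_{G,a\ell}$, where $a = \entropy_G(\ell)$, and invoke strict monotonicity of the Perron root for irreducible nonnegative matrices; this is a valid and somewhat more elementary linear-algebraic substitute for the thermodynamic input, though it relies on the strict-domination form of Perron--Frobenius rather than anything already developed in the paper. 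For the upgrade from ``$F_G(\ell)\neq 0$'' to ``$F_G(\ell)>0$,'' the paper uses a deformation argument: $p(t) = F_G(\ell + t\cdot\One)$ never vanishes (since $\entropy_G(\ell + t\One) < 1$ for all $t \geq 0$) and tends to $1$, so $p > 0$ by continuity. You instead get positivity directly from the factorization $\det(I - A_{G,\ell}) = \prod_i(1-\lambda_i)$, pairing complex conjugate eigenvalues and using $|\lambda_i| < 1$ for the real ones --- essentially the same factorization the paper deploys in the proof of Lemma~\ref{lem:F}\eqref{item:grad F}, here used more directly. Your argument is shorter and self-contained at the matrix level; the paper's has the merit of reusing only the pressure machinery already in place and avoiding the strict-monotonicity version of Perron--Frobenius. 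Both are sound.
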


\begin{proof}
We begin by showing that if $\entropy_G(\ell) < 1$, then $F_G(\ell) \neq 0$.  To begin, we observe that if $\entropy_G(\ell) < 1$, then $\pressure_G(-\ell) < 0$.  Indeed, let $\ell_t \from [0,1] \to \calM(G)$ be the path defined by
\[ \ell_t = (1-t)\ell + t\entropy_G(\ell)\ell. \]
We have that $\I{\nabla \pressure_G(-\ell_t), - \dot \ell_t} > 0$ as each component of $\nabla \pressure_G(-\ell_t)$ is positive by Lemma~\ref{lem:pressure properties}\eqref{item:positive} and each component of $-\dot\ell_t$ is positive by construction. Notice that $\pressure_G(-\ell_1) = 0$ by Theorem~\ref{thm:pressure = 0 at entropy}\eqref{item:common level set} since $\entropy_G(\ell_1) = 1$.  Therefore, we find that
\[ -\pressure_G(-\ell_0) = \int_0^1 \I{\nabla \pressure_G(-\ell_t),-\dot \ell_t} \, dt > 0 \]
and hence $\pressure_G(-\ell) = \pressure_G(-\ell_0)$ is negative as claimed.  Therefore we have that $\spec (A_{G,\ell}) < 1$ and in particular, $1$ is not an eigenvalue of $A_{G,\ell}$.  Hence $F_G(\ell) \neq 0$.  This completes the claim that $F_G(\ell) \neq 0$ for any $\ell \in \calM(G)$ where $\entropy_G(\ell) < 1$.  

We now complete the proof of the lemma.  Suppose that $\entropy_G(\ell) < 1$ and consider the continuous function $p \from [0,\infty) \to \RR$ defined by $p(t) = F_G(\ell + t \cdot \One)$.  As $\entropy_G(\ell + t \cdot \One) < 1$ for all $t \in [0,\infty)$, by the above claim we have that $p(t) \neq 0$.  Since $p(t) \to 1$ as $t \to \infty$, we have that $p(t) > 0$ for all $t \in [0,\infty)$.  In particular, we have that $F_G(\ell) = p(0) > 0$.      
\end{proof}


\subsection{A Simplification}\label{subsec:simplification}
The function $F_{G}$ factors nontrivially in special cases as a result of certain aspects of the graph $G$.  In such a case, we can replace $F_G$ with one of these factors and simplify the expressions for the entropy and pressure norm from Proposition~\ref{prop:metric formulas}.  

For instance, one factorization occurs if $e$ is a loop edge.  When $\ell(e) = 0$ the vector $\bv \in \RR^{\abs{E}}$, where $\bv(e) = 1$, $\bv(\bar{e}) = -1$, and the rest of the entries equal to 0, is an eigenvector of $A_{G,\ell}$ with eigenvalue 1.  This means $1-\exp(-\ell(e))$ is a factor of $F_{G}$.

\begin{example}\label{ex:barbell factor}
Using the notation from Example~\ref{example:barbell poly}, we have that both $1-x$ and $1-y$ are factors of $F_{G}$.  Factoring, we have:
\begin{equation*}
F_{G}(\ell) = (1-x)(1-y)(1 - x - y + xy - 4xyz^{2}).
\end{equation*}
\end{example}

Another case where there is a factorization of $F_{G}$ is when the edge involution $e \leftrightarrow \bar{e}$ is a graph automorphism of $G$. 
There are only two types of graphs for which such an automorphism exists.
\begin{enumerate}
\item the $r$--rose, $\calR_{r}$; and 
\item the graph $\Theta_{r}$ with vertices, $v$ and $w$, and edges $e_1, \ldots,e_{r+1}$ where $o(e_i) = v$ and $\tau(e_i) = w$ for $i = 1,\ldots,r+1$.
\end{enumerate}
In this case ordering the edges in $E_+$ first and then ordering the edges in $E - E_+$ accordingly, we have that
\[ A_G = \begin{bmatrix}
B_G & B'_G \\ B'_G & B_G
\end{bmatrix} \] 
for two matrices $B_G,B'_G \in \Mat_{\abs{E_+}}(\RR)$.  Thus
\begin{align*}
F_G(\ell) &= \det(I - A_{G,\ell}) = \det \begin{bmatrix} I - B_{G,\ell} & -B'_{G,\ell} \\ -B'_{G,\ell} & I - B_{G,\ell} \end{bmatrix} \\
& = \det(I - B_{G,\ell} - B'_{G,\ell})\det(I - B_{G,\ell} + B'_{G,\ell}).
\end{align*}
Since each row of $B_G$ or $B'_G$ corresponds to an edge $e \in E_+$ the notation $B_{G,\ell}$ and $B'_{G,\ell}$ still makes sense.

For $\calR_r$, we have that $B_{\calR_r}$ is the $r \times r$ matrix consisting of all $1$'s and $B'_{\calR_r} = B_{\calR_r} - I$.  In this case
\[ \det(I - B_{\calR_r,\ell} + B'_{\calR_r,\ell}) = \prod_{e \in E_+}(1-\exp(-\ell(e))). \]  
These are precisely the factors which were observed above for loop edges.

For $\Theta_{r}$, we have that $B_{\Theta_r}$ is the $(r+1) \times (r+1)$ matrix consisting of all $0$'s and $B'_{\Theta_r}$ is the $(r + 1) \times (r+1)$ matrix where all diagonal entries are $0$ and all non-diagonal entries are $1$.  In this case we have
\begin{align*}
F_{\Theta_{r}}(\ell) & = \det(I - B'_{\Theta_r,\ell})\det(I + B'_{\Theta_r,\ell}).
\end{align*}

In general, we now construct a graph quotient $D_{G} \to \oD_{G}$ that identifies certain edge pairs $\{e,\bar{e}\}$ resulting in a new matrix $\oA_{G}$, which selects the appropriate factor.  In this new matrix, every row corresponds to either an edge $e \in E$ or an edge pair $\{ e,\bar{e}\}$ and so we can still make sense of $\oA_{G,f}$ for a function $f \from E_+ \to \RR$.  

When $G$ is $\calR_r$ or $\Theta_{r}$, we take $\oD_{\Theta_{r}}$ to be the quotient of $D_{\Theta_{r}}$ by the orientation reversing automorphism $e \mapsto \bar{e}$.  In this case $\oA_{G,\ell} = B_{G,\ell} + B'_{G,\ell}$.

Else, for each pair $\{e,\bar{e}\}$ that is a loop edge of $G$, we identify the vertices of $D_{G}$ corresponding to $e$ and $\bar{e}$, now denoted $e\bar{e}$, keep the incoming edges and identify the outgoing edges that have the same terminal vertex.  We call the resulting graph $\oD_{G}$.  Algebraically, we add together the columns corresponding to $e$ and $\bar{e}$ and delete one of the rows corresponding to $e$ and $\bar{e}$.

We define $\oF_{G} \from \calM(G) \to \RR$ by
\begin{equation}\label{eq:oF}
\oF_{G}(\ell) = \det\bigl(I - \oA_{G,\ell}\bigr).
\end{equation}
The formula in Theorem~\ref{thm:F}, the formulas for the partial derivatives in~\eqref{eq:partial-e} and \eqref{eq:partial-ee'}, and the inner products in Lemmas~\ref{lem:dot product} and \ref{lem:hessian product} hold are true for $\oF_{G}$ using the complex $\oC_G$, which is the cycle complex of the directed graph $\oD_G$.

\begin{example}\label{ex:barbell quotient}
For $G$ equal to the barbell graph as in Example~\ref{ex:barbell} we have $\oA_{G}$ and $\oD_{G}$ as shown below (columns of the matrix are ordered as $a\bar{a}$, $b\bar{b}$, $c$, $\bar{c}$):

\medskip
\begin{minipage}{0.4\textwidth}
\begin{equation*}
\begin{bmatrix}
1 & 0 & 1 & 0 \\
0 & 1 & 0 & 1 \\
0 & 2 & 0 & 0 \\
2 & 0 & 0 & 0 
\end{bmatrix}
\end{equation*}
\end{minipage}
\begin{minipage}{0.6\textwidth}
\begin{tikzpicture}
\tikzstyle{vertex} =[circle,draw,fill=black,thick, inner sep=0pt,minimum size= 1 mm]
\node[vertex,label=left:{$a\bar{a}$}] (a) at (-1.5,0) {};
\node[vertex,label=above:{$c$}] (c) at (0,1) {};
\node[vertex,label=below:{$\bar{c}$}] (C) at (0,-1) {};
\node[vertex,label=right:{$b\bar{b}$}] (b) at (1.5,0) {};
\draw[thick,black,->-] (a) -- (c);
\draw[thick,black,->-] (b) -- (C);
\draw[thick,black,->-] (c) to[in=180,out=-80] (b);
\draw[thick,black,->-] (c) to[in=100,out=0] (b);
\draw[thick,black,->-] (C) to[in=0,out=100] (a);
\draw[thick,black,->-] (C) to[in=-80,out=180] (a);
\draw[thick,->-] (a) arc (0:360:8mm);
\draw[thick,->-] (b) arc (180:-180:8mm);
\end{tikzpicture}
\end{minipage}

\medskip
The two directed edges from $\bar{c}$ to $a\bar{a}$ are identified with the set $\{a ,\bar{a}\}$ so that we think of the sequence $c,a$ 
or $c,\bar{a}$ as specifying which of the two edges between $\bar{c}$ and $a\bar{a}$ to traverse in a cycle.  There are six simple cycles: $\gamma_{a\bar{a}} = (a\bar{a})$, $\gamma_{b\bar{b}} = (b\bar{b})$, $\gamma_{ab} = (a,c,b,\bar{c})$, $\gamma_{a\bar{b}} = (a,c,\bar{b},\bar{c})$, $\gamma_{\bar{a}b} = (\bar{a},c,b,\bar{c})$, and $\gamma_{\bar{a}\bar{b}} = (\bar{a},c,\bar{b},\bar{c})$.  The cycle complex $\oC_{G}$ is shown below:

\begin{figure}[h]
\centering
\begin{tikzpicture}
\tikzstyle{vertex} =[circle,draw,fill=black,thick, inner sep=0pt,minimum size= 1 mm]
\node[vertex,label=above:{$\gamma_{a\bar{a}}$}] (a) at (0,1.5) {};
\node[vertex,label=below:{$\gamma_{b\bar{b}}$}] (b) at (0,-1.5) {};
\node[vertex,label=left:{$\gamma_{\bar{a}\bar{b}}$}] (AB) at (6,-1.5) {};
\node[vertex,label=left:{$\gamma_{a\bar{b}}$}] (aB) at (3,-1.5) {};
\node[vertex,label=left:{$\gamma_{ab}$}] (ab) at (3,1.5) {};
\node[vertex,label=left:{$\gamma_{\bar{a}b}$}] (Ab) at (6,1.5) {};
\draw[thick,black] (a) -- (b);
\end{tikzpicture}
\end{figure}

Using Theorem~\ref{thm:F}, we find (with $x = \exp(-\ell(a))$, $y = \exp(-\ell(b))$ and $z = \exp(-\ell(c))$):
\begin{align*}
\oF_{G}(\ell) =& \, \sum_{\Delta \in \oC_{G}} (-1)^{\splx{\Delta}} \exp(-\ell(\Delta)) 
=  1 - \bigl(x + y + 4xyz^{2}\bigr) + xy.
\end{align*}
The astute reader will notice the comparison with Example~\ref{ex:barbell factor}.
\end{example}

\begin{lemma}\label{lem:same spectral}
With the above set-up: $\spec(\oA_{G,\ell}) = \spec(A_{G,\ell})$.
\end{lemma}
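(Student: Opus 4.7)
The plan is to prove the lemma by exhibiting, in each of the two cases defining $\oA_{G,\ell}$, an $A_{G,\ell}$-invariant decomposition $\RR^{\abs{E}} = V^+ \oplus V^-$ under which $A_{G,\ell}$ block-diagonalizes, and then using Perron--Frobenius to place the spectral radius on the $V^+$ block (which will correspond to $\oA_{G,\ell}$).

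In the case $G \in \{\calR_r, \Theta_r\}$, the edge involution $e \leftrightarrow \bar e$ is a graph automorphism of $G$, and the associated permutation matrix $\bar P$ commutes with $A_{G,\ell}$ because $\ell(e) = \ell(\bar e)$. Taking $V^\pm$ to be the $\pm 1$-eigenspaces of $\bar P$, both are $A_{G,\ell}$-invariant, and under the natural identification $V^+ \cong \RR^{\abs{E_+}}$ the restriction of $A_{G,\ell}$ to $V^+$ is represented by $B_{G,\ell} + B'_{G,\ell} = \oA_{G,\ell}$. Since $A_{G,\ell}$ is non-negative and irreducible, Perron--Frobenius gives a strictly positive eigenvector $\bv^+$ with eigenvalue $\spec(A_{G,\ell})$; uniqueness up to positive scalar forces $\bar P \bv^+ = \bv^+$, so $\bv^+ \in V^+$. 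Hence $\spec(A_{G,\ell})$ is an eigenvalue of $\oA_{G,\ell}$, and the reverse inequality follows from the factorization of $F_G$ already recorded.

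In the general case, I would begin by recalling that for each loop pair $\{e_i, \bar e_i\}$ the vector $\bv_i$ with $\bv_i(e_i) = 1$, $\bv_i(\bar e_i) = -1$, and zero elsewhere is an eigenvector of $A_{G,\ell}$ with eigenvalue $\exp(-\ell(e_i))$ (a direct calculation using $o(e_i) = \tau(e_i)$, as already noted in the discussion preceding Example~\ref{ex:barbell factor}). Set $V^- = \mathrm{span}\{\bv_i\}$ and let $V^+$ be the complementary subspace spanned by $\{e_i + \bar e_i\}_i$ together with the non-loop edges. The $A_{G,\ell}$-invariance of $V^+$ reduces to checking that the rows of $A_G$ indexed by $e_i$ and $\bar e_i$ agree on every column except $e_i$ and $\bar e_i$, which is immediate from the definition of $A_G$. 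Under the identification $e_i + \bar e_i \leftrightarrow e_i\bar e_i$, the $V^+$-block is precisely $\oA_{G,\ell}$, while the $V^-$-block is the diagonal matrix with entries $\exp(-\ell(e_i))$.

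To finish the general case, it remains to verify that $\spec(\oA_{G,\ell}) \geq \exp(-\ell(e_i))$ for every loop edge $e_i$, since then the spectral radius of the block-diagonal $A_{G,\ell}$ coincides with $\spec(\oA_{G,\ell})$. This follows because $\oA_{G,\ell}$ is non-negative with diagonal entry $\exp(-\ell(e_i))$ at the row indexed by $e_i\bar e_i$ (since $A_G(e_i, e_i) = 1$ while $A_G(e_i, \bar e_i) = 0$), combined with the general fact that the spectral radius of a non-negative matrix is bounded below by any of its diagonal entries (a principal $1 \times 1$ submatrix argument, or equivalently $(M^n)_{ii} \geq (M_{ii})^n$). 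The main technical step I expect to be a minor obstacle is the careful verification in the general case that $V^+$ is actually invariant and that its block is $\oA_{G,\ell}$; this is mechanical but requires care with indexing under the quotient $D_G \to \oD_G$.
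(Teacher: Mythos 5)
Your argument is correct, but it takes a genuinely different route from the paper. The paper disposes of the lemma in two lines by circuit counting: each circuit in $\oD_G$ lifts to at most two circuits of the same length in $D_G$, giving $\trace(\oA_{G,\ell}^n) \leq \trace(A_{G,\ell}^n) \leq 2\trace(\oA_{G,\ell}^n)$, and the spectral radii then agree since for non-negative matrices the spectral radius is the growth rate of $\trace(\,\param^n)^{1/n}$. You instead block-diagonalize $A_{G,\ell}$ over an invariant splitting $V^+ \oplus V^-$ (symmetric/antisymmetric under the involution in the $\calR_r$, $\Theta_r$ case; the span of the vectors $e_i + \bar e_i$ together with non-loop coordinates versus the loop eigenvectors $e_i - \bar e_i$ in the general case), identify the $V^+$ block with $\oA_{G,\ell}$, and then pin the spectral radius on that block either via Perron--Frobenius symmetry of the positive eigenvector or via the diagonal-entry bound $\spec(\oA_{G,\ell}) \geq \exp(-\ell(e_i))$; all of these verifications check out, including the eigenvector computation $A_{G,\ell}(e_i - \bar e_i) = \exp(-\ell(e_i))(e_i - \bar e_i)$ and the row/column bookkeeping identifying the $V^+$ block with the quotient matrix. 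The trade-off: the paper's trace argument is shorter and needs no case analysis, while your decomposition yields strictly more information --- the full spectrum of $A_{G,\ell}$ is that of $\oA_{G,\ell}$ together with the explicit eigenvalues $\exp(-\ell(e))$ over loop edges, which explains the factorizations of $F_G$ recorded in Section~\ref{subsec:simplification} rather than merely being consistent with them. One small repair: in the first case, the inequality $\spec(\oA_{G,\ell}) \leq \spec(A_{G,\ell})$ should be justified by the invariant-subspace containment you have already established (the spectrum of the restriction to $V^+$ is contained in that of $A_{G,\ell}$), not by the factorization of $F_G$, which is an identity of determinants at a single point and does not by itself compare spectral radii.
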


\begin{proof}
Each circuit in $\oD_{G}$ lifts to at most two circuits of the same length in $D_{G}$.  Thus $\trace(\oA_{G,\ell}^{n}) \leq \trace(A_{G,\ell}^{n}) \leq 2\trace(\oA_{G,\ell}^{n})$  for all $n \in \NN$ and so the lemma follows.
\end{proof}

In particular, we have that $\pressure_G(-\ell) = \log \spec(\oA_{G,\ell})$.  As in Corollary~\ref{co:F}, we have the following statement.   This follows for the same reasons as in Lemma~\ref{lem:F} as $\oF_G(\ell) = 0$ and $\nabla \oF_G(\ell) \neq 0$ for $\ell \in \calM^1(G)$.

\begin{proposition}\label{prop:oF}
The unit entropy moduli space $\calM^{1}(G)$ is a connected component of the level set $\{ \ell \in \calM(G) \mid \oF_{G}(\ell) = 0 \}$.
\end{proposition}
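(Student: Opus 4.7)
My plan is to follow the strategy used for Corollary~\ref{co:F}: verify that the two key properties of $F_G$ established in Lemma~\ref{lem:F} (namely, vanishing on $\calM^1(G)$ and nonvanishing gradient there) carry over to $\oF_G$, then conclude with standard implicit function theorem and continuity arguments.

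First, I will show $\oF_G \equiv 0$ on $\calM^1(G)$. For $\ell \in \calM^1(G)$, combining Theorem~\ref{thm:pressure = 0 at entropy}\eqref{item:common level set} with Lemma~\ref{lem:same spectral} yields $\spec(\oA_{G,\ell}) = \spec(A_{G,\ell}) = 1$. Since $\oA_{G,\ell}$ is a nonnegative matrix, its spectral radius is attained as an eigenvalue, so $1$ is an eigenvalue of $\oA_{G,\ell}$ and hence $\oF_G(\ell) = \det(I - \oA_{G,\ell}) = 0$.

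Next, I will show $\nabla \oF_G(\ell) \neq 0$ on $\calM^1(G)$ by mimicking the proof of Lemma~\ref{lem:F}\eqref{item:grad F}. Defining $p(t) = \det(I - t\oA_{G,\ell})$, one has $\oF_G(\ell + s \cdot \One) = p(\exp(-s))$ and hence $\I{\One,\nabla \oF_G(\ell)} = -p'(1)$. Writing $p(t) = \prod_i (1 - t\mu_i)$ over the eigenvalues $\mu_i$ of $\oA_{G,\ell}$, one sees that $1$ appears among them (by the previous step) and that, grouping complex conjugate pairs and using $|\mu_i| \leq 1$, one obtains $p'(1) = -\prod_{\mu_i \neq 1}(1 - \mu_i) < 0$, provided $1$ is a \emph{simple} eigenvalue. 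The main technical point is therefore establishing simplicity of $1$; I will verify this via Perron--Frobenius by observing that the quotient digraph $\oD_G$ inherits strong connectedness from $D_G$, so $\oA_{G,\ell}$ is nonnegative and irreducible, and its spectral radius is necessarily a simple eigenvalue.

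Finally, these two properties combine in the usual way. The implicit function theorem applied to $\oF_G$ makes $\{\oF_G = 0\}$ a smooth codimension-one submanifold near each point of $\calM^1(G)$, so $\calM^1(G)$ (itself a codimension-one submanifold by Lemma~\ref{lem:regular value}) is open in the zero set by a matching-dimension argument. Continuity of entropy (Theorem~\ref{thm:entropy continuous}) shows $\calM^1(G)$ is closed in $\calM(G)$, hence closed in $\{\oF_G = 0\}$. Since $\calM^1(G)$ is connected (it is diffeomorphic via scaling to an open subset of $\PP\calM(G)$), it is a full connected component.
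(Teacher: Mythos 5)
Your proposal is correct and follows essentially the same route as the paper: the paper deduces Proposition~\ref{prop:oF} by noting $\spec(\oA_{G,\ell}) = \spec(A_{G,\ell})$ (Lemma~\ref{lem:same spectral}) and then repeating the argument of Lemma~\ref{lem:F} to get $\oF_G(\ell) = 0$ and $\nabla \oF_G(\ell) \neq 0$ on $\calM^1(G)$, exactly as you do. Your explicit verification that $\oD_G$ inherits strong connectedness (hence $\oA_{G,\ell}$ is irreducible, making $1$ a simple eigenvalue) and your concluding open--closed--connected argument are just the details the paper leaves implicit.
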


We also observe that the formulas for the metrics in Proposition~\ref{prop:metric formulas} hold for $\oF_G$.

\begin{proposition}\label{prop:o metric formulas}
If $\ell_{t} \from (-1,1) \to \calM^{1}(G)$ is a smooth path, then:\begin{align*} 
\norm{(\ell_{t},\dot\ell_{t})}_{\entropy,G}^{2} &= \frac{-\I{\dot\ell_{t},\bH [\oF_{G}(\ell_{t})]\dot\ell_{t}}}{\I{\ell_{t},\nabla \oF_{G}(\ell_{t})}}= \frac{\I{\ddot\ell_{t},\nabla \oF_{G}(\ell_{t})}}{\I{\ell_{t},\nabla \oF_{G}(\ell_{t})}}, \mbox{ and} \\
\norm{(\ell_{t},\dot\ell_{t})}_{\pressure,G}^{2} &= \frac{-\I{\dot\ell_{t},\bH [\oF_{G}(\ell_{t})]\dot\ell_{t}}}{\norm{\nabla \oF_{G}(\ell_{t})}_{1}}= \frac{\I{\ddot\ell_{t},\nabla \oF_{G}(\ell_{t})}}{\norm{\nabla \oF_{G}(\ell_{t})}_{1}}. 
\end{align*}
\end{proposition}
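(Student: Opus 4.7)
The plan is to mirror the proof of Proposition~\ref{prop:metric formulas} verbatim, substituting $\oF_G$ for $F_G$ throughout, and using Proposition~\ref{prop:oF} (which says $\calM^1(G)$ is a component of $\{\oF_G = 0\}$) in place of Corollary~\ref{co:F}. The key analytic inputs are: (i) $\oF_G(\ell_t)\equiv 0$ along any smooth path $\ell_t$ in $\calM^1(G)$, and (ii) $\nabla \oF_G(\ell)$ is a \emph{positive} scalar multiple of $\nabla \pressure_G(-\ell)$ (equivalently, a negative scalar multiple of $\nabla \entropy_G(\ell)$) at each $\ell \in \calM^1(G)$.

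First I would differentiate the identity $\oF_G(\ell_t) = 0$ twice in $t$ to obtain
\begin{equation*}
\I{\dot\ell_t, \nabla \oF_G(\ell_t)} = 0 \qquad\text{and}\qquad \I{\ddot\ell_t,\nabla \oF_G(\ell_t)} = -\I{\dot\ell_t, \bH[\oF_G(\ell_t)]\dot\ell_t}.
\end{equation*}
This already establishes the second equality in each displayed formula of the proposition.

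Next I would verify the proportionality claim (ii). Parallelism of $\nabla \oF_G(\ell)$ with $\nabla \pressure_G(-\ell)$ is automatic from Proposition~\ref{prop:oF} together with Theorem~\ref{thm:pressure = 0 at entropy}: both vectors are normal to the codimension-one submanifold $\calM^1(G)$. The content is in the \emph{sign}. I would establish it by replaying the argument of Lemma~\ref{lem:F}\eqref{item:positive partial}: consider the one-parameter path $s \mapsto \ell + s\cdot\One$ and observe that $\oF_G(\ell + s\cdot\One) = \det(I - e^{-s}\oA_{G,\ell}) = p(e^{-s})$, where $p(t) = \prod_i (1 - t\lambda_i)$ and the $\lambda_i$ are the eigenvalues of $\oA_{G,\ell}$. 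By Lemma~\ref{lem:same spectral} and the Perron--Frobenius argument already used for $A_{G,\ell}$, the eigenvalue $1$ occurs once, the remaining factors pair into conjugate complex pairs contributing positive real factors, and hence $p'(1) < 0$. This yields $\I{\One,\nabla \oF_G(\ell)} > 0$, and combining with parallelism to $\nabla\pressure_G(-\ell)$ (whose components are all positive by Lemma~\ref{lem:pressure properties}\eqref{item:positive}) forces every component of $\nabla \oF_G(\ell)$ to be positive. So the proportionality constants are positive.

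Finally I would combine these pieces. Writing $\nabla \oF_G(\ell) = \nu\, \nabla \pressure_G(-\ell)$ with $\nu > 0$, Lemma~\ref{lem:pressure properties}\eqref{item:norm = 1} gives $\norm{\nabla \oF_G(\ell)}_1 = \nu$, while Proposition~\ref{prop:metrics} gives $\I{\ddot\ell_t,\nabla \oF_G(\ell_t)} = \nu\, \norm{(\ell_t,\dot\ell_t)}_{\pressure,G}^2$, so the pressure formula follows. For the entropy formula, note $\nabla\oF_G(\ell)$ and $-\nabla\entropy_G(\ell)$ are parallel with positive coefficient (both have positive components, using Lemma~\ref{lem:regular value} for the sign of $\nabla \entropy_G$ evaluated on $\ell$); then Lemma~\ref{lem:regular value} gives $\I{\ell,\nabla\oF_G(\ell)} > 0$ and the ratio $\I{\ddot\ell_t,\nabla\oF_G(\ell_t)}/\I{\ell_t,\nabla\oF_G(\ell_t)}$ collapses to $\norm{(\ell_t,\dot\ell_t)}_{\entropy,G}^2$ via Proposition~\ref{prop:metrics}. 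The main obstacle here is the positivity verification in step two; parallelism alone is straightforward, but without the correct sign of the proportionality constant the denominators in the proposition would not have the geometric meaning they need to.
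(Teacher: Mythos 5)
Your proposal is correct and follows essentially the same route the paper takes (it leaves this verification implicit): differentiate $\oF_G(\ell_t)=0$ twice, observe that $\nabla\oF_G(\ell)$ is a positive multiple of $\nabla\pressure_G(-\ell)$ (equivalently of $-\nabla\entropy_G(\ell)$) via the Perron--Frobenius determinant argument of Lemma~\ref{lem:F} transported to $\oA_{G,\ell}$ through Lemma~\ref{lem:same spectral}, and then invoke Proposition~\ref{prop:metrics} together with Lemma~\ref{lem:regular value} and Lemma~\ref{lem:pressure properties}. Your write-up in fact supplies the details the paper omits; the only tiny overstatement is that for the entropy formula the ratio $\I{\ddot\ell_t,\nabla\oF_G(\ell_t)}/\I{\ell_t,\nabla\oF_G(\ell_t)}$ is invariant under rescaling $\nabla\oF_G$ by any nonzero constant, so the positivity of the proportionality constant is genuinely needed only for the pressure formula, where the $L^1$--norm in the denominator forgets the sign.
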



\section{The Topology Induced by the Entropy Metric}\label{sec:entropy topology}

The purpose of this section is to show that the metric topology induced by $d_\entropy$ on $\calX^1(\FF_r)$ is the same as the weak topology on $\calX^1(\FF_r)$.  We do so using the formulas for the entropy metric derived in Section~\ref{sec:determinant} and seeing that they behave as one might anticipate with regards to collapses.  We refer the reader back to Section~\ref{subsec:outer space} for the notation used in this section.

By Theorem~\ref{thm:entropy continuous} we have that $\entropy_G \from \calM(G) \to \RR$ extends to a continuous function on $\ocalM(G)$.  
Indeed, if $c\from G \to G_0$ is a collapse, then for $\ell \in \calM(G_0)$ we have $\entropy_G(c^*(\ell)) = \entropy_{G_0}(\ell)$.  We set $\ocalM^1(G) = \{ \ell \in \ocalM(G) \mid \entropy_G(\ell) = 1 \}$ and observe that we have 
\[ \ocalM^1(G) = \bigcup_{c \from G \to G_0} c^*(\calM^1(G_0)) \]
as well.  This set is homeomorphic to the closure of $CV(G,\rho)$ in $CV(\FF_r)$ for any marking $\rho \from \calR_r \to G$. 
%
%
Given a graph $G$, we observe that $F_G:\calM(G) \to \RR$ admits an extension (still denoted $F_G$) to $\RR^{\abs{E_+}}$.
In particular $F_G$ is defined on $\ocalM(G) \subset \RR^{\abs{E_+}}$.
The next result shows that this function behaves as expected with respect to collapses.    

\begin{lemma}\label{lem:F_G for collapse}
If $c \from G \to G_0$ is a collapse, then $F_G \circ c^* = F_{G_0}$.
\end{lemma}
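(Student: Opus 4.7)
The plan is to use the matrix definition $F_G(\ell) = \det(I - A_{G,\ell})$ together with a Schur complement argument, decomposing the edges of $G$ into those that are collapsed by $c$ and those that are not. The key geometric input is that each preimage $c^{-1}(x)$ is a tree, and trees support no reduced closed walks while furnishing a unique reduced path between any two vertices.

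First I would set $E^{\mathrm{nc}} = \{e \in E \mid c(e) \text{ is an edge of } G_0\}$ and $E^{\mathrm{c}} = \{e \in E \mid c(e) \text{ is a vertex of } G_0\}$. Ordering rows and columns with $E^{\mathrm{nc}}$ first, since $c^*(\ell)(e)=0$ for $e \in E^{\mathrm{c}}$, we can write
\begin{equation*}
I - A_{G,c^*(\ell)} = \begin{pmatrix} I - N & -P \\ -Q & I - M \end{pmatrix},
\end{equation*}
where $N, P$ carry the weight $\exp(-\ell(c(e)))$ from the $E^{\mathrm{nc}}$ rows and $Q, M$ carry weight $1$ from the $E^{\mathrm{c}}$ rows.

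Next I would show $\det(I - M) = 1$. Consecutive collapsed edges in a reduced walk must lie in the same component of $c^{-1}(x)$, so $M$ is block-diagonal with blocks $M_x$ indexed by $x \in V(G_0)$, each block being $A_{T_x}$ for the tree $T_x = c^{-1}(x)$. Since a tree admits no reduced closed walks, the cycle complex $C_{T_x}$ contains only the empty simplex, and applying Theorem~\ref{thm:F} (which is a purely formal identity valid for any finite digraph) to $T_x$ at the zero length function yields $\det(I - M_x) = 1$. Multiplying over the components gives $\det(I - M) = 1$, so $I - M$ is invertible and the Schur complement formula yields
\begin{equation*}
F_G(c^*(\ell)) = \det\bigl((I - N) - P(I - M)^{-1} Q\bigr).
\end{equation*}

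The remaining task, which I expect to be the technical heart of the argument, is to identify the Schur complement with $I - A_{G_0,\ell}$ under the natural bijection $E^{\mathrm{nc}} \leftrightarrow E(G_0)$ induced by $c$. Expanding $(I-M)^{-1} = \sum_{k\ge 0} M^k$, the $(e,e')$-entry of $N + P(I - M)^{-1} Q$ is a weighted count, with common weight $\exp(-\ell(c(e)))$, of reduced edge paths in $G$ of the form $(e, f_1, \ldots, f_k, e')$ with all intermediate $f_i \in E^{\mathrm{c}}$ (including $k=0$, i.e.\ the direct transition from $N$). Since $T_x$ is a tree for every $x$, there is at most one reduced path between any two of its vertices; together with the observation that a non-collapsed edge is never the reverse of a collapsed one, one checks case by case that such a reduced path in $G$ exists precisely when $\tau(c(e)) = o(c(e'))$ and $c(e) \neq \overline{c(e')}$. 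In other words the count equals $A_{G_0}(c(e), c(e'))$, so $N + P(I-M)^{-1} Q = A_{G_0,\ell}$ and consequently $F_G(c^*(\ell)) = F_{G_0}(\ell)$, as desired.

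The main obstacle is the careful bookkeeping in the last step: one must separately handle the cases $c(e)=c(e')$ (loop in $G_0$), $c(e) = \overline{c(e')}$ (which forces backtracking and gives zero), and the generic case, and verify in each that the reducedness conditions at the junctions $e \to f_1$ and $f_k \to e'$ automatically hold. Once this is in hand, the identity $F_G \circ c^* = F_{G_0}$ follows at once.
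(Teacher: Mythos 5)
Your proposal is correct, but it takes a genuinely different route from the paper's proof. The paper reduces to the collapse of a single non-loop edge $e$ (a general collapse being a composition of such) and then performs explicit column operations on $I - A_{G,c^*(\ell)}$, adding the columns of $e$ and $\bar e$ to the columns of the edges that can follow them; this puts the matrix in lower block-triangular form with a $2\times 2$ identity block and $I - A_{G_0,\ell}$ as the other diagonal block, so the determinant is unchanged and the identity follows with no path counting. You instead handle an arbitrary collapse in one step: the block $M$ indexed by collapsed edges is block-diagonal over the fiber trees $c^{-1}(x)$ and is in fact nilpotent (a reduced walk in a finite tree is embedded, so no directed edge repeats), which is worth stating explicitly since it is what makes both $\det(I-M)=1$ and the expansion $(I-M)^{-1}=\sum_{k\ge 0}M^k$ legitimate as a finite sum; the Schur complement then reduces the claim to identifying $N+P(I-M)^{-1}Q$ with $A_{G_0,\ell}$, and your transfer-matrix interpretation of the entries as weighted counts of reduced paths $(e,f_1,\dots,f_k,e')$ through collapsed edges does this correctly: the junction non-backtracking conditions are automatic because a collapsed edge is never the reverse of a non-collapsed one, uniqueness of reduced paths in a tree gives count at most one, existence holds exactly when $\tau(c(e))=o(c(e'))$ and $c(e')\neq\overline{c(e)}$ (the case $e'=\bar e$ being excluded because a nonempty reduced path in a tree cannot return to its starting vertex), and the row weight $\exp(-\ell(c(e)))$ matches. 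The paper's argument buys brevity and pure row/column manipulation; yours buys a single uniform treatment of the whole collapse that makes conceptually transparent why the contracted trees contribute trivially to the determinant.
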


\begin{proof}
It suffices to consider the case when $c \from G \to G_0$ is the collapse of a single edge $e \in E_{+}$.  Order the edges in $E$ starting with $e$ and $\bar{e}$.  Since $e$ can be collapsed, it is not a loop and so we have that $A_{G}(e,e) = A_{G}(\bar{e},\bar{e}) = 0$.  By definition, $A_{G}(e,\bar{e}) = A_{G}(\bar{e},e) = 0$.  Thus the top-leftmost $2 \times 2$ block of the matrix $I - A_{G}$ is the $2 \times 2$ identity matrix. 

Let $\ell \in \calM(G_0)$.  For an edge $e' \notin \{e,\bar{e}\}$, the entry $\bigl[I - A_{G,c^*(\ell)}\bigr](e,e')$ is either $-1$ or $0$ depending on whether or not $e'$ can follow $e$.  Likewise for $\bigl[I - A_{G,c^*(\ell)}\bigr](\bar{e},e')$.  Again, as $e$ is not a loop, for any edge $e' \notin \{e,\bar{e}\}$, at most one of these entries is nonzero.  

For each edge $e' \notin \{e, \bar{e}\}$, where $\bigl[I - A_{G,c^*(\ell)}\bigr](e,e') = -1$, we consider the column operation that adds the column for $e$ to the column for $e'$.  This zeros the $(e,e')$ entry.  The $(\bar{e},e')$ entry was previously 0 and is uneffected by this operation.  We next see what effect this has on the remaining rows.  In the row for $e'' \notin \{e,\bar{e}\}$, this adds $-\exp(-\ell(e''))$ to $A_{G,c^*(\ell)}(e'',e')$ if $e$ can follow $e''$ and 0 else.  In the former case, the previous entry was either 0 ($e' \neq e''$) or 1 ($e' = e''$) as $e$ is not a loop edge.  In other words, this adds $-\exp(-\ell(e''))$ whenever $e'$ can follow $e''$ in $G_0$.  Therefore, the remaining entries in the column for $e'$ agree with the corresponding entries in the column of $I - A_{G_0,\ell}$ for $e'$.  

Hence after performing column operations to $I - A_{G,c^*(\ell)}$ with the column for $e$ to clear out the rest of the row for $e$ and column operations with the column for $\bar{e}$ to clear out the rest of the row for $\bar{e}$, the resulting matrix has lower block triangular form.  The top-leftmost $2 \times 2$ block is still the $2 \times 2$ identity matrix and the bottom-rightmost $(\abs{E} - 2) \times (\abs{E} - 2)$ block is $I - A_{G_0,\ell}$.  

As these column operations do not change the determinant, we have for $\ell \in \calM(G_0)$ that
\[F_{G}(c^*(\ell)) = \det\bigl(I - A_{G,c^*(\ell)}\bigr) = \det\bigl(I - A_{G_0,\ell}\bigr) = F_{G_0}(\ell).\qedhere\]
\end{proof}

As a consequence of Lemma~\ref{lem:F_G for collapse} we deduce the following.  If $c \from G \to G_0$ is a collapse and $e \in E_+$ is an edge such that $c(e)$ is not a vertex, then $\PD{F_{G_0}}{c(e)}(\ell) = \PD{F_G}{e}(c^*(\ell))$ for all $\ell \in \calM(G_0)$.  Similarly, in this same setting if additionally $c(e')$ is not a vertex for an edge $e' \in E_+$, then $\PPD{F_{G_0}}{c(e)}{c(e')}(\ell) = \PPD{F_G}{e}{e'}(c^*(\ell))$.  

The \emph{tangent bundle} $T\ocalM^1(G)$ is the subspace of $(\ell,\bv) \in \RR^{\abs{E_+}} \times \RR^{\abs{E_+}}$ such that $\ell \in \ocalM^1(G)$ and $\I{\bv,\nabla F_G(\ell)} = 0$.
Given a collapse, we let $\bc^* \from \RR^{\abs{(E_0)_+}} \to \RR^{\abs{E_+}}$ be the derivative of the map $c^* \from \calM(G_0) \to \calM(G)$ and $Tc^* \from T\calM^1(G_0) \to T\ocalM^1(G)$ be the map given by $Tc^*(\ell,\bv) = (c^*(\ell),\bc^*(\bv))$.

Using this notation, we see that the following hold:
\begin{align*}
\I{\bc^*(\bv),\bH[F_{G}(c^*(\ell))]\bc^*(\bv)} &= \I{\bv, \bH[F_{G_0}(\ell)]\bv}, \mbox{ and} \\
\I{c^*(\ell),\nabla F_G(c^*(\ell))} &= \I{\ell,\nabla F_{G_0}(\ell)}.
\end{align*}   
Hence, by Proposition~\ref{prop:metric formulas}, we get the following.

\begin{proposition}\label{prop:topologies agree}
Let $G$ be a finite connected graph.  The entropy norm $\norm{\param}_{\entropy,G} \from T\calM^1(G) \to \RR$ extends to a continuous semi-norm $\norm{\param}_{\entropy,G} \from T\ocalM^1(G) \to \RR$.  Specifically, if $c \from G \to G_0$ is a collapse and $(\ell,\bv) = Tc^*(\ell_0,\bv_0)$, then the extension satisfies
\[  \norm{(\ell,\bv)}_{\entropy,G} = \norm{(\ell_0,\bv_0)}_{\entropy,G_0}.  \]
\end{proposition}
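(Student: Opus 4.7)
The plan is to extend the norm by simply evaluating the closed-form expression from Proposition~\ref{prop:metric formulas} at points of $T\ocalM^1(G)$, and then verify continuity, the stated collapse-compatibility, and non-negativity. For $(\ell,\bv) \in T\ocalM^1(G)$, I would define
\[
\norm{(\ell,\bv)}_{\entropy,G}^2 \defeq \frac{-\I{\bv,\bH[F_G(\ell)]\bv}}{\I{\ell,\nabla F_G(\ell)}}.
\]
Both $\nabla F_G$ and $\bH[F_G]$ are continuous on $\RR^{\abs{E_+}}$, since by Theorem~\ref{thm:F}, $F_G$ is a polynomial in the variables $\exp(-\ell(e))$. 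By Proposition~\ref{prop:metric formulas} the formula reproduces the original entropy norm on $T\calM^1(G)$, so this is a genuine extension.

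To see the formula is well-defined, I would verify the denominator is positive on all of $\ocalM^1(G)$. Any $\ell \in \ocalM^1(G)$ can be written as $c^*(\ell_0)$ for some collapse $c : G \to G_0$ and $\ell_0 \in \calM^1(G_0)$, so $\ell(e) = 0$ for every collapsed edge $e$. Expanding $\I{\ell,\nabla F_G(\ell)}$ as a sum over edges, only the non-collapsed terms survive, and by the remark immediately following the proof of Lemma~\ref{lem:F_G for collapse}, each matches the corresponding term in $\I{\ell_0,\nabla F_{G_0}(\ell_0)}$. Hence
\[
\I{c^*(\ell_0),\nabla F_G(c^*(\ell_0))} = \I{\ell_0,\nabla F_{G_0}(\ell_0)} > 0,
\]
the positivity being Lemma~\ref{lem:F}(3) applied in $G_0$.

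The compatibility $\norm{Tc^*(\ell_0,\bv_0)}_{\entropy,G} = \norm{(\ell_0,\bv_0)}_{\entropy,G_0}$ is now immediate. Substituting $(\ell,\bv) = (c^*(\ell_0),\bc^*(\bv_0))$ into the defining formula and invoking the two identities recorded just above the statement of the proposition---one for the Hessian inner product, one for the gradient inner product---reduces it to the formula for the entropy norm in $G_0$. Continuity of the extension on all of $T\ocalM^1(G)$ then follows from continuity of $\bH[F_G]$ and $\nabla F_G$ combined with the strict positivity of the denominator established above.

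The main obstacle, in my view, is verifying that the extension is genuinely a semi-norm, i.e., that the quadratic form on $T_\ell \ocalM^1(G)$ remains positive semi-definite at boundary points $\ell$. The plan is to argue by density: $T\calM^1(G)$ is dense in $T\ocalM^1(G)$. Indeed, any $\ell = c^*(\ell_0) \in \ocalM^1(G)$ is a limit of points $\ell_n \in \calM^1(G)$ obtained by giving small positive weights to the collapsed edges and renormalizing using that $\entropy_G$ is continuous (Theorem~\ref{thm:entropy continuous}), and any $\bv \in T_\ell \ocalM^1(G)$ is approximated by its orthogonal projections onto the hyperplanes $\{\bw \in \RR^{\abs{E_+}} : \I{\bw,\nabla F_G(\ell_n)} = 0\}$, which converge to $T_\ell \ocalM^1(G)$ by continuity of $\nabla F_G$. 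Non-negativity of the extension then follows from its strict positivity on the interior by passing to the limit.
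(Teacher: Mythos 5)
Your proposal is correct and follows essentially the same route as the paper: the extension is defined by the $F_G$--formula of Proposition~\ref{prop:metric formulas}, and the collapse-compatibility and continuity come from Lemma~\ref{lem:F_G for collapse} together with the resulting identities for $\nabla F_G$ and $\bH[F_G]$ under $c^*$, which is exactly what the paper does. Your additional checks (positivity of $\I{\ell,\nabla F_G(\ell)}$ on $\ocalM^1(G)$ and positive semi-definiteness at boundary points via density of $T\calM^1(G)$) are details the paper leaves implicit, and they are argued correctly.
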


Consequently, we see that the distance function $d_{\entropy,G}$ extends to a distance function on $\ocalM^1(G)$ and that the induced topology is the same as the subspace topology.  As $\calX^1(\FF_r)$ is locally finite, the metric topology agrees with the weak topology as we show now.

\begin{theorem}\label{th:topologies agree}
The metric topology on $(\calX^1(\FF_r),d_\entropy)$ is the same as the weak topology on $\calX^1(\FF_r)$.
\end{theorem}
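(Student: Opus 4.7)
The plan is to prove agreement of the two topologies by exploiting local finiteness of the cell structure on $\calX^1(\FF_r)$ together with the fact, established just before the theorem, that on each closure $\ocalX^1(G,\rho) \cap \calX^1(\FF_r)$ the extended metric $d_{\entropy,G}$ induces the subspace topology coming from $\RR^{\abs{E_+}}$.

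First I would record the local finiteness of the cell structure: for any $x \in \calX^1(\FF_r)$, the set of marked graphs $(G_i,\rho_i)$, $i=1,\ldots,k$, with $x \in \ocalX^1(G_i,\rho_i) \cap \calX^1(\FF_r)$ is finite. This uses that $\calG_r$ is a finite set and that, for fixed $G$, the collapses from $G$ onto the marked graph underlying $x$ are finite in number. Writing $S = \bigcup_i \ocalX^1(G_i,\rho_i) \cap \calX^1(\FF_r)$, this is a closed subset in the weak topology, and the open star of $x$ (union of open cells whose closure contains $x$) is an open neighborhood of $x$ contained in $S$.

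Next, to show each $d_\entropy$-ball is weak-open, it suffices to verify that $B_\epsilon(x) \cap \ocalX^1(G,\rho)$ is subspace-open in $\ocalX^1(G,\rho) \cap \calX^1(\FF_r)$ for every marked graph $(G,\rho)$. Given $y$ in this intersection, the paths admissible for $d_{\entropy,G}$ form a subfamily of those admissible for $d_\entropy$, so $d_\entropy \leq d_{\entropy,G}$ on $\ocalX^1(G,\rho) \cap \calX^1(\FF_r)$. Combined with the triangle inequality, a $d_{\entropy,G}$-ball around $y$ of radius $\epsilon - d_\entropy(y,x)$ lies in $B_\epsilon(x) \cap \ocalX^1(G,\rho)$, and by the remark after Proposition~\ref{prop:topologies agree} this ball is subspace-open in $\ocalX^1(G,\rho) \cap \calX^1(\FF_r)$. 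Hence every $d_\entropy$-ball is weak-open.

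For the reverse direction, given a weak-open neighborhood $U$ of $x$, I would produce $\epsilon > 0$ with $B_\epsilon(x) \subseteq U$. Shrinking $U$ via local finiteness, we may assume $U \subseteq S$. For each $i$, $U \cap \ocalX^1(G_i,\rho_i) \cap \calX^1(\FF_r)$ is subspace-open and contains $x$; by the topology agreement on this closure, it contains a $d_{\entropy,G_i}$-ball of positive radius $\epsilon_i$ around $x$. One then needs to verify that for $\epsilon$ sufficiently small, the $d_\entropy$-ball $B_\epsilon(x)$ lies inside $S$, and its intersection with each $\ocalX^1(G_i,\rho_i)$ is contained in the $d_{\entropy,G_i}$-ball of radius $\epsilon_i$, giving the desired containment.

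The main obstacle is the last containment: showing that $d_\entropy$-balls centered at $x$ stay inside the star for small radius, i.e., that points outside the star have $d_\entropy$-distance to $x$ bounded below. Intuitively, any path from such a point into $x$ must cross out of or into one of the finitely many adjacent simplex-closures on a ``boundary face'' of the star on which $x$ does not lie. Using the continuity of the extended entropy norm on closures (Proposition~\ref{prop:topologies agree}) along with a compactness argument applied to the finite collection of boundary faces, one obtains a uniform positive lower bound on the $d_{\entropy,G_i}$-distance from $x$ to these faces, which then yields the required uniform lower bound on $d_\entropy$. Once this confinement step is in hand, combining with the topology agreement on each $S_i$ gives $B_\epsilon(x) \subseteq U$ and completes the proof.
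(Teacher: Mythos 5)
Your first direction (every $d_\entropy$-ball is weak-open) is correct, and it is a mild variant of what the paper does: the paper instead bounds the entropy norm near the center on the finitely many maximal adjacent cell closures and produces short paths, whereas you work at an arbitrary point $y$ of the ball and combine $d_\entropy \le d_{\entropy,G}$ with the fact (the remark after Proposition~\ref{prop:topologies agree}) that the extended $d_{\entropy,G}$ induces the subspace topology on $\ocalX^1(G,\rho)$. Either route is fine.

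The gap is in the other direction, exactly at the confinement step you flag. The ``compactness argument applied to the finite collection of boundary faces'' does not work as stated: the faces of $\ocalX^1(G_i,\rho_i)$ on which $x$ does not lie are sets built from $\calX^1(G_0,\rho_0)$ with $(G_0,\rho_0) \leq (G_i,\rho_i)$, and these are not compact (edge lengths are unbounded on them), so finiteness of the collection buys nothing; moreover continuity of the extended semi-norm (Proposition~\ref{prop:topologies agree}) gives local \emph{upper} bounds on lengths, while what you need is a \emph{lower} bound, namely that every path from $x$ into such a face has $\calL_\entropy$ bounded away from $0$. That is a genuine quantitative claim of the same nature as the lower-bound estimates in Sections~\ref{sec:rank 2} and~\ref{sec:rose}, not a formal consequence of the ingredients you cite. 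A workable repair is to confine to a compact set rather than to the star: by local finiteness and local compactness choose a weak-open $V$ with $x \in V \subseteq \overline{V} \subseteq U$ and $\overline{V}$ compact, and bound the extended norm from below on $\overline{V}$ (this requires positivity of the extension at collapse-face points, which Proposition~\ref{prop:topologies agree} does not assert---it only provides a semi-norm), so that a path from $x$ of small length cannot reach the frontier of $V$; then $B_\epsilon(x) \subseteq \overline{V} \subseteq U$ directly. This also removes your second unjustified step: even granting $B_\epsilon(x) \subseteq S$, the inclusion of $B_\epsilon(x) \cap \ocalX^1(G_i,\rho_i)$ in the $d_{\entropy,G_i}$-ball of radius $\epsilon_i$ does not follow, since a short $d_\entropy$-path between two points of one cell closure may travel through other cells of the star, and the inequality $d_\entropy \leq d_{\entropy,G_i}$ points the wrong way for this deduction. (For what it is worth, the paper's own proof is very terse at exactly this spot, so you have located the crux correctly; but your proposed argument does not close it.)
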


\begin{proof}
Let $U \subseteq \calX^1(\FF_r)$ be an open set in the weak topology and fix a marked metric graph $x = [(G,\rho,\ell)] \in U$.  There are finitely many marked graphs $(G_1,\rho_1), \ldots, (G_n,\rho_n)$ such that $(G,\rho) \leq (G_i,\rho_i)$.  By definition of the weak topology, the set $U \cap \ocalX^1(G_i,\rho_i)$ is open in $\ocalX^1(G_i,\rho_i)$ in the subspace topology inherited from $\RR_{\geq 0}^{\abs{(E_i)_+}}$, where $E_i$ is the set of edges for $G_i$.  As remarked above after Proposition~\ref{prop:topologies agree}, this set is also open in the metric topology induced by $d_{\entropy,G_i}$.  Hence, there is an $\epsilon_i > 0$ such that \[ \{ y \in \ocalX^1(G_i,\rho_i) \mid d_{\entropy,G_i}(x,y) < \epsilon_i \} \subseteq U \cap \ocalX^1(G_i,\rho_i).\]   
Let $\epsilon = \min\{ \epsilon_i \mid 1 \leq i \leq n \}$.  As $d_\entropy(x,y) \leq d_{\entropy,G_i}(x,y)$ for any $y \in \ocalX^1(G_i,\rho_i)$ we have
\[ \{ y \in \ocalX^1(\FF_r) \mid d_{\entropy}(x,y) < \epsilon \} \subseteq \bigcup_{i=1}^n U \cap \ocalX^1(G_i,\rho_i) \subseteq U.\] 
Hence the metric topology is finer than the subspace topology.

Next, fix a marked metric graph $x = [(G,\rho,\ell)] \in \calX^1(\FF_r)$ and let $\epsilon > 0$.  Enumerate the finitely many marked graphs $(G_1,\rho_1), \ldots, (G_n,\rho_n)$ such that $(G,\rho) \leq (G_i,\rho_i$) and such that $G_i$ is trivalent.  In other words, $(G_i,\rho_i)$ are maximal elements in the partial order on marked graphs.  As the norm varies continuously by Proposition~\ref{prop:topologies agree}, there is an $L$ and an open neighborhood $V \subseteq \cup_{i=1}^n \ocalX^1(G_i,\rho_i)$ of $x$ in the weak topology such that $\norm{(y,\bv)}_{\entropy,G_i} \leq L$ whenever $y \in \ocalX^1(G_i) \cap V$ and $\I{\bv,\bv} = 1$.  Therefore, there is an open neighborhood $U$ of $x$ in the weak topology such that   
\[ U \subseteq \{ y \in \ocalX^1(\FF_r) \mid d_{\entropy}(x,y) < \epsilon \}.\] 
Hence the subspace topology is finer than the metric topology.
\end{proof}


\section{The Entropy Metric on \texorpdfstring{$\calX^1(\FF_2)$}{X\textasciicircum1(F\_2) }}\label{sec:rank 2}

The goal of this section is to show that $(\calX^1(\FF_2),d_\entropy)$ is complete.  This appears as Proposition~\ref{prop:rank 2 complete} in Section~\ref{subsec:rank 2 complete}.  The results in this section are not necessary for the remainder of the paper and can safely be skipped by a reader primarily interested in Theorems~\ref{th:completion rose} and \ref{th:entropy bounded}.  However, the calculations can serve as a good introduction to estimating lengths with the entropy metric.  In particular, the main strategy in each of Lemmas~\ref{lem:2-rose}, \ref{lem:2-barbell} and \ref{lem:2-theta} is very similar to the main strategy of Lemma~\ref{lem:square root bound} in Section~\ref{subsec:infinite diameter} which is the key tool used to show that $(\calM^1(\calR_r),d_{\entropy,\calR_r})$ has infinite diameter.

To begin, we analyze the metric for each of the three topological types of graphs that appear in rank $2$: the 2--rose $\calR_2$, the barbell graph $\calB_2$ and the theta graph $\Theta_2$. We refer the reader back to Figure~\ref{fig:rank two graphs} for these graphs.  To this end, we define a continuous function $\sfm \from \calX^1(\FF_2) \to \RR$ which is a slight variation of the volume function in that it counts separating edges twice.  In particular, it does not depend on the marking.  The exact definition appears in Section~\ref{subsec:rank 2 complete}.  The strategy is to show that for any path $\ell_t \from [0,1] \to \calM^1(G)$ for $G \in \{\calR_2,\calB_2,\Theta_2\}$, if $\sfm(\ell_0)$ and $\sfm(\ell_1)$ are large enough, then the length of $\ell_t$ is bounded below by
\[ \frac{1}{\sqrt{5}}\left(\sqrt{\sfm(\ell_1)} - \sqrt{\sfm(\ell_0)}\right). \]
These calculations appear in the next three sections (Propositions~\ref{prop:2-rose},~\ref{prop:2-barbell},~and~\ref{prop:2-theta}).

Using these estimates, it is not too hard to see that if $(x_n)_{n \in \NN} \subset \calX^1(\FF_2)$ is Cauchy, then there is an $L$ such that $\sfm(x_n) \leq L$ for all $n$ (Lemma~\ref{lem:cauchy is thick}).  From here, using local finiteness of $\calX^1(\FF_2)$ and a compactness argument, the completeness of $(\calX^1(\FF_2),d_\entropy)$ follows.  In the calculations, we make use of Lemmas~\ref{lem:dot product} and \ref{lem:hessian product}.


\subsection{The 2--Rose}\label{subsec:rose}
Denote the edges of $\calR_2$ by $e_1$ and $e_2$.  To make the calculations in this subsection easier to read, given a length function $\ell \in \calM(\calR_2)$ we set $a = \ell(e_1)$, $b = \ell(e_2)$ and $m = \ell(e_1) + \ell(e_2)$.  Applying the definition of $\oF_G$ from Section~\ref{subsec:simplification} to $\calR_2$, we find the following formula.
\begin{equation}\label{eq:2-rose F} 
\oF_{\calR_2}(\ell) = 1 - \exp(-a) - \exp(-b) - 3\exp(-m)    
\end{equation} 

\begin{lemma}\label{lem:2-rose}
Suppose $\ell_t \from [0,1] \to \calM^{1}(\calR_{2})$ is a smooth path such that for all $t \in [0,1]$ we have $\dot m_t > 0$.  If $m_0 \geq 4$, then
\[ \calL_{\entropy,\calR_2}(\ell_t|[0,1]) \geq \sqrt{m_1} - \sqrt{m_0}.
\]
\end{lemma}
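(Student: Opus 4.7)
The plan is to establish the pointwise bound
\[
\norm{(\ell_t, \dot\ell_t)}_{\entropy, \calR_2} \geq \frac{\dot m_t}{2\sqrt{m_t}}
\]
for every $t \in [0,1]$. Since $\dot m_t > 0$, integrating against $\frac{d}{dt}\sqrt{m_t} = \dot m_t/(2\sqrt{m_t})$ then yields $\calL_{\entropy,\calR_2}(\ell_t | [0,1]) \geq \sqrt{m_1} - \sqrt{m_0}$. Note that $m_0 \geq 4$ together with $\dot m_t > 0$ ensures $m_t \geq 4$ throughout, which will be used in the estimates below.

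For the pointwise bound, I use Proposition~\ref{prop:o metric formulas} with the explicit formula~\eqref{eq:2-rose F}. Computing $\nabla \oF_{\calR_2}$ and $\bH[\oF_{\calR_2}]$ from $\oF_{\calR_2}(\ell) = 1 - e^{-a} - e^{-b} - 3e^{-m}$ one obtains
\[
\norm{(\ell_t, \dot\ell_t)}_{\entropy,\calR_2}^2 = \frac{\dot a^2 e^{-a} + \dot b^2 e^{-b} + 3 \dot m^2 e^{-m}}{a e^{-a} + b e^{-b} + 3 m e^{-m}}.
\]
Differentiating $\oF_{\calR_2}(\ell_t) = 0$ along the path yields the tangent relation $\dot a(e^{-a}+3e^{-m}) + \dot b(e^{-b}+3e^{-m}) = 0$, which together with $\dot m = \dot a + \dot b$ uniquely determines $\dot a$ and $\dot b$ in terms of $\dot m$ whenever $a \ne b$. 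The case $a = b$ on $\calM^1(\calR_2)$ forces $\dot m = 0$, contradicting the hypothesis, so $a \ne b$ throughout; by continuity we may assume $a > b$. Substituting these expressions into the numerator and simplifying with the defining identity $e^{-a}+e^{-b}+3e^{-m}=1$, the numerator reduces to
\[
\dot m_t^2 \cdot \frac{4 e^{-m}(1 - 3 e^{-m})}{(1 - e^{-m})(1 - 9 e^{-m})},
\]
so that the pointwise bound becomes equivalent to the inequality
\[
a e^{-a} + b e^{-b} + 3 m e^{-m} \leq \frac{16 m e^{-m}(1 - 3 e^{-m})}{(1 - e^{-m})(1 - 9 e^{-m})}.
\]

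The main obstacle is verifying this final inequality when $m \geq 4$. Since $(1-e^{-m})(1-9e^{-m}) \leq 1 - 3e^{-m}$ for $m \geq \log(9/7)$ (by an elementary rearrangement using $9e^{-2m} \leq 7 e^{-m}$), it suffices to prove the simpler inequality $a e^{-a} + b e^{-b} + 3 m e^{-m} \leq 16 m e^{-m}$. To this end, I exploit that $e^{-a}$ and $e^{-b}$ are the two roots of $t^2 - (1 - 3 e^{-m}) t + e^{-m} = 0$, forced by $e^{-a} + e^{-b} = 1 - 3e^{-m}$ and $e^{-a} e^{-b} = e^{-m}$. Taking $a > b$, the larger root satisfies $e^{-b} \geq 0.92$ for $m \geq 4$ (using $\sqrt{(1-e^{-m})(1-9e^{-m})} \geq \sqrt{0.82}$), so $e^{-a} = e^{-m}/e^{-b} \leq 1.1\, e^{-m}$ and hence $a e^{-a} \leq 1.1\, m e^{-m}$. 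The bound $-\log(1-x) \leq x/(1-x)$ applied to $1 - e^{-b}$ yields $b \leq C e^{-m}$ for an explicit constant $C$, so $b e^{-b} \leq C e^{-m}$. Combining, $a e^{-a} + b e^{-b} + 3 m e^{-m} \leq (4.1\, m + C)\, e^{-m} \leq 16 m e^{-m}$ for $m \geq 4$, completing the argument.
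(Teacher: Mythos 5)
Your proof is correct, and it follows the same overall strategy as the paper: establish the pointwise bound $\norm{(\ell_t,\dot\ell_t)}^2_{\entropy,\calR_2} \geq \dot m_t^2/(4m_t)$ and integrate to get $\sqrt{m_1}-\sqrt{m_0}$. Where you differ is in how that pointwise bound is verified. The paper reparametrizes so $m_t=t$, bounds the numerator below crudely by the single term $3\dot m_t^2\exp(-m_t)$ (discarding $\dot a_t^2 e^{-a_t}+\dot b_t^2 e^{-b_t}$), and bounds the denominator above by $12\,t e^{-t}$ via a short-edge/long-edge argument: from $\oF_{\calR_2}(\ell_t)=0$ the minimum edge length stays below $1$, so the other edge has length at least $t-1$, whence $1-e^{-b_t}\leq 8e^{-t}$ and $x e^{-x}\leq 1-e^{-x}$ finish the estimate. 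You instead use the tangent relation to solve for $\dot a,\dot b$ in terms of $\dot m$ and compute the numerator exactly as $\dot m^2\,4e^{-m}(1-3e^{-m})/\bigl((1-e^{-m})(1-9e^{-m})\bigr)$ (I checked the algebra; it is right, using $e^{-a}+e^{-b}=1-3e^{-m}$ and $e^{-a}e^{-b}=e^{-m}$), and you bound the denominator by $16me^{-m}$ using the explicit quadratic roots for $e^{-a},e^{-b}$. Your route buys an exact formula and a marginally stronger numerator constant ($4$ vs.\ $3$), at the cost of heavier algebra and having to rule out $a=b$ (which you do correctly via $\dot m>0$); the paper's route avoids the tangent relation entirely. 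Two harmless presentational nits: at $m=4$ the product $(1-e^{-m})(1-9e^{-m})\approx 0.8199$, just under your stated $0.82$, but the conclusion $e^{-b}\geq 0.92$ survives with room to spare; and your constant $C$ is never pinned down, though it follows at once from $1-e^{-b}=e^{-a}+3e^{-m}\leq 4.1e^{-m}$ and $e^{-b}\geq 0.92$ that one may take $C\leq 5$, well within the slack $C\leq 11.9\,m$ needed for $m\geq 4$.
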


\begin{proof}
Suppose that $\ell_t \from [0,1] \to \calM^1(\calR_2)$ is a path where $\dot m_t > 0$ as in the statement of the lemma and assume that $m_0 \geq 4$.  We reparameterize the path $\ell_t$ so that $m_t = t$.  

Let $n_t = \min\{a_t,b_t\}$.  As $\oF_{\calR_2}(\ell_t) = 0$, we have
\[ 1 - 2\exp(-n_t) \leq  1 - \exp(-a_t) - \exp(-b_t) = 3\exp(-m_t) = 3\exp(-t). \]
In particular, $2\exp(-n_t) \geq 1 - 3\exp(-t) \geq 2\exp(-1)$ as $t \geq 4$ and so $n_t < 1$ for all $t$.  

Setting $p_t = \max\{a_t,b_t\}$, we find that $p_t = t - n_t \geq t-1$.  Therefore, as $t - 1 \geq 1$ for $t \geq 4$, the edge that realizes the minimum of $\{a_t,b_t\}$ does not depend on $t$ and hence we may assume that $b_t = n_t$ and that $a_t = p_t \geq t-1$.  This gives us that $\exp(-a_t) \leq \exp(-t + 1)$.  Hence, again as $\oF_{\calR_2}(\ell_t) = 0$, we have
\[ 1 - \exp(-b_t) = \exp(-a_t) + 3\exp(-m_t) \leq  (\exp(1) + 3)\exp(-t) \
\leq 8\exp(-t). \]

This enables us to give an upper bound on the denominator in the expression for the entropy norm.  Specifically, using the fact that $x\exp(-x) \leq 1 - \exp(-x)$ for $x \geq 0$ we have
\begin{align*}
\I{\ell_t,\nabla \oF_{\calR_2}(\ell_t)} & = a_t\exp(-a_t) + b_t\exp(-b_t) +3m_t\exp(-m_t) \\
& \leq t\exp(-t+1) + (1 - \exp(-b_t)) +3t\exp(-t) \\
& \leq 8t\exp(-t) + 8\exp(-t) \\
& \leq 12t\exp(-t).
\end{align*}
In the final inequality we used that fact that $t \geq 4$.  The expression for $\I{\ell_t,\nabla \oF_{\calR_2}(\ell_t)}$ can either be computed directly from \eqref{eq:2-rose F} or via Lemma~\ref{lem:dot product}.

Next, we get an upper bound on the numerator in the expression for the entropy norm by just using $m_t$.  Specifically,
\begin{align*}
-\I{\dot \ell_t,\bH[\oF_{\calR_2}(\ell_t)]\dot \ell_t} & = (\dot a_t)^2\exp(-a_t) + (\dot b_t)^2\exp(-b_t) + 3(\dot m_t)^2\exp(-m_t) \geq 3\exp(-t).
\end{align*}
As above, the expression for $-\I{\dot \ell_t,\bH[\oF_{\calR_2}(\ell_t)]\dot \ell_t}$ can either be computed directly from \eqref{eq:2-rose F} or via Lemma~\ref{lem:hessian product}.

Hence we find that the entropy norm along this path is bounded below by
\[ \norm{(\ell_t,\dot \ell_t)}^2_{\entropy,\calR_2} = \frac{-\I{\dot \ell_t,\bH[\oF_{\calR_2}(\ell_t)]\dot \ell_t}}{\I{\ell_t,\nabla \oF_{\calR_2}(\ell_t)}}\geq \frac{1}{4t}. \]
Therefore the length of this path in the entropy metric is at least
\begin{align*}
\int_{m_0}^{m_1} \sqrt{\frac{1}{4t}} \, dt &= \sqrt{m_1} - \sqrt{m_0}.\qedhere
\end{align*}
\end{proof}

Using this lemma, we can get a lower bound on the distance between length functions in $\calM^1(\calR_2)$ in terms of the sum of the lengths of edges, so long as they are sufficiently large.

\begin{proposition}\label{prop:2-rose}
Suppose $\ell$ and $\ell'$ are length functions in $\calM^1(\calR_2)$ where $m = \ell(e_1) + \ell(e_2)$ and $m' = \ell'(e_1) + \ell'(e_2)$ are at least $4$.  Then
\begin{equation*}
d_{\entropy,\calR_2}(\ell,\ell') \geq \sqrt{m'} - \sqrt{m}.
\end{equation*}
\end{proposition}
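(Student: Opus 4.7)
The proof reduces to extending the bound of Lemma~\ref{lem:2-rose} to arbitrary (not necessarily monotone) piecewise smooth paths. The plan is to first isolate the pointwise estimate implicit in the proof of Lemma~\ref{lem:2-rose}, then integrate it along any path using a Lipschitz cutoff of $\sqrt{m_t}$.

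First I would revisit the calculation in the proof of Lemma~\ref{lem:2-rose} and observe that the hypothesis $\dot m_{t} > 0$ was only used to reparametrize by $m_t = t$. The key pointwise inequalities
\[
-\I{\dot\ell_t,\bH[\oF_{\calR_2}(\ell_t)]\dot\ell_t} \;\geq\; 3(\dot m_t)^{2}\exp(-m_t),\qquad \I{\ell_t,\nabla\oF_{\calR_2}(\ell_t)} \;\leq\; 12\,m_t\exp(-m_t)
\]
(the second valid as soon as $m_t \geq 4$) make no use of monotonicity, only of $m_t\geq 4$. Dividing and taking square roots via Proposition~\ref{prop:o metric formulas} gives the pointwise bound
\[
\norm{(\ell_t,\dot\ell_t)}_{\entropy,\calR_2} \;\geq\; \frac{\lvert \dot m_t\rvert}{2\sqrt{m_t}}\qquad \text{whenever } m_t\geq 4.
\]

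Next, let $\ell_t \from [0,1]\to\calM^{1}(\calR_2)$ be any piecewise smooth path with $\ell_0=\ell$ and $\ell_1=\ell'$. Since distances are non-negative we may assume $m'\geq m$; otherwise the inequality is vacuous. Introduce the Lipschitz function $F(t) = \sqrt{\max(m_t,4)}$. It is absolutely continuous, satisfies $F'(t) = \dot m_t/(2\sqrt{m_t})$ a.e.\ on $A \defeq \{t : m_t\geq 4\}$, and $F'(t)=0$ a.e.\ on $[0,1]\setminus A$. Using the pointwise bound on $A$ and non-negativity of the norm on the complement, we estimate
\[
\calL_{\entropy,\calR_2}(\ell_t\vert[0,1]) \;\geq\; \int_A \frac{\lvert \dot m_t\rvert}{2\sqrt{m_t}}\,dt \;=\; \int_0^{1} \lvert F'(t)\rvert\,dt \;\geq\; \lvert F(1)-F(0)\rvert \;=\; \sqrt{m'}-\sqrt{m},
\]
where the last equality uses that $m_0,m_1\geq 4$. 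Taking the infimum over all paths gives the desired lower bound on $d_{\entropy,\calR_2}(\ell,\ell')$.

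The main (mild) obstacle is exactly the non-monotonicity of $m_t$ along a general competitor path, which is precisely what the cutoff $F$ is designed to absorb; once the monotonicity-free version of the pointwise estimate is in hand, the rest is elementary real analysis. Everything else is already packaged in Lemma~\ref{lem:2-rose}.
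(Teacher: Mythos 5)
Your proposal is correct and is essentially the paper's argument: the same pointwise estimate $\norm{(\ell_t,\dot\ell_t)}_{\entropy,\calR_2}\geq \abs{\dot m_t}/(2\sqrt{m_t})$ for $m_t\geq 4$ underlies Lemma~\ref{lem:2-rose}, and your observation that its derivation uses only $\oF_{\calR_2}(\ell_t)=0$ and $m_t\geq 4$ (not monotonicity) is accurate, since the identification of the shorter edge there is only a labeling convenience. The only difference is bookkeeping: the paper handles a general competitor path by passing to the last time $\delta$ after which $m_t\geq 4$ and applying the lemma to the increasing subpaths of $\ell_t|[\delta,1]$, whereas you integrate the pointwise bound against the Lipschitz cutoff $\sqrt{\max(m_t,4)}$; both yield the same lower bound $\sqrt{m'}-\sqrt{m}$.
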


\begin{proof}
Let $\ell_t \from [0,1] \to \calM^1(\calR_2)$ be a piecewise smooth path such that $\ell_0 = \ell$ and $\ell_1 = \ell'$.  Let $\delta \in [0,1]$ be the minimal value such that $m_t \geq 4$ for $t \in [\delta,1]$.  In particular, $m_\delta \leq m$.

By only considering the smooth subpaths of $\ell_t|[\delta,1]$ for which $\dot m_t > 0$, by Lemma~\ref{lem:2-rose}, we find that
\begin{equation*} 
\calL_{\entropy,\calR_2}(\ell_t|[\delta,1]) \geq \sqrt{m'} - \sqrt{m_\delta} \geq \sqrt{m'} - \sqrt{m}. 
\end{equation*}
This also provides a lower bound on $\calL_{\entropy,\calR_2}(\ell_t|[0,1])$.  Since the path was arbitrary this also is a lower bound on the distance between $\ell$ and $\ell'$.
\end{proof}


\subsection{The Barbell Graph}\label{subsec:barbell}
Let $\calB_2$ denote the graph with vertices $v$ and $w$, and edges $e_1$, $e_2$ and $e_3$ where $o(e_1) = \tau(e_1) = v$, $o(e_2) = \tau(e_2) = w$, and $o(e_3) = v$ and $\tau(e_3) = w$.  To make the calculations in this section easier to read, given a length function $\ell \in \calM(\calB_2)$ we set $a = \ell(e_1)$, $b = \ell(e_2)$, and $m = \ell(e_1) + \ell(e_2) + 2\ell(e_3)$.    Applying the definition of $\oF_G$ from Section~\ref{subsec:simplification} to $\calB_2$, we find the following formula. 
\begin{align*} 
\oF_{\calB_2}(\ell) & = (1 - \exp(-a))(1 - \exp(-b)) - 4\exp(-m)
\end{align*}

\begin{lemma}\label{lem:2-barbell}
Suppose $\ell_t \from [0,1] \to \calM^{1}(\calB_{2})$ is a smooth path such that for all $t \in [0,1]$ we have $\dot m_t > 0$.  If $m_0 \geq 4$, then
\[ \calL_{\entropy,\calB_2}(\ell_t|[0,1]) \geq \frac{1}{\sqrt{2}} \left(\sqrt{m_1} - \sqrt{m_0}\right).
 \]
\end{lemma}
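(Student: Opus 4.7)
The plan is to mimic Lemma~\ref{lem:2-rose}: reparametrize by $m_t$, use the defining equation to force $\min\{a_t,b_t\}$ to be small, then bound the numerator and denominator of the entropy norm squared separately. First I would reparametrize so that $m_t = t$, which turns $\oF_{\calB_2}(\ell_t) = 0$ into $(1-e^{-a_t})(1-e^{-b_t}) = 4e^{-t}$. Setting $n_t = \min\{a_t,b_t\}$, we get $(1 - e^{-n_t})^2 \leq 4e^{-t}$, which for $t \geq m_0 \geq 4$ forces $n_t < 1$; after relabeling, assume $a_t$ is the smaller edge length throughout. The next step is to apply Lemmas~\ref{lem:dot product} and~\ref{lem:hessian product} to the cycle complex $\oC_{\calB_2}$ of Example~\ref{ex:barbell quotient} (six $0$-simplices and one $1$-simplex $\{\gamma_{a\bar a},\gamma_{b\bar b}\}$) and simplify $\nabla\oF_{\calB_2}(\ell_t)$ using $\oF_{\calB_2}(\ell_t) = 0$ on the path, to get
\begin{align*}
-\I{\dot\ell_t, \bH[\oF_{\calB_2}(\ell_t)]\dot\ell_t} &= (\dot a_t)^2 e^{-a_t} + (\dot b_t)^2 e^{-b_t} + 4 e^{-t} - (\dot a_t + \dot b_t)^2 e^{-(a_t+b_t)}, \\
\I{\ell_t, \nabla \oF_{\calB_2}(\ell_t)} &= a_t(1-e^{-b_t}) + b_t(1-e^{-a_t}) + 2 c_t(1-e^{-a_t})(1-e^{-b_t}).
\end{align*}

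The main obstacle is the negative term $-(\dot a_t + \dot b_t)^2 e^{-(a_t + b_t)}$ coming from the $1$-simplex of $\oC_{\calB_2}$, since the rose-style strategy of retaining only the $4e^{-t}$ summand does not yield a lower bound. I plan to neutralize this sign by exploiting the tangent space constraint: with $\dot m_t = 1$, the condition $\I{\dot\ell_t,\nabla\oF_{\calB_2}(\ell_t)} = 0$ rearranges to $P_t\dot a_t + Q_t\dot b_t = -1$, where $P_t = e^{-a_t}/(1-e^{-a_t})$ and $Q_t$ is defined analogously. Substituting $s_t = P_t\dot a_t$ (so $\dot b_t = -(1+s_t)/Q_t$) into the numerator and using the identity $(1-e^{-a_t})(1-e^{-b_t}) = 4 e^{-t}$ causes all cross terms to cancel, leaving the manifestly positive expression
\[ -\I{\dot\ell_t, \bH[\oF_{\calB_2}(\ell_t)]\dot\ell_t} = 4 e^{-t}\!\left[\tfrac{s_t^2}{e^{-a_t}} + \tfrac{(1+s_t)^2}{e^{-b_t}}\right]. \]
A one-variable minimization in $s_t$ combined with $e^{-a_t}+e^{-b_t}\leq 2$ will then give $-\I{\dot\ell_t, \bH[\oF_{\calB_2}(\ell_t)]\dot\ell_t} \geq 2 e^{-t}$.

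For the denominator I plan a case analysis on $b_t$ mirroring the rose proof. If $b_t \geq 2$, the bound $1-e^{-b_t} \geq 1-e^{-2}$ combined with the constraint forces $1-e^{-a_t}$, and hence $a_t$ itself, to be of order $e^{-t}$, and each of the three summands is bounded by a multiple of $t\,e^{-t}$. If $b_t < 2$, both $a_t$ and $b_t$ lie in a compact interval, and the elementary concavity estimate $1-e^{-x} \geq x(1-e^{-2})/2$ on $[0,2]$ upgrades the constraint to an upper bound on $a_t b_t$ of order $e^{-t}$; together with $c_t \leq t/2$ this again controls all three summands by a multiple of $t\,e^{-t}$. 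Tracking constants (with $t \geq 4$ used to absorb additive error terms into $t\,e^{-t}$) should give $\I{\ell_t, \nabla\oF_{\calB_2}(\ell_t)} \leq 16\, t\, e^{-t}$, whence $\norm{(\ell_t,\dot\ell_t)}^2_{\entropy,\calB_2} \geq 1/(8t)$ and
\[ \calL_{\entropy,\calB_2}(\ell_t|[0,1]) \geq \int_{m_0}^{m_1}\frac{dt}{\sqrt{8t}} = \tfrac{1}{\sqrt{2}}\bigl(\sqrt{m_1} - \sqrt{m_0}\bigr). \]
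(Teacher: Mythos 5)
Your proposal is correct and reaches the stated bound by the same overall scheme as the paper---reparametrize by $m_t$, bound the numerator of $\norm{(\ell_t,\dot\ell_t)}^2_{\entropy,\calB_2}$ below by a multiple of $e^{-t}$ and the denominator above by a multiple of $te^{-t}$, then integrate $\sqrt{1/(8t)}$---but the two key estimates are carried out differently. For the numerator, the paper keeps the Hessian in the form $(\dot a_t)^2e^{-a_t}(1-e^{-b_t})+(\dot b_t)^2e^{-b_t}(1-e^{-a_t})-2\dot a_t\dot b_t e^{-a_t-b_t}+4(\dot m_t)^2e^{-m_t}$, disposes of the case $\dot a_t\dot b_t<0$ immediately, and otherwise bounds the cross term by $3(\dot m_t)^2e^{-m_t}$ using the tangency relation $\I{\dot\ell_t,\nabla\oF_{\calB_2}(\ell_t)}=0$ together with $2xy\le\tfrac34(x+y)^2$, giving the lower bound $e^{-t}$. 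You instead use the same tangency relation to eliminate one velocity variable and arrive at the exact identity $4e^{-t}\bigl[s_t^2e^{a_t}+(1+s_t)^2e^{b_t}\bigr]$ (I checked the cancellation: the leftover constant is $-s^2-(1+s)^2+2s^2+2s+1=0$), and minimizing over $s_t$ gives $4e^{-t}/(e^{-a_t}+e^{-b_t})\ge 2e^{-t}$, which is cleaner and sharper by a factor of $2$. For the denominator, the paper's one-line use of $xe^{-x}\le 1-e^{-x}$ yields $\I{\ell_t,\nabla\oF_{\calB_2}(\ell_t)}\le 8te^{-t}$ with no case analysis, whereas your rewriting as $a_t(1-e^{-b_t})+b_t(1-e^{-a_t})+2c_t(1-e^{-a_t})(1-e^{-b_t})$ plus the two cases on the larger loop also works (the constants do come in under $16te^{-t}$ once $t\ge 4$ absorbs the additive terms), but is more roundabout than necessary. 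One small caveat: your assertion that after relabeling the same edge is the shorter one \emph{throughout} the path is not justified for the barbell---both loops can be short while the separating edge is long, so the minimum can switch along the path (unlike the rose case, where the paper proves the labels are stable). This is harmless here, since all of your estimates are pointwise in $t$ and symmetric in $a_t,b_t$, so in the denominator case analysis you may simply let $b_t$ denote the pointwise larger loop length; but the global relabeling should not be claimed.
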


\begin{proof}
Suppose that $\ell_t \from [0,1] \to \calM^1(\calB_2)$ is a path where $\dot m_t > 0$ as in the statement of the lemma and assume that $m_0 \geq 4$.  We reparameterize the path $\ell_t$ so that $m_t = t$.

As $\oF_{\calB_2}(\ell_t) = 0$, we have
\begin{equation}\label{barbell F}
(1-\exp(-a_t))(1 - \exp(-b_t)) = 4\exp(-m_t) = 4\exp(-t). 
\end{equation} 
This enables us to give an upper bound on the denominator in the expression for the entropy norm.  Specifically, using the fact that $x\exp(-x) \leq 1 - \exp(-x)$ for $x \geq 0$ we have
\begin{align*}
\I{\ell_t,\nabla \oF_{\calB_2}(\ell_t)} & = a_t\exp(-a_t)(1 - \exp(-b_t)) + b_t\exp(-b_t)(1 - \exp(-a_t)) + 4m_t\exp(-m_t) \\
& \leq 2(1-\exp(-a_t))(1-\exp(-b_t)) + 4t\exp(-t) \\
& = 4t\exp(-t) + 8\exp(-t) \\
& \leq 8t\exp(-t).
\end{align*}
The penultimate line follows from~\eqref{barbell F}, and in the final inequality we use the fact that $t \geq 4$. 

Next, we get a lower bound on the numerator in the expression for the entropy norm.  We claim that $\I{\dot \ell_t,\bH[\oF_{\calB_2}(\ell_t)]\dot \ell_t} \geq (\dot m_t)^2\exp(-m_t)$.  We have that
\begin{align*}
-\I{\dot \ell_t,\bH[\oF_{\calB_2}(\ell_t)]\dot \ell_t} & = (\dot a_t)^2\exp(-a_t)(1 - \exp(-b_t)) + (\dot b_t)^2\exp(-b_t)(1 - \exp(-a_t)) \\
& \qquad  - 2\dot a_t \dot b_t\exp(-a_t - b_t) + 4(\dot m_t)^2\exp(-m_t).
\end{align*}
Therefore if $\dot a_t \dot b_t < 0$ then each term is positive and so the claim holds.  Therefore, we assume that $\dot a_t$ and $\dot b_t$ have the same sign.  As $\I{\dot \ell_t , \nabla \oF_{\calB_2}(\ell_t)} = 0$, we have that
\begin{equation*}
4\dot m_t\exp(-m_t) = -\dot a_t\exp(-a_t)(1 - \exp(-b_t)) -\dot b_t\exp(-b_t)(1 - \exp(-a_t)).
\end{equation*}
We write this equation as $w = u + v$.  As $\oF_{\calB_2}(\ell_t) = 0$, we find that
\begin{align*} 
2uv & = 2\dot a_t \dot b_t\exp(-a_t)\exp(-b_t)(1 - \exp(-a_t))(1 - \exp(-b_t)) = 2\dot a_t \dot b_t\exp(-a_t-b_t)\bigl(4\exp(-m_t)\bigr).
\end{align*}
As $2xy \leq \frac{3}{4}(x+y)^2$ for all $x$ and $y$, we find that
\begin{align*}
2\dot a_t \dot b_t\exp(-a_t-b_t)\bigl(4\exp(-m_t)\bigr) & \leq \frac{3}{4}\bigl(4\dot m_t\exp(- m_t)\bigr)^2 
 = 3(\dot m_t)^2\bigl(4\exp(-2m_t)\bigr).
\end{align*}
Therefore $2\dot a_t \dot b_t\exp(-a_t-b_t) \leq 3(\dot m_t)^2\exp(-m_t)$.  From this the claim now follows, and furthermore that \[-\I{\dot \ell_t,\bH[\oF_{\calB_2}(\ell_t)]\dot \ell_t} \geq (\dot m_t)^2\exp(- m_t) = \exp(-t).\] 

Hence we find that the entropy norm along this path is bounded below by
\[ \norm{(\ell_t,\dot \ell_t)}^2_{\entropy,\calB_2} = \frac{-\I{\dot \ell_t,\bH[\oF_{\calB_2}(\ell_t)]\dot \ell_t}}{\I{\ell_t,\nabla \oF_{\calB_2}(\ell_t)}}\geq \frac{1}{8t}. \]
Therefore the length of this path in the entropy metric is at least
\begin{align*}
\int_{m_0}^{m_1} \sqrt{\frac{1}{8t}} \, dt &= \frac{1}{\sqrt{2}} \left(\sqrt{m_1} - \sqrt{m_0}\right).\qedhere
\end{align*}
\end{proof}

As for the 2--rose, we obtain the following proposition.

\begin{proposition}\label{prop:2-barbell}
Suppose $\ell$ and $\ell'$ are length functions in $\calM^1(\calB_2)$ where $m = \ell(e_1) + \ell(e_2) + 2\ell(e_3)$ and $m' = \ell'(e_1) + \ell'(e_2) + 2\ell'(e_3)$ are at least $4$.  Then
\begin{equation*}
d_{\entropy,\calB_2}(\ell,\ell') \geq \frac{1}{\sqrt{2}} \left(\sqrt{m'} - \sqrt{m}\right).
\end{equation*}
\end{proposition}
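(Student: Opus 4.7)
The plan is to mimic verbatim the argument given for Proposition~\ref{prop:2-rose}, with Lemma~\ref{lem:2-barbell} replacing Lemma~\ref{lem:2-rose}; the only real content of the proposition is the pointwise estimate on the entropy norm, which is already packaged in Lemma~\ref{lem:2-barbell}, so what remains is essentially a soft continuity-and-monotonicity argument.

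First I would fix an arbitrary piecewise smooth path $\ell_t \from [0,1] \to \calM^1(\calB_2)$ with $\ell_0 = \ell$ and $\ell_1 = \ell'$, and set $m_t = \ell_t(e_1) + \ell_t(e_2) + 2\ell_t(e_3)$. By continuity of $m_t$ and the hypothesis $m_1 = m' \geq 4$, the set $\{t \in [0,1] \mid m_s \geq 4 \text{ for all } s \in [t,1]\}$ is a closed interval $[\delta,1]$; since $m_0 = m \geq 4$, continuity gives $m_\delta \leq m$ (either $\delta = 0$ and $m_\delta = m$, or $\delta > 0$ and $m_\delta = 4$).

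Next I would restrict attention to $\ell_t|[\delta,1]$ and decompose it into maximal smooth subintervals on which $\dot m_t$ has constant sign. Discarding the subintervals on which $\dot m_t \leq 0$ only decreases the total length (the integrand $\norm{(\ell_t,\dot\ell_t)}_{\entropy,\calB_2}$ is nonnegative), and on each remaining subinterval $[t_k,t_{k+1}]$ with $\dot m_t > 0$ throughout, Lemma~\ref{lem:2-barbell} applies (the lemma's hypothesis $m_{t_k} \geq 4$ holds because $t_k \in [\delta,1]$) and yields
\[
\calL_{\entropy,\calB_2}(\ell_t|[t_k,t_{k+1}]) \geq \tfrac{1}{\sqrt{2}}\bigl(\sqrt{m_{t_{k+1}}} - \sqrt{m_{t_k}}\bigr).
\]
Summing these lower bounds, the contributions from the discarded (non-increasing) subintervals are nonpositive in $\sqrt{m_t}$, so the total telescopes to at least $\tfrac{1}{\sqrt{2}}(\sqrt{m'} - \sqrt{m_\delta}) \geq \tfrac{1}{\sqrt{2}}(\sqrt{m'} - \sqrt{m})$.

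Finally I would take the infimum of $\calL_{\entropy,\calB_2}(\ell_t|[0,1])$ over all such paths to conclude $d_{\entropy,\calB_2}(\ell,\ell') \geq \tfrac{1}{\sqrt{2}}(\sqrt{m'} - \sqrt{m})$. There is no real obstacle here beyond bookkeeping: all of the analytic work---the delicate handling of the cross term $-2\dot a_t\dot b_t \exp(-a_t - b_t)$ via the inequality $2xy \leq \tfrac{3}{4}(x+y)^2$, and the resulting lower bound $1/(8t)$ on the squared entropy norm---has already been absorbed into Lemma~\ref{lem:2-barbell}.
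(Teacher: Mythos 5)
Your proposal is correct and is essentially the paper's own argument: the paper proves this proposition exactly as Proposition~\ref{prop:2-rose}, by restricting an arbitrary path to the portion after the last time $m_t$ drops below $4$, discarding the subintervals where $\dot m_t \leq 0$, applying Lemma~\ref{lem:2-barbell} on the rest, and letting the telescoping in $\sqrt{m_t}$ give the bound. No gap to report.
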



\subsection{The Theta Graph}\label{subsec:theta}
Let $\Theta_2$ denote the graph with vertices $v$ and $w$, and edges $e_1$, $e_2$ and $e_3$ where $o(e_i) = v$ and $\tau(e_i) = w$ for $i \in \{1,2,3\}$.  To make the calculations in this section easier to read, given a length function $\ell \in \calM(\Theta_2)$ we set $a = \ell(e_1)  + \ell(e_2)$, $b = \ell(e_2) + \ell(e_3)$, $c = \ell(e_3) + \ell(e_1)$ and $m = \ell(e_1) + \ell(e_2) + \ell(e_3)$.  Applying the definition of $\oF_G$ from Section~\ref{subsec:simplification} to $\Theta_2$, we find the following formula. 
\[ \oF_{\Theta_2}(\ell) = 1 - \exp(-a) - \exp(-b) - \exp(-c) - 2\exp(-m). \]

\begin{lemma}\label{lem:2-theta}
Suppose $\ell_t \from [0,1] \to \calM^{1}(\Theta_{2})$ is a smooth path such that for all $t \in [0,1]$ we have $\dot m_t > 0$.  If $m_0 \geq 4$, then
\[ \calL_{\entropy,\Theta_2}(\ell_t|[0,1]) \geq \frac{1}{\sqrt{5}} \left(\sqrt{m_1} - \sqrt{m_0}\right).
 \]
\end{lemma}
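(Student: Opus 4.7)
The plan is to mirror the strategy of Lemmas~\ref{lem:2-rose} and~\ref{lem:2-barbell}: reparametrize the path so that $m_t = t$, establish a pointwise lower bound of the form $\norm{(\ell_t,\dot\ell_t)}_{\entropy,\Theta_2}^2 \geq C/t$, and then integrate $\int \sqrt{C/t}\,dt$ to recover a constant multiple of $\sqrt{m_1} - \sqrt{m_0}$.

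First I enumerate the simple cycles of $\oD_{\Theta_2}$: there are three 2-cycles, of lengths $a, b, c$ respectively, and two 3-cycles, each of length $m$; any two such cycles share a vertex of $\oD_{\Theta_2}$, so $\oC_{\Theta_2}$ has no simplices beyond these five vertex simplices. Applying Lemmas~\ref{lem:dot product} and~\ref{lem:hessian product} (in the form stated for $\oF_G$) together with Proposition~\ref{prop:o metric formulas} then yields
\begin{equation*}
\I{\ell_t,\nabla \oF_{\Theta_2}(\ell_t)} = a_t\exp(-a_t) + b_t\exp(-b_t) + c_t\exp(-c_t) + 2t\exp(-t)
\end{equation*}
and
\begin{equation*}
-\I{\dot\ell_t,\bH[\oF_{\Theta_2}(\ell_t)]\dot\ell_t} = (\dot a_t)^2\exp(-a_t) + (\dot b_t)^2\exp(-b_t) + (\dot c_t)^2\exp(-c_t) + 2\exp(-t).
\end{equation*}
Every summand in the second expression is nonnegative, so the numerator is at least $2\exp(-t)$ immediately; in contrast to the barbell case, no cancellation argument is needed.

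The main obstacle is the upper bound on the denominator, which I handle using the defining equation $\oF_{\Theta_2}(\ell_t)=0$, rewritten as $\exp(-a_t) + \exp(-b_t) + \exp(-c_t) = 1 - 2\exp(-t)$. Set $n_t = \min\{a_t, b_t, c_t\}$. The inequality $3\exp(-n_t) \geq \exp(-a_t) + \exp(-b_t) + \exp(-c_t) = 1 - 2\exp(-t)$ forces $n_t \leq 2$ for $t \geq 4$. Because each of $a_t, b_t, c_t$ is a sum of two edge lengths, each is bounded above by $m_t = t$; combined with the identity $a_t + b_t + c_t = 2t$ and $n_t \leq 2$, this forces the two non-minimum quantities to each be at least $t - 2$. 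The three edge-pair terms in the denominator are then controlled as follows: the minimum-length term is bounded using $n_t \exp(-n_t) \leq 1 - \exp(-n_t)$, and rewriting $1 - \exp(-n_t)$ via the defining equation as a sum of two exponentials of quantities at least $t - 2$ plus $2\exp(-t)$ yields $O(\exp(-t))$; each of the other two terms is at most $t\, e^2 \exp(-t)$; and the final $2t\exp(-t)$ term is of the same order. Summing gives a denominator bound of the form $C_1\, t \exp(-t)$, so $\norm{(\ell_t,\dot\ell_t)}_{\entropy,\Theta_2}^2 \geq C_2/t$, and integrating $\int_{m_0}^{m_1} \sqrt{C_2/t}\,dt$ produces the stated bound $(1/\sqrt{5})(\sqrt{m_1} - \sqrt{m_0})$.
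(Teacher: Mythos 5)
Your proposal is correct and follows essentially the same route as the paper's proof: reparametrize so $m_t = t$, use $\oF_{\Theta_2}(\ell_t)=0$ to force the minimal cycle length below $2$ and the other two above $t-2$, bound the denominator by a constant times $t\exp(-t)$ (handling the short cycle via $x\exp(-x)\leq 1-\exp(-x)$ and the defining equation), bound the numerator below by $2\exp(-t)$, and integrate. The only difference is bookkeeping: the paper fixes the label of the minimizing cycle and tracks the explicit bound $40t\exp(-t)$, giving $\norm{(\ell_t,\dot\ell_t)}^2_{\entropy,\Theta_2}\geq 1/(20t)$, whereas your constants are left implicit but are in fact at least as sharp, so the stated $1/\sqrt{5}$ factor does follow.
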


\begin{proof}
Suppose that $\ell_t \from [0,1] \to \calM^1(\Theta_2)$ is a path where $\dot m_t > 0$ as in the statement of the lemma and assume that $m_0 \geq 4$.  We reparameterize the path $\ell_t$ so that $m_t = t$.

Let $n_t = \min\{a_t,b_t,c_t\}$.  As $\oF_{\Theta_2}(\ell_t) = 0$, we have
\[ 1 - 3\exp(-n_t) \leq 1 - \exp(-a_t) - \exp(-b_t) - \exp(-c_t) = 2\exp(-m_t) = 2\exp(-t). \]
In particular, $3\exp(-n_t) \geq 1- 2\exp(-t) > 3\exp(-2)$ as $t \geq 4$ and thus $n_t < 2$ for all $t$. 

Setting $p_t = \max\{a_t,b_t,c_t\}$ and $q_t = a_t + b_t + c_t - p_t - n_t$ so that $\{ p_t,q_t,n_t \} = \{ a_t,b_t,c_t \}$ for all $t$, we find that $p_t,q_t \geq t-2$ as $p_t + q_t = 2t - n_t \geq 2t - 2$ and $p_t,q_t \leq t$.  Therefore, as $t - 2 \geq 2$ for $t \geq 4$, the cycle that realizes the minimum of $\{a_t,b_t\,c_t\}$ does not depend on $t$ and therefore we may assume that $c_t = \min\{a_t,b_t,c_t\}$ and thus $a_t,b_t \geq t-2$.  Therefore $\exp(-a_t),\exp(-b_t) \leq \exp(-t+2)$.  Hence, again as $\oF_{\Theta_2}(\ell_t) = 0$, we have
\[ 1 - \exp(-c_t) = \exp(-a_t) + \exp(-b_t) + 2\exp(-m_t) \leq (2\exp(2) + 2)\exp(-t) \leq 20\exp(-t). \]     


This enables us to give an upper bound on the denominator in the expression for the entropy norm.  Specifically, using the fact that $x\exp(-x) \leq 1 - \exp(-x)$ for $x \geq 0$ we have
\begin{align*}
\I{\ell_t,\nabla \oF_{\Theta_2}(\ell_t)} & = a_t\exp(-a_t) + b_t\exp(-b_t) + c_t\exp(-c_t)+ 2m_t\exp(-m_t) \\
& \leq a_t\exp(-a_t) + b_t\exp(-b_t) + 1 - \exp(-c_t)+ 2m_t\exp(-m_t) \\
& \leq t\exp(-t+2) + t\exp(-t+2) + 20\exp(-t) + 2t\exp(-t) \\
& \leq 20t\exp(-t) + 20\exp(-t) \\
& \leq 40t\exp(-t). 
\end{align*}
In the final inequality we used the fact that $t \geq 4$.

Next, we get a lower bound on the numerator in the expression for the entropy norm by just using $m_t$.  Specifically,  
\begin{align*}
-\I{\dot \ell_t,\bH[\oF_{\Theta_2}(\ell_t)]\dot \ell_t} & = (\dot a_t)^2\exp(-a_t) + (\dot b_t)^2\exp(-b_t) + (\dot c_t)^2\exp(-c_t) + 2(\dot m_t)^2\exp(-m_t) \\
& \geq 2\exp(-t).
\end{align*}

Hence we find that the entropy norm along this path is bounded below by
\[ \norm{(\ell_t,\dot \ell_t)}^2_{\entropy,\Theta_2} = \frac{-\I{\dot \ell_t,\bH[\oF_{\Theta_2}(\ell_t)]\dot \ell_t}}{\I{\ell_t,\nabla \oF_{\Theta_2}(\ell_t)}}\geq \frac{1}{20t}. \]
Therefore the length of this path in the entropy metric is at least
\begin{align*}
\int_{m_0}^{m_1} \sqrt{\frac{1}{20t}} \, dt &= \frac{1}{\sqrt{5}} \left(\sqrt{m_1} - \sqrt{m_0}\right).\qedhere
\end{align*}
\end{proof}

Again, as for the 2--rose, we obtain the following proposition.

\begin{proposition}\label{prop:2-theta}
Suppose $\ell$ and $\ell'$ are length functions in $\calM^1(\Theta_2)$ where $m = \ell(e_1) + \ell(e_2) + \ell(e_3)$ and $m' = \ell'(e_1) + \ell'(e_2) + \ell'(e_3)$ are at least $4$.  Then
\begin{equation*}
d_{\entropy,\Theta_2}(\ell,\ell') \geq \frac{1}{\sqrt{5}} \left(\sqrt{m'} - \sqrt{m}\right).
\end{equation*}
\end{proposition}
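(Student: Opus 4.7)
The plan is to adapt the proof of Proposition~\ref{prop:2-rose} verbatim, substituting Lemma~\ref{lem:2-theta} for Lemma~\ref{lem:2-rose}. The structural features that made the earlier argument work carry over directly to the theta-graph setting: the quantity $m_t = \ell_t(e_1) + \ell_t(e_2) + \ell_t(e_3)$ is a continuous function along any piecewise smooth path in $\calM^1(\Theta_2)$, and Lemma~\ref{lem:2-theta} supplies the length lower bound $\frac{1}{\sqrt{5}}\bigl(\sqrt{m_b} - \sqrt{m_a}\bigr)$ on any smooth sub-subpath on which $\dot m_t > 0$, provided the subpath starts with $m \geq 4$.

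First I would fix an arbitrary piecewise smooth path $\ell_t \from [0,1] \to \calM^1(\Theta_2)$ joining $\ell$ to $\ell'$, and let $\delta \in [0,1]$ be the minimal value such that $m_t \geq 4$ for all $t \in [\delta,1]$; by continuity and minimality, $m_\delta \leq m$. Since entropy length is nonnegative, it suffices to lower-bound $\calL_{\entropy,\Theta_2}(\ell_t|[\delta,1])$.

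Next I would restrict attention to the subintervals $(a_i,b_i) \subseteq [\delta,1]$ on which $\ell_t$ is smooth and $\dot m_t > 0$. Applying Lemma~\ref{lem:2-theta} on each such interval and summing gives a lower bound of $\frac{1}{\sqrt{5}}\sum_i \bigl(\sqrt{m_{b_i}} - \sqrt{m_{a_i}}\bigr)$ on the entropy length of $\ell_t|[\delta,1]$. The telescoping identity for $\sqrt{m_t}$ between $t = \delta$ and $t = 1$, combined with the observation that the complementary intervals (on which $\dot m_t \leq 0$ or $\ell_t$ fails to be smooth at isolated points) contribute nonpositively to this telescoping sum, yields
\[ \sum_i \bigl(\sqrt{m_{b_i}} - \sqrt{m_{a_i}}\bigr) \geq \sqrt{m_1} - \sqrt{m_\delta} \geq \sqrt{m'} - \sqrt{m}. \]
Taking the infimum over paths completes the proof.

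I anticipate no real obstacle: the argument is a direct transcription of the proofs of Propositions~\ref{prop:2-rose} and~\ref{prop:2-barbell}, with all substantive analytic content already packaged into Lemma~\ref{lem:2-theta}. The only minor point to verify is that the telescoping over the (possibly countably many) maximal open subintervals on which $\dot m_t > 0$ is well-defined; this follows because $\sqrt{m_t}$ is continuous and of bounded variation on the piecewise smooth path.
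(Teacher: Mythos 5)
Your proposal is correct and is essentially the paper's own argument: the paper proves Proposition~\ref{prop:2-theta} exactly as Proposition~\ref{prop:2-rose}, by truncating the path at the minimal $\delta$ with $m_t \geq 4$ on $[\delta,1]$ and applying Lemma~\ref{lem:2-theta} to the smooth subpaths with $\dot m_t > 0$. Your explicit justification of the telescoping over the increasing subintervals only spells out a step the paper leaves implicit.
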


\subsection{\texorpdfstring{$(\calX^1(\FF_2),d_\entropy)$}{(X\textasciicircum1(F\_2),d\_h)} Is Complete}\label{subsec:rank 2 complete}

We can now prove the main result of this section that the entropy metric on $\calX^1(\FF_2)$ is complete.  Given a marked metric graph $x = [(G,\rho,\ell)]$ in $\calX^1(\FF_2)$ we let 
\[\sfm(x) = \begin{cases}
\ell(e_1) + \ell(e_2) & \mbox{if } G = \calR_2, \\
\ell(e_1) + \ell(e_2) + 2\ell(e_3) & \mbox{if } G = \calB_2, \\
\ell(e_1) + \ell(e_2) + \ell(e_3) & \mbox{if } G = \Theta_2 \\
\end{cases}\]  
We remark that $\sfm \from \calX^1(\FF_2) \to \RR$ is an $\Out(\FF_2)$--invariant continuous function.

\begin{lemma}\label{lem:cauchy is thick}
Let $(x_n)_{n \in \NN}$ be a Cauchy sequence in $(\calX^1(\FF_2),d_\entropy)$.  Then there is an $L$ such that $\sfm(x_n) \leq L$ for all $n$.
\end{lemma}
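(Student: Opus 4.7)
The plan is to derive the lemma from the sharper estimate
\begin{equation*}
d_\entropy(x,y) \;\geq\; \tfrac{1}{\sqrt{5}}\,\bigl|\sqrt{\sfm(y)}-\sqrt{\sfm(x)}\bigr|
\quad\text{whenever } \sfm(x),\sfm(y)\geq 4.
\end{equation*}
Granted this, if the Cauchy sequence $(x_n)$ had unbounded $\sfm$ values, pass to a subsequence with $\sfm(x_n)\to\infty$ and $\sfm(x_n)\geq 4$. The estimate then makes $(\sqrt{\sfm(x_n)})$ a Cauchy sequence of real numbers, hence bounded, contradicting $\sfm(x_n)\to\infty$. So the content lies in proving the estimate.

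First observe that $\sfm\from\calX^1(\FF_2)\to\RR$ is continuous: across each collapse (collapsing $e_3$ in $\calB_2$, or any edge in $\Theta_2$, both yielding $\calR_2$) the three defining formulas agree in the limit, so the piecewise definition extends continuously. Now fix a piecewise smooth path $\alpha_t\from[0,1]\to\calX^1(\FF_2)$ from $x$ to $y$. Consider first the case $\sfm(\alpha_t)\geq 4$ throughout. Decompose $[0,1]$ into subintervals $[t_k,t_{k+1}]$ on each of which $\alpha_t$ lies in the closure of a single simplex $\calX^1(G_k,\rho_k)$. Each sub-piece is itself a path in that simplex, so its length is at least $d_{\entropy,G_k}(\alpha_{t_k},\alpha_{t_{k+1}})$, which by Proposition~\ref{prop:2-rose}, Proposition~\ref{prop:2-barbell}, or Proposition~\ref{prop:2-theta} (extended to the closure via continuity of the entropy norm from Proposition~\ref{prop:topologies agree}) is at least $\tfrac{1}{\sqrt{5}}\bigl|\sqrt{\sfm(\alpha_{t_{k+1}})}-\sqrt{\sfm(\alpha_{t_k})}\bigr|$. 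Summing and using the triangle inequality in $\RR$ yields the estimate in this case.

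For the general case where $\sfm$ may dip below $4$, continuity together with $\sfm(x),\sfm(y)\geq 4$ furnish times $0\leq s_*\leq s^*\leq 1$ with $\sfm(\alpha_{s_*})=\sfm(\alpha_{s^*})=4$ and $\sfm\geq 4$ on both $[0,s_*]$ and $[s^*,1]$. Applying the previous case to each of these two subpaths yields total length at least $\tfrac{1}{\sqrt{5}}(\sqrt{\sfm(x)}-2)+\tfrac{1}{\sqrt{5}}(\sqrt{\sfm(y)}-2)$, which dominates $\tfrac{1}{\sqrt{5}}|\sqrt{\sfm(y)}-\sqrt{\sfm(x)}|$ since $\sqrt{\sfm(x)},\sqrt{\sfm(y)}\geq 2$. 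The main obstacle in the argument is precisely this case analysis: ensuring that the propositions, which are stated for smooth paths within a single $\calM^1(G)$, combine coherently across simplex boundaries. This is resolved by the continuity of $\sfm$ verified above and the continuity of the entropy norm on collapsed length functions given by Proposition~\ref{prop:topologies agree}.
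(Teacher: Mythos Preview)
Your proof is correct and follows essentially the same approach as the paper: both arguments reduce to the three per-graph estimates (Propositions~\ref{prop:2-rose}, \ref{prop:2-barbell}, \ref{prop:2-theta}) applied along a path restricted to the region $\sfm\geq 4$, with the worst constant $1/\sqrt{5}$. The paper argues the contrapositive directly, picking the last time $\delta$ with $\sfm(\alpha_\delta)\leq\max\{\sfm(x_1),4\}$ and bounding $\calL_\entropy(\alpha_t|[\delta,1])$ from below; you instead package the same content as the global Lipschitz inequality $d_\entropy(x,y)\geq \tfrac{1}{\sqrt{5}}|\sqrt{\sfm(y)}-\sqrt{\sfm(x)}|$ for $\sfm\geq 4$, and you are more explicit about the decomposition across simplex boundaries and the continuity of $\sfm$ and of the norm (Proposition~\ref{prop:topologies agree}) needed to glue the per-simplex estimates.
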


\begin{proof}
Suppose that $(x_n)_{n \in \NN}$ is a sequence in $\calX^1(\FF_2)$ such that $\sfm(x_n) \to \infty$.  We will show that $(x_n)_{n \in \NN}$ is not Cauchy by showing that $\limsup d_\entropy(x_1,x_n) = \infty$.  

Let $m = \max\{\sfm(x_1),4\}$.  Given $N \geq 0$, we let $n$ be such that $\sqrt{\sfm(x_n)} \geq \sqrt{5}N + \sqrt{m}$.  Consider a path $\alpha_t \from [0,1] \to \calX^1(\FF_2)$ such that $\alpha_0 = x_1$ and $\alpha_1 = x_n$.  Let $\delta \in [0,1]$ be the minimal value such that $\sfm(\alpha_t) \geq 4$ for all $t \in [\delta,1]$.  In particular, $\sfm(\alpha_\delta) \leq m$. 

Combining Propositions~\ref{prop:2-rose}, \ref{prop:2-barbell} and \ref{prop:2-theta}, we find that 
\[ \calL_\entropy(\alpha_t|[\delta,1]) \geq \frac{1}{\sqrt{5}}\left( \sqrt{\sfm(x_n)} - \sqrt{\sfm(\alpha_\delta)}\right) \geq \frac{1}{\sqrt{5}}\left( \sqrt{\sfm(x_n)} - \sqrt{m}\right) \geq N. \]
This also provides a lower bound on $\calL_\entropy(\alpha_t|[0,1])$.  Since the path was arbitrary this also is a lower bound on $d_\entropy(x_1,x_n)$.

Therefore $d_\entropy(x_1,x_n) \geq N$, showing that $\limsup d_\entropy(x_1,x_n) = \infty$ as claimed.
\end{proof}

Given $L \geq 0$, we set $\calX^1_L(\FF_2) = \{ x \in \calX^1(\FF_2) \mid \sfm(x) \leq L \}$ and additionally given a marked graph $\rho \from \calR_2 \to G$, we set $\calX^1_L(G,\rho) = \calX^1(G,\rho) \cap \calX^1_L(\FF_2)$.  We remark that the closure of $\calX^1_L(G,\rho)$ is compact.  As $\calX^1(\FF_2)$ is locally finite and there are only finitely many topological types of graphs, the following statement holds.  For all $L, D \geq 0$, there is an $N$ such that if $x \in \calX^1_L(\FF_2)$ then there is a collection of marked graphs $(G_1,\rho_1),\ldots,(G_N,\rho_N)$ such that
\[ \{x' \in \calX^1_L(\FF_2) \mid d_\entropy(x,x') \leq D \} \subseteq  \bigcup_{k=1}^N \calX^1_L(G_k,\rho_k).\] 

\begin{proposition}\label{prop:rank 2 complete}
The metric space $(\calX^1(\FF_2),d_\entropy)$ is complete.
\end{proposition}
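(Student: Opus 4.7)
Let $(x_n)_{n \in \NN}$ be a Cauchy sequence in $(\calX^1(\FF_2), d_\entropy)$. The plan is to extract a convergent subsequence; since a Cauchy sequence possessing a convergent subsequence converges to the same limit, this suffices. Cauchy sequences are bounded, so fix $D$ with $d_\entropy(x_1, x_n) \leq D$ for all $n$, and invoke Lemma~\ref{lem:cauchy is thick} to obtain $L$ such that $\sfm(x_n) \leq L$. Thus every $x_n$ lies in the set
\[ S = \{\, y \in \calX^1_L(\FF_2) \mid d_\entropy(x_1, y) \leq D \,\}. \]
By the local finiteness statement recorded just before this proposition, $S$ is covered by finitely many cells $\calX^1_L(G_k, \rho_k)$, and the pigeonhole principle extracts a subsequence $(x_{n_k})$ lying in a single cell $\calX^1_L(G, \rho)$.

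The heart of the argument is to show that the closure $\oC$ of $\calX^1_L(G, \rho)$ taken inside $\calX^1(\FF_2)$ is compact in $d_\entropy$. Identifying $\calX^1_L(G, \rho)$ with a subset of $\calM^1(G) \subseteq \RR^{\abs{E_+}}_{>0}$, the bound $\sfm \leq L$ keeps every edge length bounded above; I would then argue that $\sfm \leq L$ also forces a uniform lower bound on the lengths of all loop edges. Indeed, inspecting the unit entropy equations $\oF_G(\ell) = 0$ worked out in Sections~\ref{subsec:rose},~\ref{subsec:barbell},~and~\ref{subsec:theta}, one checks in each of the three cases $G \in \{\calR_2, \calB_2, \Theta_2\}$ that shrinking any loop edge to zero length forces another edge to have length tending to infinity, contradicting $\sfm \leq L$. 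The same reasoning excludes simultaneous degenerations, for example $\ell(e_1), \ell(e_2) \to 0$ in $\Theta_2$ forces $\ell(e_3) \to \infty$.

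The only remaining degenerations are those in which a single non-loop edge (a bridge in $\calB_2$ or one edge of $\Theta_2$) shrinks to length zero, and the main technical point is to verify that such a limit corresponds to a genuine point of $\calX^1(\FF_2)$. This follows from a direct substitution: taking $\ell(e) \to 0$ in $\oF_G = 0$ produces exactly the equation $\oF_{G_0} = 0$ for the graph $G_0$ obtained by the corresponding collapse $c \from G \to G_0$, which is also a consequence of Lemma~\ref{lem:F_G for collapse}. Consequently $\oC$ is contained in the finite union $\bigcup_{(G_0, \rho_0) \leq (G, \rho)} \{\, z \in \calX^1(G_0, \rho_0) \mid \sfm(z) \leq L \,\}$, a closed bounded subset of $\calX^1(\FF_2)$ in each coordinate system. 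By Theorem~\ref{th:topologies agree}, the weak topology and $d_\entropy$ agree, so $\oC$ is compact in the entropy metric.

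To finish, extract from $(x_{n_k})$ a convergent subsubsequence with limit $x_\infty \in \oC \subseteq \calX^1(\FF_2)$. Since $(x_n)$ is Cauchy and admits a convergent subsequence, it converges to $x_\infty$, establishing completeness.
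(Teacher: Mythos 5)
Your proof is correct and follows essentially the same route as the paper: Lemma~\ref{lem:cauchy is thick} bounds $\sfm$ along the Cauchy sequence, the local finiteness remark confines its tail to finitely many cells $\calX^1_L(G_k,\rho_k)$, and compactness of their closures in $\calX^1(\FF_2)$ yields convergence. The only difference is cosmetic: you spell out why the closure of $\calX^1_L(G,\rho)$ is compact (ruling out loop-edge degenerations and handling single non-loop-edge collapses via Lemma~\ref{lem:F_G for collapse} and Theorem~\ref{th:topologies agree}), a fact the paper simply records as a remark before the proposition.
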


\begin{proof}
Let $(x_n)_{n \in \NN}$ be a Cauchy sequence in $(\calX^1(\FF_2),d_\entropy)$.  By Lemma~\ref{lem:cauchy is thick}, there is an $L$ such that $(x_n) \subset \calX^1_L(\FF_2)$.  Let $n_0$ be such that $d_\entropy(x_n,x_m) \leq 1$ if $n,m \geq n_0$.  By the above remark, there are finitely marked graphs $(G_1,\rho_1), \ldots, (G_N,\rho_N)$ such that
\[ \{ x_n | n \geq n_0 \} \subset \{ x' \in \calX^1_L(\FF_2) \mid d_\entropy(x_{n_0},x') \leq 1 \} \subseteq \bigcup_{k=1}^N \calX^1_L(G_k,\rho_k). \]  As the closure of this set in $\calX^1(\FF_2)$ is compact, the sequence $(x_n)_{n \in \NN}$ converges.   
\end{proof}


\section{The Moduli Space of the Rose}\label{sec:rose}

The purpose of this section is to examine the entropy metric on the moduli space of an $r$--rose $\calM^1(\calR_r)$.  We begin in Section~\ref{subsec:Fr} by computing the function $\oF_{\calR_r}$ introduced in Section~\ref{subsec:simplification}.  For the $r$--rose, we can strengthen Proposition~\ref{prop:oF} and conclude that $\calM^1(\calR_r)$ equals the set of length functions $\ell$ for which $\oF_{\calR_r}(\ell) = 0$.  Next, in Section~\ref{subsec:not complete} we show that $(\calM^1(\calR_r),d_{\entropy,\calR_r})$ for $r \geq 3$ is not complete by producing paths that have finite length yet no accumulation point.  Lastly, in Section~\ref{subsec:infinite diameter} we show that $(\calM^1(\calR_r),d_{\entropy,\calR_r})$ has infinite diameter.  Specifically, a path that shrinks the length of an edge to zero necessarily has infinite length.  


\subsection{\texorpdfstring{$\calM^1(\calR_r)$}{M\textasciicircum1(R\_r)} as a Zero Locus}\label{subsec:Fr}

In this section we compute the function $\oF_{\calR_r}$.  This appears as Proposition~\ref{prop:Frose}.  In Proposition~\ref{prop:oF = 0 iff h = 1} we prove that $\calM^1(\calR_r) = \{\ell \in \calM(\calR_r) \mid \oF_{\calR_r}(\ell) = 0 \}$; this strengthen Proposition~\ref{prop:oF} in this setting.

First we set some notation for working with the graph $\calR_r$.  We identify the unoriented edges of $\calR_{r}$ with the set $[r] = \{1,2,\ldots,r\}$.  To simplify the expressions, we will use $r$ as the identifying subscript rather than $\calR_{r}$ and we will use variables $\ell = (\ell^1,\ldots, \ell^r)$ to denote the length of the unoriented edges.  The matrix $\oA_{r,\ell} \in \Mat_{r}(\RR)$ has rows and columns indexed by $[r]$ and we have
\begin{equation}\label{eq:Ar}
\oA_{r,\ell}(i,j) = \exp(-\ell^i)(2-\delta(i,j))
\end{equation}
where $\delta(\param,\param)$ is the Kronecker delta function.  

For the calculations in this section, we need the following combinatorial identities.

\begin{lemma}\label{lem:comb identity}
For any $r \geq 1$ and any $x \in \RR$ the following equations hold.
\begin{align}
(1 + x)^{r-1}\bigl(x - (2r-1)\bigr) &= x^r\sum_{k=0}^{r} \genfrac{(}{)}{0pt}{0}{r}{k} (1-2k) x^{-k}.\label{eq:X} \\
(1 + x)^{r-1}\bigl(x + (2r+1)\bigr) &= x^r\sum_{k=0}^{r} \genfrac{(}{)}{0pt}{0}{r}{k} (1+2k) x^{-k}.\label{eq:Y}
\end{align}
\end{lemma}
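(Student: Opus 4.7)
The plan is to reduce both identities to the binomial theorem by splitting the factor $1 \pm 2k$ into its constant and linear pieces. Specifically, I would write
\[
x^r\sum_{k=0}^{r}\binom{r}{k}(1 \pm 2k)x^{-k} \;=\; x^r\sum_{k=0}^{r}\binom{r}{k}x^{-k} \;\pm\; 2\,x^r\sum_{k=0}^{r} k\binom{r}{k}x^{-k},
\]
so the task reduces to evaluating the two auxiliary sums $S_1(x) = x^r\sum_{k=0}^r \binom{r}{k}x^{-k}$ and $S_2(x) = x^r\sum_{k=0}^r k\binom{r}{k}x^{-k}$ in closed form.

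The first sum is immediate: distributing $x^r$ and applying the binomial theorem to $(1 + 1/x)^r$ gives $S_1(x) = (1+x)^r$. For $S_2(x)$ I would invoke the standard absorption identity $k\binom{r}{k} = r\binom{r-1}{k-1}$ (valid for $k \geq 1$; the $k=0$ term vanishes anyway), re-index by $j = k-1$, and apply the binomial theorem once more to obtain
\[
S_2(x) \;=\; r\,x^r\sum_{j=0}^{r-1}\binom{r-1}{j}x^{-(j+1)} \;=\; r(1+x)^{r-1}.
\]

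Substituting these two evaluations back yields
\[
x^r\sum_{k=0}^{r}\binom{r}{k}(1 \pm 2k)x^{-k} \;=\; (1+x)^r \pm 2r(1+x)^{r-1} \;=\; (1+x)^{r-1}\bigl(x + 1 \pm 2r\bigr),
\]
which for the $-$ sign is \eqref{eq:X} and for the $+$ sign is \eqref{eq:Y}. There is essentially no obstacle in this proof: it is a routine binomial manipulation, and the only bookkeeping required is to keep track of the sign and to perform the index shift correctly in evaluating $S_2$.
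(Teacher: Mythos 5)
Your proof is correct and follows essentially the same route as the paper: both decompose the sum into the plain binomial sum and the $k$-weighted sum and evaluate the latter in closed form, the only (immaterial) difference being that you use the absorption identity $k\binom{r}{k}=r\binom{r-1}{k-1}$ where the paper differentiates the binomial theorem and multiplies by $x$.
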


\begin{proof}
Differentiate the equation $(1+x)^{r} = \sum_{k=0}^{r}\genfrac{(}{)}{0pt}{1}{r}{k} x^{r-k}$ and multiply it by $x$ to obtain the equality $rx(1+x)^{r-1} = \sum_{k=0}^{r} (r-k) \genfrac{(}{)}{0pt}{1}{r}{k} x^{r-k}$.  Therefore
\begin{align*}
2rx(1+x)^{r-1} - (2r-1)(1+x)^{r}  &= 2r\sum_{k=0}^{r} \genfrac{(}{)}{0pt}{0}{r}{k} x^{r-k} - \sum_{k=0}^{r} 2k \genfrac{(}{)}{0pt}{0}{r}{k} x^{r-k} \\
& \qquad - 2r\sum_{k=0}^{r} \genfrac{(}{)}{0pt}{0}{r}{k} x^{r-k} + \sum_{k=0}^{r} \genfrac{(}{)}{0pt}{0}{r}{k} x^{r-k} \\
& = \sum_{k=0}^{r} (1 - 2k) \genfrac{(}{)}{0pt}{0}{r}{k}x^{r-k}  = x^{r}\sum_{k=0}^{r}(1-2k)\genfrac{(}{)}{0pt}{0}{r}{k} x^{-k}.
\end{align*}
The left hand side in the above equation simplifies to $(1 + x)^{r-1}\bigl(x - (2r-1)\bigr)$.  This shows~\eqref{eq:X}.  In a similar manner one can derive \eqref{eq:Y}.
\end{proof}

\begin{corollary}\label{co:comb identity}
For any $r \geq 1$,
\begin{equation}\label{eq:X = 0}
 \sum_{k=0}^{r} \genfrac{(}{)}{0pt}{0}{r}{k} (1-2k)(2r-1)^{-k} = 0.
\end{equation}
\end{corollary}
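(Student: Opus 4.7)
The plan is to deduce the identity directly from equation~\eqref{eq:X} of Lemma~\ref{lem:comb identity} by making a single well-chosen substitution. Specifically, I would set $x = 2r-1$ in \eqref{eq:X}. Then the factor $x - (2r-1)$ on the left-hand side vanishes, forcing the entire left-hand side to be zero. The right-hand side becomes
\begin{equation*}
(2r-1)^r \sum_{k=0}^{r} \binom{r}{k}(1-2k)(2r-1)^{-k}.
\end{equation*}
Since $r \geq 1$ implies $(2r-1)^r \neq 0$, we may divide by this factor to obtain the stated identity.

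There is no real obstacle here: the corollary is essentially the statement that $x = 2r-1$ is the designated root of the left factor on the LHS of \eqref{eq:X}, and reading off the RHS at that value gives the sum in question. The only mild point to note is that for $r=1$ one should check $(2r-1)^{-k}$ makes sense for each term in the sum, which it does since $2r-1 = 1 > 0$ for all $r \geq 1$.
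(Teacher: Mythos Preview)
Your proof is correct and matches the paper's approach exactly: substitute $x = 2r-1$ into \eqref{eq:X}, note the left-hand side vanishes, and divide through by $(2r-1)^r \neq 0$.
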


\begin{proof}
Evaluate equation~\eqref{eq:X} with $x = 2r-1$. The left hand side becomes $0$.  Dividing the resulting equation by $(2r-1)^{r}$, we obtain \eqref{eq:X = 0}.
\end{proof}

Given a length function $\ell = (\ell^1,\ldots,\ell^n) \in \calM(\calR_r)$ and a subset $S \subseteq [r]$, we define $\ell^S = \sum_{k \in S} \ell^k$.  In particular, we have $\ell^\emptyset = 0$.  

\begin{proposition}\label{prop:Frose}
For any $r \geq 2$ and any length function $\ell \in \calM(\calR_r)$,
\begin{equation}\label{eq:Frose}
\oF_{r}(\ell) = \sum_{S \subseteq [r]} (1 - 2\abs{S})\exp\left(-\ell^S\right).
\end{equation}
\end{proposition}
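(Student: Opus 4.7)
To establish \eqref{eq:Frose}, the plan is to compute $\oF_r(\ell) = \det(I - \oA_{r,\ell})$ directly by recognizing $\oA_{r,\ell}$ as a rank-one matrix up to a diagonal correction, and then to apply the matrix determinant lemma. Set $x_i = \exp(-\ell^i)$ and write $\mathbf{x} = (x_1,\ldots,x_r)^T \in \RR^r$ with $\mathbf{1}$ the all-ones column vector. Formula~\eqref{eq:Ar} says $\oA_{r,\ell}(i,j) = x_i(2 - \delta(i,j))$, which is exactly the $(i,j)$-entry of $2\mathbf{x}\mathbf{1}^T - \mathrm{diag}(x_1,\ldots,x_r)$. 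Consequently
\[ I - \oA_{r,\ell} = D - 2\mathbf{x}\mathbf{1}^T, \qquad D = \mathrm{diag}(1+x_1,\ldots,1+x_r). \]
This will be the starting point.

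The main step will be an application of the matrix determinant lemma to the rank-one update $D - 2\mathbf{x}\mathbf{1}^T$. Since $D$ is invertible on $\calM(\calR_r)$ (all $x_i \in (0,1)$), the lemma will give
\[ \oF_r(\ell) = \det(D)\Bigl(1 - 2\mathbf{1}^T D^{-1}\mathbf{x}\Bigr) = \prod_{i=1}^r (1+x_i) \cdot \left(1 - 2\sum_{i=1}^r \frac{x_i}{1+x_i}\right). \]
From here the task reduces to combinatorics. Using the standard product expansion $\prod_i (1+x_i) = \sum_{S \subseteq [r]}\prod_{i \in S} x_i = \sum_{S}\exp(-\ell^S)$, the first term contributes the ``$1$'' piece of the coefficient $1 - 2\abs{S}$.

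What remains is to handle the cross term, and I expect this to be the only place any bookkeeping is required. After clearing denominators, the correction becomes $2\sum_i x_i \prod_{j \neq i}(1+x_j)$. Expanding each partial product over subsets $T \subseteq [r]\setminus\{i\}$ and re-indexing by $S = T \cup \{i\}$ will collect each subset $S \subseteq [r]$ exactly $\abs{S}$ times (once per $i \in S$), producing $2\sum_S \abs{S}\exp(-\ell^S)$. Subtracting this from the first term yields the claimed identity $\oF_r(\ell) = \sum_S (1 - 2\abs{S})\exp(-\ell^S)$. No analytic or limiting issues intervene, since after the substitution $x_i = e^{-\ell^i}$ both sides are polynomials in the $x_i$'s, and the combinatorial identities of Lemma~\ref{lem:comb identity} are not needed here (they will be relevant later, for locating the unit-entropy component of the zero set).
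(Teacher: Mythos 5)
Your proposal is correct, and it takes a genuinely different route from the paper. You compute $\det(I-\oA_{r,\ell})$ in closed form by writing $I-\oA_{r,\ell}=D-2\bx\One^{T}$ with $D=\mathrm{diag}(1+x_{1},\ldots,1+x_{r})$ and invoking the matrix determinant lemma, after which the subset expansion of $\prod_{i}(1+x_{i})$ and the re-indexing $S=T\cup\{i\}$ give the coefficient $1-2\abs{S}$ directly; the remark that both sides are polynomials in the $x_{i}$ disposes of any invertibility caveat. The paper instead expands the determinant over permutations only far enough to see that $\oF_{r}(\ell)=\sum_{S}c_{S,r}\exp(-\ell^{S})$ with $c_{S,r}$ depending only on $\abs{S}$, then inducts on $r$ (the base case $r=2$ by hand, the inductive hypothesis pinning down $c_{S,r}$ for $\abs{S}<r$), and finally determines the top coefficient $c_{[r],r}$ by evaluating at the symmetric unit-entropy metric $\log(2r-1)\cdot\One$, using Example~\ref{ex:rose}, Proposition~\ref{prop:oF}, and the binomial identity of Corollary~\ref{co:comb identity}. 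Your argument buys a shorter, self-contained derivation that needs no input from entropy or from Lemma~\ref{lem:comb identity}; the paper's route leans on the combinatorial identities it has already set up (and which it reuses later, e.g.\ in Lemma~\ref{lem:ellr}), at the cost of the extra inductive and evaluation steps. Either way the verification of the rank-one identification $\oA_{r,\ell}=2\bx\One^{T}-\mathrm{diag}(x_{1},\ldots,x_{r})$ against \eqref{eq:Ar} is the only point that needs care, and you have it right.
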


\begin{proof}
Using the expansion of the determinant via permutations of $[r]$, we can express $\oF_{r}(\ell) = \det(I - \oA_{r,\ell})$ as
\begin{equation*}
\oF_{r}(\ell) = \sum_{S \subseteq [r]} c_{S,r}\exp\left(-\ell^{S}\right)
\end{equation*}
for some coefficients $c_{S,r} \in \RR$ depending on the subset $S \subseteq [r]$ and the rank $r$.  Further, it is apparent that the coefficient $c_{S,r}$ only depends on the cardinality of $S$.  It remains to determine these coefficients.  We will do so by induction.

For $r=2$, we compute:
\begin{align*}
\oF_{2}(\ell^{1},\ell^{2}) &= \det\left[\begin{smallmatrix} 1 - \exp(-\ell^{1}) & -2\exp(-\ell^{1}) \\ -2\exp(-\ell^{2}) & 1 - \exp(-\ell^{2}) \end{smallmatrix}\right] \\
& = 1 - \exp(-\ell^{1}) - \exp(-\ell^{2}) - 3\exp(-\ell^{1} - \ell^{2}).
\end{align*}
This shows the proposition for $r = 2$.

Suppose $r \geq 3$ and that the proposition holds for $r-1$.  That is, we assume that $c_{S,r-1} = 1-2\abs{S}$ for any $S \subseteq [r-1]$.  Since $c_{S,r}$ only depends on the cardinality of $S$, this implies that $c_{S,r} = 1 - 2\abs{S}$ for any $S \subseteq [r]$ where $\abs{S} < r$ as well.  Hence, it only remains to compute $c_{[r],r}$.  

To compute $c_{[r],r}$, we make use of Corollary~\ref{co:comb identity}.  Indeed, by Example~\ref{ex:rose}, we have $\entropy_r(\log(2r-1) \cdot \One) = 1$.  Therefore by Proposition~\ref{prop:oF} we obtain $\oF_{r}(\log(2r-1) \cdot \One) = 0$.  Hence
\begin{align*}
0 & = \oF_{r}(\log(2r-1) \cdot \One) \\
& = \sum_{S \subset [r]} (1 - 2\abs{S}) \exp\left(-\abs{S}\log(2r-1)\right) + c_{[r],r}\exp\left(-r\log(2r-1)\right) \\
& = \sum_{k=0}^{r-1} \genfrac{(}{)}{0pt}{0}{r}{k} (1-2k)(2r-1)^{-k} + c_{[r],r}(2r-1)^{-r}.
\end{align*}
By Corollary~\ref{co:comb identity}, we find that $c_{[r],r} = 1-2r$ as desired.
\end{proof}

\begin{example}\label{ex:rose moduli}
For $r = 2$ and $r = 3$, using the coordinates $x = \exp(-\ell^{1})$, $y = \exp(-\ell^{2})$ and $z = \exp(-\ell^{3})$ we find
\begin{align*}
\oF_{2}(\ell^{1},\ell^{2}) &=  1 - x - y - 3xy \text{, and}\\
\oF_{3}(\ell^{1},\ell^{2},\ell^{3}) &= 1 - x - y - z - 3xy - 3xz - 3yz - 5xyz.
\end{align*}
Figure~\ref{fig:moduli space} shows $\calM^{1}(\calR_{r})$ as a subset of $\calM(\calR_r)$ for $r = 2$ and $r = 3$. 
\end{example}

\begin{figure}[ht]
\centering
\includegraphics[width=0.4\textwidth]{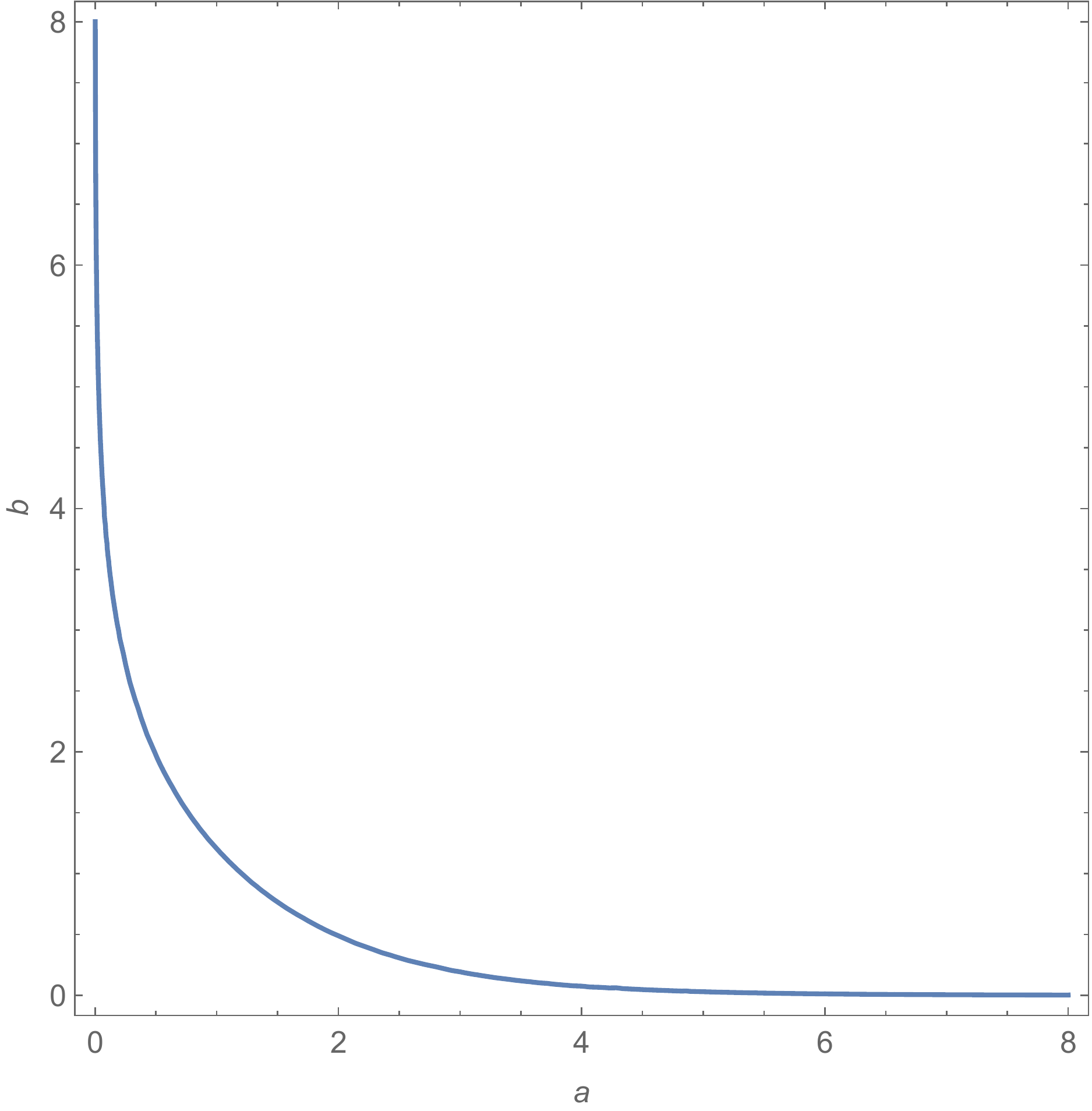}
\quad
\includegraphics[width=0.47\textwidth]{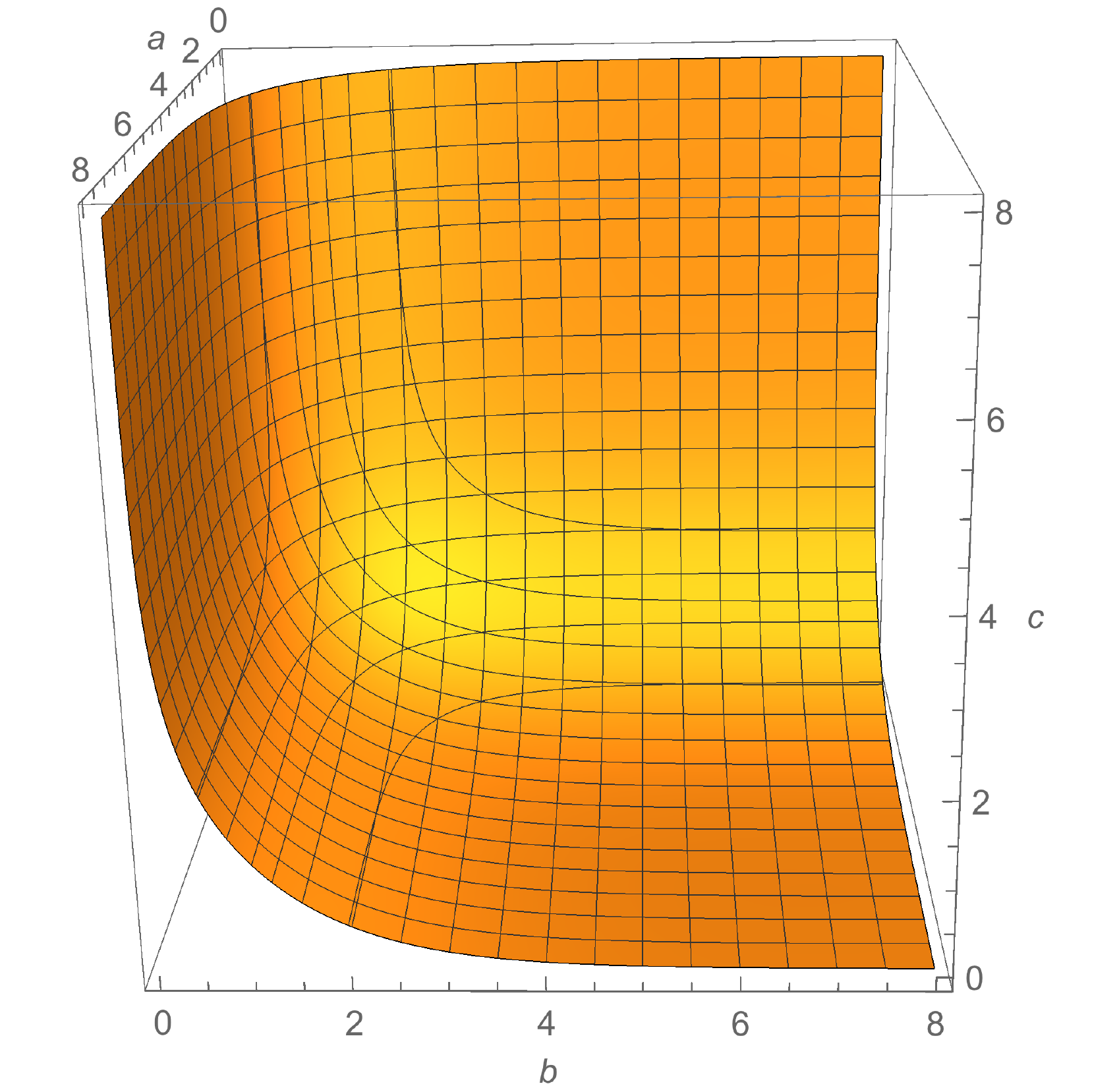}
\caption{The hypersurfaces $\calM^{1}(\calR_{r})$ for the roses with 2 and 3 petals.}\label{fig:moduli space}
\end{figure}

Using Proposition~\ref{prop:Frose}, we can provide a strengthening of Proposition~\ref{prop:oF} for the $r$--rose.

\begin{proposition}\label{prop:oF = 0 iff h = 1}
For any $r \geq 2$, the unit entropy moduli space $\calM^{1}(\calR_{r})
$ equals the level set $\{\ell \in \calM(\calR_r) \mid \oF_{r}(\ell) = 0 \}$.
\end{proposition}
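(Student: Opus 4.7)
The plan is to apply Proposition~\ref{prop:oF}, which already establishes that $\calM^1(\calR_r)$ is \emph{a} connected component of the zero set $Z \defeq \{\ell \in \calM(\calR_r) \mid \oF_r(\ell) = 0\}$. It therefore suffices to prove that $Z$ is itself connected, and hence equal to $\calM^1(\calR_r)$.

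The main step will be to factor the explicit expression for $\oF_r$ supplied by Proposition~\ref{prop:Frose}. Setting $x_i = \exp(-\ell^i) \in (0,1)$, so that $\exp(-\ell^S) = \prod_{i \in S} x_i$, I would split
\begin{equation*}
\oF_r(\ell) = \sum_{S \subseteq [r]} \prod_{i \in S} x_i \;-\; 2\sum_{S \subseteq [r]} \abs{S} \prod_{i \in S} x_i.
\end{equation*}
The first sum collapses to $\prod_i(1+x_i)$, while exchanging the order of summation turns the second into $\sum_i x_i \prod_{j \neq i}(1+x_j)$. Pulling $\prod_j(1+x_j)$ out of both pieces yields the factorization
\begin{equation*}
\oF_r(\ell) \;=\; \prod_{i=1}^r(1+x_i) \cdot \left(1 - 2\sum_{i=1}^r \frac{x_i}{1+x_i}\right).
\end{equation*}
Since $\prod_i(1+x_i) > 0$ on $\calM(\calR_r)$, the locus $Z$ is precisely $\{\ell \mid \sum_i x_i/(1+x_i) = 1/2\}$.

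To finish, I would introduce the change of coordinates $y_i = x_i/(1+x_i)$, which is a homeomorphism from $(0,1)$ to $(0,1/2)$ in each coordinate and so identifies $\calM(\calR_r) \cong (0,1)^r$ with the open cube $(0,1/2)^r$, carrying $Z$ to the slice $\{y \in (0,1/2)^r \mid \sum_i y_i = 1/2\}$. When $r \geq 2$, the positivity of the $y_i$ together with the constraint $\sum y_i = 1/2$ automatically forces $y_i < 1/2$, so this slice is the open $(r-1)$--simplex $\{y \in \RR_{>0}^r \mid \sum_i y_i = 1/2\}$, which is convex and hence connected. Combined with Proposition~\ref{prop:oF}, this forces $\calM^1(\calR_r) = Z$. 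The only real obstacle is spotting the factorization; once that is in hand, the connectedness of $Z$ is immediate from the linear change of variables.
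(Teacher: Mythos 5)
Your proposal is correct, and the factorization you use is valid: writing $x_i=\exp(-\ell^i)$, the identity $\sum_{S\subseteq[r]}(1-2\abs{S})\prod_{i\in S}x_i=\prod_{i=1}^r(1+x_i)\bigl(1-2\sum_{i=1}^r \tfrac{x_i}{1+x_i}\bigr)$ follows from the double count you describe and checks out against the $r=2,3$ formulas of Example~\ref{ex:rose moduli}; since $x_i\in(0,1)$ on $\calM(\calR_r)$, the zero locus is carried by the coordinatewise homeomorphism $y_i=x_i/(1+x_i)$ onto the convex slice $\{y\in\RR_{>0}^r\mid \sum y_i=1/2\}$, which is connected for $r\geq 2$, and then Proposition~\ref{prop:oF} (plus nonemptiness, e.g.\ $\log(2r-1)\cdot\One\in\calM^1(\calR_r)$) forces equality. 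This is genuinely different from the paper's argument, which never shows the zero set is connected: there, for a zero $\ell$ one sets $h=\entropy_r(\ell)$, notes that $t\mapsto\oF_r(t\cdot\ell)$ vanishes at both $t=1$ and $t=h$ (the latter by Proposition~\ref{prop:oF} and homogeneity of entropy, Lemma~\ref{lem:homogeneous -1}), and uses the expression of Proposition~\ref{prop:Frose} to see this function is strictly increasing along the ray, so $h=1$. The paper's route is shorter once Proposition~\ref{prop:Frose} is in hand and isolates the key mechanism (uniqueness of the zero on each ray through the origin), while your factorization buys more structure: it exhibits $\calM^1(\calR_r)$ globally as the open simplex $\{y\in\RR_{>0}^r\mid\sum_i y_i=1/2\}$ in the coordinates $y_i=1/(1+\exp(\ell^i))$, which gives an independent, very concrete proof that $\calM^1(\calR_r)$ is homeomorphic to an open $(r-1)$--simplex and could be used to streamline the discussion of the completion in Section~\ref{sec:completion of rose}; its drawback is that it is special to the rose, whereas the ray-monotonicity argument is closer in spirit to what is available for general graphs via Lemma~\ref{lem: h < 1}.
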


\begin{proof}
By Proposition~\ref{prop:oF}, we have that $\calM^{1}(\calR_{r}) \subseteq \{\ell \in  \calM(\calR_r) \mid \oF_{r}(\ell) = 0 \}$.  Suppose that $\oF_{r}(\ell) = 0$ for some $\ell \in  \calM(\calR_r)$.  Set $h = \entropy_r(\ell)$.  We need to show that $h = 1$.    

Consider the function $p \from \RR_{>0} \to \RR$ defined by: $p(t) = \oF_{r}(t \cdot \ell)$.  We have $p(1) = \oF_{r}(\ell) = 0$.  As $\entropy_r(h \cdot \ell) = 1$, we have $p(h) = \oF_{r}(h \cdot \ell) = 0$ as well by Proposition~\ref{prop:oF}.  

Using the expression of $\oF_r(\ell)$ derived in Proposition~\ref{prop:Frose} we compute that
\begin{equation*}
p'(t) = \sum_{\substack{S \subseteq [r] \\ S \neq  \emptyset}} (2\abs{S} - 1)\ell^{S}\exp(-t \cdot \ell^{S}).
\end{equation*}
Therefore $p'(t) > 0$ for all $t \in \RR_{>0}$.  As $p(h \cdot \ell) = 0 = p(\ell)$, we must have that $h \cdot \ell = \ell$ and hence $h = 1$.
\end{proof}


\subsection{Finite Length Paths in \texorpdfstring{$\calM^1(\calR_r)$}{M\textasciicircum1(R\_r)} for \texorpdfstring{$r \geq 3$}{r >= 3}}\label{subsec:not complete}

Using the computation of $\oF_r$, in Proposition~\ref{prop:finite length path} we will compute the length of the path in $\calM^1(\calR_r)$ starting at $\log(2r-1) \cdot \One$ that blows up the length of one edge while shrinking the lengths of the others at the same rate.  As we will show, when $r$ is at least $3$, this path has finite length and thus the moduli space $(\calM^1(\calR_r),d_{\entropy,\calR_r})$ is not complete for $r \geq 3$.    

Before we begin, it is useful to introduce the following functions $X_{i}, Y_{i} \from \calM(\calR_r) \to \RR$ for each $i \in [r]$:
\begin{align}
X_{i}(\ell) &= \sum_{S \subseteq [r] - \{i\}} (1 - 2\abs{S}) \exp\left(-\ell^{S}\right),\label{eq:Xi} \text{ and} \\
Y_{i}(\ell) &= \sum_{S \subseteq [r] - \{i\}} (1 + 2\abs{S}) \exp\left(-\ell^{S}\right).\label{eq:Yi}
\end{align}
Both $X_{i}$ and $Y_{i}$ are constant with respect to $\ell^{i}$.  Using these functions, we can isolate the terms in $\oF_r(\ell)$ in which $\ell^i$ appears and write 
\begin{equation}\label{eq:F = 0}
\oF_{r}(\ell) = X_{i}(\ell) - \exp(-\ell^{i})Y_{i}(\ell). 
\end{equation}
Hence for $\ell \in \calM^{1}(\calR_{r})$, as $\oF_r(\ell) = 0$ we can solve for $\ell^i$ and write
\begin{equation}\label{eq:ell i}
\ell^{i} = \log\left(\frac{Y_{i}(\ell)}{X_{i}(\ell)}\right).
\end{equation}
Further, we find the following expression for the partial derivative of $\oF_r(\ell)$ with respect to $\ell^i$:
\begin{equation}\label{eq:partial F}
\PD{\oF_{r}}{i}(\ell) = \exp(-\ell^{i})Y_{i}(\ell).
\end{equation} 

We observe the following inequalites for any $\ell \in \calM^{1}(\calR_{r})$.

\begin{lemma}\label{lem:Xi and Yi bounds}
Let $r \geq 2$ and let $\ell \in \calM^1(\calR_r)$.  Then 
\begin{align}
0 & < X_{i}(\ell) < 1,\text{ and}\label{eq:Xi bound} \\ 
1 & < Y_{i}(\ell) < 4\label{eq:Yi bound}  
\end{align} 
\end{lemma}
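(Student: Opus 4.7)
Three of the four inequalities will be immediate from the definitions. Looking at \eqref{eq:Xi} and \eqref{eq:Yi}, the $S = \emptyset$ term contributes $+1$ to both $X_i$ and $Y_i$; for $|S| \geq 1$ the coefficient $1 - 2|S|$ is $\leq -1$ and $1 + 2|S|$ is $\geq 3$. Since $r \geq 2$, at least one nontrivial subset $S \subseteq [r] - \{i\}$ is present in each sum, so $X_i < 1$ and $Y_i > 1$ (in particular $Y_i > 0$). For the lower bound $X_i > 0$, I would invoke \eqref{eq:F = 0}, which supplies the relation $X_i = \exp(-\ell^i) Y_i$, and combine it with $Y_i > 0$.

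The strict upper bound $Y_i < 4$ is the only substantive assertion. My plan is to first derive the product identity
\[ Y_i(1 + \exp(-\ell^i)) = 2 \prod_{j \neq i}(1 + \exp(-\ell^j)), \]
by adding the defining series to obtain $X_i + Y_i = 2 \prod_{j \neq i}(1 + \exp(-\ell^j))$ and substituting $X_i = \exp(-\ell^i) Y_i$. Then I would reparametrise by setting $\phi_j = 1/(1 + \exp(\ell^j)) \in (0, 1/2)$, so that $1 + \exp(-\ell^j) = 1/(1 - \phi_j)$ and the identity takes the compact form $Y_i = 2(1 - \phi_i)/\prod_{j \neq i}(1 - \phi_j)$. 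The crucial observation is that, after expanding $\oF_r(\ell) = \prod_j (1 + \exp(-\ell^j)) - 2 \sum_j \exp(-\ell^j)\prod_{k \neq j}(1 + \exp(-\ell^k))$, dividing the equation $\oF_r(\ell) = 0$ through by $\prod_j (1 + \exp(-\ell^j))$ collapses the unit entropy condition into the strikingly clean affine constraint
\[ \sum_{j=1}^{r} \phi_j = \tfrac{1}{2}. \]
Bernoulli's inequality then supplies $\prod_{j \neq i}(1 - \phi_j) \geq 1 - \sum_{j \neq i}\phi_j = \tfrac{1}{2} + \phi_i$, and substituting yields $Y_i \leq 2(1 - \phi_i)/(\tfrac{1}{2} + \phi_i)$, which is strictly less than $4$ precisely because $\phi_i > 0$.

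The main obstacle is recognizing the affine reformulation $\sum_j \phi_j = 1/2$ of the unit entropy condition; once that reformulation is in place, Bernoulli is perfectly matched to the denominator of $Y_i$ and the bound drops out by a short calculation.
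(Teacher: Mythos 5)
Your proof is correct, and for the one substantive inequality, $Y_i(\ell) < 4$, it takes a genuinely different route from the paper. The paper argues termwise: the nonempty subsets $S \subseteq [r]-\{i\}$ contribute to $Y_i - 1$ with coefficient $1+2\abs{S}$ and to $1 - X_i$ with coefficient $2\abs{S}-1$, and since the ratio $(2\abs{S}+1)/(2\abs{S}-1) \leq 3$ one gets $Y_i - 1 \leq 3(1-X_i) < 3$ directly from $X_i < 1$. You instead exploit exact identities: $X_i + Y_i = 2\prod_{j\neq i}(1+\exp(-\ell^j))$ combined with $X_i = \exp(-\ell^i)Y_i$ (which, as in the paper, needs $\oF_r(\ell)=0$ on $\calM^1(\calR_r)$, i.e.\ Proposition~\ref{prop:oF} together with \eqref{eq:F = 0}) gives $Y_i = 2(1-\phi_i)/\prod_{j\neq i}(1-\phi_j)$ in the coordinates $\phi_j = 1/(1+\exp(\ell^j))$, and dividing $\oF_r(\ell)=0$ by $\prod_j(1+\exp(-\ell^j))$ turns the unit-entropy condition into the linear constraint $\sum_j \phi_j = 1/2$ (consistent with $\ell = \log(2r-1)\cdot\One$, where each $\phi_j = 1/2r$). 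The Weierstrass product inequality $\prod_{j\neq i}(1-\phi_j) \geq 1 - \sum_{j\neq i}\phi_j = \tfrac12 + \phi_i$ (what you call Bernoulli; the inductive proof is immediate for $\phi_j \in (0,1)$) then gives $Y_i \leq 2(1-\phi_i)/(\tfrac12+\phi_i) < 4$ since $\phi_i > 0$. The paper's argument is shorter and purely combinatorial; yours costs a bit more setup but buys an exact formula for $Y_i$, a quantitative sharpening of the bound (decaying as $\phi_i$ grows), and the clean affine description of $\calM^1(\calR_r)$ as $\{\sum_j 1/(1+\exp(\ell^j)) = 1/2\}$, which is an appealing reformulation of Proposition~\ref{prop:Frose} in its own right.
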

\begin{proof}
For \eqref{eq:Xi bound}, we first note that $X_i(\ell) = \exp(-\ell^i)Y_i(\ell)$ for all $\ell \in \calM^1(\calR_r)$ by Proposition~\ref{prop:oF} and \eqref{eq:F = 0}.  Since every term in $Y_i(\ell)$ has a positive coefficient, we find that $0 < X_i(\ell)$.  As the term in $X_i(\ell)$ corresponding to $S = \emptyset$ is $1$ and all other terms have negative coefficients we find $X_i(\ell) < 1$.

For \eqref{eq:Yi bound}, we have that the term in $Y_i(\ell)$ corresponding to $S = \emptyset$ is $1$ and all other terms have positive coefficients, thus $1 < Y_i(\ell)$.  The terms in $1 - X_i(\ell)$ and $Y_i(\ell) - 1$ correspond to the nonempty subsets $S \subseteq [r] - \{i\}$.  The coefficient for the term corresponding to $S$ in $1 - X_i(\ell)$ is
\[ \frac{2\abs{S} + 1}{2\abs{S} - 1}  \] times the coefficient for the same term in $Y_i(\ell) - 1$.  As this ratio is bounded by $3$, we find that $Y_i(\ell) - 1 \leq 3(1 - X_i(\ell))$.  Hence, as $1 - X_i(\ell) < 1$ by \eqref{eq:Xi bound}, we have $Y_i(\ell) - 1 < 3$ and so $Y_i(\ell) < 4$.
\end{proof}
  
We record the following calculation.

\begin{lemma}\label{lem:ellr}
Let $r \geq 2$, and let $\ell \in \calM^1(\calR_r)$ be such that $\ell^i = \log(L)$ for $i \in [r-1]$ for some $L > 2r - 3$.  Then
\begin{equation}\label{eq:ellr}
\ell^r = \log\left(\frac{L + (2r-1)}{L - (2r - 3)}\right).
\end{equation}
\end{lemma}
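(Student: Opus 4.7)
The plan is to apply formula~\eqref{eq:ell i}, which gives $\ell^r = \log(Y_r(\ell)/X_r(\ell))$ for any $\ell \in \calM^1(\calR_r)$, and then evaluate $X_r(\ell)$ and $Y_r(\ell)$ explicitly using the hypothesis that $\ell^i = \log(L)$ for $i \in [r-1]$ and the combinatorial identities of Lemma~\ref{lem:comb identity}.

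First I would observe that since $X_r$ and $Y_r$ depend only on $\ell^1,\ldots,\ell^{r-1}$, and $\exp(-\ell^S) = L^{-|S|}$ for every $S \subseteq [r-1]$ under the hypothesis, the definitions~\eqref{eq:Xi} and~\eqref{eq:Yi} collapse to
\begin{align*}
X_r(\ell) &= \sum_{k=0}^{r-1} \genfrac{(}{)}{0pt}{0}{r-1}{k}(1-2k)L^{-k}, \\
Y_r(\ell) &= \sum_{k=0}^{r-1} \genfrac{(}{)}{0pt}{0}{r-1}{k}(1+2k)L^{-k}.
\end{align*}
Next I would apply Lemma~\ref{lem:comb identity} with $r$ replaced by $r-1$ and $x = L$. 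Identity~\eqref{eq:X} yields $L^{r-1} X_r(\ell) = (1+L)^{r-2}\bigl(L-(2r-3)\bigr)$, and identity~\eqref{eq:Y} yields $L^{r-1} Y_r(\ell) = (1+L)^{r-2}\bigl(L+(2r-1)\bigr)$. Dividing, the factors $L^{r-1}$ and $(1+L)^{r-2}$ cancel and we obtain the claimed formula for $\ell^r$.

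The hypothesis $L > 2r-3$ is exactly what is needed to guarantee $X_r(\ell) > 0$ so that the logarithm is defined and, consistently with Lemma~\ref{lem:Xi and Yi bounds}\eqref{eq:Xi bound}, yields a positive value of $\ell^r$. There is no real obstacle here; the content of the lemma is essentially a bookkeeping application of the binomial identities from Lemma~\ref{lem:comb identity}, and the only mild care required is in matching the index shift (replacing $r$ by $r-1$ when invoking that lemma).
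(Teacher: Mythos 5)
Your proof is correct and follows essentially the same route as the paper: evaluate $X_r(\ell)$ and $Y_r(\ell)$ as binomial sums in $L^{-1}$, apply the identities of Lemma~\ref{lem:comb identity} (with $r$ replaced by $r-1$), and conclude via the formula $\ell^r = \log\bigl(Y_r(\ell)/X_r(\ell)\bigr)$ from~\eqref{eq:ell i}. The index shift and the resulting constants $2r-3$ and $2r-1$ are handled exactly as in the paper's argument.
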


\begin{proof}
For any $S \subseteq [r-1]$ we have $\exp(-\ell^S) = \exp(-\abs{S}\log L) = L^{-\abs{S}}$.  Hence by Lemma~\ref{lem:comb identity} we have that
\begin{align*}
X_r(\ell) &= \sum_{k=0}^{r-1} \genfrac{(}{)}{0pt}{0}{r-1}{k}(1-2k)L^{-k} = L^{-r+1}(L+1)^{r-2}(L - (2r - 3)), \text{ and} \\
Y_r(\ell) &= \sum_{k=0}^{r-1} \genfrac{(}{)}{0pt}{0}{r-1}{k} (1+2k)L^{-k} = L^{-r+1}(L+1)^{r-2}(L + (2r - 1)).
\end{align*}  
Therefore by \eqref{eq:ell i} we find that
\begin{equation*}
\ell^{r} = \log\left(\frac{Y_r(\ell)}{X_r(\ell)}\right) = \log \left(\frac{L + (2r - 1)}{L - (2r - 3)}\right).\qedhere
\end{equation*}
\end{proof}

For any $r \geq 3$, we will construct a path $\ell_t \from [0,1) \to \calM^1(\calR_{r})$ that has finite length and the property that $\ell^r_t \to \infty$ as $t \to 1^-$. 

\begin{proposition}\label{prop:finite length path}
Fix $r \geq 3$ and let $N_{t} = 2(r-t)-1$.  Let $\ell_t \from [0,1) \to \calM^1(\calR_{r})$ be the smooth path where $\ell^i_t = \log(N_{t})$ for $i \in [r-1]$.  Then $\calL_{\entropy,r}(\ell_t|[0,1))$ is finite and $\ell^r_t \to \infty$ as $t \to 1^-$.
\end{proposition}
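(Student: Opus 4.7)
The plan is to leverage the explicit formula from Lemma~\ref{lem:ellr} for the asymptotics of $\ell^r_t$ and then use the simplex-sum expressions for the entropy norm to show the norm grows no faster than $(1-t)^{-1/2}$.

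First, applying Lemma~\ref{lem:ellr} with $L = N_t = 2(r-t)-1$ (which satisfies $L > 2r-3$ exactly because $t < 1$) gives $\ell^r_t = \log\bigl((2r-t-1)/(1-t)\bigr)$, from which $\ell^r_t \to \infty$ as $t \to 1^-$ is immediate. Direct differentiation yields $\dot\ell^r_t = \frac{1}{1-t} - \frac{1}{2r-t-1} \sim \frac{1}{1-t}$ and $\exp(-\ell^r_t) = \frac{1-t}{2r-t-1} = O(1-t)$, while for $i \in [r-1]$ one has $\dot\ell^i_t = -2/N_t$, and $\ell^i_t$, $\dot\ell^i_t$, $\exp(-\ell^i_t) = N_t^{-1}$ are all bounded on $[0,1)$ since $N_t \in (2r-3, 2r-1]$.

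I would then combine Proposition~\ref{prop:o metric formulas} with Lemmas~\ref{lem:dot product} and~\ref{lem:hessian product} applied to $\oF_r$ and $\oC_r$ (valid by the remark following Lemma~\ref{lem:same spectral}), together with the coefficient calculation of Proposition~\ref{prop:Frose}. This yields
\[\norm{(\ell_t,\dot\ell_t)}^2_{\entropy,r} \;=\; \frac{\sum_{\emptyset\neq S\subseteq [r]}(2\abs{S}-1)\bigl(\sum_{i\in S}\dot\ell^i_t\bigr)^2 \exp(-\ell_t^S)}{\sum_{\emptyset\neq S\subseteq [r]}(2\abs{S}-1)\,\ell_t^S\exp(-\ell_t^S)},\]
where $\ell_t^S=\sum_{i\in S}\ell^i_t$. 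Both sums are manifestly non-negative since $2\abs{S}-1\geq 1$ for $\abs{S}\geq 1$. Splitting each by whether $r\in S$: subsets $S\subseteq[r-1]$ contribute bounded quantities, and the denominator's $S\subseteq[r-1]$ part converges as $t\to 1^-$ to $\sum_{\emptyset\neq S\subseteq[r-1]}(2\abs{S}-1)\abs{S}\log(2r-3)(2r-3)^{-\abs{S}}$, which is \emph{strictly positive} precisely because $r\geq 3$ forces $\log(2r-3)>0$. For $S\ni r$, $\exp(-\ell_t^S)=O(1-t)$ kills the denominator's contribution (which is $O(-(1-t)\log(1-t)) \to 0$) but leaves $(\sum_{i\in S}\dot\ell^i_t)^2\sim(1-t)^{-2}$ in the numerator, giving an $O((1-t)^{-1})$ contribution there.

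Together with Lemma~\ref{lem:F}\eqref{item:positive partial} and continuity giving a uniform positive lower bound on the denominator over $[0,1)$, we conclude $\norm{(\ell_t,\dot\ell_t)}_{\entropy,r}=O((1-t)^{-1/2})$. Since $\int_0^1(1-t)^{-1/2}\,dt=2$, the path has finite entropy length. The main technical obstacle is the bookkeeping required to isolate the singular behavior entirely within the numerator; the hypothesis $r\geq 3$ is essential for the uniform positive lower bound on the denominator, since for $r=2$ one has $\log(2r-3)=0$ and the leading contributions from $S\subseteq[r-1]$ would vanish (consistent with the completeness result in Section~\ref{sec:rank 2}).
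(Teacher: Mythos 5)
Your proposal is correct, and it follows the same overall strategy as the paper's proof: obtain $\ell^r_t = \log\bigl((2r-1-t)/(1-t)\bigr)$ from Lemma~\ref{lem:ellr}, bound the denominator $\I{\ell_t,\nabla \oF_r(\ell_t)}$ below by a positive constant (this is exactly where $r\geq 3$ enters, via $\log(2r-3)>0$), show the numerator is $O\bigl((1-t)^{-1}\bigr)$, and integrate $(1-t)^{-1/2}$. The difference is in which of the two equivalent expressions from Proposition~\ref{prop:o metric formulas} you use for the numerator: the paper works with $\I{\ddot\ell_t,\nabla\oF_r(\ell_t)}$, computing $\ddot\ell^i_t<0$ for $i\in[r-1]$ to discard those terms and using $\ddot\ell^r_t<(1-t)^{-2}$ together with $\exp(-\ell^r_t)=O(1-t)$ and the bound $Y_i<4$ from Lemma~\ref{lem:Xi and Yi bounds}; you instead expand $-\I{\dot\ell_t,\bH[\oF_r(\ell_t)]\dot\ell_t}$ via Proposition~\ref{prop:Frose} and Lemmas~\ref{lem:dot product} and~\ref{lem:hessian product}, split over subsets according to whether $r\in S$, and use $(\dot\ell^r_t)^2=O\bigl((1-t)^{-2}\bigr)$ against $\exp(-\ell^S_t)=O(1-t)$, so no sign analysis of second derivatives is needed since every coefficient $2\abs{S}-1$ is positive. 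For the denominator the paper gives the explicit bound $(r-1)\log(2r-3)/(2r-1)$ directly from \eqref{eq:partial F} and $Y_i>1$, whereas you compute the $t\to 1^-$ limit of the subset sum and appeal to positivity plus continuity on the closed interval; both are valid, with the paper's version yielding an explicit constant and yours avoiding the computation of $\ddot\ell^r_t$ altogether. Your closing remark about the $r=2$ case is also consistent with the paper's discussion.
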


\begin{proof}
Let $\ell_t \from [0,1) \to \calM^1(\calR_{r})$ be as in the statement.  Using Lemma~\ref{lem:ellr} we find
\begin{equation*}
\ell^{r}_t = \log \left(\frac{2r - 1 -t}{1-t}\right).
\end{equation*}
In particular, we have $\ell^r_t \to \infty$ when $t \to 1^-$ as claimed. 

We first provide a lower bound on $\I{\ell_t,\nabla \oF_r(\ell_t)}$.  This is the denominator of the expression for the entropy norm in Proposition~\ref{prop:o metric formulas}.  Using the expressions of the partial derivatives for $\oF_r(\ell_t)$ in \eqref{eq:partial F}, the fact that $1 < Y_i(\ell_t)$ \eqref{eq:Yi bound} and that $\ell^r_t \exp(-\ell^r_t)Y_r(\ell_t) > 0$, we have that
\begin{equation*}
\I{\ell_t,\nabla \oF_{r}(\ell_t)} = \sum_{i=1}^{r} \ell^{i}_t\exp(-\ell^{i}_t)Y_{i}(\ell_t) > \sum_{i=1}^{r-1} \ell^{i}_t\exp(-\ell^{i}_t) = (r-1)\frac{\log(N_{t})}{N_{t}} .
\end{equation*}
As $\log(N_t) \geq \log(N_1)$ and $N_t \leq N_0$ for all $t \in [0,1]$, we conclude that
\begin{equation}\label{eq:finite length d bound}
\I{\ell_t,\nabla \oF_{r}(\ell_t)} > (r-1)\frac{\log(N_{t})}{N_{t}} \geq (r-1)\frac{\log(N_{1})}{N_{0}} = (r-1)\frac{\log(2r-3)}{2r-1}.
\end{equation}

Next, we provide an upper bound on $\I{\ddot \ell_t, \nabla \oF_r(\ell_t)}$.  This is the numerator of the expression for the entropy norm in Proposition~\ref{prop:o metric formulas}.  To do so, we compute that
\begin{align*}
\ddot \ell^i_{t}  = -\frac{4}{N_{t}^{2}}, \mbox{ for } i \in [r-1], \mbox{ and }
\ddot \ell^r_{t} = -\frac{1}{(2r - 1 -t)^{2}} + \frac{1}{(1-t)^{2}}.
\end{align*}
In particular $\ddot \ell^{i}_t \exp(-\ell^{i}_t)Y_{i}(\ell_t) < 0$ for $i \in [r-1]$ and $\ddot \ell_t^r < \frac{1}{(1-t)^2}$.  Combining these with the expressions for the partial derivatives for $\oF_r(\ell_t)$ in \eqref{eq:partial F} and the fact that $Y_i(\ell_t) < 4$ \eqref{eq:Yi bound}, we have that
\begin{align}
\I{\ddot \ell_t, \nabla \oF_{r}(\ell_t) } &= \sum_{i=1}^{r} \ddot \ell^i_t\exp(-\ell^i_{t})Y_{i}(\ell_t) \notag \\
& < \ddot \ell^r_{t} \exp(-\ell^r_{t})Y_{i}(\ell_t) \notag \\
& < \frac{1}{(1-t)^{2}} \cdot \frac{1-t}{2r - 1 -t} \cdot 4 \leq \frac{2}{r-1} \cdot \frac{1}{1-t}.\label{eq:finite length n bound}
\end{align}
Proposition~\ref{prop:o metric formulas} together with the bounds appearing in \eqref{eq:finite length d bound} and \eqref{eq:finite length n bound} implies that
\begin{equation*}
\norm{(\ell_t,\dot \ell_t)}^2_{\entropy,\calR_r} = \frac{\I{\ddot \ell_t, \nabla \oF_{r}(\ell_t)}}{\I{\ell_t,\nabla \oF_{r}(\ell_t)}} \leq \frac{2(2r-1)}{(r-1)^2\log(2r - 3)} \cdot \frac{1}{1-t}.
\end{equation*}
Therefore, the entropy length of the path $\ell_t \from [0,1) \to \calM^{1}(\calR_{r})$ is finite as claimed.
\end{proof}

As a consequence, we obtain that $\calM^1(\calR_r)$ is not complete when $r \geq 3$.
  
\begin{proposition}\label{prop:moduli rose not complete}
For any $r \geq 3$, the moduli space $(\calM^1(\calR_r),d_{\entropy,\calR_r})$ is not complete.
\end{proposition}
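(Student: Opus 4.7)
The plan is to deduce incompleteness as an immediate corollary of Proposition~\ref{prop:finite length path} by exhibiting a Cauchy sequence in $\calM^1(\calR_r)$ with no limit.

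First, I will fix $r \geq 3$ and take the smooth path $\ell_t \from [0,1) \to \calM^1(\calR_r)$ constructed in Proposition~\ref{prop:finite length path}, whose total entropy length $L = \calL_{\entropy,r}(\ell_t|[0,1))$ is finite. I will then choose a strictly increasing sequence $t_n \to 1$ and set $x_n = \ell_{t_n}$.

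Next, I will verify that $(x_n)_{n \in \NN}$ is Cauchy in $(\calM^1(\calR_r), d_{\entropy,\calR_r})$. For $n \leq m$, the restriction $\ell_t|_{[t_n,t_m]}$ is a piecewise smooth path from $x_n$ to $x_m$, so
\[ d_{\entropy,\calR_r}(x_n, x_m) \leq \calL_{\entropy,r}(\ell_t|[t_n,t_m]), \]
and the right-hand side tends to $0$ as $n \to \infty$ by finiteness of $L$.

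Finally, I will argue that $(x_n)$ has no limit in $\calM^1(\calR_r)$. By Proposition~\ref{prop:finite length path}, the coordinate $x_n^r = \ell_{t_n}^r$ tends to $\infty$, so the sequence exits every compact subset of $\calM^1(\calR_r) \subseteq \RR^r_{>0}$ and in particular does not converge in the subspace topology. Since the metric topology induced by $d_{\entropy,\calR_r}$ on the smooth submanifold $\calM^1(\calR_r)$ coincides with its submanifold topology (as is implicit in the continuity of the entropy norm established in Proposition~\ref{prop:topologies agree}), any metric limit would force convergence in $\RR^r_{>0}$, yielding a contradiction.

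The hard work does not lie in this proof at all: the delicate estimates on the numerator and denominator of the entropy norm along the path $\ell_t$ have already been carried out to establish Proposition~\ref{prop:finite length path}. What remains here is essentially the observation that a finite-length path whose image exits every compact set witnesses incompleteness, together with a brief bookkeeping check that convergence in $d_{\entropy,\calR_r}$ implies convergence in the ambient $\RR^r_{>0}$.
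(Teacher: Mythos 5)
Your proposal is correct and matches the paper's intended argument: the paper presents this proposition as an immediate consequence of Proposition~\ref{prop:finite length path}, with exactly this Cauchy-sequence-along-a-finite-length-escaping-path reasoning spelled out (for the analogous statement about $\calX^1(\FF_r)$) in Section~\ref{sec:not complete}. The only cosmetic point is the citation for the agreement of the metric and ambient topologies on $\calM^1(\calR_r)$, which the paper records via Proposition~\ref{prop:metric extends} rather than Proposition~\ref{prop:topologies agree}, but the substance of your argument is the same.
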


In Section~\ref{sec:not complete}, we will use Proposition~\ref{prop:finite length path} to show that $(\calX^1(\FF_r),d_\entropy)$ is not complete as well when $r \geq 3$.


\subsection{The Diameter of \texorpdfstring{$\calM^1(\calR_r)$}{M\textasciicircum1(R\_r)} Is Infinite}\label{subsec:infinite diameter}

In this section we show that $(\calM^1(\calR_r),d_{\entropy,\calR_r})$ has infinite diameter by showing that any path that shrinks an edge to $0$ has infinite length.  Before we begin, it is useful to introduce the following functions.  For distinct $i, j \in [r]$ we define:
\begin{align}
X_{ij}(\ell) &= \sum_{S \subseteq [r] - \{i,j\}} (1 + 2\abs{S}) \exp\left(-\ell^{S}\right),\label{eq:Xij} \mbox{ and} \\
Y_{ij}(\ell) &= \sum_{S \subseteq [r] - \{i,j\}} (3 + 2\abs{S}) \exp\left(-\ell^{S}\right).\label{eq:Yij}
\end{align}
As in Lemma~\ref{lem:Xi and Yi bounds}, we observe that for any $\ell \in \calM^1(\calR_r)$ we have that
\begin{equation}\label{eq:Yij bound}
3 < Y_{ij}(\ell) < 3Y_i(\ell) < 12.
\end{equation}
Notice that both $X_{ij}$ and $Y_{ij}$ are constant with respect to both $\ell^{i}$ and $\ell^{j}$.  For any distinct $i,j \in [r]$, these functions allow us to write
\begin{equation}\label{eq:write Yi}
Y_{i}(\ell) = X_{ij}(\ell) + \exp(-\ell^j)Y_{ij}(\ell).
\end{equation}
Using \eqref{eq:write Yi} plus the expressions for the partial derivatives for $\oF_r(\ell)$ in \eqref{eq:partial F} we find the following expressions for the second partial derivatives of $\oF_r(\ell)$:
\begin{align}
\PPD{\oF_{r}}{i}{i}(\ell) &= -\exp(-\ell^i)Y_{i}(\ell), \text{ and}\label{eq:partial ii F} \\
\PPD{\oF_{r}}{i}{j}(\ell) &= -\exp(-\ell^i - \ell^j)Y_{ij}(\ell) \text{ for } i \neq j.\label{eq:partial ij F}
\end{align} 

The following technical lemma is the main tool for estimating length.  Intuitively, it says that when one of the edge lengths---$\ell^r$ in the statement---is short, the length of a path is bounded below by the difference in the square roots of the lengths of second shortest edge---$\ell^1$ in the statement---at the endpoints of the path.  In the statement below, shortness of $\ell^r$ is guaranteed by taking $\ell^1$ large enough. 

\begin{lemma}\label{lem:square root bound}
Let $r \geq 2$.  There is an $L_r$ with the following property.  Suppose $\ell_t \from [0,1] \to \calM^{1}(\calR_{r})$ is a piecewise smooth path such that for all $t \in [0,1]$:
\begin{enumerate}
\item $\ell^1_t = \min\{ \ell^i_t \mid i \in [r - 1] \}$, 
\item $\ell^1_0 \geq L_r$, and
\item $\dot \ell^1_t > 0$.
\end{enumerate}
Then
\[ \calL_{\entropy,\calR_r}(\ell_t|[0,1]) \geq \frac{1}{2\sqrt{2}B_{1}} \left(\sqrt{B_{1}\ell^1_1 + B_{2}} - \sqrt{B_{1}\ell^1_0 + B_{2}}\right)
 \]
where $B_1 = 4(r-1)$ and $B_2 = 2^{r+3}(2r-1)$. 
\end{lemma}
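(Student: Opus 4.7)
The plan is to reparametrize by $\ell^1$---using the hypothesis $\dot\ell^1_t > 0$---so that in the new parameter $s$ we have $\ell^1_s = s$ and $\dot\ell^1_s = 1$, and then to bound $\norm{(\ell_s,\dot\ell_s)}^2_{\entropy, \calR_r}$ via Proposition~\ref{prop:o metric formulas} by estimating its numerator and denominator separately. I will choose $L_r$ large enough that $xe^{-x}$ is monotonically decreasing on $[L_r,\infty)$ (any $L_r \geq 1$ suffices) and also large enough that the lower-order contributions to the denominator from the edge indexed by $r$ can be absorbed into $B_2$.

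For the numerator, I begin with direct differentiation of the formula for $\oF_r$ from Proposition~\ref{prop:Frose}. A quick calculation shows that for all $i,j \in [r]$,
\[
-\PPD{\oF_r}{i}{j}(\ell) = \sum_{S \supseteq \{i,j\}} (2\abs{S}-1) \exp(-\ell^S).
\]
Multiplying by $\dot\ell^i_s \dot\ell^j_s$, swapping the order of summation, and using the identity $\sum_{i,j \in S}\dot\ell^i_s\dot\ell^j_s = \Bigl(\sum_{i \in S}\dot\ell^i_s\Bigr)^2$ yields the clean expansion
\[
-\I{\dot\ell_s, \bH[\oF_r(\ell_s)]\dot\ell_s} = \sum_{\emptyset \neq S \subseteq [r]} (2\abs{S}-1) \exp(-\ell^S_s) \Bigl(\sum_{i \in S}\dot\ell^i_s\Bigr)^2,
\]
a sum of nonnegative terms. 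Retaining only the $S = \{1\}$ contribution and using $\dot\ell^1_s = 1$ gives the lower bound $\exp(-s)$ on the numerator.

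For the denominator $\I{\ell_s, \nabla \oF_r(\ell_s)} = \sum_i \ell^i_s \exp(-\ell^i_s) Y_i(\ell_s)$, I split the sum according to whether $i \in [r-1]$ or $i = r$. Using $Y_i(\ell_s) < 4$ from Lemma~\ref{lem:Xi and Yi bounds} together with $\ell^i_s \geq s \geq L_r \geq 1$ and monotonicity of $xe^{-x}$, the first group contributes at most $B_1 s \exp(-s)$. For $i=r$ the value $\ell^r_s$ is forced to be small: the inequality $\ell^r \exp(-\ell^r) \leq 1 - \exp(-\ell^r)$ combined with the relation $(1-\exp(-\ell^r))Y_r = Y_r - X_r$---which holds on $\calM^1(\calR_r)$ by~\eqref{eq:F = 0}---gives $\ell^r_s\exp(-\ell^r_s)Y_r(\ell_s) \leq Y_r(\ell_s) - X_r(\ell_s)$. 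Expanding the right-hand side as $4\sum_{\emptyset \neq S \subseteq [r-1]}\abs{S}\exp(-\ell^S_s)$ and using $\ell^S_s \geq \abs{S} s$ lets me factor out $\exp(-s)$ and sum to a bound of the form $B_2 \exp(-s)$, for an appropriate choice of $L_r$. Combined, the denominator is at most $(B_1 s + B_2)\exp(-s)$.

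Putting these estimates together via Proposition~\ref{prop:o metric formulas} yields $\norm{(\ell_s,\dot\ell_s)}^2_{\entropy, \calR_r} \geq 1/(B_1 s + B_2)$, and integrating from $s = \ell^1_0$ to $s = \ell^1_1$ produces a bound at least as strong as the claimed square-root inequality (in fact stronger by a constant factor). The main obstacle is establishing the positive-sum expansion of the Hessian; once that identity is in hand, the denominator estimate and the concluding integration are elementary.
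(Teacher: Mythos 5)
Your proof is correct, and at the crux it takes a genuinely different, and simpler, route than the paper. The outer skeleton is the same as the paper's: reparametrize so $\ell^1_t = t$, bound the denominator $\I{\ell_t,\nabla \oF_r(\ell_t)}$ above by $(B_1t+B_2)\exp(-t)$ and the numerator below by a multiple of $\exp(-t)$, then integrate. The difference is in the numerator. The paper keeps the Hessian form expanded into diagonal terms (with $Y_i$) and off-diagonal terms (with $Y_{ij}$, of indefinite sign) and proves Claim~\ref{claim:infinite} that the off-diagonal part is dominated by half the diagonal part; this is the delicate step, and it is what forces the large choice of $L_r$ (e.g.\ $288r\exp(-L_r)\le 1$) and the case analysis over the index sets $K_i$, $K_r$. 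Your identity $-\I{\dot\ell,\bH[\oF_r(\ell)]\dot\ell} = \sum_{\emptyset\neq S\subseteq[r]}(2\abs{S}-1)\exp(-\ell^S)\bigl(\sum_{i\in S}\dot\ell^i\bigr)^2$, obtained by differentiating Proposition~\ref{prop:Frose} twice (it is Lemma~\ref{lem:hessian product} specialized to the rose, where the coefficient of each $\exp(-\ell^S)$ is $1-2\abs{S}\le 0$ for $S\neq\emptyset$), exhibits the quadratic form as a sum of nonnegative terms, so dropping all but $S=\{1\}$ gives the $\exp(-t)$ lower bound with no domination claim and only $L_r\ge 1$. Your handling of the $i=r$ term in the denominator, via $\ell^r\exp(-\ell^r)Y_r\le (1-\exp(-\ell^r))Y_r = Y_r - X_r = 4\sum_{\emptyset \neq S\subseteq[r-1]}\abs{S}\exp(-\ell^S)$, likewise bypasses the paper's chain of logarithm estimates bounding $\ell^r_t$ itself, and the resulting constant $(r-1)2^{r}$ sits below $B_2 = 2^{r+3}(2r-1)$, so the stated inequality follows with room to spare (your constants are in fact better, which only strengthens the conclusion). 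What you give up is generality: the manifestly nonnegative expansion uses the special form of $\oF_r$ for the rose, whereas the paper's domination strategy mirrors the estimates it uses for other graphs (Section~\ref{sec:rank 2}) and also records a two-sided comparison; but for this lemma only the lower bound is needed, so your argument is a clean simplification.
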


\begin{proof}
Let $\ell_t \from [0,1] \to \calM^1(\calR_r)$ be as in the statement.  By (3) we may reparametrize the path so that $\ell^1_t = t$.  Let $L_r$ be large enough so that \[\max\{2^{r}(2r-3)\exp(-L_r), 288r\exp(-L_r)\} \leq 1.\]  

The method of proof is similar to the calculations performed in Section~\ref{sec:rank 2}.  Specifically, using the expression
\begin{equation}
\norm{(\ell_t,\dot \ell_t)}^2_{\entropy,\calR_r} = \frac{-\I{\dot \ell_t, \bH[\oF_{r}(\ell_t)]\dot\ell_t}}{\I{\ell_t,\nabla \oF_r(\ell_t)}}
\end{equation}
from Proposition~\ref{prop:o metric formulas}, we show that the square of the entropy norm along this path is bounded from below by $1/2(B_1 t + B_2)$.  This is done by showing that the denominator is bounded from above by $\exp(-t)(B_1t + B_2)$~\eqref{eq:rose volume upper bound} and that the numerator is bounded from below by $\frac{1}{2}\exp(-t)$~\eqref{eq:rose hessian lower bound}.  

We first provide the upper bound on $\I{\ell_t,\nabla \oF_r(\ell_t)}$.

As $\ell^i_t \geq \ell^1_t = t$ for all $i \in [r-1]$, we have that $\exp(-\ell^S_t) \leq \exp(-t)$ for all nonempty subsets $S \subseteq [r-1]$.  Since $1 - 2\abs{S} \geq -(2r-3)$ for any nonempty subset $S \subseteq [r-1]$, using the definition of $X_i(\ell_t)$~\eqref{eq:Xi} we have that
\begin{equation}\label{eq:Xr bound}
X_r(\ell_t) = \sum_{S \subseteq [r - 1]}(1 - 2\abs{S})\exp(-\ell_t^S) \geq 1 - 2^{r-1}(2r-3)\exp(-t).
\end{equation}
Therefore
\begin{equation}\label{eq:1-Xr bound}
1 - X_r(\ell_t) \leq 2^{r-1}(2r-3)\exp(-t).
\end{equation}
As $t \geq L_r$ we additionally find that
\begin{equation}\label{eq:Xr one-half}
X_r(\ell_t) \geq \frac{1}{2}.
\end{equation}
As $0 < X_r(\ell_t) < 1$~\eqref{eq:Xi bound} and $-\log(1-x) \leq \frac{x}{1-x}$ for all $0 < x < 1$, using~\eqref{eq:1-Xr bound} and~\eqref{eq:Xr one-half} we find that
\[ -\log(X_r(\ell_t)) = -\log(1 - (1 - X_r(\ell_t))) \leq \frac{1-X_r(\ell_t)}{X_r(\ell_t)} \leq 2^r(2r-3)\exp(-t).\] 
Similarly as $1 < Y_r(\ell_t)$~\eqref{eq:Yi bound} and $\log(x) \leq x - 1$ for all $x \geq 1$, using the definition of $Y_i(\ell_t)$~\eqref{eq:Yi} we have that
\[ \log(Y_r(\ell_t)) \leq Y_r(\ell_t) - 1 = \sum_{\substack{S \subseteq [r-1] \\ S \neq \emptyset}} (1 + 2\abs{S})\exp(-\ell^S_t) \leq 2^{r-1}(2r-1)\exp(-t). \]
Thus by \eqref{eq:ell i}, we find
\begin{equation}\label{eq:short edge length}
\ell^r_{t} = \log(Y_{r}(\ell_t)) - \log(X_{r}(\ell_t)) \leq 2^{r+1}(2r-1)\exp(-t). 
\end{equation}  
As $x\exp(-x)$ is decreasing for $x > 1$, we have that $\ell^i_{t}\exp(-\ell^i_{t}) \leq \ell^1_{t}\exp(-\ell^1_t) = t\exp(-t)$ for $i \in [r-1]$.  Using the expressions for the partial derivatives of $\oF_r(\ell_t)$ in~\eqref{eq:partial F} and the fact that $Y_i(\ell_t) < 4$~\eqref{eq:Yi bound}, we have that
\begin{align*}
\I{\ell_t,\nabla \oF_{r}(\ell_t)} &= \sum_{i=1}^{r} \ell^i_{t}\exp(-\ell^i_{t})Y_{i}(\ell_t)\notag \\
& < 4\bigl((r-1)t\exp(-t) + \ell^r_{t}\exp(-\ell^r_{t}) \bigr) \\
& \leq 4\exp(-t)\bigl((r-1)t + 2^{r+1}(2r-1)\bigr).\notag
\end{align*}
As defined above, we have that $B_{1} = 4(r-1)$ and $B_{2} = 2^{r+3}(2r-1)$.  Hence
\begin{equation}\label{eq:rose volume upper bound}
\I{\ell_t,\nabla \oF_{r}(\ell_t)} \leq \exp(-t)(B_{1}t + B_{2}).
\end{equation}

Next we provide a lower bound on $-\I{\dot \ell_t,\bH[\oF_r(\ell_t)]\dot \ell_t}$.  Using the expressions for the second partial derivatives of $\oF_r(\ell_t)$ in~\eqref{eq:partial ii F} and~\eqref{eq:partial ij F}, we have that
\begin{equation}\label{eq:hessian}
-\I{\dot\ell_t,\bH[\oF_{r}(\ell_t)]\dot\ell_t}  = \sum_{i=1}^{r} (\dot \ell^i_{t})^{2}\exp(-\ell^i_{t})Y_{i}(\ell_t) + 
\sum_{i=1}^{r-1} \sum_{j = i+1}^{r} 2\dot \ell^i_{t} \dot \ell^j_{t} \exp(-\ell^i_{t}-\ell^j_{t})Y_{ij}(\ell_t).
\end{equation}
The following claim says that the diagonal terms in $\bH[\oF_r(\ell_t)]$ dominate in the current setting, that is, when one of the edge lengths is small.

\begin{claim}\label{claim:infinite}
$\displaystyle \frac{1}{2}\I{(\dot\ell_t)^{2},\nabla \oF_{r}(\ell_t)} \leq -\I{\dot\ell_t,\bH[\oF_{r}(\ell_t)]\dot\ell_t} \leq \frac{3}{2}\I{(\dot\ell_t)^{2},\nabla \oF_{r}(\ell_t)}.$
\end{claim}

\begin{proof}[Proof of Claim~\ref{claim:infinite}]
We observe that the first summand in~\eqref{eq:hessian} is exactly $\I{(\dot \ell_t)^2,\nabla \oF_r(\ell)}$.  The claim thus proved by showing that the second summand has absolute value bounded above by $\frac{1}{2}\I{(\dot \ell_t)^2,\nabla \oF_r(\ell_t)}$.  We accomplish this by breaking this summand into various pieces.  

To begin, we focus on the terms in this summand where $j = r$.  

Let $K_{r} \subseteq [r - 1]$ be the set of indices where $\abs{2\dot \ell^i_{t} \exp(-\ell^i_{t})Y_{ir}(\ell_t)} \leq \frac{1}{2r}\abs{\dot \ell^r_{t}Y_{r}(\ell_t)}$.  Summing over the elements in $K_r$ we find that
\begin{equation}\label{eq:sum over Kr}
\abs{\sum_{i \in K_{r}} 2\dot \ell^i_{t} \dot \ell^r_{t}\exp(-\ell^i_{t} - \ell^r_{t})Y_{ir}(\ell_t)} \leq \frac{1}{2}(\dot \ell^r_{t})^{2}\exp(-\ell^r_{t})Y_r(\ell_t).
\end{equation}
From the definition of $L_r$ we have $24r\exp(-L_r) \leq 1/12$.  Thus if $i < r$ and $i \notin K_r$ as $\ell^i_t \geq L_r$ and $Y_{ir}(\ell_t) < 3\max\{Y_i(\ell_t), Y_r(\ell_t)\}$~\eqref{eq:Yij bound} we have that
\begin{equation}
2\abs{\dot \ell^r_{t}Y_{ir}(\ell_t)} \leq 6\abs{\dot \ell^r_{t}Y_r(\ell_t)} \leq \abs{24r \dot \ell^i_{t} \exp(-\ell^i_{t})Y_{ir}(\ell_t)} \leq \frac{1}{12}\abs{\dot \ell^i_t Y_{ir}(\ell_t)} \leq \frac{1}{4}\abs{\dot \ell^i_{t}Y_i(\ell_t)}.
\end{equation}
Thus for $i < r$ and $i \notin K_r$ we have that
\begin{equation}\label{eq:not in Kr}
\abs{2\dot \ell^i_{t} \dot \ell^r_{t}\exp(-\ell^i_{t} - \ell^r_{t})Y_{ir}(\ell_t)} \leq \abs{2\dot \ell_t^i \dot\ell_t^r \exp(-\ell_t^i)Y_{ir}(\ell_t)} \leq \frac{1}{4}(\dot \ell^i_{t})^{2}\exp(-\ell^i_{t})Y_{i}(\ell_t).
\end{equation}

Next we turn our attention to the terms where $j < r$.

For $i \in [r - 1]$ we let $K_{i} \subseteq [r - 1]$ be the set of indices where $\abs{\dot \ell^i_{t}} > \abs{\dot \ell^j_{t}}$ or where $\abs{\dot \ell^i_{t}} = \abs{\dot \ell^j_{t}}$ and $j > i$.  We observe that for any distinct pair of indices $i,j \in [r-1]$ either $j \in K_i$ and $i \notin K_j$ or $i \in K_j$ and $j \notin K_i$.  From the definition of $L_r$ we have $2\exp(-L_r) \leq 1/12r$.  Hence as $Y_{ij}(\ell_t) < 3Y_i(\ell_t)$~\eqref{eq:Yij bound}, we find that $2\exp(-\ell^j_{t})Y_{ij}(\ell_t) \leq \frac{1}{4r}Y_{i}(\ell_t)$ for $j \in [r-1]$.  Therefore, summing over the indices in $K_i$ we find that
\begin{equation}
\label{eq:sum over Ki}
\abs{\sum_{j \in K_{i}} 2\dot \ell^i_{t} \dot \ell^j_{t}\exp(-\ell^i_{t} - \ell^j_{t})Y_{ij}(\ell_t) } \leq \frac{1}{4}(\dot \ell^i_{t})^{2}\exp(-\ell^i_{t})Y_{i}(\ell_t).
\end{equation}

Rearranging the terms and using~\eqref{eq:sum over Kr},~\eqref{eq:not in Kr} and~\eqref{eq:sum over Ki} we find that
\begin{align*}
\abs{\sum_{i=1}^{r-1}\sum_{j=i+1}^{r}2\dot \ell^i_{t} \dot \ell^j_{t} \exp(-\ell^i_{t} - \ell^j_{t})Y_{ij}(\ell_t)} \leq & \, \abs{\sum_{i=1}^{r-1} \sum_{j \in K_{i}}2\dot \ell^i_{t} \dot \ell^j_{t} \exp(-\ell^i_{t} - \ell^j_{t})Y_{ij}(\ell_t)} \\
& \, + \abs{\sum_{i \in K_{r}}2\dot \ell^i_{t} \dot \ell^r_{t} \exp(-\ell^i_{t} - \ell^r_{t})Y_{ir}(\ell_t)} \\
& \, + \abs{\sum_{i \notin K_{r}}2\dot \ell^i_{t} \dot \ell^r_{t} \exp(-\ell^i_{t} - \ell^r_{t})Y_{ir}(\ell_t)} \\
\leq & \, \sum_{i=1}^{r-1}\frac{1}{4}(\dot \ell^i_{t})^{2}\exp(-\ell^i_{t})Y_{i}(\ell_t) \\
& + \frac{1}{2}(\dot \ell^r_{t})^{2}\exp(-\ell^r_{t})Y_{r}(\ell_t) \\
& + \sum_{i \notin K_r} \frac{1}{4}(\dot \ell^i_{t})^{2}\exp(-\ell^i_{t})Y_{i}(\ell_t)  \\
&\leq \frac{1}{2}\I{(\dot\ell_t)^{2},\nabla \oF_{r}(\ell_t)}.
\end{align*}
As explained above, the claim now follows.
\end{proof}

Thus applying Claim~\ref{claim:infinite} and by focusing on the term in $\I{(\dot \ell_t)^2,\nabla \oF_r(\ell_t)}$ corresponding to $\ell^1_t$ and tossing out the rest---which are all nonnegative---we get our desired bound:
\begin{equation}\label{eq:rose hessian lower bound}
-\I{\dot\ell_t,\bH[\oF_{r}(\ell_t)]\dot\ell_t} \geq \frac{1}{2}\I{(\dot\ell_t)^{2},\nabla \oF_{r}(\ell_t)} \geq \frac{1}{2} (\dot \ell^1_{t})^{2}\exp(-\ell^1_t)Y_1(\ell_t) \geq \frac{1}{2}\exp(-t).
\end{equation}  
For the last inequality, recall that $1 < Y_1(\ell_t)$~\eqref{eq:Yi bound}.  Combining~\eqref{eq:rose hessian lower bound} with our previous bound on $\I{\ell_t,\nabla \oF_r(\ell_t)}$ we obtained in \eqref{eq:rose volume upper bound}, we see that
\begin{equation*}
\norm{(\ell_t,\dot\ell_t)}_{\entropy,\calR_r}^{2} = \frac{-\I{\dot\ell_t,\bH[\oF_{r}(\ell_t)]\dot\ell_t}}{\I{\ell_t,\nabla \oF_r(\ell_t)}} \geq \frac{1}{2(B_{1}t + B_{2})}.
\end{equation*}
Hence the length of this path in the entropy metric is at least
\begin{align*}
\int_{\ell^1_0}^{\ell^1_1} \sqrt{\frac{1}{2(B_{1}t + B_{2})}} \, dt &= \frac{1}{2\sqrt{2}B_{1}} \left(\sqrt{B_{1}\ell^1_1 + B_{2}} - \sqrt{B_{1}\ell^1_0 + B_{2}}\right).\qedhere
\end{align*}
\end{proof}

Before we can apply Lemma~\ref{lem:square root bound} to show that $(\calM^1(\calR_r),d_{\entropy,\calR_r})$ has infinite diameter, we require two more estimates.  The first states that  for a length function in $\calM^1(\calR_r)$ when $\ell^r$ is bounded from below, there is an upper bound on the length of the shortest edge that is not $r$.

\begin{lemma}\label{lem:4r+5}
Let $r \geq 2$.  If $\ell \in \calM^1(\calR_r)$ where $\ell^r \geq \log(3)$, then $\min\{\ell^i \mid i \in [r-1]\} \leq \log(4r - 5)$.
\end{lemma}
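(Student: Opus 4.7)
The plan is to use the formula $\ell^r = \log(Y_r(\ell)/X_r(\ell))$ from \eqref{eq:ell i}, which expresses $\ell^r$ on $\calM^1(\calR_r)$ as an implicit function $\psi(\ell^1,\ldots,\ell^{r-1})$ on the domain $\{X_r > 0\}$, and to show that $\psi$ is strictly decreasing in each argument. Together with an explicit computation from Lemma~\ref{lem:ellr} producing a symmetric length function with $\ell^i = \log(4r-5)$ for all $i \in [r-1]$ and $\ell^r = \log 3$, this monotonicity will yield the contrapositive of the lemma: if $\ell \in \calM^1(\calR_r)$ and $\ell^i > \log(4r-5)$ for every $i \in [r-1]$, then $\ell^r < \log 3$.

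For the monotonicity, I observe from the definitions \eqref{eq:Xi} and \eqref{eq:Yi} that only subsets $S \subseteq [r-1]$ containing $i$ contribute to the partial derivatives of $X_r$ and $Y_r$ with respect to $\ell^i$, for any $i \in [r-1]$. Every such $S$ has $\abs{S} \geq 1$, so the coefficients $1-2\abs{S}$ appearing in $X_r$ are negative while the coefficients $1+2\abs{S}$ in $Y_r$ are positive. Combining this with the sign of $-\exp(-\ell^S)$ from differentiation and the positivity of $X_r$ and $Y_r$ on $\calM^1(\calR_r)$ furnished by Lemma~\ref{lem:Xi and Yi bounds}, I will obtain $\partial \log X_r/\partial \ell^i > 0$ and $\partial \log Y_r/\partial \ell^i < 0$, so $\partial \psi/\partial \ell^i < 0$. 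The same computation also gives $\partial X_r/\partial \ell^i > 0$, which shows that the open set $\{X_r > 0\}$ is preserved under coordinate-wise increases in the $\ell^i$.

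To conclude, I will apply Lemma~\ref{lem:ellr} with $L = 4r-5$: the length function $\ell'$ with $(\ell')^i = \log(4r-5)$ for $i \in [r-1]$ lies in $\calM^1(\calR_r)$ and satisfies
\[ (\ell')^r = \log\left(\frac{(4r-5)+(2r-1)}{(4r-5)-(2r-3)}\right) = \log\left(\frac{6r-6}{2r-2}\right) = \log 3. \]
Given any $\ell \in \calM^1(\calR_r)$ with $\ell^i > \log(4r-5)$ for all $i \in [r-1]$, I will consider the straight-line path in $\RR_{>0}^{r-1}$ from $((\ell')^1,\ldots,(\ell')^{r-1})$ to $(\ell^1,\ldots,\ell^{r-1})$: each coordinate strictly increases along this path, and by the preceding paragraph the path remains in the domain of $\psi$. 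Strict monotonicity of $\psi$ then forces $\ell^r = \psi(\ell^1,\ldots,\ell^{r-1}) < \log 3$, completing the argument. The only subtlety will be confirming that the path stays in $\{X_r > 0\}$, which follows from $\partial X_r/\partial \ell^i > 0$ combined with $X_r > 0$ at the starting point $\ell'$.
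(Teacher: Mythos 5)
Your argument is correct, and it runs on the same two pillars as the paper's proof---the symmetric computation of Lemma~\ref{lem:ellr} and the fact that enlarging the edges $1,\ldots,r-1$ forces $\ell^r$ to shrink---but you package these differently. The paper proves the symmetric case directly (all $\ell^i = \log L$ with $\ell^r \geq \log 3$ forces $L \leq 4r-5$) and then handles a general $\ell$ by a qualitative deformation: assuming the minimum exceeds $\log(2r-3)$, it decreases $\ell^2,\ldots,\ell^{r-1}$ down to $\ell^1$ while letting $\ell^r$ grow to keep unit entropy, and quotes the symmetric case for the deformed metric. You instead prove the contrapositive, making the monotonicity explicit: writing $\ell^r = \psi(\ell^1,\ldots,\ell^{r-1}) = \log(Y_r/X_r)$ via \eqref{eq:ell i}, you compute $\PD{X_r}{i} > 0$ and $\PD{Y_r}{i} < 0$ from \eqref{eq:Xi}--\eqref{eq:Yi} to get $\PD{\psi}{i} < 0$ on $\{X_r>0\}$, and integrate along a straight-line path from the symmetric point with $L = 4r-5$ (where $\psi = \log 3$). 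What your route buys is that the case split in the paper's proof ($\ell^1 \leq \log(2r-3)$ versus not) disappears, since the contrapositive hypothesis already places you in the regime $X_r>0$, and the claim that ``$\ell^r$ increases as the other edges decrease,'' which the paper asserts without computation, is verified explicitly. The only point worth a line of justification is your assertion that the symmetric function $\ell'$ with $(\ell')^i=\log(4r-5)$ lies in $\calM^1(\calR_r)$: this follows from Proposition~\ref{prop:oF = 0 iff h = 1} after defining $(\ell')^r$ by \eqref{eq:ell i} (or you can bypass it entirely by evaluating $\psi$ at the symmetric point via the binomial identities of Lemma~\ref{lem:comb identity}, exactly as in the proof of Lemma~\ref{lem:ellr}); this is a cosmetic gap, not a substantive one.
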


\begin{proof}
We first prove the lemma under the additional assumption that $\ell^i = \ell^1$ for any $i \in [r-1]$.  In this case, we have that $\ell^i = \log(L)$ for $i \in [r - 1]$ and some $L > 2r - 3$.  By Lemma~\ref{lem:ellr} we have 
\[ \log(3) \leq \ell^r = \log\left(\frac{L + (2r - 1)}{L - (2r - 3)}\right). \]
Hence we have that $3(L - (2r - 3)) \leq L + (2r-1)$, which implies that $L \leq 4r - 5$.  

Next we prove the general case.  Let $\ell \in \calM^1(\calR_r)$ be such that $\ell^r \geq \log(3)$.  Without loss of generality, we assume that $\ell^1 = \min\{\ell^i \mid i \in [r-1]\}$.  

If $\ell^1 \leq \log(2r - 3)$, then we are done.  

Otherwise, we may decrease the lengths $\ell^2,\ldots,\ell^{r-1}$ to be equal to $\ell^1$ while increasing $\ell^r$ to maintain the fact that the metric has unit entropy.  The assumption that $\ell^1 > \log(2r-3)$ ensures that $\ell^r$ is finite.  Denote the resulting metric by $\hat\ell$.  Observe that $\hat\ell^r \geq \ell^r \geq \log(3)$.  By the special case considered above, $\hat\ell^i \leq \log(4r - 5)$ for each $i \in [r-1]$.  As $\ell^1 = \hat\ell^1$, this completes the proof of the lemma. 
\end{proof}

The second estimate shows that when the length of an edge is small for a length function in $\calM^1(\calR_r)$, the lengths of the other edges must be very large.

\begin{lemma}\label{lem:definite length}
Let $r \geq 2$ and let $\ell \in \calM^1(\calR_r)$.  For any $\epsilon > 0$, if $\ell^i \leq \epsilon$ for some $i \in [r]$, then for any $j \in [r] - \{i\}$ we have $\ell^j > -\log \left(\exp(\epsilon) - 1\right)$.
\end{lemma}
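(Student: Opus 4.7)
The plan is to recognize that Proposition~\ref{prop:Frose} admits a very clean reformulation of the defining equation of $\calM^1(\calR_r)$. Setting $u_k = \exp(-\ell^k)$ and using the elementary identities $\sum_{S \subseteq [r]} \prod_{k \in S} u_k = \prod_{k=1}^r(1 + u_k)$ and $\sum_{S \subseteq [r]} \abs{S} \prod_{k \in S} u_k = \sum_{j=1}^r u_j \prod_{k \neq j}(1+u_k)$, Proposition~\ref{prop:Frose} rewrites as
\[
\oF_r(\ell) = \prod_{k=1}^r(1+u_k) - 2\sum_{j=1}^r u_j \prod_{k \neq j}(1+u_k).
\]
Since $\prod_k(1+u_k)$ is positive, dividing $\oF_r(\ell) = 0$ through by it converts the defining equation of $\calM^1(\calR_r)$ (cf.\ Proposition~\ref{prop:oF = 0 iff h = 1}) into the clean identity
\[
\sum_{j=1}^r \frac{1}{1+\exp(\ell^j)} = \frac{1}{2}.
\]

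Granted this identity, the lemma reduces to an elementary manipulation. From $\ell^i \leq \epsilon$ we get $\frac{1}{1+\exp(\ell^i)} \geq \frac{1}{1+\exp(\epsilon)}$, and since every term in the sum is positive, for each $j \neq i$
\[
\frac{1}{1+\exp(\ell^j)} \leq \frac{1}{2} - \frac{1}{1+\exp(\epsilon)} = \frac{\exp(\epsilon)-1}{2(1+\exp(\epsilon))}.
\]
Taking reciprocals and rearranging yields $\exp(\ell^j) \geq \frac{\exp(\epsilon)+3}{\exp(\epsilon)-1}$, which is strictly larger than $\frac{1}{\exp(\epsilon)-1}$ since $\exp(\epsilon)+3 > 1$. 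Passing to logarithms gives $\ell^j > -\log(\exp(\epsilon)-1)$, as required.

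In my view the only real step is discovering the closed form $\sum_j (1+\exp(\ell^j))^{-1} = \frac{1}{2}$, which is not explicit in the excerpt; once in hand, the estimate is a routine manipulation of positive quantities. An alternative route would be to work directly from $X_i(\ell) = \exp(-\ell^i) Y_i(\ell)$ via \eqref{eq:F = 0} together with the bounds in Lemma~\ref{lem:Xi and Yi bounds}, but that path appears strictly more cumbersome than passing through the identity above.
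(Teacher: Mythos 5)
Your argument is correct, and it takes a genuinely different route from the paper. You first observe that Proposition~\ref{prop:Frose} factors as
\[
\oF_r(\ell) \;=\; \prod_{k=1}^r\bigl(1+\exp(-\ell^k)\bigr)\left(1 - 2\sum_{j=1}^r \frac{1}{1+\exp(\ell^j)}\right),
\]
so that on $\calM^1(\calR_r)$ (where $\oF_r$ vanishes, by Proposition~\ref{prop:oF}) one has the closed-form constraint $\sum_j (1+\exp(\ell^j))^{-1} = \tfrac12$; the lemma then follows by positivity of the terms and elementary manipulation, and the strict inequality is secured by $\exp(\epsilon)+3>1$. The paper instead never leaves the rank-$2$ world: it notes that the subrose spanned by the edges $i$ and $j$ inherits a length function of entropy at most $1$ (monotonicity of entropy under passing to subgraphs), and then invokes the explicit rank-$2$ computation of Lemma~\ref{lem:ellr} to get $\ell^j \geq \log\bigl((\exp(\ell^i)+3)/(\exp(\ell^i)-1)\bigr)$ --- exactly the same intermediate bound you reach, $\exp(\ell^j)\geq(\exp(\epsilon)+3)/(\exp(\epsilon)-1)$. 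The trade-off: the paper's subgraph-monotonicity argument is shorter given Lemma~\ref{lem:ellr} and reflects a technique that generalizes beyond roses, while your global identity is a clean reformulation of the defining equation of $\calM^1(\calR_r)$ (it is of some independent interest, e.g.\ it instantly recovers $\entropy_{\calR_r}(\log(2r-1)\cdot\One)=1$) and avoids any appeal to how entropy behaves under restriction to subroses. Both proofs are complete; just make sure you only need the implication $\entropy_r(\ell)=1 \Rightarrow \oF_r(\ell)=0$ (Proposition~\ref{prop:oF}), not the full equivalence of Proposition~\ref{prop:oF = 0 iff h = 1}, though citing the latter is harmless.
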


\begin{proof}
The subrose consisting of the edges $i$ and $j$ has entropy less than or equal to $1$ (strictly less than $1$ when $r \geq 3$).  By Lemma~\ref{lem:ellr}, this implies that
\begin{equation*} 
\ell^j \geq \log \left(\frac{\exp(\ell^i) + 3}{\exp(\ell^i) - 1}\right) > -\log\left(\exp(\ell^i) - 1 \right) \geq -\log \left(\exp(\epsilon) - 1\right).\qedhere
\end{equation*} 
\end{proof}

Now we can prove the main inequality in this section that shows that any path that shrinks the length of an edge to zero must have infinite length.

\begin{proposition}\label{prop:infinite length}
Let $r \geq 2$.  For any $D > 0$, there is an $\epsilon > 0$ such that for any $\ell \in \calM^1(\calR_r)$ with $\min\{\ell^i \mid i \in [r]\} \leq \epsilon$ we have $d_{\entropy,\calR_r}(\log(2r-1) \cdot \One,\ell) \geq D$.  
\end{proposition}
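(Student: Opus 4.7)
The plan is to extract a pointwise lower bound on the entropy norm from the argument used for Lemma~\ref{lem:square root bound} and integrate it against the function $m(t) = \min_{i \in [r-1]}\ell_t^i$, which must traverse a large range when one edge has been pinched very short.

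Given $D > 0$, I would set $M = M(\epsilon) := -\log(\exp(\epsilon) - 1)$, noting that $M \to \infty$ as $\epsilon \to 0^{+}$, and choose $\epsilon$ small enough so that $M > L_{r}$. Suppose $\ell \in \calM^{1}(\calR_{r})$ has $\min_{i} \ell^i \le \epsilon$; by edge-permutation symmetry of $\calR_{r}$ (these permutations are isometries fixing $\log(2r-1)\cdot\One$), I may relabel so that $\ell^{r} \le \epsilon$. By Lemma~\ref{lem:definite length} every other length satisfies $\ell^{j} > M$. Thus for any piecewise smooth path $\alpha_{t} \from [0,1] \to \calM^{1}(\calR_{r})$ with $\alpha_{0} = \log(2r-1)\cdot\One$ and $\alpha_{1} = \ell$, the function $m(t) = \min_{i \in [r-1]}\ell_t^i$ satisfies $m(0) = \log(2r-1) < L_{r}$ and $m(1) > M > L_{r}$. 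Setting $b = \inf\{t : m(t) \geq M\}$ and $a = \sup\{t \in [0,b] : m(t) \leq L_{r}\}$, we obtain $m(a) = L_{r}$, $m(b) = M$, and $m(t) \geq L_{r}$ on $[a,b]$.

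The key step is the following pointwise version of Lemma~\ref{lem:square root bound}. Inspection of that proof shows that the inequalities~\eqref{eq:rose volume upper bound} and~\eqref{eq:rose hessian lower bound} (which route through Claim~\ref{claim:infinite}) depend only on the condition that $\ell_{t}^{1} = \min_{i \in [r-1]}\ell_{t}^i \geq L_{r}$; the monotonicity hypothesis $\dot\ell_{t}^{1} > 0$ was used only to reparametrize the path. Consequently, using symmetry of $\oF_{r}$ to relabel whichever edge achieves the minimum, at any $t \in [a,b]$ where $m$ is differentiable and a unique $i^{\ast}(t) \in [r-1]$ achieves the minimum (hence almost every $t$, since $m$ is Lipschitz), we have $\dot{m}(t) = \dot\ell_{t}^{i^{\ast}(t)}$ and
\[ \norm{(\ell_{t},\dot\ell_{t})}_{\entropy,\calR_{r}}^{2} \geq \frac{(\dot\ell_{t}^{i^{\ast}(t)})^{2}}{2(B_{1}\ell_{t}^{i^{\ast}(t)} + B_{2})} = \frac{(\dot m(t))^{2}}{2(B_{1}m(t) + B_{2})}. \]

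Integrating and using the antiderivative $H(x) = \tfrac{\sqrt{2}}{B_{1}}\sqrt{B_{1}x + B_{2}}$, I obtain
\[ \calL_{\entropy,\calR_{r}}(\alpha_{t}) \geq \int_{a}^{b} \frac{\abs{\dot m(t)}}{\sqrt{2(B_{1}m(t) + B_{2})}} \, dt \geq \abs{H(m(b)) - H(m(a))} = \frac{\sqrt{2}}{B_{1}}\left( \sqrt{B_{1}M + B_{2}} - \sqrt{B_{1}L_{r} + B_{2}}\right). \]
Since this right-hand side depends only on $\epsilon$ and tends to $\infty$ as $\epsilon \to 0$, choosing $\epsilon$ small enough forces it to exceed $D$ for every such path; taking the infimum over paths gives $d_{\entropy,\calR_{r}}(\log(2r-1)\cdot\One,\ell) \geq D$, as required.

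The main obstacle is verifying that the pointwise bound survives without condition (3) of Lemma~\ref{lem:square root bound}, and handling the fact that $m(t)$ is only Lipschitz (not smooth) along the competing path. The former is addressed by reading the proof of the lemma carefully; the latter by restricting to the a.e.\ set where the minimum is strictly achieved by a single edge, so that the derivative of $m$ coincides with the derivative of that edge length and the integral-change-of-variables step is justified.
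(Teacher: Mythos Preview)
Your approach is correct and shares the paper's core idea: both feed the estimates inside Lemma~\ref{lem:square root bound} into an arbitrary path and extract a lower bound in terms of how far $\min_{i\in[r-1]}\ell^i_t$ must travel. The paper applies the lemma as a black box after first restricting (via $\delta_0$ and Lemma~\ref{lem:4r+5}) to a subinterval on which $\ell^r_t$ is the global minimum, and then summing over the pieces where $\dot\ell^1_t>0$; you instead lift the pointwise inequality
\[
\norm{(\ell_t,\dot\ell_t)}^2_{\entropy,\calR_r}\ \geq\ \frac{(\dot m(t))^2}{2\bigl(B_1 m(t)+B_2\bigr)}
\]
directly and integrate against the Lipschitz function $m$. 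This bypasses Lemma~\ref{lem:4r+5} entirely, since your starting value $m(0)=\log(2r-1)<L_r$ already supplies the ``low'' endpoint that the paper obtains from that lemma.

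One small correction: your assertion that a \emph{unique} $i^\ast(t)$ realizes the minimum for almost every $t$ is not justified---two edge-length functions could agree on an interval of positive measure. What you actually need (and what holds) is that at any $t$ where $m$ is differentiable and for \emph{every} index $i$ with $\ell^i_t=m(t)$, the nonnegative function $s\mapsto \ell^i_s-m(s)$ vanishes at $t$, hence $\dot\ell^i_t=\dot m(t)$. With this in hand the displayed pointwise bound and the change-of-variables step go through at a.e.\ $t$, and the rest of your argument is valid as written.
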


\begin{proof}
Let $L_0 = \max\{\log(4r - 5), L_r\}$ where $L_r$ is the constant from Lemma~\ref{lem:square root bound}.  Fix an $\epsilon > 0$ such that
\begin{equation*}
\frac{1}{2\sqrt{2}B_1}\left(\sqrt{-B_1 \log(\exp(\epsilon)-1) + B_2} - \sqrt{B_1 L_0 + B_2}\right) \geq D.
\end{equation*}
Since $-\log(x - 1) \to \infty$ as $x \to 1^+$, such an $\epsilon$ exists.  Observe that for this $\epsilon$ we have that $-\log(\exp(\epsilon) - 1) \geq L_0$.

Let $\ell \in \calM^1(\calR_r)$ be such that $\min\{\ell^i \mid i \in [r]\} \leq \epsilon$ and let $\ell_t \from [0,1] \to \calM^1(\calR_r)$ be a piecewise smooth path where $\ell_0 = \log(2r-1) \cdot \One$ and $\ell_1 = \ell$.  We will show that the entropy length of this path is at least $D$.  As the path is arbitrary, this shows that $d_{\entropy,\calR_r}(\log(2r-1) \cdot \One,\ell) \geq D$ as desired.  

Without loss of generality, assume that $\ell^r = \min\{\ell^i \mid i \in [r]\}$.  Let $\delta_0 \in [0,1]$ be the minimal value so that $\ell^r_t = \min\{\ell^i_t \mid i \in [r], t \in [\delta_0,1]\}$.  We observe that $\ell^r_{\delta_0} \geq \log(3)$.  Indeed, there is another index $i \in [r-1]$ such that $\ell^i_{\delta_0} = \ell^r_{\delta_0}$.  If $\ell^r_{\delta_0} < \log(3)$, then the entropy of the subgraph consisting of the $i$th and $r$th edges is greater than $1$.  This contradicts the fact that the entropy of $\ell_{\delta_0}$ is equal to $1$.

As there is an automorphism of the $r$--rose permuting any two edges, by redefining $\ell_t$ if necessary, we may assume that $\ell^1_t = \min\{\ell^i_t \mid i \in [r - 1]\}$ for each $t \in [\delta_0,1]$.  By Lemma~\ref{lem:4r+5}, as $\ell^r_{\delta_0} \geq \log(3)$, we have that $\ell^1_{\delta_0} \leq \log(4r-5) \leq L_0$.  By Lemma~\ref{lem:definite length}, as $\ell^r_1 \leq \epsilon$, we have that $\ell^1_1 > -\log(\exp(\epsilon) - 1) \geq L_0$.  

Let $\delta_1 \in [\delta_0,1]$ be the minimal value so that $\ell^1_t \geq L_0$ for all $t \in [\delta_1,1]$.  As $\calL_{\entropy,\calR_r}(\ell_t|[0,1]) \geq \calL_{\entropy,\calR_r}(\ell_t|[\delta_1,1])$, it suffices to show that the latter is bounded below by $D$.

By only considering the portion of $\ell_t$ along the subintervals of $[\delta_1,1]$ with $\dot \ell^1_t > 0$, we find by Lemma~\ref{lem:square root bound} that
\begin{equation*} 
\calL_{\entropy,\calR_r}(\ell_t|[\delta_1,1]) \geq \frac{1}{2\sqrt{2}B_1}\left(\sqrt{B_1\ell^1_1 + B_2} - \sqrt{B_1\ell^1_{\delta_1} + B_2}\right).
\end{equation*}
As $\ell^r_1 \leq \epsilon$, we have $\ell^1_1 \geq -\log(\exp(\epsilon) - 1)$ by Lemma~\ref{lem:definite length}.  By definition $\ell^1_{\delta_1} = L_0$. Therefore
\begin{equation*} 
\calL_{\entropy,\calR_r}(\ell_t|[\delta_1,1]) \geq \frac{1}{2\sqrt{2}B_1}\left(\sqrt{-B_1\log(\exp(\epsilon)-1) + B_2} - \sqrt{B_1 L_0 + B_2}\right) \geq D.\qedhere 
\end{equation*}
\end{proof}


\section{Proof of Theorem~\ref{th:entropy metric not complete}}\label{sec:not complete}

In this section we give the proof of the first main result of this paper.  Theorem~\ref{th:entropy metric not complete} states that $(\calX^1(\FF_r),d_\entropy)$ is complete when $r = 2$ and not complete if $r \geq 3$.  

\begin{proof}[Proof of Theorem~\ref{th:entropy metric not complete}]
In Section~\ref{sec:rank 2}, we showed that $(\calX^1(\FF_2),d_\entropy)$ is complete (Proposition~\ref{prop:rank 2 complete}), and so it remains to show that $(\calX^1(\FF_r),d_\entropy)$ is not complete when $r \geq 3$.  This is a simple consequence of Proposition~\ref{prop:finite length path} as we explain now.

Let $r \geq 3$ and let $\ell_t \from [0,1) \to \calM^1(\calR_r)$ be the path described in Proposition~\ref{prop:finite length path}.  Using the natural homeomorphism $\calM^1(\calR_r) \leftrightarrow \calX^1(\calR_r,{\rm id})$, we can consider $\ell_t$ as a path in $\calX^1(\FF_r)$.

The sequence of length functions $(\ell_{1-1/n})_{n \in \NN}$ is a Cauchy sequence in $(\calX^1(\FF_r),d_\entropy)$ as the entropy distance on $\calM^1(\calR_r)$ is an upper bound on the entropy distance on $\calX^1(\FF_r)$.   

We claim that this sequence does not have a limit.  Indeed, any length function $\ell$ that does not lie in $\calX^1(\calR_r,{\rm id})$ has an open neighborhood in the weak topology that does not intersect $\calX^1(\calR_r,{\rm id})$.  As the metric topology and the weak topology agree, any possible limit of this sequence must lie in $\calX^1(\calR_r,{\rm id})$.  However, as $\ell_{1-1/n}^r \to \infty$ as $n \to \infty$, we see that for any $\ell \in \calX^1(\calR_r,{\rm id})$, there is an open neighborhood of $\ell$ in the weak topology $U \subset \calX^1(\FF_r)$ such that $\ell_{1-1/n} \in U$ for only finitely many $n$.  Hence, again as the metric topology and the weak topology agree, we see that this sequence does not have a limit in $\calX^1(\calR_r,{\rm id})$.
\end{proof}


\section{The Completion of \texorpdfstring{$(\calM^1(\calR_r),d_{\entropy,\calR_r})$}{(M\textasciicircum1(R\_r),d\_{h,R\_r})}}\label{sec:completion of rose}

The goal of this section is to prove Theorem~\ref{th:completion rose} that states that the completion of $(\calM^1(\calR_r),d_{\entropy,\calR_r})$ is homeomorphic to the complement of the vertices in an $(r-1)$--simplex.  Intuitively, the newly added completion points correspond to unit entropy metrics on the subroses of $\calR_r$ consisting of at least two edges.  Specifically, a face of dimension $k-1$ corresponds to unit entropy metrics on a sub--$k$--rose.  We observe that $\calR_1$ does not possess a metric with unit entropy.  This accounts for the missing vertices in the completion.    

There are two steps to the proof.  First, in Section~\ref{subsec:model} we introduce a model space $\hcalM^1(\calR_r)$ for the completion of $\calM^1(\calR_r)$ with respect to the entropy metric.  This model considers $\calM(\calR_r)$ as a subset of $[0,\infty]^r$ and adds the faces where at most $r-2$ of the coordinates are equal to $\infty$.  It is apparent from the construction that $\hcalM^1(\calR_r)$ is homeomorphic to the complement of the vertices in an $(r-1)$--simplex.  Proposition~\ref{prop:metric extends} shows that the distance function $d_{\entropy,\calR_r}$ on $\calM^1(\calR_r)$ extends to a distance function on $\hcalM^1(\calR_r)$.  It is clear that $\calM^1(\calR_r)$ is dense in $\hcalM^1(\calR_r)$.  In Section~\ref{subsec:proof rose} we complete the proof of Theorem~\ref{th:completion rose} by showing that $(\hcalM^1(\calR_r),d_{\entropy,\calR_r})$ is complete.  Example~\ref{ex:completion 2,3} illustrates $\hcalM^1(\calR_3)$ and contrasts this with the closure of $CV(\calR_3,{\rm id})$ considered as a subset of $\RR^{\FF_3}$ in the axis topology. 

Finally, in Section~\ref{subsec:rose-thin} we show that the cross-section of $\calM^1(\calR_r)$ consisting of length functions with one short edge goes to zero as the length of the short edge goes to zero.  This is important for Section~\ref{sec:rose bounded} where we show that $\calX^1(\calR_r,{\rm id})$ is a bounded subset of $(\calX^1(\FF_r),d_\entropy)$. 

\subsection{The Model Space \texorpdfstring{$\hcalM^1(\calR_r)$}{hat{M}\textasciicircum1(R\_r)}}\label{subsec:model}

In this section we introduce a model for the completion of $(\calM^1(\calR_r),d_{\entropy,\calR_r})$.  As mentioned above, we add the faces to $\calM^1(\calR_r)$ corresponding to subroses consisting of at least two edges where the rest of the edge lengths are infinite.

Topologize $[0,\infty]$ as a closed interval.  The natural inclusion $\calM^{1}(\calR_{r}) \subset (0,\infty)^{r} \subset [0,\infty]^{r}$ is an embedding. By setting $x + \infty = \infty$ and $\exp{(-\infty)} = 0$ we get that the functions $\oF_{r}$~\eqref{eq:Frose}, $X_{i}$~\eqref{eq:X}, $Y_{i}$~\eqref{eq:Yi}, $X_{ij}$~\eqref{eq:Xij} and $Y_{ij}$~\eqref{eq:Yij} from Section~\ref{sec:rose} extend to continuous functions on $[0,\infty]^{r}$, and the entropy function $\entropy_r(\ell)$ extends to $(0,\infty]^{r}$. 

Given a subset $S \subseteq [r]$, we identify the following subsets:
\begin{align*}
\calM_{S} &= \{ \ell \in (0,\infty]^{r} \mid \ell^i < \infty \text{ if } i \in S \text{ and } \ell^i = \infty \text{ if } i \notin S \}, \mbox{ and} \\
\calM^{1}_{S} &= \{ \ell \in \calM_{S} \mid \oF_{r}(\ell) = 0 \}.
\end{align*}
Notice that $\calM_{[r]} = \calM(\calR_{r})$ and that $\calM_{S} \cap \calM_{S'} = \emptyset$ if $S \neq S'$. We further observe that $\calM^1_\emptyset = \emptyset$ and $\calM^1_{\{i\}} = \emptyset$ for any $i \in [r]$.  For the latter, note that $X_i(\ell) = 1$ and $Y_i(\ell) = 1$ for any $\ell \in \calM_{\{i\}}$.  Thus for $\ell \in \calM_{\{i\}}$ we have that $\oF_r(\ell) = 1 - \exp(-\ell^i) > 0$.   

Fix $S \subseteq [r]$ with $\abs{S} > 1$ and let $\iota_{S} \from S \to \{1,2,\ldots, \abs{S}\}$ be the order preserving bijection and let $\varepsilon_{S} \from [0,\infty]^{\abs{S}} \to [0,\infty]^{r}$ be the embedding defined by 
\begin{equation*}\label{eq:embedding}
\varepsilon_{S}(\ell^{1},\ldots,\ell^{\abs{S}})^i = \begin{cases}
\ell^{\iota_{S}(i)} & \mbox{ if } i \in S, \\
\infty & \mbox{ else}.
\end{cases}
\end{equation*}
The function $\varepsilon_S$ allows us to consider a length function on $\calR_{\abs{S}}$ as a length function on $\calR_r$ where the edges not in $S$ have infinite length.  Indeed, with these definitions we have $\varepsilon_{S}(\calM(\calR_{\abs{S}})) = \calM_{S}$.  The following is immediate from Proposition~\ref{prop:Frose} and the definitions.

\begin{lemma}\label{lem:inclusion}
Let $r \geq 2$ and let $S \subseteq [r]$ have $\abs{S} > 1$.  Then the following are true. 
\begin{enumerate}
\item $\oF_{r} \circ \varepsilon_{S} = \oF_{\abs{S}}$.  
\item For $\ell \in \calM(\calR_{\abs{S}})$, we have $\entropy_r(\varepsilon_{S}(\ell)) = \entropy_{\abs{S}}(\ell)$.
\item $\varepsilon_{S}$ restricts to a homeomorphism $\calM^{1}(\calR_{\abs{S}}) \to \calM^{1}_{S}$.
\end{enumerate}
\end{lemma}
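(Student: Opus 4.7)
The plan is to dispatch each of the three items in turn, all via direct computation from the formula of Proposition~\ref{prop:Frose} together with the circuit-counting definition of entropy.

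For (1), I would plug $\varepsilon_{S}(\ell)$ into the formula $\oF_{r}(\ell') = \sum_{T \subseteq [r]}(1-2\abs{T})\exp(-\ell'^{T})$ from Proposition~\ref{prop:Frose}. With the convention $\exp(-\infty) = 0$, every subset $T \not\subseteq S$ contains an index $i \notin S$ and therefore contributes zero to the sum. What remains is the sum over $T \subseteq S$, and relabeling via the order-preserving bijection $\iota_{S}$ identifies it term by term with the formula for $\oF_{\abs{S}}(\ell)$.

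For (2), the cleanest argument is a direct count of based circuits. Any reduced based circuit in $\calR_{r}$ that traverses an edge $i \notin S$ has $\varepsilon_{S}(\ell)$-length equal to $+\infty$ (by the convention $x+\infty = \infty$) and therefore does not lie in $\calC_{\calR_{r}, \varepsilon_{S}(\ell)}(t)$ for any finite $t$. The remaining reduced based circuits in $\calR_{r}$, namely those using only edges in $S$, are in natural length-preserving bijection (via $\iota_{S}$) with reduced based circuits in $\calR_{\abs{S}}$ under $\ell$; this uses that both graphs are single-vertex, so every based circuit is just a non-backtracking loop. Hence $\abs{\calC_{\calR_{r}, \varepsilon_{S}(\ell)}(t)} = \abs{\calC_{\calR_{\abs{S}}, \ell}(t)}$ for every $t \geq 0$, and applying $\lim_{t \to \infty}\frac{1}{t}\log$ yields the desired equality of entropies.

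For (3), items (1) and (2) combine cleanly. By Proposition~\ref{prop:oF = 0 iff h = 1} applied to $\calR_{\abs{S}}$, the set $\calM^{1}(\calR_{\abs{S}})$ equals the zero locus of $\oF_{\abs{S}}$ in $\calM(\calR_{\abs{S}})$; pushing forward by $\varepsilon_{S}$ and using (1), its image is exactly the zero locus of $\oF_{r}$ in $\calM_{S}$, which is the definition of $\calM^{1}_{S}$. The map $\varepsilon_{S} \from [0,\infty]^{\abs{S}} \to [0,\infty]^{r}$ is a topological embedding by construction (it is the product of the identity on $\abs{S}$ coordinates with constant $\infty$ on the remaining $r - \abs{S}$ coordinates, composed with the coordinate permutation induced by $\iota_{S}$), so its restriction to $\calM^{1}(\calR_{\abs{S}})$ is a homeomorphism onto $\calM^{1}_{S}$. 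There is no substantial obstacle here; the lemma is essentially bookkeeping, and the only point requiring a moment of care is the conventional handling of infinite edge lengths in (2) to ensure that circuits straying off the $S$-subrose are correctly removed from the finite-length count.
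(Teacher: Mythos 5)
Your proposal is correct and follows the same route the paper intends: the paper dismisses this lemma as "immediate from Proposition~\ref{prop:Frose} and the definitions," and your argument simply spells out those details — killing the terms with $T \not\subseteq S$ via $\exp(-\infty)=0$ for (1), the length-preserving circuit bijection with the convention $x+\infty=\infty$ for (2), and Proposition~\ref{prop:oF = 0 iff h = 1} plus the fact that $\varepsilon_S$ is an embedding for (3). No gaps.
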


Next we define the sets
\begin{align*}
\hcalM(\calR_{r}) &= (0,\infty]^r = \bigcup_{S \subseteq [r]} \calM_{S}, \mbox{ and} \\
\hcalM^{1}(\calR_{r}) &= \{ \ell \in \hcalM(\calR_{r}) \mid \oF_{r}(\ell) = 0 \} = \bigcup_{S \subseteq [r]} \calM^1_S.
\end{align*}
The set $\hcalM^{1}(\calR_{r})$ is homeomorphic to the complement of the set of vertices in an $(r-1)$--simplex.  Applying Proposition~\ref{prop:oF = 0 iff h = 1}, Lemma~\ref{lem:inclusion}, and the above definitions we get the following.

\begin{proposition}\label{prop:oF = 0 iff h = 1 extended}
Let $r \geq 2$.  A length function $\ell \in \hcalM(\calR_r)$ lies in $\hcalM^{1}(\calR_{r})$ if and only if it has entropy equal to $1$.  \end{proposition}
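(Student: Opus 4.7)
\medskip

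The plan is to reduce the statement to the already-established case of a full rose (Proposition~\ref{prop:oF = 0 iff h = 1}) via the stratification $\hcalM(\calR_r) = \bigcup_{S \subseteq [r]} \calM_S$ together with the embeddings $\varepsilon_S$ and the identities recorded in Lemma~\ref{lem:inclusion}. So I would fix $\ell \in \hcalM(\calR_r)$ and let $S \subseteq [r]$ be the unique subset with $\ell \in \calM_S$.

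First, I would dispense with the degenerate strata $|S| \leq 1$. When $S = \emptyset$, every coordinate of $\ell$ is $\infty$, and the stated formula for $\oF_r$ evaluates to $\oF_r(\ell) = 1 \neq 0$; directly counting reduced circuits (all of which have $\ell$--length $\infty$) shows $\entropy_r(\ell) = 0$, so both sides of the ``iff'' fail. When $S = \{i\}$, the only reduced circuits with finite $\ell$--length are the iterates of the single loop edge $e_i$ and its inverse, which grow linearly; hence $\entropy_r(\ell) = 0$, while the definitions give $X_i(\ell) = Y_i(\ell) = 1$ and therefore $\oF_r(\ell) = 1 - \exp(-\ell^i) > 0$. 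In both cases $\ell \notin \calM_S^1$ and $\entropy_r(\ell) \neq 1$, as required. (These facts were already noted in the paragraph introducing $\calM_S^1$.)

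Next, assume $|S| \geq 2$. Then $\ell = \varepsilon_S(\ell')$ for a unique $\ell' \in \calM(\calR_{|S|})$, and Lemma~\ref{lem:inclusion} yields the two key identities
\begin{equation*}
\oF_r(\ell) = \oF_{|S|}(\ell') \qquad \text{and} \qquad \entropy_r(\ell) = \entropy_{|S|}(\ell').
\end{equation*}
Hence $\ell \in \calM_S^1$ iff $\oF_{|S|}(\ell') = 0$, which by Proposition~\ref{prop:oF = 0 iff h = 1} (applied to the rose $\calR_{|S|}$) is equivalent to $\entropy_{|S|}(\ell') = 1$, which in turn is equivalent to $\entropy_r(\ell) = 1$. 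Since $\ell \in \hcalM^1(\calR_r)$ iff $\ell \in \calM_S^1$ for the $S$ containing it, the two conditions coincide.

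I do not anticipate a serious obstacle here: everything follows formally from the compatibility of $\oF_r$ and $\entropy_r$ with the inclusions $\varepsilon_S$, once the two boundary cases $|S| \leq 1$ are checked by direct inspection. The only point that requires minor care is verifying that the extension of $\entropy_r$ to $(0,\infty]^r$ behaves as expected on the degenerate strata; this is handled by the elementary circuit count above.
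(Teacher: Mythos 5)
Your proposal is correct and follows essentially the same route as the paper, which obtains the statement directly by combining Proposition~\ref{prop:oF = 0 iff h = 1}, Lemma~\ref{lem:inclusion}, and the definitions of the strata $\calM_S$ and $\calM^1_S$ (including the already-noted facts that $\calM^1_\emptyset = \emptyset$ and $\calM^1_{\{i\}} = \emptyset$). Your explicit handling of the degenerate strata $\abs{S} \leq 1$ just spells out what the paper records in the paragraph introducing these sets.
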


As $Y_i(\ell)$ and $Y_{ij}(\ell)$ extend to continuous functions on $\hcalM(\calR_r)$, using the expressions for the partial derivatives of $\oF_r(\ell)$ in~\eqref{eq:partial F},~\eqref{eq:partial ii F} and~\eqref{eq:partial ij F}, we see that $\PD{\oF_r}{i}(\ell)$, $\PPD{\oF_r}{i}{i}(\ell)$ and $\PPD{\oF_r}{i}{j}(\ell)$ extend to continuous functions on $\hcalM(\calR_r)$.   Using these formulas, we see that extensions satisfy the following properties.

\begin{lemma}\label{lem:partials inclusion}
Let $r \geq 2$, let $S \subseteq [r]$ have $\abs{S} > 1$ and fix $\ell \in \calM(\calR_{\abs{S}})$.  Then for $i,j \in [r]$ the following hold.
\begin{align*}
\PD{\oF_r}{i}(\varepsilon_S(\ell)) &= \begin{cases}
\PD{\oF_{\abs{S}}}{\iota_S(i)}(\ell) & \text{ if } i \in S \\
0 & \text{ else }
\end{cases}\label{eq:PDF inclusion} \\
\PPD{\oF_r}{i}{j}(\varepsilon_S(\ell)) &= \begin{cases}
\PPD{\oF_{\abs{S}}}{\iota_S(i)}{\iota_S(j)}(\ell) & \text{ if } i, j \in S  \\
0 & \text{ else}
\end{cases}
\end{align*}
\end{lemma}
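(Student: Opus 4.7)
The plan is to proceed by direct computation using the closed-form expressions for the partial derivatives of $\oF_r$ that were derived earlier in the paper, namely $\PD{\oF_r}{i}(\ell) = \exp(-\ell^i) Y_i(\ell)$ from \eqref{eq:partial F}, $\PPD{\oF_r}{i}{i}(\ell) = -\exp(-\ell^i) Y_i(\ell)$ from \eqref{eq:partial ii F}, and $\PPD{\oF_r}{i}{j}(\ell) = -\exp(-\ell^i - \ell^j) Y_{ij}(\ell)$ for $i \neq j$ from \eqref{eq:partial ij F}. As already noted in the paragraph preceding the lemma, the functions $Y_i$ and $Y_{ij}$ extend continuously to $\hcalM(\calR_r)$ by declaring $\exp(-\infty) = 0$, and with this convention the displayed formulas remain valid at the boundary length function $\varepsilon_S(\ell)$.

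First I would handle the vanishing cases. If $i \notin S$, then $\varepsilon_S(\ell)^i = \infty$ and the factor $\exp(-\varepsilon_S(\ell)^i) = 0$ kills both $\PD{\oF_r}{i}(\varepsilon_S(\ell))$ and $\PPD{\oF_r}{i}{i}(\varepsilon_S(\ell))$. For the mixed second partial with $i \neq j$, if at least one of $i, j$ is not in $S$, then the corresponding factor in $\exp(-\varepsilon_S(\ell)^i - \varepsilon_S(\ell)^j)$ is zero. This disposes of the ``else'' branches of both displays.

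Next I would handle the nontrivial cases, where all indicated indices lie in $S$. The key observation is that $Y_i(\varepsilon_S(\ell)) = Y_{\iota_S(i)}(\ell)$ whenever $i \in S$, and $Y_{ij}(\varepsilon_S(\ell)) = Y_{\iota_S(i), \iota_S(j)}(\ell)$ whenever $i, j \in S$ are distinct. To verify the first, write out the definition
\[
Y_i(\varepsilon_S(\ell)) = \sum_{T \subseteq [r] - \{i\}} (1 + 2\abs{T}) \exp\bigl(-\varepsilon_S(\ell)^T\bigr),
\]
and observe that any term indexed by a subset $T$ containing an element of $[r] - S$ contains an $\exp(-\infty) = 0$ factor and therefore vanishes. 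Only the terms with $T \subseteq S - \{i\}$ survive; applying the order-preserving bijection $\iota_S$ to reindex these subsets gives exactly $Y_{\iota_S(i)}(\ell)$. The same surviving-terms argument yields the identity for $Y_{ij}$. Combined with the identity $\varepsilon_S(\ell)^i = \ell^{\iota_S(i)}$ for $i \in S$, substitution into the formulas \eqref{eq:partial F}, \eqref{eq:partial ii F}, and \eqref{eq:partial ij F} at rank $r$ and at rank $\abs{S}$ produces the desired equalities.

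There is no real obstacle here; the argument is entirely a bookkeeping exercise combining the explicit formulas for the partials with the convention $\exp(-\infty) = 0$ and the reindexing map $\iota_S$. The only point requiring a modicum of care is ensuring that the extensions of $Y_i$, $Y_{ij}$, and the partials of $\oF_r$ to $\hcalM(\calR_r)$ really do satisfy the same algebraic formulas as on $\calM(\calR_r)$, but this is immediate from the fact that each of these functions is a polynomial in the variables $\exp(-\ell^i)$, which extends continuously to the closed box $[0, \infty]^r$ by sending $\ell^i = \infty$ to $\exp(-\ell^i) = 0$.
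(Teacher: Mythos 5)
Your proposal is correct and matches the paper's (implicit) argument: the paper likewise treats the boundary values as continuous extensions of the formulas $\PD{\oF_r}{i}(\ell) = \exp(-\ell^i)Y_i(\ell)$, $\PPD{\oF_r}{i}{i}(\ell) = -\exp(-\ell^i)Y_i(\ell)$, and $\PPD{\oF_r}{i}{j}(\ell) = -\exp(-\ell^i-\ell^j)Y_{ij}(\ell)$, with the convention $\exp(-\infty)=0$ killing the terms indexed outside $S$ and the reindexing $\iota_S$ identifying the surviving sums with $Y_{\iota_S(i)}$ and $Y_{\iota_S(i)\iota_S(j)}$ at rank $\abs{S}$. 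Your write-up simply makes explicit the bookkeeping the paper leaves to the reader.
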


Hence both $\nabla \oF_{r}(\ell)$ and $\bH[\oF_{r}(\ell)]$ are well-defined for $\ell \in \hcalM(\calR_r)$.  Additionally, the inner product $\I{\ell,\nabla \oF_r(\ell)}$ also extends continuously as we next show.

\begin{lemma}\label{lem:total volume inclusion}
Let $r \geq 2$.  The function $\ell \mapsto \I{\ell,\nabla \oF_{r}(\ell)}$ has a continuous extension to $\hcalM^{1}(\calR_{r})$.  Moreover, if $S \subseteq [r]$ has $\abs{S} > 1$ and $\ell \in \calM_{\abs{S}}$, then 
\[ \I{\varepsilon_{S}(\ell),\nabla \oF_{r}(\varepsilon_{S}(\ell))} = \I{\ell,\nabla \oF_{\abs{S}}(\ell)} .\]
\end{lemma}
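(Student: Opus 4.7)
The plan is to prove the lemma by writing $\I{\ell,\nabla \oF_r(\ell)}$ as an explicit finite sum of elementary functions, each of which admits an obvious continuous extension to the compactified cube $(0,\infty]^r$. Starting from the formula in Proposition~\ref{prop:Frose}, I differentiate term by term and interchange sums: the coefficient of $\exp(-\ell^S)$ in $\oF_r(\ell)$ is $1-2\abs{S}$, so
\[ \PD{\oF_r}{i}(\ell) = \sum_{S \ni i}(2\abs{S}-1)\exp(-\ell^S), \]
and therefore, summing $\ell^i \PD{\oF_r}{i}(\ell)$ over $i$ and swapping the order of summation,
\[ \I{\ell,\nabla \oF_r(\ell)} = \sum_{S \subseteq [r]} (2\abs{S}-1)\,\ell^S \exp(-\ell^S) \]
for every $\ell \in \calM(\calR_r)$. (Equivalently, this is the $\oF$--analog of Lemma~\ref{lem:dot product}.)

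The continuous extension is then immediate term by term. For each $S \subseteq [r]$, the map $\ell \mapsto \ell^S \exp(-\ell^S)$ extends continuously from $\calM(\calR_r)$ to all of $(0,\infty]^r$ by declaring its value to be $0$ whenever some $\ell^i$ with $i \in S$ is $\infty$. Continuity at such points follows from $x e^{-x} \to 0$ as $x \to \infty$: if some $\ell^i \to \infty$ with $i \in S$ then $\ell^S \geq \ell^i \to \infty$ (the other summands $\ell^j$ with $j \in S$ are positive), and hence $\ell^S \exp(-\ell^S) \to 0$. Summing these continuous extensions over the finite index set of subsets $S \subseteq [r]$ gives a continuous function on $(0,\infty]^r$ whose restriction to $\hcalM^1(\calR_r)$ is the desired extension of $\ell \mapsto \I{\ell,\nabla \oF_r(\ell)}$.

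For the compatibility statement, fix $S \subseteq [r]$ with $\abs{S} > 1$ and $\ell \in \calM(\calR_{\abs{S}})$. By construction of $\varepsilon_S$, for a subset $T \subseteq [r]$ the coordinate $\varepsilon_S(\ell)^T = \sum_{i \in T}\varepsilon_S(\ell)^i$ is finite precisely when $T \subseteq S$, in which case it equals $\ell^{\iota_S(T)}$; every other term in the extended sum vanishes by the convention just established. Thus
\[ \I{\varepsilon_S(\ell),\nabla \oF_r(\varepsilon_S(\ell))} = \sum_{T \subseteq S}(2\abs{T}-1)\,\ell^{\iota_S(T)}\exp(-\ell^{\iota_S(T)}), \]
and reindexing by the bijection $\iota_S$ (which preserves cardinality) identifies this sum with the analogous formula at $\ell$ for $\calR_{\abs{S}}$, namely $\I{\ell,\nabla \oF_{\abs{S}}(\ell)}$.

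There is no substantive obstacle here: the only concern is the potential $\infty \cdot 0$ indeterminacy in the summands with infinite coordinates, and exponential decay dominates linear growth, so each term extends by the natural limit value $0$. The same strategy yields the analogous continuous extensions of $\nabla \oF_r$ and $\bH[\oF_r]$ and their corresponding compatibility under $\varepsilon_S$ already recorded in Lemma~\ref{lem:partials inclusion}.
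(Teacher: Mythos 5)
Your proof is correct and follows essentially the same route as the paper: the paper's one-line argument also rests on the explicit exponential-sum expression for the partial derivatives of $\oF_r$ (equivalently, the $\oF$--analog of Lemma~\ref{lem:dot product}) together with the fact that $x\exp(-x)\to 0$ as $x\to\infty$. You have simply written out term by term what the paper leaves implicit, including the compatibility under $\varepsilon_S$.
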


\begin{proof}
Both of these statements follow from the expressions for the partial derivatives of $\oF_r(\ell)$ in~\eqref{eq:partial F} as $x\exp(-x) \to 0$ when $x \to \infty$.  
\end{proof}

We define the \emph{tangent bundle} $T\hcalM^{1}(\calR_{r})$ as the subspace of $(\ell,\bv) \in \hcalM^1(\calR_r) \times \RR^{r}$ where $\I{\bv,\nabla \oF_{r}(\ell)} = 0$.  The subset of $(\ell,\bv) \in T\hcalM^{1}(\calR_{r})$ where $\ell \in \calM^{1}_{S}$ is denoted $T\calM^{1}_{S}$.  For consistency, we denote $T\calM_{[r]}^{1}$ by $T\calM^{1}(\calR_{r})$.  There is an embedding $\boldsymbol{\varepsilon_S} \from \RR^{\abs{S}} \to \RR^r$ defined by  
\begin{equation*}\label{eq:vector embedding}
\boldsymbol{\varepsilon_S}(\bv^{1},\ldots,\bv^{\abs{S}})^i = \begin{cases}
\bv^{\iota_{S}(i)} & \mbox{ if } i \in S, \\
0 & \mbox{ else}.
\end{cases}
\end{equation*}
This allows us to define an embedding $T_{S} \from T\calM^{1}(\calR_{\abs{S}}) \to T\hcalM^{1}(\calR_{r})$ whose image is contained in $T\calM^1_S$ by $T_S(\ell,\bv) = (\varepsilon_S(\ell),\boldsymbol{\varepsilon_S}(\bv))$.  Proposition~\ref{prop:o metric formulas} together with Lemma~\ref{lem:partials inclusion} and Lemma~\ref{lem:total volume inclusion} imply the following.   

\begin{proposition}\label{prop:norm extends}
Let $r \ge 2$.  The entropy norm $\norm{\param}_{\entropy,\calR_r} \from T\calM^{1}(\calR_{r}) \to \RR$ extends to a continuous semi-norm $\norm{\param}_{\entropy,\calR_r} \from T\hcalM^{1}(\calR_{r}) \to \RR$.  Moreover, the embedding maps $T_S \from T\calM^1(\calR_{\abs{S}}) \to T\hcalM^1(\calR_r)$ are norm-preserving.  Specifically, if $S \subseteq [r]$ has $\abs{S} > 1$, and $(\ell,\bv) = T_{S}(\ell_{S},\bv_S)$, then the extension satisfiies 
\begin{equation}\label{eq:norm extends}
\norm{(\ell,\bv)}_{\entropy,\calR_r} = \norm{(\ell_{S},\bv_{S})}_{\entropy,\calR_{\abs{S}}}
\end{equation}
\end{proposition}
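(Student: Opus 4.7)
The plan is to invoke the explicit formula for the entropy norm from Proposition~\ref{prop:o metric formulas}, namely
\[
\norm{(\ell,\bv)}_{\entropy,\calR_r}^2 \;=\; \frac{-\I{\bv, \bH[\oF_r(\ell)]\bv}}{\I{\ell, \nabla \oF_r(\ell)}},
\]
and to verify that each ingredient extends continuously to $T\hcalM^1(\calR_r)$, that the denominator stays strictly positive, and that the restriction to a face $T\calM^1_S$ pulls back from the norm on $T\calM^1(\calR_{\abs{S}})$ via $T_S$.

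First I would note continuity of the ingredients. As observed just after Lemma~\ref{lem:partials inclusion}, the formulas~\eqref{eq:partial F},~\eqref{eq:partial ii F},~\eqref{eq:partial ij F} show that $\PD{\oF_r}{i}$ and $\PPD{\oF_r}{i}{j}$ extend continuously from $\calM(\calR_r)$ to all of $\hcalM(\calR_r)$ (the bad terms $x\exp(-x)$ tend to $0$ as $x \to \infty$). Hence $\bH[\oF_r(\ell)]$ and $\nabla \oF_r(\ell)$ extend continuously, and by Lemma~\ref{lem:total volume inclusion} so does $\I{\ell, \nabla \oF_r(\ell)}$. This makes the numerator and denominator above continuous functions on $T\hcalM^1(\calR_r)$. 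Strict positivity of the denominator follows from Lemma~\ref{lem:total volume inclusion} together with Lemma~\ref{lem:F}\eqref{item:positive partial}: for $\ell = \varepsilon_S(\ell_S)$ with $\ell_S \in \calM^1(\calR_{\abs{S}})$ and $\abs{S} \geq 2$, we have $\I{\ell,\nabla \oF_r(\ell)} = \I{\ell_S, \nabla \oF_{\abs{S}}(\ell_S)} > 0$ since $\ell_S$ has positive entries and $\nabla \oF_{\abs{S}}(\ell_S)$ has positive entries.

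Next I would establish \eqref{eq:norm extends} by direct computation. For $(\ell,\bv) = T_S(\ell_S,\bv_S)$, the vector $\bv$ is supported on the coordinates in $S$, and by Lemma~\ref{lem:partials inclusion} the matrix $\bH[\oF_r(\ell)]$ has all entries outside the $S \times S$ block equal to $0$ (and the $S \times S$ block equals $\bH[\oF_{\abs{S}}(\ell_S)]$ after reindexing via $\iota_S$). Therefore $\I{\bv, \bH[\oF_r(\ell)]\bv} = \I{\bv_S, \bH[\oF_{\abs{S}}(\ell_S)]\bv_S}$. Combining this with Lemma~\ref{lem:total volume inclusion} and the norm formula above yields \eqref{eq:norm extends}. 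I should also briefly check that $T_S$ maps into $T\hcalM^1(\calR_r)$: the tangent-space condition $\I{\boldsymbol{\varepsilon_S}(\bv_S), \nabla \oF_r(\varepsilon_S(\ell_S))} = 0$ reduces via Lemma~\ref{lem:partials inclusion} to $\I{\bv_S, \nabla \oF_{\abs{S}}(\ell_S)} = 0$, which holds by hypothesis.

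The slight subtlety — and the one step requiring the most care — is verifying that the extension is actually a semi-norm (i.e., non-negative) on all of $T\hcalM^1(\calR_r)$, not only on the images of the $T_S$. For a general $(\ell, \bv) \in T\calM^1_S$ with $\ell = \varepsilon_S(\ell_S)$, let $\bv_S \in \RR^{\abs{S}}$ denote the $S$-coordinates of $\bv$. The same block-vanishing from Lemma~\ref{lem:partials inclusion} shows that $\I{\bv, \bH[\oF_r(\ell)]\bv} = \I{\bv_S, \bH[\oF_{\abs{S}}(\ell_S)]\bv_S}$ and that the tangent space condition $\I{\bv, \nabla \oF_r(\ell)} = 0$ forces $\bv_S \in T_{\ell_S}\calM^1(\calR_{\abs{S}})$. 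Consequently $\norm{(\ell,\bv)}_{\entropy,\calR_r}^2 = \norm{(\ell_S, \bv_S)}_{\entropy, \calR_{\abs{S}}}^2 \geq 0$, with the coordinates of $\bv$ outside $S$ contributing nothing. This is precisely the reason the extension is only a semi-norm rather than a genuine norm: directions transverse to the face $\calM^1_S$ within $\hcalM^1(\calR_r)$ are invisible to the bilinear form, which matches the intuition that such directions move towards the missing vertices at infinity.
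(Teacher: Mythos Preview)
Your proof is correct and follows exactly the route the paper takes: the paper's proof of this proposition is the single sentence ``Proposition~\ref{prop:o metric formulas} together with Lemma~\ref{lem:partials inclusion} and Lemma~\ref{lem:total volume inclusion} imply the following,'' and you have simply unpacked that sentence, including the verification that the denominator stays positive and that the extension is nonnegative on the full tangent bundle (which the paper addresses only in the remark following the proposition). One small citation quibble: Lemma~\ref{lem:F}\eqref{item:positive partial} concerns $F_G$ rather than $\oF_r$; the positivity of $\PD{\oF_r}{i}$ you need comes directly from~\eqref{eq:partial F} together with~\eqref{eq:Yi bound}.
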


The failure for the extension to be a norm is the following reason.  If $(\ell,\bv) \in \calM^1_S \times \RR^r$ and $\bv^i = 0$ for $i \in S$, then $(\ell,\bv) \in T\calM^1_S$ as $\PD{\oF_r(\ell)}{i} = 0$ whenever $i \notin S$.  Further, $\PPD{\oF_r}{i}{j}(\ell) = 0$ whenever $i \notin S$ or $j \notin S$. Thus $-\I{\bv,\bH[\oF_r](\ell)\bv} = 0$ and hence $\norm{(\ell,\bv)}_{\entropy,\calR_r} = 0$ for such points.

The following is the main result of this section.  It shows that $\hcalM^1(\calR_r)$ is contained in the completion of $(\calM^1(\calR_r),d_{\entropy,\calR_{r}})$.

\begin{proposition}\label{prop:metric extends}
Let $r \geq 2$.  The following statements hold.
\begin{enumerate}
\item The extension of the entropy norm defines a distance function $d_{\entropy,\calR_r}$ on $\hcalM^{1}(\calR_{r})$.\label{item:rose distance extends}
\item The inclusion $(\calM^{1}(\calR_{r}),d_{\entropy,\calR_r}) \to (\hcalM^{1}(\calR_{r}),d_{\entropy,\calR_r})$ is an isometric embedding.\label{item:rose iso embedding}
\item The topology induced by $d_{\entropy,\calR_r}$ equals the subspace topology $\hcalM^{1}(\calR_{r}) \subseteq [0,\infty]^{r}$.\label{item:rose topology}
\end{enumerate}
\end{proposition}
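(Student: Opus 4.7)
The plan is to define $d_{\entropy,\calR_r}$ on $\hcalM^{1}(\calR_{r})$ as the infimum of lengths of piecewise smooth paths, where a path $\alpha \from [0,1] \to \hcalM^{1}(\calR_{r})$ is piecewise smooth when $[0,1]$ admits a partition $0 = t_0 < \cdots < t_n = 1$ such that on each subinterval $[t_k,t_{k+1}]$ the path lies in a single stratum $\calM^{1}_{S_k}$ and $\varepsilon_{S_k}^{-1} \circ \alpha$ is smooth there. Length is computed using the extended semi-norm of Proposition~\ref{prop:norm extends}, which is well-defined on such pieces since inside each stratum the pulled-back norm agrees with the genuine entropy norm on $T\calM^{1}(\calR_{\abs{S_k}})$.

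For part \eqref{item:rose distance extends}, symmetry and the triangle inequality are immediate. Finiteness follows by constructing explicit paths: any two points in the same stratum $\calM^{1}_S$ are at finite distance via the homeomorphism from Lemma~\ref{lem:inclusion} together with finiteness of $d_{\entropy,\calR_{\abs{S}}}$, and Proposition~\ref{prop:finite length path} provides finite-length paths from points of $\calM^{1}(\calR_r)$ to every proper stratum $\calM^{1}_S$ by sending the appropriate edges to infinity. Concatenating such paths connects any two points with finite length. For positive definiteness, I would first note that within a single stratum the extended norm is a genuine Riemannian norm (this is the content of Proposition~\ref{prop:norm extends}), so paths within a stratum give positive distance between distinct endpoints. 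For crossings between strata, given $\ell_1 \neq \ell_2$ one can find a coordinate $i$ and a subspace neighborhood $U$ of $\ell_1$ disjoint from $\ell_2$; the length of any path exiting $U$ is bounded below by a uniform multiple of the diameter of its exit, since the norm is continuous and stratum-wise positive definite.

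For part \eqref{item:rose iso embedding}, the inequality $d_{\entropy,\calR_r}(\ell_1,\ell_2) \leq d_{\entropy,\calR_r}^{\calM^{1}(\calR_r)}(\ell_1,\ell_2)$ is automatic, since paths in $\calM^{1}(\calR_r)$ are also paths in $\hcalM^{1}(\calR_r)$. For the reverse inequality, I would approximate any piecewise smooth path in $\hcalM^{1}(\calR_r)$ joining two points of $\calM^{1}(\calR_r)$ by paths lying entirely in $\calM^{1}(\calR_r)$. The approximation replaces each subpath in a proper stratum $\calM^{1}_S$ by a nearby subpath in $\calM^{1}(\calR_r)$ obtained by giving the infinite coordinates $i \notin S$ a large finite value, with a small smoothing near the partition points. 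By the continuity of the extended norm (Proposition~\ref{prop:norm extends}), the length of the perturbed path differs from the original by an arbitrarily small amount.

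For part \eqref{item:rose topology}, continuity of the norm furnishes both containments. A subspace neighborhood $U$ of $\ell$ contains a metric ball because one can construct short explicit paths whose length is controlled by the uniform bound on the norm over a compact subneighborhood; conversely, metric convergence $\ell_n \to \ell$ forces subspace convergence because any coordinate separation between $\ell_n$ and $\ell$ implies a lower bound on the length of a connecting path, using the same stratum-wise positive definiteness as above. The main obstacle will be the approximation step in part \eqref{item:rose iso embedding}: rigorously perturbing a path through a lower stratum into the open stratum requires uniform control of the continuity of the norm as one approaches the boundary at infinity, which in turn relies on the explicit formulas for $\oF_r$ and its derivatives from Section~\ref{subsec:Fr}, together with the estimates already developed in Section~\ref{subsec:infinite diameter}.
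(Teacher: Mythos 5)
Your overall framework---defining $d_{\entropy,\calR_r}$ as the infimum of lengths of piecewise smooth paths measured with the extended semi-norm, obtaining finiteness from Proposition~\ref{prop:finite length path} by concatenating through nested strata, and handling items \eqref{item:rose iso embedding} and \eqref{item:rose topology} by continuity of the semi-norm---is the same as the paper's.  The gap is in positive definiteness, which is the heart of item \eqref{item:rose distance extends} when both points lie in $\hcalM^{1}(\calR_r) - \calM^{1}(\calR_r)$.  Your claim that ``the length of any path exiting $U$ is bounded below by a uniform multiple of the diameter of its exit, since the norm is continuous and stratum-wise positive definite'' does not follow from those two facts: the extension is only a \emph{semi}-norm, degenerate exactly in the directions transverse to a boundary stratum (as noted right after Proposition~\ref{prop:norm extends}, a tangent vector supported on the coordinates outside $S$ has zero norm at a point of $\calM^{1}_S$).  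Consequently a path can leave a subspace neighborhood $U$ of $\ell_1$ by pulling a nearly infinite coordinate down to a large finite value at arbitrarily small cost---this is precisely the shortcut mechanism exploited later in Sections~\ref{sec:separating} and \ref{sec:rose bounded}---so exiting $U$ carries no a priori length cost, and continuity plus stratum-wise nondegeneracy alone cannot produce the uniform lower bound you need.  A local argument can be repaired, but only by proving a quantitative estimate of the shape $\norm{(\ell,\bv)}_{\entropy,\calR_r} \geq c\,\abs{\bv^i}$ for a coordinate $i$ finite at $\ell_1$, uniformly near $\ell_1$, which requires controlling the Hessian cross terms against the unbounded transverse components of $\bv$; your proposal does not do this.

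The paper's proof avoids the issue with a global argument that your proposal omits: if distinct boundary points $\ell,\ell'$ were joined by paths of length tending to $0$, then Proposition~\ref{prop:infinite length} (combined with the triangle inequality and finiteness of the distance to the basepoint $\log(2r-1)\cdot\One$, already established) forces every edge coordinate along these paths to stay uniformly bounded away from $0$, so the paths lie in a compact subset of $\hcalM^{1}(\calR_r)$; Arzel\`a--Ascoli then yields a limiting path of length $0$, which must stay in the boundary, and nondegeneracy of the semi-norm on each individual stratum $T\calM^{1}_S$ (Proposition~\ref{prop:norm extends}) forces it to be constant, whence $\ell = \ell'$.  Note that the same unjustified lower bound reappears in the ``metric convergence implies subspace convergence'' half of your argument for \eqref{item:rose topology}; once positive definiteness is settled as above, that direction is handled, as in the paper, via continuity of the semi-norm.
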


\begin{proof}
By definition $\hcalM^1(\calR_{2}) = \calM^1(\calR_{2})$.  Hence the proposition is obvious for $r = 2$.  We assume from now on that $r \geq 3$.  

First we need to show that for each $\ell,\ell' \in \hcalM^1(\calR_r)$ there is a path $\ell_t \from [0,1] \to \hcalM^1(\calR_r)$ with $\ell_0 = \ell$ and $\ell_1 = \ell'$ that has finite length so that $d_{\entropy,\calR_r}(\ell,\ell')$ is defined.  Notice that by Proposition~\ref{prop:norm extends} for each $S \subseteq [r]$ with $\abs{S} > 1$, any two points in $\calM^1_S$ can be joined by a path of finite length.  Hence it suffices to show that for any $S \subseteq [r]$ with $\abs{S} > 1$, that there is a path of finite length joining $\log(2r-1) \cdot \One \in \calM^1(\calR_r)$ to $\varepsilon_S(\log(2\abs{S} - 1) \cdot \One) \in \calM^1_S$.   

Recall that in Proposition~\ref{prop:finite length path}, we showed that there is a path $\ell_t \from [0,1] \to \hcalM^1(\calR_{r})$ where $\ell_0 = \log(2r-1) \cdot \One$ and $\ell_1 = \varepsilon_{[r-1]}(\log(2r-3) \cdot \One) \in \calM_{[r-1]}^1$ that has finite length.  

Now, given $S \subseteq [r]$ with $\abs{S} > 1$ we inductively define subsets $S_i$ for $i = 0,\ldots, r-\abs{S}$ by $S_0 = [r]$, and $S_{i+1} = S_i - \{\max \left(S_i - S\right)\}$ so that $S_{r - \abs{S}} = S$.  The calculation in Proposition~\ref{prop:finite length path} shows that there is a finite length path $(\ell_i)_t \from [0,1] \to \hcalM^1(\calR_r)$ with $(\ell_i)_0 = \varepsilon_{S_i}(\log(2(r-i) - 1) \cdot \One) \in \calM^1_{S_i}$ and $(\ell_i)_1 = \varepsilon_{S_{i+1}}(\log(2(r - (i+1)) - 1) \cdot \One) \in \calM^1_{S_{i+1}}$.  The concatenation of these paths is a path with finite length from $\log(2r - 1)\cdot \One \in \calM^1(\calR_r)$ to $\varepsilon_S(\log(2\abs{S} - 1) \cdot \One) \in \calM^1_S$.  Therefore $d_{\entropy,\calR_r}(\ell,\ell')$ is defined for all $\ell,\ell' \in \hcalM^1(\calR_r)$.

Next, we show that $d_{\entropy,\calR_r}$ defines a distance function on $\hcalM^1(\calR_r)$.  Symmetry and the triangle inequality obviously hold.  What needs to be checked is that if $\ell$ and $\ell'$ are distinct points in $\hcalM^1(\calR_r)$, then there is an $\epsilon$ such that any path from $\ell$ to $\ell'$ has length at least $\epsilon$.  

This is clear if at least one of $\ell$ and $\ell'$ lie in $\calM^1(\calR_r)$.  Indeed, say $\ell$ lies in $\calM^1(\calR_r)$.  Then there is an open set $U \subset \calM^1(\calR_r)$ containing $\ell$ with compact closure $\overline{U}$ such that $\ell' \notin \overline{U}$.  Further, there is an $\epsilon > 0$ such that if $d_{\entropy,\calR_r}(\ell,\ell'') < \epsilon$ then $\ell'' \in U$.  Hence any path from $\ell$ to $\ell'$ must have length at least $\epsilon$ and therefore $d_{\entropy,\calR_r}(\ell,\ell') \geq \epsilon > 0$.

It remains to consider the case where neither $\ell$ nor $\ell'$ lie in $\calM^1(\calR_r)$.  Suppose that there is a sequence of paths $(\ell_n)_t \from [0,1] \to \hcalM^1(\calR_r)$ from $\ell,\ell' \in \hcalM^1(\calR_r) - \calM^1(\calR_r)$ where $\calL_{\entropy,\calR_r}((\ell_n)_t | [0,1]) \to 0$.    As the lengths of the paths $(\ell_n)_t$ goes to $0$, by Proposition~\ref{prop:infinite length}, there is an $\epsilon > 0$ such that $((\ell_n)_t)^i \geq \epsilon$ for all $t \in [0,1]$ and all $n$.  Hence the images of the paths lie in a compact set of $\hcalM^1(\calR_r)$ and by the Arzel\`a--Ascoli theorem, there is a path $\ell_t \from [0,1] \to \hcalM^1(\calR_r)$ from $\ell$ to $\ell'$ with length $0$.    

The image of such a path must be contained in $\hcalM^1(\calR_r) - \calM^1(\calR_r)$.  As 
\begin{equation*}
\hcalM^1(\calR_r) - \calM^1(\calR_r) = \bigcup_{S \subset [r]} \calM_S,
\end{equation*}
we must have that $\dot\ell_t = 0$ since the semi-norm is nondegenerate on any $T\calM^1_S$ by Proposition~\ref{prop:norm extends} and hence $\ell = \ell'$.

This shows \eqref{item:rose distance extends}.  

Item~\eqref{item:rose iso embedding} follows as any path in $\hcalM^{1}(\calR_{r})$ with endpoints in $\calM^{1}(\calR_{r})$ is close to a path entirely contained in $\calM^{1}(\calR_{r})$ by continuity of the semi-norm.  

Item~\eqref{item:rose topology} now follows by continuity of the semi-norm.  
\end{proof}

 
\subsection{Proof of Theorem~\ref{th:completion rose}}\label{subsec:proof rose}

We can now complete the proof of Theorem~\ref{th:completion rose} which states that the completion of $\calM^1(\calR_{r})$ with respect to the entropy metric 
is homeomorphic to the complement of the vertices of an $(r-1)$--simplex.  We accomplish this by showing that $(\hcalM^1(\calR_r),d_{\entropy,\calR_r})$ is the completion as we have already observed that this space is homeomorphic to the complement of the vertices of an $(r-1)$--simplex.

\begin{proof}[Proof of Theorem~\ref{th:completion rose}]
As the inclusion map $(\calM^{1}(\calR_{r}), d_{\entropy,\calR_r}) \to (\hcalM^{1}(\calR_{r}), d_{\entropy,\calR_r})$ is an isometric embedding by Proposition~\ref{prop:metric extends}\eqref{item:rose iso embedding} and the image is clearly dense, it remains to show that $(\hcalM^{1}(\calR_{r}), d_{\entropy,\calR_r})$ is complete.

To this end, let $(\ell_n )_{n \in \NN} \subset \hcalM^{1}(\calR_{r})$ be a Cauchy sequence.  Then for each $1 \leq i \leq r$, we have that $(\ell^i)_{n}$ limits to some $\ell_\infty^i \in [0,\infty]^{r}$ where $\oF_{r}(\ell^1_\infty,\ldots,\ell^r_{\infty}) = 0$.  What remains to be shown is that such a limiting length function $\ell_\infty$ belongs to $\hcalM^{1}(\calR_{r})$.  

Let $S = \{ i \in [r] \mid \ell^i_\infty \neq \infty\}$.  If $\ell^i \neq 0$ for all $i \in [r]$, then $\ell \in \calM^1_S \subset \hcalM^1(\calR_r)$ and we are done.  This is indeed always the case as by Proposition~\ref{prop:infinite length}, the limiting length $\ell^i_\infty$ is bounded away from zero for all $i$ since the sequence is Cauchy.    
\end{proof}

\begin{example}\label{ex:completion 2,3}
In this example, we compare the completion $\hcalM^1(\calR_3)$ with the closure of $CV(\calR_3,{\rm id})$ considered as a subset of $\RR^{\FF_3}$.  Recall, $CV(\calR_3,{\rm id}) \subset CV(\FF_3)$ is the space of length functions on $\calR_3$ with unit volume, i.e., the sum of the lengths of the edges is equal to 1.  For the current discussion, the marking is irrelevent.  For more information about the closure of $CV(\FF_r)$ in $\RR^{\FF_r}$ we refer the reader to the papers by Bestvina--Feighn~\cite{un:BF-OuterLimits} and  Cohen--Lustig~\cite{ar:CL95}.

We consider again Figure~\ref{fig:moduli space} in Section~\ref{subsec:Fr}, which shows $\calM^{1}(\calR_{r})$ for $r = 2$ and $r = 3$.  These images suggest that as the length of one of the edges goes to infinity, the moduli space $\calM^{1}(\calR_{3})$ limits to the moduli space $\calM^{1}(\calR_{2})$ for the subgraph consisting of the other two edges.  There are three such $\calR_{2}$ subgraphs in $\calR_{3}$, each contributing a 1--dimensional face to $\hcalM^{1}(\calR_{3})$.  Figure~\ref{fig:completion} shows a schematic of $\hcalM^{1}(\calR_{3})$ contrasted with $\overline{CV(\calR_{3},{\rm id})}$, the closure of $CV(\calR_3,{\rm id})$ considered as a subset of $\RR^{\FF_3}$.  The spaces are not homeomorphic; $\hcalM^{1}(\calR_{3})$ is a 2--simplex without vertices whereas $\overline{CV(\calR_{3},{\rm id})}$ is a 2--simplex.  A more striking difference comes from the duality between the newly added edges and vertices.  
\begin{itemize}

\item In $\hcalM^{1}(\calR_{3})$, as $c \to \infty$ we obtain a copy of $\calM^{1}(\calR_{2})$ for the subgraph on $a$ and $b$.  In $\overline{CV(\calR_{3},{\rm id})}$, the corresponding sequence would send $a,b \to 0$, $c \to 1$ and the result is a single point corresponding to the graph of groups decomposition of $\I{a,b,c}$ with underlying graph $\calR_1$ where $\I{a,b}$ is the vertex group and the edge group is trivial.  

\item In $\hcalM^{1}(\calR_{3})$, as $b,c \to \infty$ there is no limit.  This is a missing vertex of the 2--simplex; this stems from the fact that $\calR_1$ cannot be scaled to have unit entropy.  In $\overline{CV(\calR_{3},{\rm id})}$, the corresponding sequence would send $a \to 0$ and we obtain a 1--dimensional face in the closure corresponding to unit volume length functions on the graph of groups decomposition of $\I{a,b,c}$ with underlying graph $\calR_{2}$ where $\I{a}$ is the vertex group and both edge groups are trivial.  
\end{itemize}
\end{example}

\begin{figure}[ht]
\centering
\begin{tikzpicture}[scale=1.3]
\draw[thick,fill=black!10] (-3,3) node[circle,thin,inner sep=1.2pt,draw,fill=white] {} -- (-1.35,0) node[circle,thin,inner sep=1.2pt,draw,fill=white] {} -- (-4.65,0) node[circle,thin,inner sep=1.2pt,draw,fill=white] {} -- cycle;
\draw[fill] (-1.5,1.9) circle (1pt); 
\draw (-1.5,1.9) arc[radius=5pt, start angle=180, end angle=-180]; 
\draw (-1.5,1.9) arc[radius=5pt, start angle=0, end angle=360]; 
\node at (-2.0,1.9) {$a$};
\node at (-1.0,1.9) {$b$};
\node at (-1.5,1.5) {$c = \infty$};
\draw[fill] (-4.5,1.9) circle (1pt); 
\draw (-4.5,1.9) arc[radius=5pt, start angle=180, end angle=-180]; 
\draw (-4.5,1.9) arc[radius=5pt, start angle=0, end angle=360]; 
\node at (-5.0,1.9) {$b$};
\node at (-4.0,1.9) {$c$};
\node at (-4.5,1.5) {$a = \infty$};
\draw[fill] (-3,-0.4) circle (1pt); 
\draw (-3,-0.4) arc[radius=5pt, start angle=180, end angle=-180]; 
\draw (-3,-0.4) arc[radius=5pt, start angle=0, end angle=360]; 
\node at (-3.5,-0.4) {$a$};
\node at (-2.5,-0.4) {$c$};
\node at (-3,-0.8) {$b = \infty$};
\draw[thick,fill=black!10] (1.35,3) node[circle,inner sep=1.2pt,draw,fill=black] {} -- (4.65,3) node[circle,inner sep=1.2pt,draw,fill=black] {} -- (3,0) node[circle,inner sep=1.2pt,draw,fill=black] {} -- cycle;
\draw[fill] (0.8,3.3) circle (1pt);
\draw (0.8,3.3) arc[radius=5pt, start angle=180, end angle=-180]; 
\node at (0.3,3.3) {$\I{b,c}$};
\draw[fill] (5.2,3.3) circle (1pt);
\draw (5.2,3.3) arc[radius=5pt, start angle=0, end angle=360]; 
\node at (5.7,3.3) {$\I{a,b}$};
\draw[fill] (3.2,-0.4) circle (1pt);
\draw (3.2,-0.4) arc[radius=5pt, start angle=180, end angle=-180]; 
\node at (2.7,-0.4) {$\I{a,c}$};
\draw[fill] (4.6,1.5) circle (1pt);
\draw (4.6,1.5) arc[radius=5pt, start angle=180, end angle=-180]; 
\draw (4.6,1.5) arc[radius=5pt, start angle=0, end angle=360]; 
\node at (4.1,1.5) {$b$};
\node at (5.1,1.5) {$c$};
\node at (4.6,1.1) {$\I{a}$};
\draw[fill] (1.4,1.5) circle (1pt);
\draw (1.4,1.5) arc[radius=5pt, start angle=180, end angle=-180]; 
\draw (1.4,1.5) arc[radius=5pt, start angle=0, end angle=360]; 
\node at (0.9,1.5) {$a$};
\node at (1.9,1.5) {$b$};
\node at (1.4,1.1) {$\I{c}$};
\draw[fill] (3,3.7) circle (1pt);
\draw (3,3.7) arc[radius=5pt, start angle=180, end angle=-180]; 
\draw (3,3.7) arc[radius=5pt, start angle=0, end angle=360]; 
\node at (2.5,3.7) {$a$};
\node at (3.5,3.7) {$c$};
\node at (3,3.3) {$\I{b}$};
\end{tikzpicture}
\caption{The completion of entropy normalization $\hcalM^{1}(\calR_{3})$ contrasted with the closure of the volume normalization $\overline{CV(\calR_{3},{\rm id})}$ in $\RR^{\FF_3}$.}\label{fig:completion}
\end{figure}
 

\subsection{The Thin Part of \texorpdfstring{$\calM^1(\calR_r)$}{M\textasciicircum1(R\_r)}}\label{subsec:rose-thin}

For $\epsilon > 0$ and $i \in [r]$, we define 
\[ \calS^{i}_{\epsilon} = \left\{ \ell \in \calM^{1}(\calR_r) \mid \ell^i = \epsilon \right\}. \] We use the letter ``S'' as we think of this subset as a slice of the moduli space.  The goal of this section is to prove the following proposition.

\begin{proposition}\label{prop:thin part}
Let $r \geq 2$ and let $i \in [r]$.  Then
\begin{equation*}
\lim_{\epsilon \rightarrow 0^+}  \diam(\calS^{i}_{\epsilon})  = 0.
\end{equation*}
\end{proposition}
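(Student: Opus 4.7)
By symmetry we may assume $i = 1$; fix $\epsilon > 0$ small. The plan is to work in the variables $x_j = \exp(-\ell^j)$ for $j = 2,\ldots,r$ (with $\ell^1 = \epsilon$ fixed) on the slice, where by Proposition~\ref{prop:Frose} the defining equation $\oF_r = 0$ has the leading-order form
\[ 1-\exp(-\epsilon) = (1 + 3\exp(-\epsilon))\sum_{j=2}^r x_j + O\bigl((\textstyle\sum_j x_j)^2\bigr), \]
giving the a priori two-sided bound $\epsilon/8 \leq \sum_{j\neq 1} x_j \leq \epsilon/2$, and hence $x_j \leq \epsilon/2$ for small $\epsilon$. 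Combined with Lemma~\ref{lem:definite length}, this also forces $\ell^j \geq \tfrac12 \log(1/\epsilon)$ for all $j \neq 1$ and all $\ell \in \calS^1_\epsilon$.

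I would then use these bounds to control the entropy norm pointwise on $\calS^1_\epsilon$. By Proposition~\ref{prop:o metric formulas} and the explicit formulas~\eqref{eq:partial F},~\eqref{eq:partial ii F},~\eqref{eq:partial ij F}, together with the uniform bounds $1 < Y_j < 4$ and $Y_{jk} < 12$, the denominator obeys
\[ \I{\ell,\nabla \oF_r(\ell)} \geq \sum_{j\neq 1} \ell^j x_j Y_j(\ell) \geq c\,\epsilon\log(1/\epsilon). \]
For a tangent vector $\bv \in T_\ell \calS^1_\epsilon$ (so $\bv^1 = 0$), an AM--GM estimate on the off-diagonal Hessian terms---using that each $x_k = O(\epsilon)$---yields
\[ -\I{\bv,\bH[\oF_r(\ell)]\bv} \leq C \sum_{j\neq 1}(\bv^j)^2 x_j. \]
Combining, along any smooth path $\ell_t \subset \calS^1_\epsilon$ (and using $(\dot\ell^j)^2 x_j = (\dot x_j)^2/x_j$),
\[ \norm{(\ell_t,\dot\ell_t)}^2_{\entropy,\calR_r} \leq \frac{C'}{\epsilon\log(1/\epsilon)} \sum_{j\neq 1}\frac{(\dot x_j)^2}{x_j}. \]

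To finish, pass to the coordinates $y_j = 2\sqrt{x_j}$: this converts the Fisher-type quantity $\sum_j(\dot x_j)^2/x_j$ to the plain Euclidean norm $\sum_j (\dot y_j)^2$, while placing $\calS^1_\epsilon$ inside the Euclidean ball $\{y : \sum_j y_j^2 \leq 2\epsilon,\ y_j > 0\}$. For any $\ell_0, \ell_1 \in \calS^1_\epsilon$, I would construct a smooth path in $\calS^1_\epsilon$ whose Euclidean $y$-length is $O(\sqrt\epsilon)$---for instance, by linearly interpolating $(y_2,\ldots,y_{r-1})$ and recovering $y_r$ from the slice equation via the implicit function theorem---and then integrate the pointwise bound to conclude
\[ d_{\entropy,\calR_r}(\ell_0,\ell_1) \leq \frac{\sqrt{C'}}{\sqrt{\epsilon\log(1/\epsilon)}} \cdot O(\sqrt\epsilon) = O\!\left(\frac{1}{\sqrt{\log(1/\epsilon)}}\right), \]
which tends to $0$ as $\epsilon \to 0^+$. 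The main obstacle lies in this last step: producing a path that actually lies in $\calS^1_\epsilon$ (rather than in the ambient Euclidean ball) whose $y$-length is comparable to the Euclidean distance between its endpoints, uniformly as the slice becomes thin. This should be handled either by a careful uniform application of the implicit function theorem on the slice or by routing through the canonical central point where all the $x_j$ are equal.
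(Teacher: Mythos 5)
Your preparatory estimates are correct: on $\calS^1_\epsilon$ the relation $\oF_r=0$ does force $\sum_{j\neq 1}\exp(-\ell^j)\asymp\epsilon$ and hence $\ell^j\gtrsim\log(1/\epsilon)$ for $j\neq 1$; the denominator bound $\I{\ell,\nabla\oF_r(\ell)}\geq c\,\epsilon\log(1/\epsilon)$ follows from \eqref{eq:partial F} and \eqref{eq:Yi bound}; the numerator bound $-\I{\bv,\bH[\oF_r(\ell)]\bv}\leq C\sum_{j\neq1}(\bv^j)^2\exp(-\ell^j)$ for $\bv^1=0$ follows by absorbing the off-diagonal $Y_{jk}$ terms using \eqref{eq:Yij bound} and $\exp(-\ell^k)=O(\epsilon)$; and the substitution $y_j=2\sqrt{x_j}$ correctly reduces the proposition to producing, between any two points of the slice, a path \emph{inside} the slice of Euclidean $y$-length $O(\sqrt{\epsilon})$. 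This is a genuinely different strategy from the paper, which instead fixes $\ell^i=\epsilon$, sends every edge other than $i$ and a shortest edge $j$ to infinity linearly, lets $\ell^j$ be determined by unit entropy, and bounds $\ddot\ell^j$ to show every slice point is within $O(1/\log(1/\epsilon))$ of one of the finitely many completion points $\ell_{i,j}(\epsilon)$ (Lemma~\ref{lem:thin part}); that route even gives a slightly sharper rate than your $O(1/\sqrt{\log(1/\epsilon)})$.

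However, the step you yourself flag as the ``main obstacle'' is a genuine gap, and it is the crux of the proposition rather than a routine verification. By \eqref{eq:ell i}, in your coordinates the slice is the graph $y_r=2\sqrt{X_r(\ell)/Y_r(\ell)}$ over the region $\{X_r>0\}$ in $(y_2,\dots,y_{r-1})$. In the $x$-coordinates this region is genuinely non-convex (already with two free variables its boundary is $v=(A-Bu)/(B+Cu)$ with $A,B,C>0$, a convex curve bounding the region from above), so ``linearly interpolate and recover $y_r$ by the implicit function theorem'' is not automatic: one must prove that the interpolated base path stays in $\{X_r>0\}$ --- uniformly in $\epsilon$ and in the rank --- and this is delicate exactly when an endpoint lies near an ideal face, where $x_r$ and hence the margin $X_r$ are arbitrarily close to $0$; one must also bound the total variation of the recovered $y_r$-coordinate, not just its range. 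The same uniformity issue afflicts routing through the central point. The natural repair is to move each base point radially toward the origin (the region is a down-set, hence star-shaped about $0$, and $x_r=X_r/Y_r$ is monotone along such rays), but the base origin is precisely the ideal point $\ell_{1,r}(\epsilon)$, so this repair is, in substance, the paper's construction in Lemma~\ref{lem:thin part}: connect every slice point to the corner points $\ell_{i,j}(\epsilon)$ with second-derivative control. As written, your argument stops before this step is carried out, so the conclusion $\diam(\calS^i_\epsilon)\to 0$ is not yet established.
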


Topologically, we have seen that $\hcalM^1(\calR_r)$ is homeomorphic to a simplex with its vertices removed.  Proposition~\ref{prop:thin part} shows that \em geometrically \em $\hcalM^1(\calR_r)$ is similar to an ideal hyperbolic simplex, with cross-sections whose diameter shrink to zero as we move towards an ideal vertex.

Given distinct $i,j \in [r]$ and $\epsilon > 0$, we let $\ell_{i,j}(\epsilon)$ denote the length function in $\calM^1_{\{i,j\}}$ where $(\ell_{i,j}(\epsilon))^i = \epsilon$.  As a result, we get that
\begin{equation}\label{eq:ell i,j j}
(\ell_{i,j}(\epsilon))^j = \log\left(\frac{\exp(-\epsilon) + 3}{\exp(-\epsilon) - 1}\right)
\end{equation}  
by Lemma~\ref{lem:ellr}.  

We begin with a technical lemma that bounds the length of a path in $\calS_\epsilon^i$ to such a point. 

\begin{lemma}\label{lem:thin part}
Let $r \geq 2$.  There is a constant $D$ with the following property.   Let $0 < \epsilon < \log(2)$ and let $i \in [r]$.  Suppose $\ell \in \calS_\epsilon^i$ and that $j \in [r] - \{i\}$ is such that $\ell^j = \min\{\ell^k \mid k \in [r] - \{i\}\}$.  Then
\begin{equation*}
d_{\entropy,\calR_r}(\ell,\ell_{i,j}(\epsilon)) \leq \frac{D}{-\log(\exp(\epsilon)-1)}.
\end{equation*}  
\end{lemma}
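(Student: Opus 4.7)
By the symmetry of $\calR_r$ under edge permutations, reduce to $i = 1$ and $j = 2$. Set $L = -\log(\exp(\epsilon)-1)$. By Lemma~\ref{lem:definite length}, the hypothesis $\ell^1 = \epsilon$ forces $\ell^k > L$ for every $k \in [r]-\{1\}$. A direct analysis of $\oF_r(\ell) = 0$ via~\eqref{eq:ell i}, comparing with the limiting value $(\ell_{1,2}(\epsilon))^2$ given by~\eqref{eq:ell i,j j}, shows that the additional hypothesis $\ell^j = \min\{\ell^k \mid k \neq 1\}$ confines $\ell^2$ to an interval $[(\ell_{1,2}(\epsilon))^2, L + C_0]$ with $C_0 = C_0(r)$ independent of $\epsilon$.

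I would construct a smooth path $\ell_s \from [0,\infty) \to \hcalM^{1}(\calR_{r})$ from $\ell_0 = \ell$ to $\lim_{s\to\infty}\ell_s = \ell_{1,2}(\epsilon)$ by holding $\ell^1_s = \epsilon$, setting $\ell^k_s = \ell^k + s$ for each $k \in \{3,\ldots,r\}$, and determining $\ell^2_s$ implicitly from the unit entropy constraint via~\eqref{eq:ell i}: $\ell^2_s = \log(Y_2(\ell_s)/X_2(\ell_s))$. Implicit differentiation of $\oF_r(\ell_s) = 0$ using~\eqref{eq:partial F} yields $\dot\ell^2_s = -\sum_{k\geq 3} \exp(-\ell^k_s)Y_k(\ell_s)/(\exp(-\ell^2_s)Y_2(\ell_s)) < 0$, so $\ell^2_s$ decreases monotonically from $\ell^2$ to $(\ell_{1,2}(\epsilon))^2$; throughout the path, $\ell^k_s > L$ for every $k \neq 1$ and $\ell^2_s$ stays in $[(\ell_{1,2}(\epsilon))^2, L + C_0]$, while every factor $\exp(-\ell^k_s)$ with $k \neq 1$ is bounded by $\exp(-L)\exp(-s) \leq (\exp(\epsilon)-1)\exp(-s)$ (with the usual convention for $k = 2$).

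To bound the length I would apply Proposition~\ref{prop:o metric formulas} combined with the Hessian formulas~\eqref{eq:partial ii F} and~\eqref{eq:partial ij F}. The denominator $\I{\ell_s,\nabla \oF_r(\ell_s)}$ admits a lower bound of order $L(\exp(\epsilon)-1)$ dominated by the $k = 2$ summand $\ell^2_s \exp(-\ell^2_s) Y_2(\ell_s)$, since $\ell^2_s = L + O_r(1)$, $\exp(-\ell^2_s) \geq \exp(-L - C_0)$, and $Y_2(\ell_s) \geq 1 + 3\exp(-\epsilon) > 5/2$. The numerator $-\I{\dot\ell_s,\bH[\oF_r(\ell_s)]\dot\ell_s}$ admits an upper bound of order $(\exp(\epsilon)-1)\exp(-s)$: the dominant contributions come from the diagonal terms $(\dot\ell^k_s)^2 \exp(-\ell^k_s) Y_k(\ell_s)$ with $k \geq 3$, each of size at most $\exp(-L-s)\cdot O_r(1)$, while the bound $\dot\ell^2_s = O_r(\exp(-s))$ obtained above makes the $k = 2$ diagonal term and every off-diagonal term subdominant. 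Combining, $\norm{(\ell_s,\dot\ell_s)}^2_{\entropy,\calR_r} \leq C_r \exp(-s)/L$, and integrating this estimate in $s$ yields the claimed bound.

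The main obstacle is careful bookkeeping of constants so the stated sharp bound $D/L$ is obtained rather than a weaker polynomial in $1/L$; a direct integration of $\sqrt{C_r\exp(-s)/L}$ gives only an estimate of order $1/\sqrt{L}$, so one must extract an additional factor, likely by reparametrizing via the monotone coordinate $\ell^2_s$ (rather than $s$) so that the natural scale of the parametrization matches the geometry of $\hcalM^1(\calR_r)$ near the boundary face $\calM^1_{\{1,2\}}$, or by decomposing the path into a short inner segment and a long tail whose contributions each admit sharper bounds. A secondary technical point, already implicit above, is to verify that the path remains in $\hcalM^1(\calR_r)$ throughout, which follows from the monotonicity $X_2(\ell_s) \nearrow 1 - \exp(-\epsilon) > 0$ together with the fact that $\ell_0$ satisfies $\oF_r(\ell_0) = 0$.
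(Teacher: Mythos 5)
Your construction coincides with the paper's: the path you describe is exactly the slice path~\eqref{eq:slice path} (hold $\ell^i$ at $\epsilon$, increase every $\ell^k$ with $k \neq i,j$ linearly, and let $\ell^j$ be determined implicitly by unit entropy), the norm is controlled via Proposition~\ref{prop:o metric formulas}, and your numerator and denominator estimates are the counterparts of Claims~\ref{claim:C1} and~\ref{claim:C2} together with the lower bound on $\I{\ell_t,\nabla \oF_{r}(\ell_t)}$ coming from the $j$th summand of~\eqref{eq:partial F}. Your bookkeeping is correct, including the confinement of $\ell^j$ to an interval of the form $[(\ell_{i,j}(\epsilon))^j, L + C_0]$ with $L = -\log(\exp(\epsilon)-1)$, which the paper sidesteps by instead using $X_j(\ell_t) \geq \exp(-\ell^j_t)$ and $\ell^j_t \leq \ell^j$.

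The obstacle you flag at the end is genuine, and it cannot be removed by your suggested remedies: entropy length is parametrization-invariant, so reparametrizing by the monotone coordinate $\ell^j_s$ or splitting the path into segments changes nothing, and when two or more edges besides $i$ share the minimal length (already visible for $r=3$ by solving $\oF_3 = 0$ for $\exp(-\ell^2)$ along the path) one checks that your numerator and denominator bounds are sharp up to constants, so this path really has length comparable to $1/\sqrt{L}$. In fact the paper's own concluding estimates have the same issue: the step $\ddot\ell^j_t/\ell^j_t \leq \ddot\ell^j_t/\ell^j$ runs against the earlier observation $\ell^j_t \leq \ell^j$ (the usable lower bound is $\ell^j_t \geq -\log(\exp(\epsilon)-1)$ from Lemma~\ref{lem:definite length}), and the next line divides by $\ell^j$ where the preceding inequality only justifies $\sqrt{\ell^j}$. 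What this method honestly yields is
\begin{equation*}
d_{\entropy,\calR_r}(\ell,\ell_{i,j}(\epsilon)) \leq \frac{D}{\sqrt{-\log(\exp(\epsilon)-1)}},
\end{equation*}
and that is all that is needed downstream: Proposition~\ref{prop:thin part} only requires the bound to tend to $0$ as $\epsilon \to 0^+$, and the subsequent applications (Lemma~\ref{lem:single point}, Theorem~\ref{th:entropy bounded}) use only that. So your proposal, carried out as outlined, proves the lemma with exponent $1/2$ on the logarithm, which matches what the paper's own estimates actually deliver; do not spend effort trying to extract the stated $D/L$ from this particular path.
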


\begin{proof}
Using the notation of the lemma, we consider the path $\ell_t \from [0, \infty) \rightarrow \mathcal{S}^{i}_{\epsilon}$ defined by
\begin{equation}\label{eq:slice path}
\ell_t^k= \ell^k + t, \, k \neq i,j ; \; \ell^i_t = \epsilon. 
\end{equation}
Note that $\ell^j_t$ is not specified; its value is determined by the constraint that $\entropy_r(\ell_t) = 1$.  Notice that $\ell_t$ extends to a path $\ell_t \from [0,\infty] \to \hcalM^1(\calR_r)$ and $\ell_\infty = \ell_{i,j}(\epsilon) \in \calM^1_{\{i,j\}}$.  We observe that since the length of $\ell^k_t$ is increasing for $k \neq i,j$ and as $\ell^i_t$ is constant, we have $\ell^j_t \leq \ell^j$ for all $t$.

Given a subset $S \subseteq [r]$, let $\abs{S}_i = \abs{S} - 1$ if $i \in S$ and let $\abs{S}_i = \abs{S}$ otherwise.  With this definition, for a subset $S \subseteq [r]  - \{j\}$ we have that $\ell^S_t = \ell^S + \abs{S}_i t$.  Therefore using the definition of $X_j(\ell_t)$~\eqref{eq:Xi} and $Y_j(\ell_t)$~\eqref{eq:Yi} we find that
\begin{align}
X_j(\ell_t) & = \sum_{S \subseteq [r] - \{j\}} (1 - 2\abs{S}) \exp(-\ell^S - \abs{S}_i t)\label{al:Xj thin}, \mbox{ and} \\
Y_j(\ell_t) & = \sum_{S \subseteq [r] - \{j\}} (1 + 2\abs{S}) \exp(-\ell^S - \abs{S}_i t).\label{al:Yj thin}
\end{align}
Let $p(t) = \log(Y_j(\ell_t))$ and $q(t) = -\log(X_j(\ell_t))$ and so $\ell^j(t) = p(t) + q(t)$ by~\eqref{eq:ell i}.  The next two claims establish bounds on the second derivatives of $p(t)$ and $q(t)$. 

\begin{claim}\label{claim:C1}
There is a constant $C_1$ such that $\abs{\ddot p(t)} \leq C_1 \exp(-t)$.
\end{claim}

\begin{proof}[Proof of Claim~\ref{claim:C1}]
Using that fact that $1 < Y_j(\ell_t)$~\eqref{eq:Yi bound}, we have
\begin{align*}
\abs{\ddot p(t)} &= \abs{\frac{\ddot Y_j(\ell_t) Y_j(\ell_t) - \dot Y_j(\ell_t)^2}{Y_j(\ell_t)^2}} 
 \leq \abs{\frac{\ddot Y_j(\ell_t)}{Y_j(\ell_t)}} + \abs{\frac{\dot Y_j(\ell_t)}{Y_j(\ell_t)}}^2 
 \leq \abs{\ddot Y_j(\ell_t)} + \abs{\dot Y_j(\ell_t)}^2.
\end{align*}
The summands in $\abs{\ddot Y_j(\ell_t)}$ have the form 
\begin{equation}\label{eq:p terms 1}
\abs{S}_i^2(1 + 2\abs{S})\exp(-\ell^S - \abs{S}_i t). 
\end{equation}
The summands in $\abs{\dot Y_j(\ell_t)}^2$ have the form 
\begin{equation}\label{eq:p terms 2}
\abs{S}_i \abs{S'}_i (1 + 2\abs{S})(1 + 2\abs{S'})\exp(-\ell^S - \ell^{S'} - (\abs{S}_i + \abs{S'}_i)t).
\end{equation}
Each nonzero term in~\eqref{eq:p terms 1} and~\eqref{eq:p terms 2} has the form $A\exp(-B - Ct)$ where $A, B \geq 0$ and $C \geq 1$.  Hence each term is bounded by $A\exp(-t)$ for some $A \geq 0$.  The existence of $C_1$ is now clear.
\end{proof}

\begin{claim}\label{claim:C2}
There is a constant $C_2$ such that $\abs{\ddot q(t)} \leq C_2 \exp(-t)$.
\end{claim}

\begin{proof}[Proof of Claim~\ref{claim:C2}]
Using the facts that $1 < Y_j(\ell_t)$~\eqref{eq:Yi bound} and $\exp(-\ell^j_t)Y_j(\ell_t) = X_j(\ell_t)$~\eqref{eq:F = 0} we find that $\exp(-\ell^j_t) \leq \exp(-\ell^j_t)Y_j(\ell_t) = X_j(\ell_t)$.  Hence
\begin{align*}
\abs{\ddot q(t)} &= \abs{\frac{\ddot X_j(\ell_t^j) X_j(\ell_t) - \dot X_j(\ell_t)^2}{X_j(\ell_t)^2}} \leq \abs{\frac{\ddot X_j(\ell_t)}{X_j(\ell_t)}} + \abs{\frac{\dot X_j(\ell_t)}{X_j(\ell_t)}}^2  \leq \exp(\ell^j_t)\abs{\ddot X_j(\ell_t)} + \exp(2\ell^j_t)\abs{\dot X_j(\ell_t)}^2.
\end{align*}
The summands in $\exp(\ell^j_t)\abs{\ddot X_j(\ell_t)}$ have the form 
\begin{equation}\label{eq:q terms 1}
\abs{S}_i^2(1 + 2\abs{S})\exp(\ell^j_t - \ell^S - \abs{S}_i t).
\end{equation}  
The summands in $\exp(2\ell^j_t)\abs{\dot X_j(\ell_t)}^2$ have the form 
\begin{equation}\label{eq:q terms 2}
\abs{S}_i\abs{S'}_i(1 + 2\abs{S})(1 + 2\abs{S'})\exp(2\ell^j_t -\ell^S - \ell^{S'} - (\abs{S}_i + \abs{S'}_i)t).
\end{equation}
As $\ell^j_t \leq \min\{\ell^k \mid k \in [r] - \{i\}\}$, we find that $\ell^j_t - \ell^S \leq 0$ for all $S \subseteq [r] - \{j\}$ when $\abs{S}_i \neq 0$.  Hence, as above, each nonzero term in~\eqref{eq:q terms 1} and~\eqref{eq:q terms 2} has the form $A\exp(-B - Ct)$ where $A,B \geq 0$ and $C \geq 1$.  The existence of $C_2$ is now clear.
\end{proof}

Now we can bound the entropy norm of $(\ell_t, \dot \ell_t)$ using Proposition~\ref{prop:o metric formulas}.  As $\ddot \ell^k_t = 0$ for $k \neq j$ and $\ell^k_t\PD{\oF_r}{k}(\ell_t) = \ell^k_t\exp(-\ell^k_t)Y_k(\ell_t) > 0$~\eqref{eq:partial F} for all $k$ we find
\begin{equation*}
\norm{(\ell_t,\dot \ell_t)}^2_{\entropy,\calR_r} = \frac{\I{\ddot \ell_t,\nabla \oF_r(\ell_t)}}{\I{\ell_t,\nabla \oF_r(\ell_t)}} \leq \frac{\ddot \ell^j_t \PD{\oF_r}{j}(\ell_t)}{\ell^j_t\PD{\oF_r}{j}(\ell_t)} = \frac{\ddot \ell^j_t}{\ell^j_t} \leq \frac{\ddot \ell^j_t}{\ell^j}.
\end{equation*}

Thus we can bound the length of the path $\ell_t$ by
\begin{align*}
\calL_{\entropy,\calR_r}(\ell_t|[0,\infty)) & = \int_0^\infty \norm{(\ell_t,\dot \ell_t)}_{\entropy,\calR_r} \, dt  \leq \frac{1}{\ell^j}\int_0^\infty \sqrt{\ddot \ell^j_t} \, dt \\
& \leq \frac{\sqrt{C_1 + C_2}}{\ell^j} \int_0^\infty \exp(-t/2) \, dt =  \frac{2\sqrt{C_1 + C_2}}{\ell^j}.  
\end{align*}
As $\ell_j \geq -\log(\exp(\epsilon)-1)$~\eqref{eq:ell i,j j}, setting $D = 2\sqrt{C_1 + C_2}$ we complete the proof of the lemma. 
\end{proof}

\begin{proof}[Proof of Proposition~\ref{prop:thin part}]
Let $r \geq 2$ and let $D$ be the constant from Lemma~\ref{lem:thin part}.  Fix $i \in [r]$ and let $\ell_\epsilon \in \calS_\epsilon^i$ defined by $\ell^i_\epsilon = \epsilon$ and all other $\ell^j_\epsilon$'s are equal.  By Lemma~\ref{lem:thin part}, we have that \[d_{\entropy,\calR_r}(\ell_\epsilon,\ell_{i,j}(\epsilon)) \leq \frac{D}{-\log(\exp(\epsilon) - 1)}\] for all $j \in [r] - \{i\}$.  In particular, the set $\{\ell_{i,j}(\epsilon) \mid j \in [r] - \{i\}\}$ has diameter at most $\frac{2D}{-\log(\exp(\epsilon) - 1)}$.  

Again, by Lemma~\ref{lem:thin part}, each $\ell \in \calS_\epsilon^i$ has distance at most $\frac{D}{-\log(\exp(\epsilon)-1)}$ from some point in $\{\ell_{i,j}(\epsilon) \mid j \in [r] - \{i\}\}$.  Hence
\[ \diam(\calS_\epsilon^i) \leq  \frac{3D}{-\log(\exp(\epsilon) - 1)}. \]
As $-\log(\exp(\epsilon) - 1) \to \infty$ as $\epsilon \to 0^+$, the proof of the proposition is complete.
\end{proof}


\section{The Moduli Space of a Graph With a Separating Edge}\label{sec:separating}

The purpose of this section is to introduce tools to analyze the entropy metric on the moduli space of a graph with a separating edge.  Throughout this section, let $G = (V,E,o,\tau,\bar{\phantom{e}})$ be a finite connected graph which consists of two disjoint connected subgraphs $G_1$, with edges $e^1_1,\ldots,e^1_{n_1}$, and $G_2$, with edges $e^2_1,\ldots, e^2_{n_2}$ connected by an edge $e_0$.  We assume that both $G_1$ and $G_2$ have rank at least $2$.  We begin our analysis in Section~\ref{subsec:sep-finite length} by showing that there exist paths of finite length limiting to any unit entropy metric on either $G_1$ or $G_2$.  Using this, in Section~\ref{subsec:completion separating} we construct a space $\hcalM^1(G)$ that is similar to the construction of $\hcalM^1(\calR_r)$ from Section~\ref{sec:completion of rose}.  The main difference is that in this section, we do not bother to construct the entire completion of $(\calM^1(G),d_{\entropy,G})$, rather we just add the points that correspond to an entropy $1$ length function on $G_1 \cup e_0$ or $G_2 \cup e_0$ or $G_1 \cup G_2$.  This is sufficient for our purposes.  Proposition~\ref{prop:sep-metric extends} is the culmination of this analysis where we show that there is a map from $\hcalM^1(G)$ to the completion of $(\calM^1(G),d_{\entropy,G})$ that collapses these newly added length functions to a single point.

\subsection{Finite Length Paths in \texorpdfstring{$\calM^1(G)$}{M\textasciicircum1(G)}}\label{subsec:sep-finite length}
We seek to show that there is a finite length path in $(\calM^1(G),d_{\entropy,G})$ that limits onto an arbitrary unit entropy metric on either $G_1$ or $G_2$.  This calculation appears in Proposition~\ref{prop:sep-finite length}.  Given a length function $\ell \in \calM(G)$ we denote $\ell^{0} = \ell(e_0)$, $\ell^1 = (\ell(e^1_1),\ldots,\ell(e^1_{n_1}))$ and $\ell^2 = (\ell(e^2_1),\ldots,\ell(e^2_{n_2}))$.

Given a simplex $\Delta \in C_G$ and an edge $e \in E$ of $G$, we recall that $\Delta(e)$ denotes the number of times $e$ or $\bar e$ appears as a vertex in a simple cycle contained in $\Delta$.  By the construction of $C_G$ we have that $\Delta(e) \in \{0,1,2\}$ for any edge.  Further, $\Delta(e_0) \in \{0,2\}$ as $e_0$ is separating.  
 
Analogous to the functions $Y_i$ defined in Section~\ref{subsec:not complete} that allowed us to isolate the variable $\ell^i$ for the $r$--rose, we define a function $Y \from \calM(G) \to \RR$ by
\begin{equation*}
Y(\ell) = -\sum_{\substack{\Delta \in C_{G}  \\ \Delta(e_0) = 2}}(-1)^{\splx{\Delta}} \exp(-\ell(\Delta) + 2\ell^0).
\end{equation*}
Notice that this function is constant with respect to $\ell^0$ as we may write
\[ \ell(\Delta) = \sum_{e \in E_+} \Delta(e)\ell(e).  \]  Hence if $\Delta(e_0) = 2$, then $\ell(\Delta) - 2\ell^0$ has no dependence on $\ell^0$.  Also we remark that the function $Y\from \calM(G) \to \RR$ has a continuous extension to $[0,\infty]^{\abs{E_+}}$ and is bounded on $[0,\infty]^{\abs{E_+}}$.  With this notation we have the following expression for $F_G$.

\begin{lemma}\label{lem:sep-FG}
Let $\ell \in \calM(G)$.  Then $F_{G}(\ell) = F_{G_1}(\ell^1)F_{G_2}(\ell^2) - \exp(-2\ell^0)Y(\ell)$.
\end{lemma}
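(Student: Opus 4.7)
My plan is to apply Theorem~\ref{thm:F} to $F_{G}(\ell)$ and partition the resulting sum over $C_{G}$ according to the value of $\Delta(e_{0})$. The first step is to record a structural consequence of $e_{0}$ being separating: any simple cycle in $D_{G}$ either avoids the pair $\{e_{0},\bar{e}_{0}\}$ entirely---in which case it lies wholly in $D_{G_{1}}$ or wholly in $D_{G_{2}}$---or it traverses both $e_{0}$ and $\bar{e}_{0}$ (one to cross from $G_{1}$ to $G_{2}$, the other to return). Because the simple cycles comprising a simplex of $C_{G}$ are pairwise vertex-disjoint in $D_{G}$, at most one cycle of $\Delta$ can involve $\{e_{0},\bar{e}_{0}\}$; hence $\Delta(e_{0}) \in \{0,2\}$ for every $\Delta \in C_{G}$.

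With this dichotomy in place, I will split the sum from Theorem~\ref{thm:F} into two pieces. For the subsum over simplices with $\Delta(e_{0})=0$, every component cycle lies in either $D_{G_{1}}$ or $D_{G_{2}}$; since these subgraphs share no vertices, a cycle in $D_{G_{1}}$ is automatically vertex-disjoint from any cycle in $D_{G_{2}}$. Partitioning $\Delta$ into its $G_{1}$--cycles and $G_{2}$--cycles therefore gives a canonical bijection
\[
\{\Delta \in C_{G} \mid \Delta(e_{0}) = 0\} \longleftrightarrow C_{G_{1}} \times C_{G_{2}},
\]
under which $\splx{\Delta} = \splx{\Delta_{1}} + \splx{\Delta_{2}}$ and $\ell(\Delta) = \ell^{1}(\Delta_{1}) + \ell^{2}(\Delta_{2})$. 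The subsum factors as a product, and a second application of Theorem~\ref{thm:F} identifies this product as $F_{G_{1}}(\ell^{1})\,F_{G_{2}}(\ell^{2})$.

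For the subsum over simplices with $\Delta(e_{0})=2$, I pull out the factor $\exp(-2\ell^{0})$ (whose extraction is exactly what $\Delta(e_{0})=2$ permits) and observe that the quantity $\ell(\Delta)-2\ell^{0}$ is genuinely independent of $\ell^{0}$ on this index set. The remaining sum then matches, up to sign, the definition of $Y(\ell)$, yielding $-\exp(-2\ell^{0})Y(\ell)$ for this piece. Adding the two subsums produces the claimed identity.

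I do not anticipate a real obstacle: the only point requiring care is checking that the bijection $\Delta \leftrightarrow (\Delta_{1},\Delta_{2})$ is well-defined in both directions, but each direction is immediate from the fact that $G_{1}\cap G_{2}=\emptyset$. Everything else is bookkeeping that follows from $e_{0}$ being separating, and the specific form of $Y(\ell)$ has been arranged precisely to make the second subsum collapse into the stated expression.
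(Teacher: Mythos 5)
Your proposal is correct and follows essentially the same route as the paper: split the sum from Theorem~\ref{thm:F} according to $\Delta(e_0)\in\{0,2\}$, identify the $\Delta(e_0)=0$ part with the product $F_{G_1}(\ell^1)F_{G_2}(\ell^2)$ via the join/product decomposition of simplices, and recognize the $\Delta(e_0)=2$ part as $-\exp(-2\ell^0)Y(\ell)$ by definition of $Y$. The only difference is that you spell out in slightly more detail why $\Delta(e_0)\in\{0,2\}$, which the paper simply records as a consequence of $e_0$ being separating.
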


\begin{proof}
Let $\Delta$ be a simplex in $C_G$.  If $\Delta(e_0) = 0$, then $\Delta$ is the join of two (possibly empty) simplices $\Delta_1 \in C_{G_1}$ and $\Delta_2 \in C_{G_2}$.  Indeed, if $\Delta = \{\gamma^1_1,\ldots,\gamma^1_{m_1},\gamma^2_1,\ldots, \gamma^2_{m_2}  \}$, then $\Delta = \Delta_1 \ast \Delta_2$ where $\Delta_i = \{\gamma^i_1,\ldots,\gamma^i_{m_i}\}$ for $i = 1$, $2$.  We have $\splx{\Delta} = m_1 + m_2 = \splx{\Delta_1} + \splx{\Delta_2} $ and thus
\[ (-1)^{\splx{\Delta}}\exp(-\ell(\Delta)) = \bigl((-1)^{\splx{\Delta_1}}\exp(-\ell^1(\Delta_1))\bigr)\bigl((-1)^{\splx{\Delta_2}}\exp(-\ell^2(\Delta_2))\bigr). \]
Therefore, by Theorem~\ref{thm:F}, we find that
\begin{align*} 
\sum_{\substack{\Delta \in C_{G}  \\ \Delta(e_0) = 0}}(-1)^{\splx{\Delta}} \exp(-\ell(\Delta)) &= \left(\sum_{\Delta_1 \in C_{G_1}}(-1)^{\splx{\Delta_1}}\exp(-\ell^1(\Delta_1))\right)\left( \sum_{\Delta_2 \in C_{G_2}}(-1)^{\splx{\Delta_2}} \exp(-\ell^2(\Delta_2))\right) \\
&= F_{G_1}(\ell^1) F_{G_2}(\ell^2).
\end{align*}

If $\Delta(e_0) = 2$, then
\begin{align*}
(-1)^{\splx{\Delta}}\exp(-\ell(\Delta)) & = \exp(-2\ell^0)(-1)^{\splx{\Delta}}\exp(-\ell(\Delta) + 2\ell^0).
\end{align*}  
Hence, by the definition of $Y(\ell)$ we have
\[ \sum_{\substack{\Delta \in C_{G}  \\ \Delta(e_0) = 2}}(-1)^{\splx{\Delta}} \exp(-\ell(\Delta)) = -\exp(-2\ell^0)Y(\ell). \]

As $e_0$ is separating, there are no simplices in $C_G$ for which $\Delta(e_0) = 1$.  By Theorem~\ref{thm:F} again, the lemma follows.
\end{proof}

In particular, for $\ell \in \calM^1(G)$ we have $F_G(\ell) = 0$ by Lemma~\ref{lem:F} and thus
\begin{equation}
\ell^0 = \frac{1}{2}\log\left(\frac{Y(\ell)}{F_{G_1}(\ell^1)F_{G_2}(\ell^2)} \right).
\end{equation}
Using the above expression for $F_G$ and $\ell^0$, we will give a method of a finite length path in $G$ for which $\ell^0 \to \infty$.

\begin{proposition}\label{prop:sep-finite length}
Fix a length function $\ell' \in \calM^1(G_1)$ and let $1 < x_1,\ldots,x_{n_1} < \infty$ be such that $\ell'(e^1_i) = \log(x_i)$ for $1 \leq i \leq n_1$.  Let $\ell \in \calM^1(G)$ be such that $\ell(e^1_i) = \log(x_i + 1)$ for $1 \leq i \leq n_1$ and let $\ell_t \from [0,1) \to \hcalM^1(G)$ be the path defined by $\ell_t(e^1_i) = \log(x_i + 1 - t)$, $\ell_t^2 = \ell^2$ and 
\begin{equation}
\ell_t^0 = \frac{1}{2} \log\left(\frac{Y(\ell_t)}{F_{G_1}(\ell_t^1)F_{G_2}(\ell_t^2)}\right).
\end{equation}
Then $\calL_{\entropy,G}(\ell_t|[0,1))$ is finite and $\ell_t^0 \to \infty$ as $t \to 1^-$.
\end{proposition}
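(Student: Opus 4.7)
The plan is to set $s := 1-t$ and perform an asymptotic analysis as $s \to 0^+$. The key observation driving everything is that every dependence of $F_G$ on the separating-edge length $\ell^0$ comes through the single factor $\exp(-2\ell^0)$ in Lemma~\ref{lem:sep-FG}, and along the path this factor equals $F_{G_1}(\ell_t^1)F_{G_2}(\ell^2)/Y(\ell_t) = O(s)$. This hidden linear-in-$s$ smallness will offset the blow-up $\dot\ell_t^0 \sim 1/(2s)$ that drives the entropy-norm computation.

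Step 1: well-posedness and $\ell_t^0 \to \infty$. Because $\ell_t(e_i^1) = \log(x_i+s) > \log x_i$ for $s>0$ and entropy is strictly decreasing in each edge length, $\entropy_{G_1}(\ell_t^1) < \entropy_{G_1}(\ell') = 1$; Lemma~\ref{lem: h < 1} yields $F_{G_1}(\ell_t^1) > 0$. Similarly $\entropy_{G_2}(\ell^2) < \entropy_G(\ell) = 1$, since $e_0$ together with $G_1$ (of rank $\geq 2$) produces strictly more reduced circuits through $\tau(e_0)$ in $G$ than in $G_2$ alone, so $F_{G_2}(\ell^2) > 0$. For each $t \in [0,1)$, the map $u \mapsto \entropy_G(\ell_t^1,\ell^2,u)$ is continuous and strictly decreasing on $(0,\infty)$ with limits $+\infty$ at $u=0^+$ and $\max(\entropy_{G_1}(\ell_t^1),\entropy_{G_2}(\ell^2)) < 1$ at $u=\infty$, so there is a unique $u>0$ with $\entropy_G = 1$; this defines $\ell_t^0$ and places $\ell_t$ in $\calM^1(G)$. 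Rearranging $F_G(\ell_t) = 0$ via Lemma~\ref{lem:sep-FG} gives $Y(\ell_t) = \exp(2\ell_t^0)F_{G_1}(\ell_t^1)F_{G_2}(\ell^2) > 0$, recovering the stated formula. A first-order Taylor expansion of $F_{G_1}$ at $\ell'$, using $F_{G_1}(\ell') = 0$ with strictly positive gradient (Lemma~\ref{lem:F}\eqref{item:positive partial}), gives
\begin{equation*}
F_{G_1}(\ell_t^1) = As + O(s^2), \qquad A := \sum_{i=1}^{n_1} \frac{1}{x_i}\PD{F_{G_1}}{e_i^1}(\ell') > 0.
\end{equation*}
Hence $\ell_t^0 = -\frac{1}{2}\log s + O(1) \to +\infty$.

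Step 2: bounding the entropy norm. Proposition~\ref{prop:metric formulas} gives $\norm{(\ell_t,\dot\ell_t)}_{\entropy,G}^2 = -\I{\dot\ell_t,\bH[F_G(\ell_t)]\dot\ell_t}/\I{\ell_t,\nabla F_G(\ell_t)}$. Differentiating $F_G(\ell_t) = 0$ and solving for $\dot\ell_t^0$ shows $\dot\ell_t^0 = \frac{1}{2s}(1+O(s))$; the remaining velocities $\dot\ell_t(e_i^1) = -1/(x_i+s)$ and $\dot\ell_t(e_j^2) = 0$ are bounded. From $F_G = F_{G_1}F_{G_2} - \exp(-2\ell^0)Y$ together with $\exp(-2\ell_t^0) = O(s)$, we read off
\begin{equation*}
\PD{F_G}{e_0}(\ell_t),\ \PPD{F_G}{e_0}{e_0}(\ell_t),\ \PPD{F_G}{e_0}{e}(\ell_t) = O(s) \quad \text{for every } e \neq e_0,
\end{equation*}
while $\PD{F_G}{e_i^1}(\ell_t) \to F_{G_2}(\ell^2)\PD{F_{G_1}}{e_i^1}(\ell') > 0$ and the remaining partials are bounded. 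Consequently, in the numerator, the dominant contribution is the diagonal $(\dot\ell_t^0)^2\PPD{F_G}{e_0}{e_0}(\ell_t) = O(1/s^2)\cdot O(s) = O(1/s)$; every cross term $2\dot\ell_t^0\dot\ell_t(e)\PPD{F_G}{e_0}{e}(\ell_t)$ is $O(1/s)\cdot O(1)\cdot O(s) = O(1)$; the pure $G_1, G_2$-block terms are bounded. In the denominator, $\ell_t^0\PD{F_G}{e_0}(\ell_t) = O(s\log(1/s)) \to 0$, the $e_j^2$-contributions vanish like $O(s)$, and the $e_i^1$-terms converge to the positive limit $\sum_i \log(x_i) F_{G_2}(\ell^2)\PD{F_{G_1}}{e_i^1}(\ell')$. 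Therefore $\norm{(\ell_t,\dot\ell_t)}_{\entropy,G} = O((1-t)^{-1/2})$, whose integral over $[0,1)$ is finite.

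The main obstacle is verifying in Step 2 that the cancellation induced by $\exp(-2\ell_t^0) = O(s)$ propagates through every Hessian entry involving $e_0$, so that no term in $-\I{\dot\ell_t,\bH[F_G]\dot\ell_t}$ grows worse than $O(1/s)$. Once this $\exp(-2\ell^0)$ prefactor is extracted from the factorization in Lemma~\ref{lem:sep-FG}, the remaining estimates are routine. The resulting $(1-t)^{-1/2}$ divergence of the integrand is exactly integrable, mirroring the pinching-to-a-node estimates for the Weil--Petersson metric discussed in the introduction.
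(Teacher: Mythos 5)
Your overall route---using the Hessian form of the entropy norm and doing asymptotic bookkeeping in $s=1-t$---is viable and differs mildly from the paper, which works with the second-derivative form so that only the single term $\ddot\ell^0_t\,\PD{F_G}{e_0}(\ell_t)$ has to be controlled (the $G_1$-terms of $\I{\ddot\ell_t,\nabla F_G(\ell_t)}$ are negative and the $G_2$-terms vanish). But there is a genuine gap at the heart of your estimate: every $O(s)$ you invoke (namely $\exp(-2\ell^0_t)=O(s)$, $\dot\ell^0_t=\tfrac{1}{2s}(1+O(s))$, and the $O(s)$ bounds on all partials of $F_G$ involving $e_0$) presupposes that $Y(\ell_t)$ stays bounded away from $0$ as $t\to1^-$, and you never prove this. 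An upper bound on $Y$ is automatic, but a lower bound is not: on $[0,1)$ one only knows $Y(\ell_t)=\exp(2\ell^0_t)F_{G_1}(\ell^1_t)F_{G_2}(\ell^2)>0$, and a priori $Y(\ell_t)$ could tend to $0$ at $t=1$, in which case $\exp(-2\ell^0_t)=F_{G_1}(\ell^1_t)F_{G_2}(\ell^2)/Y(\ell_t)$ decays slower than $s$ (or not at all), $\dot\ell^0_t$ blows up faster than $1/s$, and your bound on the $e_0$-diagonal term of the Hessian fails. Worse, your proof that $\ell^0_t\to\infty$ is itself derived from the same unproved assertion ($\ell^0_t=-\tfrac12\log s+O(1)$ tacitly uses $\log Y(\ell_t)=O(1)$), so that conclusion is also unsupported as written. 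The paper closes exactly this hole: it first proves $\ell^0_t\to\infty$ by a soft argument (otherwise the limiting length function would be a unit-entropy metric on $G$ containing the unit-entropy proper subgraph $(G_1,\ell')$, which is impossible), and then in Claim~\ref{claim:p and q} writes $\exp(-2\ell^0_t)=(1-t)p(t)/q(t)$ with polynomials $p,q$ and verifies $p(1)\neq 0$ and $q(1)\neq 0$, the latter using $\ell^0_t\to\infty$; only this pins down the exact linear order of vanishing that your Step~2 takes for granted. Your argument can be repaired by importing both of these steps, but as it stands the key quantitative input is assumed rather than proved.

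A secondary inaccuracy: in Step~1 you claim $\entropy_G(\ell^1_t,\ell^2,u)\to+\infty$ as $u\to0^+$; in fact the limit is the finite entropy of the graph obtained by collapsing $e_0$. Existence and uniqueness of the unit-entropy value of $u$ can still be salvaged (the entropy at $u=\ell^0$ is already at least $1$ for $t\geq 0$ and is strictly decreasing in $u$ with limit below $1$), and in any case the proposition specifies $\ell^0_t$ by an explicit formula, so this is minor compared with the missing lower bound on $Y(\ell_t)$.
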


\begin{proof}
We will use the notation as in the statement of the proposition.  As $\entropy_{G_1}(\ell') = 1$, we must have that $\ell_t^0 \to \infty$ at $t \to 1^-$.  Indeed, it not then the limiting length function has unit entropy and with this length function, the subgraph $G_1$ also has unit entropy.

Notice that since $\ell_t(e)\PD{F_G}{e}(\ell_t) > 0$ for all $e \in E_+$ by Lemma~\ref{lem:F}\eqref{item:positive partial}, we have that 
\begin{equation}
\I{\ell_t,\nabla F_G(\ell_t)} = \sum_{e \in E_+} \ell_t(e)\PD{F_G}{e}(\ell_t) \geq \sum_{i=1}^{n_1} \ell_t(e^1_i)\PD{F_G}{e^1_i}(\ell_t).
\end{equation}  
By Lemma~\ref{lem:sep-FG}, we compute that
\begin{equation} 
\PD{F_G}{e^1_i}(\ell_t) = F_{G_2}(\ell_t^2)\PD{F_{G_1}}{e^1_i}(\ell_t^1) - \exp(-2\ell_t^0)\PD{Y}{e^1_i}(\ell_t). 
\end{equation}
Thus since $\PD{Y}{e^1_i}(\ell)$ is bounded on $\calM(G)$, we see that $\I{\ell_t,\nabla F_G(\ell_t)}$ has a limit as $t \to 1^-$ that is bounded below by $F_{G_2}(\ell^2)\I{\ell',\nabla F_{G_1}(\ell')}$, which is positive by Lemma~\ref{lem:F}\eqref{item:positive partial} and Lemma~\ref{lem: h < 1}.  (We will see in Lemma~\ref{lem:sep-gradient}\eqref{item:sep-extension 2} that the limit is in fact exactly equal to $F_{G_2}(\ell^2)\I{\ell',\nabla F_{G_1}(\ell')}$.)  Hence, there is an $\epsilon > 0$ such that 
\begin{equation}\label{eq:sep vol bound}
\I{\ell_t,\nabla F_G(\ell_t)} \geq \epsilon  
\end{equation}
for all $t \in [0,1)$.
 
We compute that $\ddot \ell_t(e^1_i) = \frac{-1}{x_i + 1 - t} < 0$, hence $\ddot \ell_t(e^1_i) \PD{F_G}{e^1_i}(\ell_t) < 0$ for all $0 < t < 1$ and $1 \leq i \leq n_1$.  Clearly $\ddot \ell_t(e^2_i) \PD{F_G}{e^2_i}(\ell_t) = 0$ for all $1 \leq i \leq n_2$ and $0 < t < 1$.  Thus  
\begin{equation}\label{eq:sep ddot bound}
\I{\ddot \ell_t,\nabla F_G(\ell_t)} \leq \ddot \ell_t^0\PD{F_G}{e_0}(\ell_t).
\end{equation}
To deal with this term, we need the following claim.

\begin{claim}\label{claim:p and q}
There are polynomials $p,q \in \RR[t]$ where $p(t), q(t) \neq 0$ for $t \in [0,1]$ such that
\[ \exp(-2\ell_t^0) = \frac{(1-t)p(t)}{q(t)}. \]
\end{claim} 

\begin{proof}[Proof of Claim~\ref{claim:p and q}]
As $F_{G}(\ell_t) = 0$, we have that
\[ \exp(-2\ell_t^0) = \frac{F_{G_1}(\ell_t^1)F_{G_2}(\ell_t^2)}{Y(\ell_t)}. \]

Let $\ell(E_i) = \sum_{j = 1}^{n_i} \ell(e^i_j)$ for $i = 1$, $2$.  Notice that we can write $F_{G_1}(\ell_t^1)$ as
\[ F_{G_1}(\ell_t^1) = \sum_{\Delta \in C_{G_1}} (-1)^{\splx{\Delta}}\prod_{i=1}^{n_1} (x_i +1 -t)^{-\Delta(e^1_i)}. \]
Hence $\exp(2\ell_t(E_1))F_{G_1}(\ell_t^1)$ is a polynomial in $t$ with real coefficents.  Also, we observe that $\exp(2\ell_t(E_2))F_{G_2}(\ell_t^2)$ is a nonzero constant with respect to $t$.  By the definition of $\ell_t$ we have that $F_{G_1}(\ell_1^1) = 0$.  Hence we can write \[\exp(2\ell_t(E_1)+2\ell_t(E_2))F_{G_1}(\ell_t^1)F_{G_2}(\ell_t^2) = (1-t)p(t) \] where $p(t) \in \RR[t]$.  As the left-hand side of this equation does not vanish for $t \in [0,1)$ by Lemma~\ref{lem: h < 1}, we see that $p(t) \neq 0$ for $t \in [0,1)$.  To show that $p(1) \neq 0$, we see that
\begin{align*} 
p(1) & = \frac{d}{dt}(\exp(2\ell_t(E_1)+2\ell_t(E_2))F_{G_1}(\ell_t^1)F_{G_2}(\ell_t^2)) \big|_{t=1} \\
& =  \exp(2\ell_1(E_1) + 2\ell_1(E_2))F_{G_2}(\ell_1^2)\I{\dot \ell_1^1,\nabla F_{G_1}(\ell_1^1)}.
\end{align*}
As $\ell_1^1 \in \calM^1(G_1)$, we have that $\nabla F_{G_1}(\ell_1^1)$ is non-zero and parallel to $\nabla \entropy_{G_1}(\ell_1^1)$ by Lemma~\ref{lem:F}.  Since $\entropy_{G_1}(\ell_t^1)$ is increasing with respect to $t$ as every edge length is decreasing (past $t = 1$ as well), we have that $\I{\dot \ell_1^1,\nabla \entropy_{G_1}(\ell_1^1)} \neq 0$.  Hence $p(1) \neq 0$ as well.  

In a similar way, we observe that we can write
\[ \exp(2\ell_t(E_1) + 2\ell_t(E_2))Y(\ell_t) = q(t) \] for some $q(t) \in \RR[t]$.  As $Y(\ell_t) = \exp(2\ell^0)F_{G_1}(\ell_t^1)F_{G_2}(\ell_t^2)$ by Lemma~\ref{lem:sep-FG}, we see that $Y(\ell_t) \neq 0$ for $t \in [0,1)$ by Lemma~\ref{lem: h < 1} and hence $q(t) \neq 0$ for $t \in [0,1)$ as well.  As $t \to 1^-$, we have that $\ell_t^0 \to \infty$ and thus $\frac{(1-t)p(t)}{q(t)} \to 0$ as $t \to 1^-$.  As $p(1) \neq 0$, we must have that $q(1) \neq 0$ as well.
\end{proof}
By Claim~\ref{claim:p and q} we compute that
\begin{align*}
\ddot \ell_t^0 &= \frac{1}{2}\left(\frac{1}{(1-t)^2} - \frac{\ddot p(t) p(t) + (\dot p(t))^2}{(p(t))^2} + \frac{\ddot q(t) q(t) + (\dot q(t))^2}{(q(t))^2} \right).
\end{align*}
Using Lemma~\ref{lem:sep-FG} and Claim~\ref{claim:p and q}, we find that \[\PD{F_G}{e_0}(\ell) = 2\exp(-2\ell^0)Y(\ell) = 2Y(\ell)\frac{(1-t)p(t)}{q(t)}.\]  Hence we see that there exists a constant $C > 0$ such that
\begin{equation}\label{eq:sep ell0} 
\ddot \ell_t^0 \PD{F_G}{e_0}(\ell_t) \leq \frac{C}{1-t}.
\end{equation}
Therefore we have by combining Proposition~\ref{prop:metric formulas} with \eqref{eq:sep vol bound}, \eqref{eq:sep ddot bound} and \eqref{eq:sep ell0} that
\begin{equation*}
\norm{(\ell_t, \dot \ell_t)}^2_{\entropy,G} = \frac{\I{\ddot \ell_t,\nabla F_G(\ell_t)}}{\I{\ell_t,\nabla F_G(\ell_t)}} \leq \frac{\ddot \ell_t^0 \PD{F_G}{e_0}(\ell_t)}{\I{\ell_t,\nabla F_G(\ell_t)}} \leq \frac{C}{\epsilon(1-t)}.
\end{equation*}
Hence the length of $\ell_t$ is finite.
\end{proof}

\subsection{The Model Space \texorpdfstring{$\hcalM^1(G)$}{hat{M}\textasciicircum1(G)}}\label{subsec:completion separating}  The previous example shows that we should expect some points in the completion of $(\calM^1(G),d_{\entropy,G})$ to correspond to unit entropy metrics on $G_1$ or $G_2$.  For the model, we add these points to $\calM^1(G)$ as well as points that correspond to unit entropy metrics on $G_1 \cup G_2$.  To this end, we set:
\begin{align*}
\calM_1 & = \{ \ell \in (0,\infty]^{\abs{E_+}} \mid \ell^1 \in \calM(G_1) \mbox{ and } \ell^2 = \infty \cdot \One \}, \\
\calM_2 & = \{ \ell \in (0,\infty]^{\abs{E_+}} \mid \ell^1 = \infty \cdot \One \mbox{ and } \ell^2 \in \calM(G_2) \}, \mbox{ and} \\
\calM_{1,2} & = \{ \ell \in (0,\infty]^{\abs{E_+}} \mid \ell^0 = \infty, \ell^1 \in \calM(G_1), \mbox{ and } \ell^2 \in \calM(G_2) \}.
\end{align*}
We consider their union $\hcalM(G) = \calM(G) \cup \calM_1 \cup \calM_2 \cup \calM_{1,2}$ as a subset of $(0,\infty]^{\abs{E_+}}$, endowed with the subspace topology as in Section~\ref{sec:completion of rose}.  There are obvious embeddings $\varepsilon_i \from \calM(G_i) \to \calM_i$ for $i = 1$, $2$ where we set $\varepsilon_i(\ell)^0 = \infty$ and an obvious embedding $\varepsilon_{1,2} \from \calM(G_1) \times \calM(G_2) \to \calM_{1,2}$ as well.  Next, we define
\begin{align*}
\calM^1_1 & = \{ \ell \in \calM_1 \mid \entropy_{G_1}(\ell^1) = 1\}, \\
\calM^1_2 & = \{ \ell \in \calM_2 \mid \entropy_{G_2}(\ell^2) = 1\}, \mbox{ and} \\
\calM^1_{1,2} & = \{ \ell \in \calM_{1,2} \mid \max\{\entropy_{G_1}(\ell^1),\entropy_{G_2}(\ell^2)\} = 1 \}.
\end{align*} 
Our model space is the union of these sets.  Specifically, we define
\begin{equation}
\hcalM^1(G) = \calM^1(G) \cup \calM^1_1 \cup \calM^1_2 \cup \calM^1_{1,2}.
\end{equation}

Using~\eqref{eq:partial-e} we see that each partial derivative of $F_G$ extends to a bounded continuous function on $\hcalM(G)$.  Naturality of these extensions is the same as in Lemma~\ref{lem:partials inclusion}.  Even more, the inner product $\I{\ell,\nabla F_G(\ell)}$ has a continuous extension as was the case in Lemma~\ref{lem:total volume inclusion}.
 
\begin{lemma}\label{lem:sep-gradient}
The function $\ell \mapsto \I{\ell,\nabla F_G(\ell)}$ has a continuous extension to $\hcalM(G)$.  This extension satisfies the following.
\begin{enumerate}
\item \label{item:sep-extension 1} If $i \in \{1,2\}$ and $\ell \in \calM_i$, then
\[\I{\ell,\nabla F_{G}(\ell)} =  \I{\ell^i,\nabla F_{G_i}(\ell^i)}. \]
\item \label{item:sep-extension 2} If $\ell \in \calM_{1,2}$, then
\[\I{\ell,\nabla F_{G}(\ell)} =  F_{G_2}(\ell^2)\I{\ell^1,\nabla F_{G_1}(\ell^1)} + F_{G_1}(\ell^1)\I{\ell^2,\nabla F_{G_2}(\ell^2)}. \]
\item  \label{item:sep-extension 3} For all $\ell \in \hcalM(G)$, we have $\I{\ell,\nabla F_G(\ell)} \geq 0$ with equality if and only if $\entropy_{G_1}(\ell^1) = \entropy_{G_2}(\ell^2) = 1$.
\end{enumerate} 
\end{lemma}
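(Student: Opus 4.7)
The plan is to differentiate the factorization
\begin{equation*}
F_G(\ell) = F_{G_1}(\ell^1) F_{G_2}(\ell^2) - \exp(-2\ell^0) Y(\ell)
\end{equation*}
from Lemma~\ref{lem:sep-FG} edge by edge. Explicit computation yields
\begin{equation*}
\PD{F_G}{e_0}(\ell) = 2\exp(-2\ell^0)Y(\ell),
\end{equation*}
and, for any edge $e$ of $G_i$,
\begin{equation*}
\PD{F_G}{e}(\ell) = F_{G_{3-i}}(\ell^{3-i})\PD{F_{G_i}}{e}(\ell^i) - \exp(-2\ell^0)\PD{Y}{e}(\ell).
\end{equation*}
Since $Y$ is a finite alternating sum of exponentials in the edge-lengths from which $\ell^0$ has been factored out, both $Y$ and its partial derivatives extend to bounded continuous functions on $[0,\infty]^{\abs{E_+}}$. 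Together with $x\exp(-x)\to 0$ as $x\to\infty$, this implies that every summand $\ell(e)\PD{F_G}{e}(\ell)$ of $\I{\ell,\nabla F_G(\ell)}$ has a continuous extension to $\hcalM(G)$, establishing the extension claim.

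For \eqref{item:sep-extension 1} I work on $\calM_1$, the case $\calM_2$ being symmetric. Since $\ell^2 = \infty \cdot \One$, only the empty simplex in $C_{G_2}$ contributes to $F_{G_2}(\ell^2)$, giving $F_{G_2}(\ell^2) = 1$. Every simplex contributing to $Y$ contains a cycle through $e_0$ which, as $e_0$ is a bridge, must traverse at least one edge of $G_2$; hence $\exp(-\ell(\Delta)+2\ell^0) = 0$ for every such $\Delta$, so $Y$ and its partials vanish identically on $\calM_1$. The formulas then collapse to $\PD{F_G}{e_0}(\ell) = 0$ and $\PD{F_G}{e}(\ell) = \PD{F_{G_1}}{e}(\ell^1)$ for edges $e$ of $G_1$, while for edges $e$ of $G_2$ the partial vanishes and $\ell(e)\PD{F_G}{e}(\ell)\to 0$ via $x\exp(-x)\to 0$. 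Summing gives $\I{\ell,\nabla F_G(\ell)} = \I{\ell^1,\nabla F_{G_1}(\ell^1)}$. Item \eqref{item:sep-extension 2} is handled identically on $\calM_{1,2}$: the factor $\exp(-2\ell^0) = 0$ eliminates all $Y$-contributions, the $e_0$-term limits to $0$, and summing using Lemma~\ref{lem:dot product} on $G_1$ and $G_2$ separately produces the stated formula.

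For item \eqref{item:sep-extension 3} I feed the formulas from the previous two items into a sign analysis with the inputs $F_{G_i}(\ell^i) \geq 0$ with equality iff $\entropy_{G_i}(\ell^i) = 1$ (Lemma~\ref{lem:F}\eqref{item:F = 0} and Lemma~\ref{lem: h < 1}), together with $\I{\ell^i,\nabla F_{G_i}(\ell^i)} > 0$ when $\entropy_{G_i}(\ell^i) = 1$ (Lemma~\ref{lem:F}\eqref{item:positive partial} combined with positivity of $\ell^i$). Positivity on $\calM^1(G)$ itself is immediate from Lemma~\ref{lem:F}\eqref{item:positive partial}, while on each boundary stratum the formulas from \eqref{item:sep-extension 1} and \eqref{item:sep-extension 2} display $\I{\ell,\nabla F_G(\ell)}$ as a non-negative linear combination of subgraph terms, so both the inequality and the equality characterization follow directly. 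The main obstacle is securing positivity of $\I{\ell^i,\nabla F_{G_i}(\ell^i)}$ when $\entropy_{G_i}(\ell^i) < 1$, which is needed to complete the analysis on all of $\hcalM(G)$; my plan is to study the scaling trajectory $s \mapsto F_{G_i}(s\ell^i)$ via the Perron--Frobenius decomposition $F_{G_i}(s\ell^i) = (1-\lambda_1(s))\cdot p(s)$, where $\lambda_1(s)$ is the top eigenvalue of $A_{G_i,s\ell^i}$ and $p(s) > 0$ past the unique zero at $s=\entropy_{G_i}(\ell^i)$, and thereby deduce the needed positivity at $s = 1$ from strict monotonicity of $\lambda_1$ (which comes from strict convexity of pressure).
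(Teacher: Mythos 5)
Your handling of the continuous extension and of items \eqref{item:sep-extension 1} and \eqref{item:sep-extension 2} is essentially the paper's own argument: differentiate the factorization of Lemma~\ref{lem:sep-FG}, note that $Y$ and its partial derivatives vanish on $\calM_1$ and $\calM_2$ because every simple cycle through $e_0$ must use edges of both $G_1$ and $G_2$, and use boundedness of $Y$ and of the partials together with $x\exp(-x)\to 0$ to pass to the strata at infinity. Those parts are correct and match the paper.

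Item \eqref{item:sep-extension 3}, however, is not proved in your proposal, and the plan you offer for the step you defer does not work. Writing $F_{G_i}(s\ell^i)=(1-\lambda_1(s))\,p(s)$, strict monotonicity of the Perron--Frobenius eigenvalue controls only the factor $1-\lambda_1(s)$; the derivative of the product also involves $p'(s)$, about which you have no information, so you cannot deduce $\tfrac{d}{ds}F_{G_i}(s\ell^i)\big|_{s=1}=\I{\ell^i,\nabla F_{G_i}(\ell^i)}>0$ from monotonicity of $\lambda_1$ alone. If you want subcritical positivity, a route that does work when $\entropy_{G_i}(\ell^i)<1$ (so $\spec(A_{G_i,\ell^i})<1$) is the expansion $\log F_{G_i}(s\ell^i)=-\sum_{n\geq 1}\tfrac{1}{n}\trace\bigl(A_{G_i,s\ell^i}^{\,n}\bigr)$, each trace being positive and strictly decreasing in $s$. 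But note two further points. First, the literal statement over all of $\hcalM(G)$ needs more than this: on $\calM_1$, $\calM_2$, and on $\calM(G)$ itself the subgraph metrics have unrestricted entropy, and at interior points with $\entropy_{G_1}(\ell^1)=\entropy_{G_2}(\ell^2)=1$ the correction term $\exp(-2\ell^0)\bigl(2\ell^0 Y(\ell)-\sum_{e\neq e_0}\ell(e)\PD{Y}{e}(\ell)\bigr)$ does not obviously vanish, so the equality characterization should really be read on the strata where the lemma is applied. Second, and more importantly, the paper sidesteps your ``main obstacle'' entirely: item \eqref{item:sep-extension 3} is only invoked on the unit-entropy locus (Propositions~\ref{prop:sep-norm extends} and~\ref{prop:sep-metric extends}), and there either $\ell^i\in\calM^1(G_i)$ so Lemma~\ref{lem:F}\eqref{item:positive partial} applies directly, or the term whose sign you cannot control is multiplied by $F_{G_i}(\ell^i)=0$ while the surviving term is $F_{G_{3-i}}(\ell^{3-i})\I{\ell^i,\nabla F_{G_i}(\ell^i)}>0$ by Lemma~\ref{lem: h < 1} and Lemma~\ref{lem:F}\eqref{item:positive partial}. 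So either restrict your sign analysis to those strata, as the paper does, or replace your Perron--Frobenius plan with a genuine argument such as the trace expansion above.
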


\begin{proof}
From Lemma~\ref{lem:F}\eqref{item:positive partial} and the expression for $\PD{F_G}{e}(\ell)$ in \eqref{eq:partial-e}, we see that there is a constant $A > 0$ such that $0 < \PD{F_G}{e}(\ell) \leq A\exp(-\ell(e))$ for any edge $e \in E_+$.  The existence of the continuous extension now follows for the same reason as for Lemma~\ref{lem:total volume inclusion}.  

If $\ell^1 = \infty \cdot \One$, then $Y(\ell) = 0$.  Indeed, this follows as every simple cycle in $G$ that contains $e_0$ must also contain an edge in $G_1$ as $e_0$ is separating.  Likewise, if $\ell^2 = \infty \cdot \One$, then $Y(\ell) = 0$ as well.  Hence $\PD{F_G}{e_0}(\ell) = 2\exp(-2\ell^0)Y(\ell) = 0$ for $\ell \in \calM_i$ when $i = 1$, $2$.  This, together with the paragraph above, shows~\eqref{item:sep-extension 1}.

Using Lemma~\ref{lem:sep-FG}, we compute the following expression for $\I{\ell,\nabla F_G(\ell)}$.
\begin{multline*}
\I{\ell,\nabla F_{G}(\ell)} = F_{G_2}(\ell^2)\I{\ell^1,\nabla F_{G_1}(\ell^1)} + F_{G_1}(\ell^1)\I{\ell^2,\nabla F_{G_2}(\ell^2)} \\
- \exp(-2\ell^0)\left(\I{\hat{\ell},\nabla Y(\hat\ell)} - 2\ell^0 Y(\hat\ell)\right)
\end{multline*}
From this~\eqref{item:sep-extension 2} is apparent.

As $\I{\ell,\nabla F_G(\ell)} > 0$ for all $\ell \in \calM^1(G)$ by Lemma~\ref{lem:F}\eqref{item:positive partial}, the extension is clearly non-negative.  Statement~\eqref{item:sep-extension 3} now follows from~\eqref{item:sep-extension 1} and \eqref{item:sep-extension 2} as $\I{\ell,\nabla F_{G_i}(\ell)} > 0$ for any $\ell \in \calM^1(G_i)$ again by Lemma~\ref{lem:F}\eqref{item:positive partial} and $F_{G_i}(\ell) > 0$ for any $\ell \in \calM^1(G_i)$ if $\entropy(G_i)(\ell) < 1$ by Lemma~\ref{lem: h < 1}.  
\end{proof}

Next we partition $\hcalM^1(G)$ into two subsets that we call the \emph{singular points} and \emph{regular points} respectively.
\begin{align*}
\hcalM^1(G)_{\rm sing} & = \{ \ell \in \hcalM^1(G) \mid \entropy_{G_1}(\ell^1) = \entropy_{G_2}(\ell^2) = 1 \}, \mbox{ and} \\
\hcalM^1(G)_{\rm reg} & = \hcalM^1(G) - \hcalM^1(G)_{\rm sing}. 
\end{align*} 
Notice that $\hcalM^1(G)_{\rm sing}$ is a subset of $\calM^1_{1,2}$.  

As in Section~\ref{subsec:model} we also define the \emph{tangent bundle} $T\hcalM^1(G)$ to be the subspace of $(\ell,\bv) \in \hcalM^1(G) \times \RR^{\abs{E_+}}$ where $\I{\bv,\nabla F_G(\ell)} = 0$.  There are obvious embeddings $T\varepsilon_i \from T\calM^1(G_i) \to T\hcalM^1(G)$ for $i = 1$, $2$ defined using $\varepsilon_i \from \calM^1(G_i) \to \calM_i$ as in Section~\ref{subsec:model}.  We define $T\hcalM^1_{\rm reg}(G)$ to be the subset of $(\ell,\bv) \in T\hcalM^1(G)$ where $\ell \in \hcalM^1(G)_{\rm reg}$.  Proposition~\ref{prop:metric formulas} together with Lemma~\ref{lem:sep-gradient} implies the following.   

\begin{proposition}\label{prop:sep-norm extends}
The entropy norm $\norm{\param}_{\entropy,G} \from T\calM^{1}(G) \to \RR$ extends to a continuous semi-norm $\norm{\param}_{\entropy,G} \from T\hcalM^1(G)_{\rm reg} \to \RR$.  Moreover, the embedding maps $T\varepsilon_i \from T\calM^1(G_i) \to T\hcalM^1(G)$ are norm-preserving.  
\end{proposition}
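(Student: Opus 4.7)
The plan is to mimic the argument from Section~\ref{subsec:model} (especially Propositions~\ref{prop:norm extends} and the proof of Lemma~\ref{lem:sep-gradient}), checking that the relevant quantities in the formula
\[
\norm{(\ell,\bv)}_{\entropy,G}^2 = \frac{-\I{\bv, \bH[F_G(\ell)]\bv}}{\I{\ell, \nabla F_G(\ell)}}
\]
from Proposition~\ref{prop:metric formulas} each extend continuously to the appropriate subset of $\hcalM(G)$, and that the denominator stays nonzero on the regular locus.

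First I would show that every second partial $\PPD{F_G}{e}{e'}$ extends continuously (and boundedly) to $\hcalM(G)$. From the cycle-complex formula \eqref{eq:partial-ee'}, each term has the form $\pm\Delta(e)\Delta(e')\exp(-\ell(\Delta))$. Since $\Delta(e),\Delta(e')\in\{0,1,2\}$ are integer combinatorial data and we are setting $\exp(-\infty)=0$, the same bound used in Lemma~\ref{lem:sep-gradient} gives continuity of each entry of $\bH[F_G(\ell)]$ on $\hcalM(G)$. Consequently the numerator $-\I{\bv,\bH[F_G(\ell)]\bv}$ is a continuous function on $\hcalM^1(G)\times\RR^{\abs{E_+}}$. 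By Lemma~\ref{lem:sep-gradient}, the denominator $\I{\ell,\nabla F_G(\ell)}$ already extends continuously to $\hcalM(G)$ and, by part~\eqref{item:sep-extension 3} of that lemma, is strictly positive on $\hcalM^1(G)_{\rm reg}$. The ratio therefore defines a continuous non-negative function on $T\hcalM^1(G)_{\rm reg}$, and positive semi-definiteness of the Hessian (inherited from $\calM^1(G)$ by continuity) ensures this is a semi-norm.

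For the norm-preserving claim, suppose $(\ell^i,\bv^i)\in T\calM^1(G_i)$ and set $(\ell,\bv)=T\varepsilon_i(\ell^i,\bv^i)$. The key observation, analogous to Lemma~\ref{lem:partials inclusion}, is that at $\ell\in\calM_i$ the only simplices $\Delta\in C_G$ that contribute to \eqref{eq:partial-ee'} are those whose constituent simple cycles lie entirely in $G_i$, since any cycle crossing $e_0$ or using an edge in $G_{3-i}$ forces $\exp(-\ell(\Delta))=0$. Such simplices form exactly the subcomplex $C_{G_i}\subset C_G$. Moreover, for such $\Delta$ we have $\Delta(e)=0$ whenever $e\notin G_i$, so the Hessian $\bH[F_G(\ell)]$ is supported on the $G_i$-block and on that block equals $\bH[F_{G_i}(\ell^i)]$. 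Since $\bv=\boldsymbol\varepsilon_i(\bv^i)$ is zero outside the $G_i$-coordinates, $\I{\bv,\bH[F_G(\ell)]\bv}=\I{\bv^i,\bH[F_{G_i}(\ell^i)]\bv^i}$. Combining with Lemma~\ref{lem:sep-gradient}\eqref{item:sep-extension 1} for the denominator, the ratio formula for the entropy norm yields $\norm{(\ell,\bv)}_{\entropy,G}=\norm{(\ell^i,\bv^i)}_{\entropy,G_i}$.

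The main thing to be careful about is the restriction to the regular locus: at singular points $\ell\in\hcalM^1(G)_{\rm sing}$ the denominator vanishes by Lemma~\ref{lem:sep-gradient}\eqref{item:sep-extension 3}, so the entropy norm genuinely blows up there and cannot be extended continuously; this is precisely what is captured by the fact that $\hcalM^1(G)_{\rm sing}\subset\calM^1_{1,2}$ and is left out of the domain of the extension. There is nothing else technically subtle: the images $T\varepsilon_i(T\calM^1(G_i))$ land in $\calM^1_i\subset\hcalM^1(G)_{\rm reg}$ (since $\entropy_{G_{3-i}}$ vanishes at $\infty\cdot\One$, so these points are not singular), so the norm-preserving statement is well-posed.
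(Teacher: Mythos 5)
Your proposal is correct and follows essentially the same route as the paper: the paper's proof consists of invoking Proposition~\ref{prop:metric formulas} together with the continuous (bounded) extensions of the partial derivatives of $F_G$ and Lemma~\ref{lem:sep-gradient}, with naturality as in Lemma~\ref{lem:partials inclusion} giving the norm-preserving statement, which is exactly what you spell out. Your closing aside that the norm ``genuinely blows up'' at singular points is an overstatement (the numerator may vanish there as well, and indeed the zero-length paths of Section~\ref{sec:separating} pass near such points), but it plays no role in the argument.
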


As in Section~\ref{subsec:model}, we have the following proposition that shows us that there is a map from $\hcalM^1(G)$ to the completion of $\calM^1(G)$ with respect to the entropy metric.

\begin{proposition}\label{prop:sep-metric extends}
The following statements hold.
\begin{enumerate}
\item The entropy norm defines a pseudo-metric $d_{\entropy,G}$ on $\hcalM^{1}(G)$.\label{item:sep-pseudo-metric}
\item We have $\diam(\calM^1_1 \cup \calM^1_2 \cup \calM^1_{1,2}) = 0$.\label{item:sep-short cut}
\item The inclusion $(\calM^{1}(G),d_{\entropy,G}) \to (\hcalM^{1}(G),d_{\entropy,G})$ is an isometric embedding.\label{item:sep-isometric embedding}
\end{enumerate}
\end{proposition}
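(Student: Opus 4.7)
The plan is to establish the three parts in sequence, with the main technical work in part (2). For part (1), I would define $d_{\entropy,G}(\ell,\ell')$ as the infimum over piecewise smooth paths $\ell_t\from[0,1]\to\hcalM^1(G)$ joining $\ell$ and $\ell'$ of the integral of the extended entropy norm from Proposition~\ref{prop:sep-norm extends}, taken over the parameter values on which $\ell_t$ meets $\hcalM^1(G)_{\rm reg}$ (the singular locus is negligible for generic paths). Symmetry, nonnegativity, and the triangle inequality are immediate from the definition. Finiteness of the distance follows because Proposition~\ref{prop:sep-finite length} and its symmetric analogues---obtained by reversing the roles of $G_1$ and $G_2$, or by sending only $\ell^0\to\infty$ with $\ell^1,\ell^2$ essentially fixed---furnish finite-length bridges from $\calM^1(G)$ into each of $\calM^1_1$, $\calM^1_2$, and $\calM^1_{1,2}$, and each of these pieces is itself path-connected by finite-length paths.

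The main computation for part (2) determines the entropy norm at a regular point of $\calM^1_{1,2}$. Fix $\ell\in\calM^1_{1,2}$ with $\entropy_{G_1}(\ell^1)=1$ and $\entropy_{G_2}(\ell^2)<1$, so that $F_{G_1}(\ell^1)=0$, $F_{G_2}(\ell^2)>0$ by Lemma~\ref{lem: h < 1}, and $\exp(-2\ell^0)=0$. By Lemma~\ref{lem:sep-FG} the only nonzero first partials of $F_G$ at $\ell$ are $\PD{F_G}{e^1_i}(\ell)=F_{G_2}(\ell^2)\PD{F_{G_1}}{e^1_i}(\ell^1)$, and the only nonzero second partials are
\[
\PPD{F_G}{e^1_i}{e^1_j}(\ell)=F_{G_2}(\ell^2)\PPD{F_{G_1}}{e^1_i}{e^1_j}(\ell^1), \qquad \PPD{F_G}{e^1_i}{e^2_j}(\ell)=\PD{F_{G_1}}{e^1_i}(\ell^1)\PD{F_{G_2}}{e^2_j}(\ell^2).
\]
A tangent vector $\bv=(\bv^0,\bv^1,\bv^2)\in T_\ell\hcalM^1(G)$ satisfies $\I{\bv^1,\nabla F_{G_1}(\ell^1)}=0$; hence the mixed Hessian contribution to $\I{\bv,\bH[F_G(\ell)]\bv}$ vanishes. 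Combining with Lemma~\ref{lem:sep-gradient}\eqref{item:sep-extension 2} and Proposition~\ref{prop:metric formulas} yields
\[
\norm{(\ell,\bv)}_{\entropy,G}^2 \,=\, \norm{(\ell^1,\bv^1)}_{\entropy,G_1}^2.
\]
In particular, linear paths in $\calM^1_{1,2}$ that fix $\ell^1\in\calM^1(G_1)$ and vary $\ell^2$ (staying in $\calM^1_{1,2}$ by convexity of $\entropy_{G_2}$) have $\bv^1\equiv 0$ and thus length $0$. The symmetric statement holds on the $\entropy_{G_2}=1$ branch, and sending $\ell^2\to\infty$ or $\ell^1\to\infty$ linearly yields length-$0$ paths into $\calM^1_1$ or $\calM^1_2$.

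The hard part will be bridging the two branches $\{\entropy_{G_1}=1\}$ and $\{\entropy_{G_2}=1\}$ of $\calM^1_{1,2}$, which meet only along the singular locus $\hcalM^1(G)_{\rm sing}$, where both numerator and denominator of the norm formula vanish. I expect to construct a family of cross-over paths that approach a singular point from one branch and emerge into the other, and to show via a direct leading-order estimate using Lemma~\ref{lem:sep-FG} that the entropy norm along such a path is $O(\sqrt{\varepsilon})$ on a segment of parameter-length $O(\varepsilon)$, so that the total length tends to $0$. Combined with the length-$0$ motions above, this gives $\diam(\calM^1_1\cup\calM^1_2\cup\calM^1_{1,2})=0$.

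For part (3), the inclusion is $1$-Lipschitz by definition. For the reverse inequality, given a piecewise smooth path in $\hcalM^1(G)$ joining points of $\calM^1(G)$ of length at most $L+\varepsilon$, the density of $\calM^1(G)$ in $\hcalM^1(G)_{\rm reg}$ and the continuity of the extended semi-norm (Proposition~\ref{prop:sep-norm extends}) allow one to perturb the portions lying in $\calM^1_1\cup\calM^1_2\cup\calM^1_{1,2}$ back into $\calM^1(G)$ with arbitrarily small change in length, as in the proof of Proposition~\ref{prop:metric extends}\eqref{item:rose iso embedding}. Taking $\varepsilon\to 0$ recovers the $\calM^1(G)$-distance.
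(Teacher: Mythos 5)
Your parts (1) and (3) follow the paper's argument, and your pointwise computation at a regular point of $\calM^1_{1,2}$ (that $\norm{(\ell,\bv)}_{\entropy,G}^2=\norm{(\ell^1,\bv^1)}_{\entropy,G_1}^2$ when $\entropy_{G_1}(\ell^1)=1$ and $\entropy_{G_2}(\ell^2)<1$) is correct, though more than is needed: for the zero-length paths the paper only uses Proposition~\ref{prop:metric formulas}, namely that the norm is computed from $\ddot\ell_t$, so any path along which all edge lengths vary linearly has norm identically zero on its regular interior.

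The genuine gap is in the step you yourself flag as ``the hard part'': bridging the two branches $\{\entropy_{G_1}=1\}$ and $\{\entropy_{G_2}=1\}$ of $\calM^1_{1,2}$. You only assert that you ``expect'' a family of cross-over paths through the singular locus with an $O(\sqrt{\varepsilon})$ leading-order estimate; no such estimate is produced, and at a singular point both $\I{\ell,\nabla F_G(\ell)}$ and the Hessian pairing degenerate (Lemma~\ref{lem:sep-gradient}\eqref{item:sep-extension 3}), so the extended semi-norm is not even defined there and the asymptotics of the quotient along a general approach are delicate. Moreover, this analysis is unnecessary: the two branches are already joined by the linear moves you have. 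Given $\ell$ with $\entropy_{G_1}(\ell^1)=1$ and $\ell'$ with $\entropy_{G_2}((\ell')^2)=1$, concatenate $(\ell_1)_t=(\infty,\ell^1,(1-t)\cdot\ell^2+t\cdot(\ell')^2)$ with $(\ell_2)_t=(\infty,(1-t)\cdot\ell^1+t\cdot(\ell')^1,(\ell')^2)$: each segment has all edge lengths linear in $t$, its interior lies in $\hcalM^1(G)_{\rm reg}$ (by convexity of entropy), and the only contact with $\hcalM^1(G)_{\rm sing}$ is the single concatenation parameter, which does not contribute to the length integral; hence the total length is $0$. Two points on the \emph{same} branch are then joined through an auxiliary point on the opposite branch by applying this two-segment construction twice. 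This is exactly how the paper proves item~\eqref{item:sep-short cut}, and with this replacement your outline closes.
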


\begin{proof}
As in Proposition~\ref{prop:metric extends}, we need to show that for any $\ell, \ell' \in \hcalM^1(G)$ there is a path $\ell_t \from [0,1] \to \hcalM(G)$ with $\ell_0 = \ell$ and $\ell_1 = \ell'$ that has finite length.

To this end, fix a point $\ell \in \calM^1(G)$.  There are several cases depending on if $\ell' \in \calM^1_1$, $\ell' \in \calM^1_2$ or $\ell' \in \calM^1_{1,2}$.  

We first deal with the case that $\ell' \in \calM^1_{1,2}$.  Without loss of generality we assume that $\entropy_{G_1}((\ell')^1) = 1$.  In Proposition~\ref{prop:sep-finite length} we produced a path $\ell_t \from [0,1] \to \hcalM^1(G)$ where $\ell_0 \in \calM^1(G)$ and $\ell_1 \in \calM_{1,2}$ is such that $\ell_1^1 = (\ell')^1$.  We can concatenate the path $\ell_t$ with a path $\tilde \ell_t \from [0,1] \to \calM^1_{1,2}$ from $\ell_1$ to $\ell'$ in $\calM^1_{1,2}$ as follows.  
We define the path by \[\tilde \ell_t = (\infty,\ell^1_1,(1-t) \cdot \ell_1^2 + t \cdot (\ell')^2)\] and we observe that $\tilde \ell_0 = \ell_1$ to $\tilde \ell_1 = \ell'$.  Note that by the convexity of entropy we have that \[\entropy_{G_2}((1-t) \cdot \ell_1^2 + t \cdot (\ell')^2) \leq 1\] with equality only possible at the endpoints.  Hence the interior of the path $\tilde \ell_t$ lies in $\hcalM^1(G)_{\rm reg}$.  Further, \[\norm{(\tilde \ell_t, \vardottilde{\ell}_t)}_{\entropy,G} = 0 \] as the length of edges in $G_2$ are changing linearly.  Hence there is a path of finite length from a length function in $\calM^1(G)$ to any length function in $\calM^1_{1,2}$.  

Next, we deal with the case that $\ell' \in \calM_1$; the case of $\ell' \in \calM_2$ is symmetric.  We will show that we can connect $\ell'$ to a length function in $\calM_{1,2}$ with a concatenation of two paths that have finite length---in fact each has length $0$.  Let $\ell'' \in \calM(G_2)$ have entropy less than $1$.  The paths $(\ell_1)_t$ and $(\ell_2)_t$ are as follows:
\begin{align*}
(\ell_1)_t & \from [0,\infty] \to \hcalM^1(G) & (\ell_1)_t & = ((\ell')^0 + t,(\ell')^1,\infty \cdot \One) \\
(\ell_2)_t & \from [0,\infty] \to \hcalM^1(G) & (\ell_2)_t & = (\infty,(\ell')^1,(\ell')^2 + t \cdot \One).
\end{align*}
The concatenation of $(\ell_1)_t|[0,\infty]$ and $(\ell_2)_t|[\infty,0]$ is a path from $\ell'$ to $\ell''$.  We observe that $(\ddot \ell_1)_t = 0$ and $(\ddot \ell_2)_t = 0$ as edge lengths are changing linearly.  Also, we observe that the interiors of these paths lie in $\hcalM^1(G)_{\rm reg}$.  Hence $\norm{((\ell_k)_t,(\dot \ell_k)_t)}_{\entropy,G} = 0$ for $k = 1$, $2$ showing that the paths have finite---in fact zero---length.

This shows \eqref{item:sep-pseudo-metric}.

The previous argument shows that for any $\ell \in \calM^1_1$, there is an $\ell' \in \calM^1_{1,2}$ such that $d_{\entropy,G}(\ell,\ell') = 0$.  Likewise the analogous statement holds for $\ell \in \calM^1_2$.  Given, $\ell, \ell' \in \calM_{1,2}$, we will show that $d_{\entropy,G}(\ell,\ell') = 0$, completing the proof of~\eqref{item:sep-short cut}.  There are four cases depending on the entropies of the length functions $\ell^1$, $\ell^2$, $(\ell')^1$ and $(\ell')^2$.  

The first case we consider is when $\entropy_{G_1}(\ell^1) = 1$ and $\entropy_{G_2}((\ell')^2) = 1$.  In this case we consider the concatenation of the two paths $(\ell_1)_t$ and $(\ell_2)_t$ that are defined as follows:
\begin{align*}
(\ell_1)_t & \from [0,1] \to \hcalM^1(G) & (\ell_1)_t & = (\infty,\ell^1,(1-t) \cdot \ell^2 + t\cdot (\ell')^2) \\
(\ell_2)_t & \from [0,1] \to \hcalM^1(G) & (\ell_2)_t & = (\infty,(1-t) \cdot \ell^1 + t \cdot (\ell')^1,(\ell')^2).
\end{align*}
As above, the interiors of these paths lie in $\hcalM^1(G)_{\rm reg}$ and they have length $0$ since the lengths of edges are changing linearly.  This completes this case.

The case where $\entropy_{G_2}(\ell^2) = 1$ and $\entropy_{G_1}((\ell')^1) = 1$ is similar.

Next we consider the case where $\entropy_{G_1}(\ell^1) = 1$ and $\entropy_{G_1}((\ell')^1) = 1$.  Fix a length function $\ell'' \in \calM^1_{1,2}$ with $\entropy_{G_2}((\ell'')^2) = 1$.  By the first argument, we can connect both $\ell$ and $\ell'$ to $\ell''$ with paths of length $0$.  Concatenating these two paths shows that this case holds as well.

The case where $\entropy_{G_2}(\ell^2) = 1$ and $\entropy_{G_2}((\ell')^2) = 1$ is similar.

This completes the proof of~\eqref{item:sep-short cut}. 

We observe that any path in $\hcalM^1(G)$ is close to a path in $\calM^1(G)$.  Thus by Proposition~\ref{prop:sep-norm extends}, we have that the inclusion $(\calM^1(G),d_{\entropy,G}) \to (\hcalM^1(G),d_{\entropy,G})$ is an isometric embedding, hence~\eqref{item:sep-isometric embedding} holds.
\end{proof}


\section{\texorpdfstring{$\calX^1(\calR_r,{\rm id})$}{X\textasciicircum1(R\_r,id)} Has Bounded Diameter in \texorpdfstring{$\calX^1(\FF_r)$}{X\textasciicircum1(F\_r)}}\label{sec:rose bounded}

In this section, we make use of the collapsing phenomona witnessed in the previous section to show that even though $(\calM^1(\calR_r),d_{\entropy,\calR_r})$ has infinite diameter (Proposition~\ref{prop:infinite length}), the subspace $(\calX^1(\calR_r,{\rm id}),d_\entropy) \subset (\calX^1(\FF_r),d_\entropy)$ has bounded diameter.  The idea is as follows.  Using the natural bijection $\calM^1(\calR_r) \leftrightarrow \calX^1(\calR_r,{\rm id})$, since $\hcalM^1(\calR_r)$ is the completion of $(\calM^1(\calR_r),d_{\entropy,\calR_r})$ (Section~\ref{sec:completion of rose}) there is a map $\Phi \from \hcalM^1(\calR_r) \to \ccalX^1(\FF_r)$ where $\ccalX^1(\FF_r)$ is the completion of $(\calX^1(\FF_r),d_\entropy)$.  Indeed, if a sequence $(\ell_n) \subset (\calM^1(\calR_r),d_{\entropy,\calR_r})$ is Cauchy, then so is its image under $\Phi$ in $(\calX^1(\FF_r),d_\entropy)$ as $\Phi$ is 1--Lipschitz.  As $\hcalM^1(\calR_r)$ is homeomorphic to the complement of the vertices of an $(r-1)$--simplex, we can consider $\Phi$ as map $\Phi \from \Delta^{r-1} - V \to \ccalX^1(\FF_r)$ where $\Delta^{r-1}$ is the standard $(r-1)$--dimensional simplex and $V \subset \Delta^{r-1}$ is the set of vertices.  We will show that the map $\Phi$ extends to the vertex set $V$.  Since the image $\Phi(\Delta^{r-1})$ is compact and contains $\calX^1(\calR_r,{\rm id})$, it follows that $(\calX^1(\calR_r,{\rm id}),d_{\entropy})$ has bounded diameter. 

This is accomplished by considering $\calM^1(\calR_r)$ as the face of $\calM^1(G)$ for a particular graph $G$ that has a separating edge and using the tools developed in Section~\ref{sec:separating}.  Lemma~\ref{lem:single point} establishes that the subset $\cup \{\calM_S \mid 1 < \abs{S} < r-1 \} \subset \hcalM^1(\calR_r)$ is collapsed to a single point in the completion of $(\calX^1(\FF_r),d_\entropy)$.  We recall that $\calM^1_S \subset \hcalM^1(\calR_r)$ is the subset of unit entropy length functions on the subrose $\calR_{\abs{S}} \subseteq \calR_r$ utilizing the edges in $S \subseteq [r]$---the lengths of an edge in $[r] - S$ is $\infty$.  The subset $\calM^1_S$ corresponds to an $(\abs{S}-1)$--dimensional face of $\Delta^{r-1}$.  Thus Lemma~\ref{lem:single point} shows that the entire $(r-3)$--skeleton of $\Delta^{r-1}$ is collapsed to a point by $\Phi$ in $\ccalX^1(\FF_r)$.

\begin{lemma}\label{lem:single point}
For $r \geq 4$, $\Phi$ maps the subset $\cup \{\calM_S \mid 1 < \abs{S} < r-1 \} \subset \hcalM^1(\calR_r)$ to a single point in $\ccalX^1(\FF_r)$.
\end{lemma}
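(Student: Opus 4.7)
The plan is to use the separating-edge shortcut from Section~\ref{sec:separating} face-by-face and then combine the individual face collapses via the face structure of $\hcalM^1(\calR_r)$ together with continuity of $\Phi$. For each $S \subseteq [r]$ with $1 < \abs{S} < r-1$, I would construct the graph $G_S$ consisting of two subroses $G_1 = \calR_{\abs{S}}$ (on the edges indexed by $S$) and $G_2 = \calR_{r-\abs{S}}$ (on the edges indexed by $[r]-S$), joined by a single separating edge $e_0$. The hypotheses of Section~\ref{sec:separating} hold since $\abs{S}, r-\abs{S} \geq 2$. Choosing a marking $\rho_S \from \calR_r \to G_S$ compatible with the collapse of $e_0$ places $\calX^1(\calR_r,{\rm id})$ and $\calX^1(G_S,\rho_S)$ in the common simplex closure $\ocalX^1(G_S,\rho_S) \subset \ccalX^1(\FF_r)$. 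Via the collapse $c^{\ast}$ and the finite-length deformations from Proposition~\ref{prop:sep-finite length}, the face $\calM_S \subset \hcalM^1(\calR_r)$ is identified with the subset $\calM^1_1 \subset \hcalM^1(G_S)$ in a way compatible with the induced $1$-Lipschitz maps to $\ccalX^1(\FF_r)$. Proposition~\ref{prop:sep-metric extends}\eqref{item:sep-short cut} then gives $\diam(\calM^1_1 \cup \calM^1_2 \cup \calM^1_{1,2}) = 0$ in $\hcalM^1(G_S)$, whence its image is a single point $p_S \in \ccalX^1(\FF_r)$; in particular $\Phi(\calM_S) = \Phi(\calM_{[r]-S}) = \{p_S\}$.

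To conclude, it remains to show that $p_S = p_{S'}$ for all admissible $S, S'$. Continuity of $\Phi$ together with the inclusion $\calM_T \subseteq \overline{\calM_S}$ (for $T \subseteq S$ with $\abs{T} \geq 2$) propagates the constancy of $\Phi$ down to subfaces, giving $p_T = p_S$ for nested admissible pairs. For $r \geq 5$ this already suffices: any two $2$-element subsets $T, T'$ of $[r]$ can be connected through admissible subsets of size at most $r-2$, by taking $S = T \cup T'$ when $T \cap T' \neq \emptyset$, and using a two-step chain through a shared element otherwise. Combined with the complementary-pair identification $p_S = p_{[r]-S}$ from the previous paragraph, this glues all the $p_S$ into a single point.

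The principal obstacle is the case $r = 4$, where the chain-by-inclusion argument breaks down: the only admissible faces are the six $2$-subsets of $[4]$, the $G_S$ construction only identifies complementary pairs, and no admissible $3$-subset is available as a bridge. To close this gap I would apply Section~\ref{sec:separating} to auxiliary graphs whose subgraphs are not both roses---for instance a graph $G'$ with subgraphs $G_1 = \Theta_2$ and $G_2 = \calR_2$ joined by a separating edge. Since $\Theta_2$ admits three distinct collapses onto a $2$-rose, the graph $G'$ carries several natural markings $\rho' \from \calR_4 \to G'$, each identifying a different $2$-subset of $[4]$ with the ``theta side''. Applying Proposition~\ref{prop:sep-metric extends} under each such marking produces additional gluings in $\ccalX^1(\FF_4)$ that merge the three complementary classes into a single point. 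Carrying out these cross-marking identifications rigorously, and in particular verifying that the various completion points of $G'$ really do agree with the appropriate $p_S$ in $\ccalX^1(\FF_r)$, is where the main technical work of the argument lies.
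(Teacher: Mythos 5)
Your first step---collapsing each admissible face via the doubled rose $G_S$ (the paper's $G_{n_1,n_2}$) with a separating edge and Proposition~\ref{prop:sep-metric extends}\eqref{item:sep-short cut}---is exactly the argument the paper gives, and your gluing step for $r \geq 5$ is a correct alternative to the paper's: since $\Phi$ is $1$--Lipschitz on $(\hcalM^1(\calR_r),d_{\entropy,\calR_r})$, constancy on $\calM^1_S$ does propagate to the subfaces $\calM^1_T$, $T \subseteq S$, $\abs{T}\geq 2$, and chaining $2$--element subsets through unions of size at most $r-2$ identifies all the points $p_S$. The problem is that the lemma is stated for $r \geq 4$, and at $r=4$ your argument is genuinely incomplete, as you yourself flag: only the six $2$--subsets of $[4]$ are admissible, the inclusion argument is vacuous, and $G_S$ only pairs $S$ with $[4]-S$, leaving three unmerged classes.

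Moreover the sketched patch is unlikely to close this gap as described. For a fixed marking of a graph $G'$ with a $\Theta_2$ side and an $\calR_2$ side joined by a separating edge, the degenerate sets $\calM^1_1 \cup \calM^1_2 \cup \calM^1_{1,2}$ of $\hcalM^1(G')$, followed to their $\ell^0 \to 0$ limits, either reproduce the complementary pairing for that one marking, or---if one collapses a different edge of the theta---relate faces attached to a \emph{differently marked} rose (Nielsen-adjacent markings); the latter is precisely the mechanism the paper uses in Section~\ref{sec:entropy bounded} to prove Theorem~\ref{th:entropy bounded}, and it does not by itself merge $p_{\{1,2\}}$ with $p_{\{1,3\}}$ for the single identity-marked rose, which is what you need here. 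The ingredient the paper uses instead, and which handles all $r \geq 4$ uniformly (it would also replace your chain argument for $r\geq 5$), is Proposition~\ref{prop:thin part}: if $i \in S \cap S'$, then points of $\calM^1_S$ and of $\calM^1_{S'}$ whose $i$th coordinate equals $\epsilon$ lie in the closure of the slice $\calS^i_\epsilon \subset \calM^1(\calR_r)$, so, $\Phi$ being $1$--Lipschitz and constant on each face, $d_\entropy(p_S,p_{S'}) \leq \diam(\calS^i_\epsilon) \to 0$ as $\epsilon \to 0^+$, whence $p_S = p_{S'}$; chaining over shared indices (e.g.\ $\{1,2\}$, $\{1,3\}$, $\{3,4\}$) then identifies all admissible faces, including in the critical case $r=4$.
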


\begin{proof}
Fix $r \geq 4$ and let $S$ be a subset of $[r]$ with $1 < \abs{S} < r - 1$.  To begin, we claim that the image of $\calM^1_S$ is a single point in $\ccalX^1(\FF_r)$.  To this end, we set $n_1 = \abs{S}$ and $n_2 = r - \abs{S}$.  Notice that $n_1,n_2 \geq 2$.  Let $G_{n_1,n_2}$ be the graph that consists of two vertices $v_1$ and $v_2$, and edges $e_0$, $e^1_1,\ldots,e^1_{n_1}$ and $e^2_1,\ldots,e^2_{n_2}$.  The edges are attached via the following table.
\begin{center}
\renewcommand{\arraystretch}{1.3}
\begin{tabular}{c|c|c}
 & $o$ & $\tau$ \\
 \hline
$e_0$ & $v_1$ & $v_2$ \\
\hline 
$e^1_i$ & $v_1$ & $v_1$ \\
\hline
$e^2_i$ & $v_2$ & $v_2$ \\
\hline
\end{tabular}
\end{center} 
See Figure~\ref{fig:double barbell}.  We adopt the notation introduced in Section~\ref{sec:separating} for $G_{n_1,n_2}$.  

\begin{figure}[ht]
\centering
\begin{tikzpicture}[scale=1.3]
\draw[very thick,black] (-1.5,0) -- (1.5,0);
\filldraw (-1.5,0) circle [radius=1.75pt];
\filldraw (1.5,0) circle [radius=1.75pt];
\begin{scope}[xshift=-1.5cm]
\draw[very thick] (0,0) to[out=80,in=0] (0,1);
\draw[very thick] (0,1) to[out=180,in=100] (0,0);
\node at (0,1.3) {$e^1_1$};
\filldraw[rotate=95] (0,0.6) circle [radius=1.25pt];
\filldraw[rotate=115] (0,0.6) circle [radius=1.25pt];
\filldraw[rotate=135] (0,0.6) circle [radius=1.25pt];
\end{scope}
\begin{scope}[xshift=-1.5cm,rotate=45]
\draw[very thick] (0,0) to[out=80,in=0] (0,1);
\draw[very thick] (0,1) to[out=180,in=100] (0,0);
\node at (0,1.3) {$e^1_2$};
\end{scope}
\begin{scope}[xshift=-1.5cm,rotate=180]
\draw[very thick] (0,0) to[out=80,in=0] (0,1);
\draw[very thick] (0,1) to[out=180,in=100] (0,0);
\node at (0,1.3) {$e^1_{n_1}$};
\end{scope}
\begin{scope}[xshift=1.5cm]
\draw[very thick] (0,0) to[out=80,in=0] (0,1);
\draw[very thick] (0,1) to[out=180,in=100] (0,0);
\node at (0,1.3) {$e^2_1$};
\filldraw[rotate=-95] (0,0.6) circle [radius=1.25pt];
\filldraw[rotate=-115] (0,0.6) circle [radius=1.25pt];
\filldraw[rotate=-135] (0,0.6) circle [radius=1.25pt];
\end{scope}
\begin{scope}[xshift=1.5cm,rotate=-45]
\draw[very thick] (0,0) to[out=80,in=0] (0,1);
\draw[very thick] (0,1) to[out=180,in=100] (0,0);
\node at (0,1.3) {$e^2_2$};
\end{scope}
\begin{scope}[xshift=1.5cm,rotate=180]
\draw[very thick] (0,0) to[out=80,in=0] (0,1);
\draw[very thick] (0,1) to[out=180,in=100] (0,0);
\node at (0,1.3) {$e^2_{n_2}$};
\end{scope}
\node at (0,0.2) {$e_0$};
\node at (-1.2,0.2) {$v_1$};
\node at (1.2,0.2) {$v_2$};
\end{tikzpicture}
\caption{The graph $G_{n_1,n_2}$: there are $n_1$ loop edges attached to $v_1$ and $n_2$ loop edges attached to $v_2$.}\label{fig:double barbell}
\end{figure}
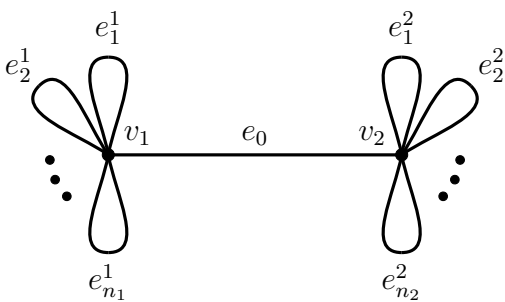 

Let $c \from G_{n_1,n_2} \to \calR_r$ be the map induced by collapsing the edge $e_0$ and let $\rho \from \calR_r \to G_{n_1,n_2}$ be a map so that $c \circ \rho$ is homotopic to ${\rm id} \from \calR_r \to \calR_r$.  Thus \[\calX^1(\calR_r,{\rm id}) \subset \ocalX^1(G_{n_1,n_2},\rho) = \{ [(G,\rho',\ell)] \in \ocalX(G_{n_1,n_2},\rho) \mid \entropy_G(\ell) = 1 \}.\]  Specifically, viewing $\ocalX^1(G_{n_1,n_2},\rho)$ as a subset of $[0,\infty)^{1 + n_1 + n_2}$, we see that $\calX^1(\calR_r,{\rm id})$ is the image of the embedding $\varepsilon \from \calM^1(\calR_r) \to [0,\infty)^{1 + n_1 + n_2}$ where $\varepsilon(\ell)^0 = 0$. 

Moreover, $\varepsilon$ extends to an embedding $\hcalM^1(\calR_r) \to [0,\infty]^{1 + n_1 + n_2}$ in the same way.  Under this embedding, $\varepsilon(\calM^1_S)$ is the face of $\calM^1_1 \subset \hcalM^1(G_{n_1,n_2}) \subset [0,\infty]^{1 + n_1 + n_2}$.  Indeed, the set $\calM^1_1$ is homeomorphic to $(0,\infty] \times \calM^1_S$.  By Proposition~\ref{prop:sep-metric extends}\eqref{item:sep-short cut}, the set $\calM^1_1$ maps to a single point in $\ccalX^1(\FF_r)$.  Hence so does $\calM^1_S$, completing the proof of our claim.

As the diameter of the thin part goes to zero as the short edge goes to zero (Proposition~\ref{prop:thin part}), the point that $\calM^1_S$ is sent to is independent of $S$.
\end{proof}

The main result of this section now follows easily.

\begin{proposition}\label{prop:simplex bounded diameter}
For $r \geq 4$, the subspace $(\calX^1(\calR_r,{\rm id}),d_{\entropy}) \subset (\calX^1(\FF_r),d_\entropy)$ has bounded diameter.
\end{proposition}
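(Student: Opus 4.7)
The plan is to construct a continuous map $\Phi \from \Delta^{r-1} \to \ccalX^1(\FF_r)$, where $\ccalX^1(\FF_r)$ denotes the metric completion of $(\calX^1(\FF_r), d_\entropy)$, whose restriction to the interior of $\Delta^{r-1}$ coincides, under the natural bijection $\calM^1(\calR_r) \leftrightarrow \calX^1(\calR_r, {\rm id})$, with the inclusion $\calM^1(\calR_r) \to \calX^1(\FF_r)$. Once such a $\Phi$ is constructed, compactness of $\Delta^{r-1}$ will force $\Phi(\Delta^{r-1})$ to be a bounded subset of $\ccalX^1(\FF_r)$ containing $\calX^1(\calR_r, {\rm id})$, yielding the conclusion.

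I would first define $\Phi$ on $\Delta^{r-1} - V$, where $V$ is the set of vertices of $\Delta^{r-1}$. Since $d_\entropy \leq d_{\entropy, \calR_r}$, the inclusion $(\calM^1(\calR_r), d_{\entropy, \calR_r}) \to (\calX^1(\FF_r), d_\entropy)$ is $1$-Lipschitz and therefore extends to a $1$-Lipschitz map from the $d_{\entropy, \calR_r}$-completion $\hcalM^1(\calR_r)$ into $\ccalX^1(\FF_r)$. The identification $\hcalM^1(\calR_r) \cong \Delta^{r-1} - V$ provided by Theorem~\ref{th:completion rose} then yields the desired map $\Phi \from \Delta^{r-1} - V \to \ccalX^1(\FF_r)$.

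The main obstacle is extending $\Phi$ continuously to each vertex. By Lemma~\ref{lem:single point}, there is a single point $p_0 \in \ccalX^1(\FF_r)$ such that $\Phi(\calM^1_S) = \{p_0\}$ whenever $1 < |S| < r-1$; I claim $\Phi$ extends at each $v \in V$ with value $p_0$. Fix such a $v$, corresponding to edge $i \in [r]$, and let $y_n \to v$ in $\hcalM^1(\calR_r)$; by density of $\calM^1(\calR_r)$ in $\hcalM^1(\calR_r)$ together with the $1$-Lipschitz property of $\Phi$, we may assume $y_n \in \calM^1(\calR_r)$. The subspace topology on $\hcalM^1(\calR_r) \subset [0,\infty]^r$ together with the unit entropy condition forces $\ell^j(y_n) \to \infty$ for each $j \neq i$ and $\epsilon_n \defeq \ell^i(y_n) \to 0$. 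For each sufficiently large $n$, choose $j_n \neq i$ minimizing $\ell^{j_n}(y_n)$; Lemma~\ref{lem:thin part} then gives
\[
d_{\entropy, \calR_r}\bigl(y_n,\, \ell_{i, j_n}(\epsilon_n)\bigr) \leq \frac{D}{-\log(\exp(\epsilon_n) - 1)} \longrightarrow 0.
\]
Since $r \geq 4$ ensures $1 < |\{i, j_n\}| = 2 < r - 1$, Lemma~\ref{lem:single point} gives $\Phi(\ell_{i, j_n}(\epsilon_n)) = p_0$, and the $1$-Lipschitz property of $\Phi$ then yields $\Phi(y_n) \to p_0$, establishing the claim.

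With the extension in hand, $\Phi \from \Delta^{r-1} \to \ccalX^1(\FF_r)$ is continuous on a compact space, so $\Phi(\Delta^{r-1})$ is compact and therefore bounded; since $\calX^1(\calR_r, {\rm id}) \subseteq \Phi(\Delta^{r-1})$, the proposition follows. The technical heart of the argument is the vertex-extension step, which cleanly combines the collapse of the interior skeleton to a single point (Lemma~\ref{lem:single point}) with the thin-part estimate controlling approaches to the missing vertices (Lemma~\ref{lem:thin part}).
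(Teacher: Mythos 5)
Your proof is correct and follows essentially the same route as the paper: the $1$--Lipschitz extension $\Phi \from \Delta^{r-1}-V \to \ccalX^1(\FF_r)$ coming from Theorem~\ref{th:completion rose}, the collapse of the faces $\calM^1_S$ with $1<\abs{S}<r-1$ to a single point via Lemma~\ref{lem:single point} (where $r\geq 4$ enters), and compactness of $\Delta^{r-1}$ after extending over $V$. Your explicit vertex-extension step via Lemma~\ref{lem:thin part} is precisely the thin-part argument the paper invokes (through Proposition~\ref{prop:thin part}) but states only tersely in its proof, so you have simply spelled out a detail of the same argument.
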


\begin{proof}
As explained above in the introduction to this section, by Theorem~\ref{th:completion rose}, there is a map $\Phi \from \Delta^{r-1} - V \to \ccalX^1(\FF_r)$.  By Lemma~\ref{lem:single point}, the map $\Phi$ extends to $V$ and the entire $(r-3)$--skeleton of $\Delta^{r-1}$ is mapped to a single point.  As $\Delta^{r-1}$ is compact, $\Phi(\Delta^{r-1})$ is compact and hence has bounded diameter.  Thus $\calX^1(\calR_r,{\rm id}) \subset \Phi(\Delta^{r-1}) \subset \calX^1(\FF)$ has bounded diameter too.
\end{proof}


\section{Proof of Theorem~\ref{th:entropy bounded}}\label{sec:entropy bounded}

The goal of this final section is the proof of the last of the main results.  Theorem~\ref{th:entropy bounded} states that the $\Out(\FF_r)$--invariant subcomplex $(\calX^1(\calR_r,{\rm id}) \cdot \Out(\FF_r),d_\entropy)$ has bounded diameter and that moreover, the action of $\Out(\FF_r)$ on the completion of $(\calX^1(\FF_r),d_\entropy)$ has a global fixed point.  This point is the image of $\cup \{\calM_S \mid 1 < \abs{S} < r-1 \}$ for any marking of the rose.  We show that the image of this point in the completion is independent of the marking.  This is done by showing that it is independent for markings that differ by a single simple move---we call such markings \emph{Nielsen adjacent}.  This is accomplished again by making use of a graph with a separating edge and the analysis in Section~\ref{sec:separating}.  This simple move suffices to connect any two markings and the theorem easily follows. 

\begin{proof}[Proof of Theorem~\ref{th:entropy bounded}]
Given a marked rose $(\calR_r,\rho)$,  there is an embedding $\Phi_\rho \from \calM^1(\calR_r) \to \calX^1(\FF_r)$ whose image is $\calX^1(\calR_r,\rho)$.  As in Section~\ref{sec:rose bounded}, this map extends to $\Phi_\rho \from \hcalM^1(\calR_r) \to \ccalX^1(\FF_r)$ where $\ccalX^1(\FF_r)$ is the completion of $\calX^1(\FF_r)$ with the entropy metric.  By Lemma~\ref{lem:single point}, $\Phi_\rho$ maps $\cup \{ \calM^1_S \mid 1 < \abs{S} < r - 1 \}$ to a single point in $\ccalX^1(\FF_r)$.  Let $x_{\rho}$ denote this point in $\ocalX(\FF)$.

Given an integer $r > 2$, we define a graph $\Gamma_{r}$ that has two vertices $v$ and $w$, and edges $e^1_1,\ldots,e^1_{r-2}$ and $e^2_0,e^2_1,e^2_{3}$.  The edges are attached via the following table.
\begin{center}
\renewcommand{\arraystretch}{1.3}
\begin{tabular}{c|c|c}
 & $o$ & $\tau$ \\
 \hline
$e^1_i$ & $v$ & $v$ \\
\hline
$e^2_i$ & $v$ & $w$ \\
\hline
\end{tabular}
\end{center} 
See Figure~\ref{fig:rose-theta}.  We call such a graph a \emph{rose-theta graph}.  

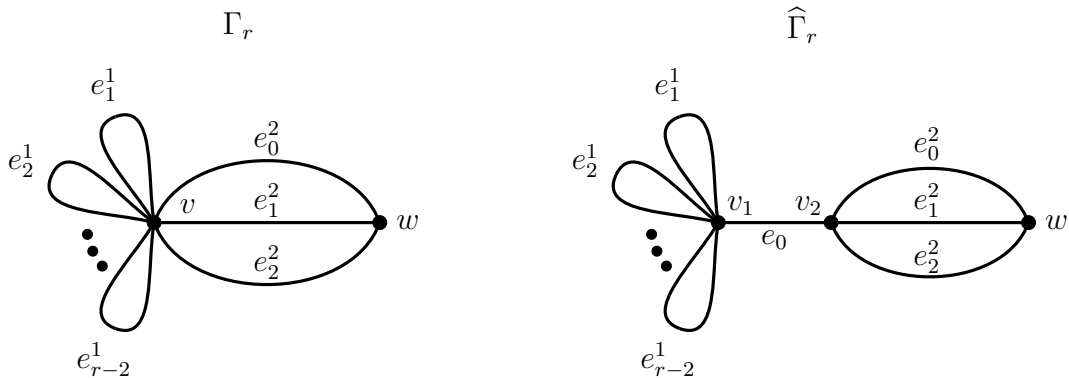
\begin{figure}[ht]
\centering
\begin{tikzpicture}[scale=1.5]
\node at (-0.75,1.75) {$\Gamma_r$};
\filldraw (-1.5,0) circle [radius=1.75pt];
\filldraw (0.5,0) circle [radius=1.75pt];
\begin{scope}[xshift=-1.5cm,rotate=20]
\draw[very thick] (0,0) to[out=80,in=0] (0,1);
\draw[very thick] (0,1) to[out=180,in=100] (0,0);
\node at (0,1.3) {$e^1_1$};
\filldraw[rotate=80] (0,0.6) circle [radius=1.25pt];
\filldraw[rotate=95] (0,0.6) circle [radius=1.25pt];
\filldraw[rotate=110] (0,0.6) circle [radius=1.25pt];
\end{scope}
\begin{scope}[xshift=-1.5cm,rotate=65]
\draw[very thick] (0,0) to[out=80,in=0] (0,1);
\draw[very thick] (0,1) to[out=180,in=100] (0,0);
\node at (0,1.3) {$e^1_2$};
\end{scope}
\begin{scope}[xshift=-1.5cm,rotate=160]
\draw[very thick] (0,0) to[out=80,in=0] (0,1);
\draw[very thick] (0,1) to[out=180,in=100] (0,0);
\node at (0,1.3) {$e^1_{r-2}$};
\end{scope}
\draw[very thick] (-1.5,0) to [out=70,in=110] (0.5,0);
\draw[very thick] (-1.5,0) to [out=0,in=180] (0.5,0);
\draw[very thick] (-1.5,0) to [out=-70,in=-110] (0.5,0);
\node at (-0.5,0.75) {$e^2_0$};
\node at (-0.5,0.2) {$e^2_1$};
\node at (-0.5,-0.35) {$e^2_2$};
\node at (-1.2,0.15) {$v$};
\node at (0.75,0) {$w$};
\begin{scope}[xshift=5cm]
\node at (-0.75,1.75) {$\widehat \Gamma_r$};
\draw[very thick] (-1.5,0) -- (-0.5,0);
\filldraw (-1.5,0) circle [radius=1.75pt];
\filldraw (-0.5,0) circle [radius=1.75pt];
\filldraw (1.25,0) circle [radius=1.75pt];
\begin{scope}[xshift=-1.5cm,rotate=20]
\draw[very thick] (0,0) to[out=80,in=0] (0,1);
\draw[very thick] (0,1) to[out=180,in=100] (0,0);
\node at (0,1.3) {$e^1_1$};
\filldraw[rotate=80] (0,0.6) circle [radius=1.25pt];
\filldraw[rotate=95] (0,0.6) circle [radius=1.25pt];
\filldraw[rotate=110] (0,0.6) circle [radius=1.25pt];
\end{scope}
\begin{scope}[xshift=-1.5cm,rotate=65]
\draw[very thick] (0,0) to[out=80,in=0] (0,1);
\draw[very thick] (0,1) to[out=180,in=100] (0,0);
\node at (0,1.3) {$e^1_2$};
\end{scope}
\begin{scope}[xshift=-1.5cm,rotate=160]
\draw[very thick] (0,0) to[out=80,in=0] (0,1);
\draw[very thick] (0,1) to[out=180,in=100] (0,0);
\node at (0,1.3) {$e^1_{r-2}$};
\end{scope}
\draw[very thick] (-0.5,0) to [out=70,in=110] (1.25,0);
\draw[very thick] (-0.5,0) to [out=0,in=180] (1.25,0);
\draw[very thick] (-0.5,0) to [out=-70,in=-110] (1.25,0);
\node at (0.35,0.7) {$e^2_0$};
\node at (0.35,0.2) {$e^2_1$};
\node at (0.35,-0.3) {$e^2_2$};
\node at (-1,-0.15) {$e_0$};
\node at (-1.3,0.15) {$v_1$};
\node at (-0.7,0.15) {$v_2$};
\node at (1.5,0) {$w$};
\end{scope}
\end{tikzpicture}
\caption{The graphs $\Gamma_r$ and $\widehat \Gamma_r$.  In $\Gamma_{r}$ there are $r-2$ loop edges attached to $v$ and $3$ edges connecting $v$ to $w$.  In $\widehat \Gamma_{r}$, there are $r-2$ loop edges attached to $v_1$, $3$ edges connecting $v_2$ to $w$ and a separating edge connecting $v_1$ to $v_2$.}\label{fig:rose-theta}
\end{figure} 

Given $i \in \{0,1,2\}$, collapsing the edge $e^2_i$ induces a map $c_i \from \Gamma_r \to \calR_{r}$.  We say two marked roses $(\calR_r,\rho_1)$ and $(\calR_r,\rho_2)$ are \emph{Nielsen adjacent} if there is a marked rose-theta graph $(\Gamma_r,\rho)$ such that $\rho_i \simeq c_i \circ \rho$ for $i = 1$, $2$.  Given any two marked roses, $(\calR_r,\rho)$ and $(\calR_r,\rho')$, it is well known that there is a sequence of markings $\rho = \rho_1,\ldots,\rho_n = \rho'$ such that $(\calR_r, \rho_{i-1})$ and $(\calR_r,\rho_i)$ are Nielsen adjacent for $i = 2,\ldots,n$.  For instance, see~\cite{ar:CV86}.

We will prove the theorem by showing that if $(\calR_r,\rho_1)$ and $(\calR_r,\rho_2)$ are Nielsen adjacent, then $x_{\rho_1} = x_{\rho_2}$.  Notice the collection $\{x_{{\rm id} \cdot \phi}\}$ is invariant under the action by $\Out(\FF_r)$.  Hence this also shows that the action of $\Out(\FF_r)$ on $\ccalX^1(\FF_r)$ has a global fixed point.  

To this end, let $(\Gamma_r,\rho)$ be the marked rose-theta graph such that $\rho_i \simeq c_i \circ \rho$.  We need to introduce a separating edge to take advantage of the shortcuts utilized in Section~\ref{sec:separating}.  Let $\widehat \Gamma_{r}$ be the graph obtained from blowing up the vertex $v$ in $\Gamma_r$.  Specifically, in $\widehat{\Gamma}_r$ there are three vertices $v_1$, $v_2$ and $w$, and edges $e_0$, $e^1_1,\ldots,e^1_{r-2}$ and $e^2_0,e^2_1,e^2_{3}$.  The edges are attached via the following table.
\begin{center}
\renewcommand{\arraystretch}{1.3}
\begin{tabular}{c|c|c}
& $o$ & $\tau$ \\
\hline
$e_0$ & $v_1$ & $v_2$ \\
\hline
$e^1_i$ & $v_1$ & $v_1$ \\
\hline
$e^2_i$ & $v_2$ & $w$ \\
\hline
\end{tabular}
\end{center} 
See Figugre~\ref{fig:rose-theta}.  We adopt the notation from Section~\ref{sec:separating} for $\widehat \Gamma_r$.

Let $c \from \widehat \Gamma_r \to \Gamma_r$ be the map that collapses the edge $e_0$.  There is a marking $\hat \rho \from \calR_r \to \widehat \Gamma_r$ such that $c \circ \hat \rho \simeq \rho$.  Viewing $\ocalX^1(\widehat{\Gamma}_r,\hat \rho)$ as a subset of $[0,\infty]^{r+2}$, there are two embeddings corresponding to $\rho_1$ and $\rho_2$ denoted $\varepsilon_1, \varepsilon_2 \from \hcalM^1(\calR_r) \to [0,\infty]^{r+2}$ where $\varepsilon_i(\ell)^0 = \varepsilon_i(\ell)(e^2_i) = 0$ for $i = 1$, $2$.  

Let $S \subset [r]$ denote the set of edges $\{c_i(e^1_1),\ldots, c_i(e^1_{r-2})\}$ in $\calR_r$.  Notice that this set is independent of $i$.  Both $\varepsilon_1(\calM^1_S)$ and $\varepsilon_2(\calM^1_S)$ are faces of $\calM^1_{1,2} \subset \hcalM^1(\widehat{\Gamma}_r)$.  Hence by Proposition~\ref{prop:sep-metric extends}\eqref{item:sep-short cut} we have that $x_{\rho_1} = x_{\rho_2}$.        
\end{proof}


\bibliography{thermo}

\begin{thebibliography}{10}

\bibitem{ar:Algom-Kfir11}
{\sc Algom-Kfir, Y.}
\newblock Strongly contracting geodesics in outer space.
\newblock {\em Geom. Topol. 15}, 4 (2011), 2181--2233.

\bibitem{ar:A-KB12}
{\sc Algom-Kfir, Y., and Bestvina, M.}
\newblock Asymmetry of outer space.
\newblock {\em Geom. Dedicata 156\/} (2012), 81--92.

\bibitem{ar:A-KHR15}
{\sc Algom-Kfir, Y., Hironaka, E., and Rafi, K.}
\newblock Digraphs and cycle polynomials for free-by-cyclic groups.
\newblock {\em Geom. Topol. 19}, 2 (2015), 1111--1154.

\bibitem{ar:Bers78}
{\sc Bers, L.}
\newblock An extremal problem for quasiconformal mappings and a theorem by
  {T}hurston.
\newblock {\em Acta Math. 141}, 1-2 (1978), 73--98.

\bibitem{pro:Bestvina02}
{\sc Bestvina, M.}
\newblock The topology of {${\rm Out}(F\sb n)$}.
\newblock In {\em Proceedings of the International Congress of Mathematicians,
  Vol. II (Beijing, 2002)\/} (Beijing, 2002), Higher Ed. Press, pp.~373--384.

\bibitem{ar:Bestvina11}
{\sc Bestvina, M.}
\newblock A {B}ers-like proof of the existence of train tracks for free group
  automorphisms.
\newblock {\em Fund. Math. 214}, 1 (2011), 1--12.

\bibitem{un:BF-OuterLimits}
{\sc Bestvina, M., and Feighn, M.}
\newblock Outer limits.
\newblock {P}reprint (1992).

\bibitem{ar:BF14-1}
{\sc Bestvina, M., and Feighn, M.}
\newblock Hyperbolicity of the complex of free factors.
\newblock {\em Adv. Math. 256\/} (2014), 104--155.

\bibitem{bk:Bowen08}
{\sc Bowen, R.}
\newblock {\em Equilibrium states and the ergodic theory of {Anosov}
  diffeomorphisms}, revised~ed., vol.~470 of {\em Lecture {Notes} in
  {Mathematics}}.
\newblock Springer-Verlag, Berlin, 2008.

\bibitem{ar:Bridgeman10}
{\sc Bridgeman, M.}
\newblock Hausdorff dimension and the weil-petersson extension to quasifuchsian
  space.
\newblock {\em Geometry and Topology 14}, 2 (2010), 799--831.

\bibitem{ar:BCLS15}
{\sc Bridgeman, M., Canary, R., Labourie, F., and Samborino, A.}
\newblock The pressure metric for anosov representations.
\newblock {\em Geometric and Functional Analysis 25}, 4 (2015), 1089--1179.

\bibitem{ar:CL95}
{\sc Cohen, M.~M., and Lustig, M.}
\newblock Very small group actions on {${\bf R}$}-trees and {D}ehn twist
  automorphisms.
\newblock {\em Topology 34}, 3 (1995), 575--617.

\bibitem{ar:CM87}
{\sc Culler, M., and Morgan, J.~W.}
\newblock Group actions on {${\bf R}$}-trees.
\newblock {\em Proc. London Math. Soc. (3) 55}, 3 (1987), 571--604.

\bibitem{ar:CV86}
{\sc Culler, M., and Vogtmann, K.}
\newblock Moduli of graphs and automorphisms of free groups.
\newblock {\em Invent. Math. 84}, 1 (1986), 91--119.

\bibitem{ar:CR90}
{\sc Cvetkovi\'{c}, D., and Rowlinson, P.}
\newblock The largest eigenvalue of a graph: a survey.
\newblock {\em Linear and Multilinear Algebra 28}, 1-2 (1990), 3--33.

\bibitem{daskwent}
{\sc Daskalopoulos, G., and Wentworth, R.}
\newblock Classification of {W}eil-{P}etersson isometries.
\newblock {\em Amer. J. Math. 125}, 4 (2003), 941--975.

\bibitem{bk:Dehn87}
{\sc Dehn, M.}
\newblock {\em Papers on group theory and topology}.
\newblock Springer-Verlag, New York, 1987.
\newblock Translated from the German and with introductions and an appendix by
  John Stillwell, With an appendix by Otto Schreier.

\bibitem{ar:DT18}
{\sc Dowdall, S., and Taylor, S.~J.}
\newblock Hyperbolic extensions of free groups.
\newblock {\em Geom. Topol. 22}, 1 (2018), 517--570.

\bibitem{ar:FM02}
{\sc Farb, B., and Mosher, L.}
\newblock Convex cocompact subgroups of mapping class groups.
\newblock {\em Geom. Topol. 6\/} (2002), 91--152.

\bibitem{ar:FM11}
{\sc Francaviglia, S., and Martino, A.}
\newblock Metric properties of outer space.
\newblock {\em Publ. Mat. 55}, 2 (2011), 433--473.

\bibitem{ar:GL07-2}
{\sc Guirardel, V., and Levitt, G.}
\newblock Deformation spaces of trees.
\newblock {\em Groups Geom. Dyn. 1}, 2 (2007), 135--181.

\bibitem{ar:Kao17}
{\sc Kao, L.-Y.}
\newblock Pressure type metrics on spaces of metric graphs.
\newblock {\em Geom. Dedicata 187\/} (2017), 151--177.

\bibitem{ar:KN07}
{\sc Kapovich, I., and Nagnibeda, T.}
\newblock The {P}atterson-{S}ullivan embedding and minimal volume entropy for
  outer space.
\newblock {\em Geometric and Functional Analysis 17}, 4 (2007), 1201--1236.

\bibitem{ar:KR08}
{\sc Kapovich, I., and Rivin, I.}
\newblock On the absence of {M}c{S}hane-type identities for the outer space.
\newblock {\em Journal of Algebra 320}, 10 (2008), 3659--3670.

\bibitem{MasurWolf}
{\sc Masur, H., and Wolf, M.}
\newblock The {W}eil-{P}etersson isometry group.
\newblock {\em Geom. Dedicata 93\/} (2002), 177--190.

\bibitem{ar:MM99}
{\sc Masur, H.~A., and Minsky, Y.~N.}
\newblock Geometry of the complex of curves. {I}. {H}yperbolicity.
\newblock {\em Invent. Math. 138}, 1 (1999), 103--149.

\bibitem{ar:McMullen08}
{\sc McMullen, C.~T.}
\newblock Thermodynamics, dimension and the {W}eil-{P}etersson metric.
\newblock {\em Invent. Math. 173}, 2 (2008), 365--425.

\bibitem{ar:McMullen15}
{\sc McMullen, C.~T.}
\newblock Entropy and the clique polynomial.
\newblock {\em J. Topol. 8}, 1 (2015), 184--212.

\bibitem{ar:PP90}
{\sc Parry, W., and Pollicott, M.}
\newblock Zeta functions and the periodic orbit structure of hyperbolic
  dynamics.
\newblock {\em Ast{\'e}risque}, 187-188 (1990), 268.

\bibitem{ar:Paulin89}
{\sc Paulin, F.}
\newblock The {Gromov} topology on {${\bf R}$}-trees.
\newblock {\em Topology and its Applications 32}, 3 (1989), 197--221.

\bibitem{ar:PS14}
{\sc Pollicott, M., and Sharp, R.}
\newblock A {W}eil-{P}etersson type metric on spaces of metric graphs.
\newblock {\em Geom. Dedicata 172\/} (2014), 229--244.

\bibitem{bk:Ruelle78}
{\sc Ruelle, D.}
\newblock {\em Thermodynamic formalism}, vol.~5 of {\em Encyclopedia of
  Mathematics and its Applications}.
\newblock Addison-Wesley Publishing Co., Reading, Mass., 1978.
\newblock The mathematical structures of classical equilibrium statistical
  mechanics, With a foreword by Giovanni Gallavotti and Gian-Carlo Rota.

\bibitem{bk:Serre03}
{\sc Serre, J.-P.}
\newblock {\em Trees}.
\newblock Springer Monographs in Mathematics. Springer-Verlag, Berlin, 2003.
\newblock Translated from the French original by John Stillwell, Corrected 2nd
  printing of the 1980 English translation.

\bibitem{un:Thurston}
{\sc Thurston, W.~P.}
\newblock Minimal stretch maps between hyperbolic surfaces.
\newblock {P}reprint,
  \href{http://arxiv.org/abs/math/9801039v1}{arXiv:math/9801039}.

\bibitem{ar:Vogtmann02}
{\sc Vogtmann, K.}
\newblock Automorphisms of free groups and outer space.
\newblock In {\em Proceedings of the Conference on Geometric and Combinatorial
  Group Theory, Part I (Haifa, 2000)\/} (2002), vol.~94, pp.~1--31.

\bibitem{ar:Wolpert75}
{\sc Wolpert, S.}
\newblock Noncompleteness of the {Weil}-{Petersson} metric for {Teichmüller}
  space.
\newblock {\em Pacific Journal of Mathematics 61}, 2 (1975), 573--577.

\bibitem{ar:Wolpert86}
{\sc Wolpert, S.}
\newblock Thurston's riemannian metric for teichm{\"u}ller space.
\newblock {\em Journal of Differential Geometry 23}, 2 (1986), 143--174.

\bibitem{col:Wolpert09}
{\sc Wolpert, S.~A.}
\newblock The {W}eil-{P}etersson metric geometry.
\newblock In {\em Handbook of {T}eichm\"{u}ller theory. {V}ol. {II}}, vol.~13
  of {\em IRMA Lect. Math. Theor. Phys.} Eur. Math. Soc., Z\"{u}rich, 2009,
  pp.~47--64.

\bibitem{ar:Xu19}
{\sc Xu, B.}
\newblock Incompleteness of the pressure metric on the {Teichmüller} space of
  a bordered surface.
\newblock {\em Ergodic Theory and Dynamical Systems 39}, 6 (2019), 1710--1728.

\end{thebibliography}
\bibliographystyle{acm}

\end{document}